\numberwithin{equation}{section}
\newtheorem{thm}{Theorem}[section]
\newtheorem{Theorem}[thm]{Theorem}
\newtheorem*{Theorem*}{Theorem}
\newtheorem{Corollary}[thm]{Corollary}
\newtheorem*{corollary*}{Corollary}
\newtheorem{Lemma}[thm]{Lemma}
\newtheorem{Proposition}[thm]{Proposition}
\newtheorem*{conjecture*}{Conjecture}
\newtheorem*{question*}{Question}
\newtheorem{Definition}[thm]{Definition}
\newtheorem*{definitions*}{Definitions}
\newtheorem*{rem*}{Remark}
\newtheorem{Remark}[thm]{Remark}
\newtheorem*{remark*}{Remark}
\newtheorem*{remarks*}{Remarks}
\newtheorem*{example*}{Example}
\newtheorem{Example}[thm]{Example}
\newtheorem*{examples*}{Examples}
\newtheorem*{convention*}{Convention}
\newtheorem*{conventions*}{Conventions}
\newtheorem*{exercise*}{Exercise}
\newtheorem*{bibliographical-note*}{Bibliographical note}
\newcommand{\Acknowledgements}{{\em Acknowledgements.} }
\newcommand{\scrA}{\EuScript{A}}
\newcommand{\calF}{\mathcal{F}}
\newcommand{\calI}{\mathcal{I}}
\newcommand{\scrX}{\EuScript{X}}
\newcommand{\scrY}{\EuScript{Y}}
\newcommand{\scrP}{\EuScript{P}}
\newcommand{\scrW}{\EuScript{W}}
\newcommand{\scrL}{\EuScript{L}}
\newcommand{\bG}{\mathbb{G}}
\newcommand{\bF}{\mathbb{F}}
\newcommand{\bR}{\mathbb{R}}
\newcommand{\bZ}{\mathbb{Z}}
\newcommand{\bQ}{\mathbb{Q}}
\newcommand{\bC}{\mathbb{C}}
\newcommand{\bN}{\mathbb{N}}
\newcommand{\bP}{\mathbb{P}}
\newcommand{\id}{\mathrm{id}}
\newcommand{\im}{\mathrm{im}}
\renewcommand{\ker}{\mathrm{ker}}
\newcommand{\Hom}{\mathrm{Hom}}
\newcommand{\End}{\mathrm{End}}
\newcommand{\rk}{\mathrm{rk}}
\newcommand{\Log}{\mathrm{Log}}
\newcommand{\Aut}{\mathrm{Aut}}
\newcommand{\Ham}{\mathrm{Ham}}
\newcommand{\Symp}{\mathrm{Symp}}
\newcommand{\Diff}{\mathrm{Diff}}
\newcommand{\Ob}{\mathrm{Ob}}
\newcommand{\scrF}{\EuScript{F}}
\newcommand{\scrG}{\EuScript{G}}
\newcommand{\bK}{\mathbb{K}}
\newcommand{\A}{\mathcal{A}}
\numberwithin{equation}{section}
\renewcommand{\leq}{\leqslant}
\renewcommand{\geq}{\geqslant}
\newcommand{\C}{\mathbb C}
\newcommand{\Z}{\mathbb{Z}}
\newcommand{\R}{\mathbf R}
\newcommand{\Tw}{\operatorname{Tw}}
\newcommand{\Coh}{\operatorname{Coh}}
\newcommand{\Ker}{\operatorname{Ker}}
\newcommand{\Ext}{\operatorname{Ext}}
\newcommand{\Def}{\operatorname{Def}}
\newcommand{\Auteq}{\mathrm{Auteq}}
\title{Fukaya categories of surfaces, spherical objects, and mapping class groups}
\author{Denis Auroux}
\address{Department of Mathematics, Harvard University, Cambridge MA 02138, USA.}
\email{auroux@math.harvard.edu}
\author{Ivan Smith}
\address{Centre for Mathematical Sciences, University of Cambridge, CB3 0WB, England.}
\email{is200@cam.ac.uk}
\begin{document}
\maketitle \thispagestyle{empty}

\parindent0em
\parskip0.5em

\raggedbottom

\begin{abstract} We prove that every spherical object in the derived Fukaya category of a closed surface of genus at least two whose Chern character represents a non-zero Hochschild homology class is quasi-isomorphic to a simple closed curve equipped with a rank one local system.  (The homological hypothesis is necessary.)  This  largely answers a question of Haiden, Katzarkov and Kontsevich.  It follows that there is a  natural surjection from the autoequivalence group of the Fukaya category to the mapping class group. The proofs appeal to and illustrate numerous recent developments: quiver algebra models for wrapped categories, sheafifying the Fukaya category,  equivariant Floer theory for finite and continuous group actions, and homological mirror symmetry. An application to high-dimensional symplectic mapping class groups is included.
\end{abstract}





\section{Introduction}

The mapping class group $\Gamma_g$ of a closed surface $\Sigma_g$ arises naturally in different contexts: in algebra as the outer automorphism group $\mathrm{Out}(\pi_1\,\Sigma_g)$, in topology as the component group $\pi_0\Diff^+(\Sigma_g)$, in algebraic geometry as the orbifold fundamental group $\pi_1^{orb}(\mathcal{M}_g)$ of the moduli space of curves.  In Floer theory, and mirror symmetry, a symplectic manifold $Z$ appears through the Fukaya category $\scrF(Z)$ and  its derived category $D^{\pi}\scrF(Z)$, a formal algebraic enlargement of $\scrF(Z)$ introduced to have better homological-algebraic properties. The natural symmetry group of a surface in that context is the group of autoequivalences $\Auteq(D^{\pi}\scrF(\Sigma_g))$.   This comes with a map $\Gamma_g \to \Auteq(D^{\pi}\scrF(\Sigma_g))$ (which depends on additional choices), which has no obvious instrinsic categorical or Floer-theoretic  description; in many cases, we know of autoequivalences of Fukaya categories which are not geometric \cite{Abouzaid-Smith:plumbing}. This paper shows that the mapping class group arises naturally from the Fukaya category.

Let $(\Sigma_g, \omega)$ denote a closed surface of genus $g \geq 2$, equipped with an area form of area $1$. 
Let $\scrF(\Sigma_g)$ denote the Fukaya category of $\Sigma_g$, which is a $\bZ/2$-graded 
$A_{\infty}$-category, linear over the one-variable Novikov field $\Lambda = \bC(\!(q^{\bR})\!)$.  Objects of the category are immersed unobstructed closed curves, equipped with  a finite rank local system and auxiliary brane data (including a choice of spin structure on the underlying curve).  We will denote by $(\xi,\gamma)$ the object associated to an immersed closed curve $\gamma: S^1 \to \Sigma_g$ and local system $\xi \rightarrow S^1$ on the domain of $\gamma$.

 We denote by $D^{\pi}\scrF(\Sigma_g) = \scrF(\Sigma_g)^{per}$ the category of perfect modules over $\scrF(\Sigma_g)$, equivalently the split-closure of twisted complexes, which is triangulated in the classical sense; write $\simeq$ for quasi-isomorphism in this category. The composite of the Chern character and the open-closed map defines a class
 \[
 ch(A) \in HH_{0}(D^{\pi}\scrF(\Sigma_g), D^{\pi}\scrF(\Sigma_g)) \cong H_1(\Sigma_g;\Lambda)
 \]
 for any object $A\in D^{\pi}\scrF(\Sigma_g)$.  Recall that an object $A \in D^{\pi}\scrF(\Sigma_g)$ is \emph{spherical} if 
 \[
H^*( hom_{D^{\pi}\scrF(\Sigma_g)}(A,A))  \cong H^*(S^1;\Lambda).\]

Lemma \ref{Cor:HH_integral} shows that $ch(X)$ is an integral class when $X$ is spherical.
Our main result is the following geometricity theorem for spherical objects.

\begin{Theorem}\label{Thm:Main}
If $X \in D^{\pi}\scrF(\Sigma_g)$ is spherical and $ch(X)$ is non-zero, then there is a simple closed curve $\gamma \subset \Sigma_g$ and a rank one local system $\xi \rightarrow \gamma$ with $X \simeq (\xi,\gamma)$.  
\end{Theorem}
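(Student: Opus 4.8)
The plan is to use the homological hypothesis to force $X$ into a homology class realized by an honest simple closed curve, then to reduce the classification problem to a combinatorial/sheaf-theoretic model of the Fukaya category in which indecomposables are understood, and finally to use equivariant arguments on covers to exclude ``exotic'' spherical objects. \emph{Step 1 (normalize the class).} Lemma~\ref{Cor:HH_integral} gives $ch(X)\in H_1(\Sigma_g;\bZ)$, and I would first show this class is \emph{primitive}: if $ch(X)=d\cdot c$ with $c$ primitive and $d\geq 2$, pass to the $\bZ/d$-cover $\widehat\Sigma\to\Sigma_g$ dual to $c$, pull $X$ back, and use equivariant Floer theory for the $\bZ/d$-action to see that the endomorphism/Hochschild structure upstairs is incompatible with a spherical object whose $ch$ is $d$ times a primitive class (equivalently, test $X$ against a simple closed curve $c'$ with $c\cdot c'=1$ and count ranks). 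Given primitivity, $\mathrm{Sp}(2g,\bZ)$ — hence the mapping class group in its geometric action on $D^{\pi}\scrF(\Sigma_g)$ — acts transitively on primitive vectors, so after replacing $X$ by $\phi_*X$ we may assume $ch(X)=[\gamma_0]$ for a fixed non-separating simple closed curve $\gamma_0$.

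\emph{Step 2 (combinatorial model and a candidate curve).} Cut $\Sigma_g$ along $\gamma_0$ together with a spanning system of arcs, or equivalently fix a ribbon-graph spine; sheafification of the Fukaya category over this spine exhibits $D^{\pi}\scrF(\Sigma_g)$ as perfect modules over a homotopy limit of partially wrapped Fukaya categories of the pieces, each of which admits an explicit gentle/quiver-algebra presentation and, by homological mirror symmetry for punctured surfaces, a description as (twisted) coherent sheaves or matrix factorizations. The image of $X$ in each piece is then a perfect module over a gentle algebra, and the Haiden–Katzarkov–Kontsevich-type classification of indecomposables there (arcs and loops with local systems) together with the gluing data should produce an immersed curve-with-local-system $Y\subset\Sigma_g$ with $ch(Y)=ch(X)$ and a distinguished morphism comparing $Y$ and $X$.

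\emph{Step 3 (rigidity — the main obstacle).} Descent only pins $X$ down up to the ambiguities built into the homotopy limit, so the heart of the matter is to promote the comparison morphism of Step~2 to a quasi-isomorphism and to rule out immersed (non-embedded) or higher-rank alternatives. For this I would pass to the infinite cyclic cover $\widetilde\Sigma\to\Sigma_g$ dual to $ch(X)$, where the ambient category is exact/wrapped and carries a deck-group symmetry to be exploited through equivariant Floer theory for (finite, and in the limit continuous/infinite) group actions, and use homological mirror symmetry on this cover to transport the classification of spherical objects downstairs. A spherical-twist argument comparing the autoequivalence $T_X$ with the geometric Dehn twist $T_{\gamma_0}$ — on Hochschild homology and on test curves — would then force the underlying curve to be embedded and isotopic to $\gamma_0$. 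I expect this rigidity step, not the combinatorics, to absorb essentially all of the work.

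\emph{Step 4 (read off the local system).} Once $X\simeq(\xi,\gamma)$ for an immersed $\gamma$ and a finite-rank local system $\xi$, the condition $H^*(hom_{D^{\pi}\scrF(\Sigma_g)}(X,X))\cong H^*(S^1;\Lambda)$ forbids any transverse self-intersection of $\gamma$ (which would enlarge $HF^*(X,X)$) and forces $\xi$ to have rank one (a higher-rank system, even if indecomposable, has too large an endomorphism algebra over $\Lambda$), so $\gamma$ is a simple closed curve carrying a rank one local system, completing the proof.
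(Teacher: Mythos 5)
Your outline misses the paper's central mechanism and, as written, several steps would fail. The most serious gap is in Steps 1 and 3, where you freely ``pull $X$ back'' to a $\bZ/d$-cover or to the infinite cyclic cover dual to $ch(X)$. Objects of $D^{\pi}\scrF(\Sigma_g)$ do not automatically lift to covers: a lift is equivalent to a (weak, then naive) equivariant structure for the deck-group action given by twisting with torsion local systems, and producing such a structure from the purely homological hypothesis is exactly where the actual proof invests its main effort --- a rational $\bG_m$-action built from a class $a$ with $\langle a, ch(X)\rangle=0$ (Proposition \ref{Prop:giveanaction}), the computation that $\langle a,ch(X)\rangle=0$ is equivalent to infinitesimal equivariance (Lemma \ref{Lem:deformationclass}), and Seidel's obstruction theory to upgrade to weak and then naive equivariance (Proposition \ref{Prop:inf_enough}, Lemma \ref{Lem:weaktonaive}). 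Without this your cover-based ``rigidity'' step has no starting point. Moreover, the purpose of passing to a large cover is to find simple closed curves Floer-orthogonal to the lifted object (Lemma \ref{Lem:lift_with_many_punctures}), since $X$ may be filling on $\Sigma_g$ itself; you instead cut along $\gamma_0$ with no vanishing of $HF^*(X,\gamma_0)$, so the restriction to the cut surface can contain arc summands and there is no control letting you glue a geometric replacement back to recover $X$ (compare Lemma \ref{Lem:cut_annulus} and Corollary \ref{Cor:cut_annulus}, whose hypotheses are exactly this vanishing). The primitivity reduction in Step 1 is both unnecessary (the paper never uses it; it is a consequence of the theorem) and, as sketched, circular.

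Step 2 is also insufficient as stated: Theorem \ref{Thm:HKK} classifies objects of $\Tw\scrW$, not of its idempotent completion, while your restricted object is a priori only a perfect module. One must prove that the $\bZ/2$-graded category $\Tw\scrW(S)$ is already split-closed, which the paper does via homological mirror symmetry for punctured surfaces and the $K_{-1}=0$ criterion of AAEKO (Corollary \ref{Cor:split-closed}); this needs at least three punctures, which is precisely why the cover is cut along two disjoint, homologically independent curves (Lemma \ref{Lem:cut_two_annuli}) rather than along a single curve as you propose (cutting along $\gamma_0$ alone yields only two boundary circles). Finally, Step 4's claim that any transverse self-intersection ``would enlarge $HF^*(X,X)$'' is false: Lemma \ref{Lem:bad} exhibits an immersed, non-embedded curve that is spherical, because the bigon differential kills the extra generators. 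The correct argument first shows the underlying curve is homotopic to an embedded one (by splitting the self-Floer complex according to lifts to the universal cover and excluding multiply covered geodesics), and then removes bigons up to quasi-isomorphism using Steinitz and Hass--Scott moves together with zero-flux isotopies --- and it is here, again, that $ch(X)\neq 0$ is needed to have room to sweep area (Lemma \ref{Lem:shrink_area}, Corollary \ref{Cor:immersed_spherical}). The rank-one statement for the local system (Lemma \ref{lem:rank_one}) is the only part of your Step 4 that goes through as written.
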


En route, we prove the corresponding result for surfaces with non-empty boundary
(Corollary \ref{Cor:spherical_on_punctured}).
When $g=1$, Theorem \ref{Thm:Main} is a consequence of homological mirror symmetry for elliptic curves  and Atiyah's classification of bundles on such curves. When $g>1$, 
there are spherical objects with vanishing Chern character which are \emph{not} quasi-isomorphic to any simple closed curve with local system, so the result is in some sense sharp; see Lemma  \ref{Lem:bad}.  Theorem \ref{Thm:Main}  largely answers \cite[Problem 2]{HKK}.  

Let $\Gamma(\Sigma_g)$ denote the symplectic mapping class group of $\Sigma_g$.
There is a homomorphism $\Gamma(\Sigma_g) \to \Auteq(D^{\pi}\scrF(\Sigma_g))$ (this depends on choices, cf. Section \ref{Sec:Monodromy}).

\begin{Corollary} \label{Cor:Main}
There is a natural surjective homomorphism $\Auteq(D^{\pi}\scrF(\Sigma_g)) \to \Gamma(\Sigma_g)$. 
\end{Corollary}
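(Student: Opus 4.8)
The plan is to let an autoequivalence act on spherical objects of non-zero Chern character, use Theorem~\ref{Thm:Main} to reinterpret this as a permutation of non-separating simple closed curves carrying rank one local systems, verify that this permutation preserves geometric intersection numbers, and then invoke combinatorial rigidity of curve complexes to realise it by a mapping class; the resulting map $\rho\colon \Auteq(D^{\pi}\scrF(\Sigma_g)) \to \Gamma(\Sigma_g)$ is a homomorphism because the realisation is unique, and it is surjective because the geometric homomorphism of Section~\ref{Sec:Monodromy} is a section of it.

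\emph{Construction of $\rho$.} Let $\Phi \in \Auteq(D^{\pi}\scrF(\Sigma_g))$. Sphericality is intrinsic, so $\Phi$ permutes quasi-isomorphism classes of spherical objects; since $\Phi$ induces an automorphism $\Phi_*$ of $HH_0(D^{\pi}\scrF(\Sigma_g)) \cong H_1(\Sigma_g;\Lambda)$ with $ch(\Phi X) = \Phi_* ch(X)$, it preserves the locus where $ch \neq 0$, which by Theorem~\ref{Thm:Main} is exactly the set $\mathcal{S}$ of objects $(\xi,\gamma)$ with $\gamma$ non-separating and $\xi$ of rank one. Thus $\Phi$ gives a bijection $\sigma_\Phi$ of $\mathcal{S}$. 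Next I would check that $\sigma_\Phi$ respects geometric support: for non-isotopic simple closed curves in minimal position the Floer differential vanishes and $\dim HF^*$ equals the geometric intersection number, whereas two rank one local systems on a single curve (or a curve with itself) give $HF^*$ of rank $0$ or $2$; consequently the set $\{B\in\mathcal{S}: \dim HF^*(A,B)=1\}$ consists precisely of the objects whose support meets that of $A$ exactly once, and — using the standard fact that distinct non-separating curves have distinct sets of such ``unit-intersection partners'' — the relation ``$A,B$ have isotopic support'' is categorically detected and hence preserved by $\Phi$. Therefore $\sigma_\Phi$ descends to a self-bijection $\bar\Phi$ of the set of isotopy classes of non-separating simple closed curves of $\Sigma_g$, and comparing $\dim HF^*$ once more shows $\bar\Phi$ preserves geometric intersection numbers, in particular disjointness.

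\emph{Realisation by a mapping class.} By combinatorial rigidity of complexes of non-separating curves (Ivanov, Luo, Irmak and others), a disjointness-preserving — a fortiori intersection-number-preserving — self-bijection of the non-separating curves of $\Sigma_g$ is induced by an element of the extended mapping class group $\Gamma^{\pm}(\Sigma_g)$, uniquely for $g\geq 3$ and uniquely modulo the hyperelliptic involution for $g=2$. To land in $\Gamma(\Sigma_g)$ I use that $\Phi_*$ preserves the categorically defined (anti-symmetric) pairing on $HH_0$ matching the intersection form on $H_1(\Sigma_g;\Lambda)$, which forces the realising class to be orientation-preserving; and to kill the residual genus-two ambiguity I use the finer permutation $\sigma_\Phi$ of $\mathcal{S}$ itself: the hyperelliptic involution fixes every isotopy class of curve but acts non-trivially on $\mathcal{S}$ (it negates Chern characters), so $\Gamma(\Sigma_g)$ acts faithfully on $\mathcal{S}$ and there is a unique $\rho(\Phi)\in\Gamma(\Sigma_g)$ whose action on $\mathcal{S}$ agrees with $\sigma_\Phi$. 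Uniqueness of this realisation makes $\rho$ a homomorphism, since the composite of two realisers realises the composite. Finally, the geometric homomorphism $\Gamma(\Sigma_g)\to\Auteq(D^{\pi}\scrF(\Sigma_g))$ of Section~\ref{Sec:Monodromy} sends a mapping class $\phi$ to the autoequivalence acting by $(\xi,\gamma)\mapsto(\phi_*\xi,\phi(\gamma))$, whose permutation of $\mathcal{S}$ is the tautological one, so $\rho$ post-composed with it is the identity; hence $\rho$ is surjective (and natural with respect to the geometric action).

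\emph{Main obstacle.} The real work is the middle step: passing from a formal permutation of the triangulated category's spherical objects to an honest intersection-number-preserving permutation of non-separating curves. This requires the support-compatibility argument above and the identification $\dim HF^*(A,B) = i(\cdot,\cdot)$ for rank one local systems, plus careful handling of the low-genus and orientation subtleties when feeding the output into the rigidity theorem. The rigidity statement for curve complexes is used as a black box from surface topology; it is the Floer-theoretic input and the genus-two/local-system bookkeeping that need to be established.
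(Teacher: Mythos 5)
Your overall skeleton is the one the paper uses in Proposition \ref{Prop:split}: act on spherical objects with non-zero Chern character, identify them with non-separating curves carrying rank one local systems via Theorem \ref{Thm:Main}, detect isotopy of supports and intersection data by ranks of Floer groups, feed the resulting permutation into a curve-graph rigidity theorem, and split by the construction of Section \ref{Sec:Monodromy}. However, two of your steps have genuine gaps. First, your justification that $\dim HF^*$ computes geometric intersection numbers in general (``for non-isotopic curves in minimal position the differential vanishes'') ignores flux: the objects of the category are specific curves up to \emph{Hamiltonian} isotopy, and one cannot in general bring them into minimal position without sweeping area. This is exactly why Corollary \ref{Cor:rankHF} carries the hypothesis of non-zero \emph{algebraic} intersection number, the flux being corrected there by sliding along the other curve. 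Your argument needs ``$HF^*=0$ iff disjoint'' (and more generally $\mathrm{rk}\,HF^*=i$) in order to invoke Ivanov--Luo--Irmak-type rigidity for disjointness-preserving bijections of non-separating curves; that statement is not established in the paper and does not follow from Corollary \ref{Cor:rankHF} when the algebraic intersection number vanishes. The rank-one detection you do have is salvageable (if $\mathrm{rk}\,HF^*=1$ then $\chi\,HF$ is odd, so the algebraic intersection number is non-zero and Corollary \ref{Cor:rankHF} applies), and this is precisely why the paper builds the graph $\Upsilon(\Sigma)$ out of the relation ``$\mathrm{rk}\,HF^*=1$'' only and appeals to Schmutz Schaller's rigidity theorem for the graph of curves with intersection number one, rather than to disjointness-based rigidity. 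Either restrict to that graph, or supply an independent proof of $\mathrm{rk}\,HF^*=i$ for non-isotopic curves over the Novikov field with local systems.

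Second, your genus-two disambiguation is incorrect as stated. It is true that $\Gamma_2$ acts faithfully on the set $\mathcal{S}$ of spherical objects, but for a general autoequivalence $\Phi$ there is \emph{no} mapping class whose action on $\mathcal{S}$ agrees with $\sigma_\Phi$: the shift functor, tensoring by flat line bundles, and symplectomorphisms of non-trivial flux all act non-trivially on $\mathcal{S}$ while fixing every isotopy class of curve, so the ``unique $\rho(\Phi)$ whose action on $\mathcal{S}$ agrees with $\sigma_\Phi$'' need not exist. What does work is to descend to \emph{oriented} isotopy classes: since $ch(\Phi X)=\Phi_*\,ch(X)$, the orientation of the support determined by $ch$ is carried along coherently, so $\sigma_\Phi$ induces a bijection of oriented non-separating curves, and the hyperelliptic involution (which reverses all orientations) is then excluded. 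This is the same mechanism as the paper's graded refinement $\Upsilon^+(\Sigma)$, which records the degree of the rank-one Floer group; your Chern-character variant is fine once phrased this way. The orientation-preserving step via the Mukai pairing and the surjectivity via the splitting of Section \ref{Sec:Monodromy} match the paper's argument.
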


The same conclusion also holds for surfaces of genus $\geq 1$ with boundary, by combining
Corollary \ref{Cor:spherical_on_punctured}
with the argument of Proposition \ref{Prop:split}.

It follows that the map $\Gamma(\Sigma_g) \to \Auteq(D^{\pi}\scrF(\Sigma_g))$ splits.   The existence of such a split homomorphism  has cohomological implications; for instance,  the autoequivalence group has infinite-dimensional second bounded cohomology, and admits families of unbounded quasimorphisms.

We conjecture that, for $g\geq 2$ (so the flux group is trivial),  the kernel of the natural map from autoequivalences to the mapping class group is generated by tensoring by flat unitary line bundles and the actions of symplectomorphisms of non-trivial flux, i.e. that
\[
\Auteq(D^{\pi}\scrF(\Sigma_g)) \stackrel{?}{=} H^1(\Sigma_g;\Lambda^*) \rtimes \Gamma(\Sigma_g),
\]
where the map $\Gamma(\Sigma_g) \to \Auteq(D^{\pi}\scrF(\Sigma_g))$ is only well-defined up to the action of $H^1(\Sigma_g;\bR)$.
In Section \ref{Sec:Schmutz} we outline an argument suggesting that the
subgroup $H^1(\Sigma_g;\Lambda^*)$ is normal, which is consistent with the speculation.

The proof of Theorem \ref{Thm:Main} is surprisingly involved, and breaks into the following steps (the main text treats these in somewhat different order). Let $X \in D^{\pi}\scrF(\Sigma_g)$ be a spherical object.

\begin{enumerate}
\item The open-closed image $ch(X) \in HH_0(\scrF(\Sigma_g)) = H_1(\Sigma_g;\Lambda)$ defines an integral class, i.e. lies in the image of $H_1(\Sigma_g;\bZ)$.  \emph{Assume henceforth this class is non-zero.}

\item A non-zero integral class  $a \in H^1(\Sigma_g;\Z)$ defines a $\bG_m$-action on $\scrF(\Sigma_g)$.

\item If $\langle a, ch(X) \rangle = 0$, then $X$ defines a $\bG_m$-equivariant object, hence a $\bZ/N$-equivariant object for any finite $\bZ/N \leq \bG_m$. The $\bZ/N$-equivariant Fukaya category of $\Sigma_g$ is the Fukaya category of an $N$-fold cover $\tilde{\Sigma}$ of $\Sigma_g$.

\item   A choice of equivariant structure on $X$ defines a lift $\hat{X}$ of $X$ to $\tilde{\Sigma}$, and for large enough $N$, there are disjoint homologically independent simple closed curves $\gamma_1, \gamma_2 \subset \tilde{\Sigma}$ with $H^*(\hom_{\scrF(\tilde{\Sigma})}(\hat{X}, \gamma_i)) = 0$.

\item There are annular neighbourhoods $\gamma_i \subset A_i \subset \tilde{\Sigma}$ for which $\hat{X}$ lifts to define a spherical object (which we still call) $\hat{X}$ in the wrapped category $\scrW(C)^{per}$ of  $C = \hat{\Sigma} \backslash (A_1\cup A_2)$.

\item   $\hat{X}$ is represented by a strictly unobstructed immersed closed curve $\sigma \subset C$ equipped with a unitary local system.

\item If it is not embedded, the immersed curve $\sigma$ supporting $\hat{X}$  bounds an embedded bigon.

\item Bigons on $\sigma$ may be ``emptied" and then ``cancelled", so $\hat{X}$ is quasi-isomorphic to a simple closed curve with rank one local system in $\scrF(C)$.

\item $X$ is quasi-isomorphic to a simple closed curve with rank one local system in $\scrF(\Sigma_g)$.
\end{enumerate}

Steps (2)-(4) rely on work of Seidel on equivariant Floer theory \cite{Seidel:Am_milnor_fibres, Seidel:categorical_dynamics}, and ideas of family Floer theory \`a la Abouzaid and Fukaya.  Step (5) relies on H.~Lee's restriction technology \cite{HLee} for sheafifying wrapped categories of surfaces. Step (6) appeals to the work of Haiden, Katzarkov and Kontsevich \cite{HKK} on quiver algebra models for wrapped categories of punctured surfaces, and to a split-closure result for such wrapped categories which we infer from homological mirror symmetry  and a K-theoretic characterisation of split-closure for derived categories of singularities due to Abouzaid, Auroux, Efimov, Katzarkov and Orlov \cite{AAEKO}.  Step (7) invokes delicate classical  results of Steinitz \cite{Steinitz2, Schleimer_et_al} and  Hass and Scott \cite{Hass-Scott} in surface topology. The ``cancelling" move for bigons in Step (8) uses the homological hypothesis from Step (1).  Corollary \ref{Cor:Main} follows by considering the action of autoequivalences of $\scrF(\Sigma_g)$ on a Floer-theoretic ``Schmutz graph" of non-separating curves \cite{SchmutzSchaller}.

As an application of Corollary \ref{Cor:Main}, in Section \ref{Sec:application} we prove:

\begin{Theorem} \label{Thm:largeMCG}
There is a smooth manifold $Z$ with symplectic forms $\omega_{\delta}$, $\delta \in (0,1]$, for which $\pi_0\Symp(Z,\omega_{\delta})$ surjects to a free group of rank $N(\delta)$ where $N(\delta) \to \infty$ as $\delta \to 0$.
\end{Theorem}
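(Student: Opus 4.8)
The plan is to manufacture $Z$ as a total space of a Lefschetz-type fibration, or more simply as a product/plumbing construction built on a surface $\Sigma_g$ with $g \geq 2$, so that the symplectic mapping class group of $Z$ receives a controlled contribution from $\Gamma(\Sigma_g)$ via a parametrized Fukaya category, and then to exploit Corollary \ref{Cor:Main} to detect a large free quotient. Concretely, I would take $Z$ to be (a symplectic neighbourhood of) $\Sigma_g$ inside a higher-dimensional symplectic manifold---for instance $Z = \Sigma_g \times S^2$ or a twisted analogue, or the cotangent-type construction used in \cite{Abouzaid-Smith:plumbing}---equipped with a family of symplectic forms $\omega_\delta$ obtained by scaling the fibre or ``inflating'' along a divisor so that the flux/area parameters degenerate as $\delta \to 0$. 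The point of the one-parameter family is that for small $\delta$ the ``large complex structure'' regime of homological mirror symmetry is accessible, and the relevant Fukaya category of $(Z,\omega_\delta)$ acquires an explicit algebraic model in which autoequivalences can be counted.

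First I would set up a homomorphism $\pi_0\Symp(Z,\omega_\delta) \to \Auteq(D^\pi\scrF(Z,\omega_\delta))$ and relate the target, via a localization or restriction functor to the Fukaya category of the ``core'' surface $\Sigma_g$, to $\Auteq(D^\pi\scrF(\Sigma_g))$; composing with the surjection of Corollary \ref{Cor:Main} lands in $\Gamma(\Sigma_g)$. Second, I would exhibit explicit symplectomorphisms of $(Z,\omega_\delta)$---fibred Dehn twists, or squared Dehn twists along Lagrangian spheres/$S^1\times S^2$'s, whose number grows as $\delta\to 0$ because shrinking $\delta$ opens up more room for disjoint embedded vanishing-cycle configurations---and track their images in $\Gamma(\Sigma_g)$. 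Third, I would check that these images generate (a subgroup surjecting onto) a free group of rank $N(\delta)$: here one uses that in $\Gamma(\Sigma_g)$ suitable collections of Dehn twists along curves that are ``far apart'' in the curve complex, or appropriate pseudo-Anosov elements, generate free groups of arbitrarily large rank (a standard ping-pong argument on the boundary at infinity of Teichmüller space, or on the Schmutz graph already invoked in the proof of Corollary \ref{Cor:Main}). Combining: $\pi_0\Symp(Z,\omega_\delta)$ surjects onto this free group of rank $N(\delta)$, and $N(\delta)\to\infty$.

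The main obstacle I expect is Step two---producing genuinely $\delta$-dependent, non-trivially-acting symplectomorphisms and proving their images in $\Gamma(\Sigma_g)$ are as claimed rather than (say) eventually trivial or all conjugate. This requires a careful choice of $Z$ and $\omega_\delta$ so that shrinking $\delta$ both (a) enlarges the supply of disjoint Lagrangian configurations supporting generalized Dehn twists and (b) keeps the induced map on $\scrF(\Sigma_g)$, hence on $\Gamma(\Sigma_g)$, under computational control via homological mirror symmetry. A secondary technical point is verifying that the restriction/localization functor from $D^\pi\scrF(Z,\omega_\delta)$ to $D^\pi\scrF(\Sigma_g)$ is compatible with the symplectomorphism actions on both sides, so that the composite homomorphism is well-defined; this is where the parametrized/relative Fukaya category formalism does the real work. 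Once these are in place, the ping-pong lemma input on the mapping class group side is classical and the growth statement $N(\delta)\to\infty$ follows by counting the disjoint configurations available at scale $\delta$.
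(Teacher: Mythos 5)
Your overall strategy---embed $\scrF(\Sigma_g)$ into the Fukaya category of a product-type $Z$ and invoke Corollary \ref{Cor:Main}---is in the spirit of the paper's proof, but the mechanism you propose for producing a free \emph{quotient} of $\pi_0\Symp(Z,\omega_\delta)$ has a genuine gap. In your third step you argue that the images of your chosen symplectomorphisms generate a free subgroup of $\Gamma_g$ by ping-pong. That yields a free \emph{subgroup} of the image of $\pi_0\Symp(Z,\omega_\delta)$ in $\Gamma_g$, not a surjection of $\pi_0\Symp(Z,\omega_\delta)$ onto a free group: the image could be much larger (for the unperturbed product form it contains the image of all of $\Gamma_g$, which has finite abelianisation and hence admits no non-trivial free quotients at all). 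To convert a free subgroup into a free quotient you need a retraction, and nothing in your set-up provides one. This is exactly where the paper's two key ingredients enter, and both are absent from your plan: (i) the symplectic form is an \emph{irrational} perturbation $\Omega^{\delta}_{irr}$ of the product form on $Z=\Sigma_2\times Bl_{pt}(T^4)$, chosen with rationally independent coefficients so that every symplectomorphism acts trivially on $H^*(Z)$ (Lemma \ref{Lem:TrivialOnH*}); combined with the K\"unneth identification of $\scrF(Z,\Omega^{\delta}_{irr};-q^\varepsilon)^{per}$ with $\scrF(\Sigma_2)^{per}$ (via the idempotent summand of a monotone torus in the blow-up) and Corollary \ref{Cor:Main}, this forces the image of $\pi_0\Symp(Z,\Omega^{\delta}_{irr})$ in $\Gamma_2$ to lie in the Torelli group $I_2$; (ii) $I_2$ is an infinitely generated \emph{free} group, so it retracts onto the rank-$N$ free subgroup $\bF_N$ generated by $N$ chosen separating twists, and the composite $\bF_N\to\pi_0\Symp(Z,\Omega^{\delta}_{irr})\to I_2\to\bF_N$ is the identity, which is what gives the surjection.

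Your account of the $\delta$-dependence is also not the right mechanism: in the paper, shrinking $\delta$ does not ``open up more room for disjoint vanishing-cycle configurations''; rather, for a \emph{fixed} collection of $N$ separating Dehn twists the graphs of the corresponding symplectomorphisms of the product form are Lagrangian for $\Omega^{\delta}_{irr}$ (because Torelli elements act trivially on $H^1$, so the perturbation classes restrict trivially to the graphs) only after a deformation that exists when $\delta$ is small compared with geometric bounds depending on those $N$ twists (Corollary \ref{Cor:I2deforms}); this is why the attainable rank $N(\delta)$ grows only as $\delta\to 0$. The specific choices also matter and cannot be swapped for $\Sigma_g\times S^2$ or a plumbing without redoing the argument: the $Bl_{pt}(T^4)$ factor supplies both the semisimple summand of the Fukaya category at the eigenvalue $-q^\varepsilon$ (giving a fully faithful, split-generating K\"unneth functor from $\scrF(\Sigma_2)$) and a large enough $H^1(Z)$ for the irrational-perturbation rigidity, while the genus must be $2$ precisely so that the Torelli group is free.
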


Explicitly, $Z$ is the product of $\Sigma_2$ with the blow-up of the four-torus at a point, but equipped with an irrational perturbation (of size $\delta$) of the standard K\"ahler form.  The free group quotients arise from subgroups of the genus two Torelli group.  Gromov \cite{Gromov}, Abreu-McDuff \cite{Abreu-McDuff} and others showed that the topology of the symplectomorphism group can vary rather wildly as one continuously deforms the symplectic form, but this seems to be the first example in which a fixed degree homotopy group is known to have unbounded rank. 

\noindent \emph{Notation:} Throughout the paper, surfaces are connected, and  (immersed) curves are assumed to be homotopically non-trivial  unless explicitly stated otherwise.

\Acknowledgements Thanks to Paul Seidel for suggesting we use equivariance and for several related explanations; and to Mohammed Abouzaid, Fabian Haiden,
Yank\i{} Lekili, Dhruv Ranganathan and Henry Wilton for helpful conversations. We are grateful to the anonymous referees for their comments and suggestions. 
DA is partially supported by NSF grant DMS-1937869 and by Simons Foundation grant
\#385573 (Simons Collaboration on Homological Mirror Symmetry).
IS is partially supported by Fellowship EP/N01815X/1 from the Engineering and Physical Sciences Research Council, U.K.


\section{The Fukaya category of a surface\label{subsec:closed_fukaya}}

This section collects background on Floer theory and the Fukaya category for a two-dimensional surface.  A reader wanting a more comprehensive treatment might consult \cite{Seidel:FCPLT, Seidel:HMSgenus2} for $A_{\infty}$-algebra and the general construction of the Fukaya category, and \cite{Abouzaid:higher_genus, AAEKO, HLee} for related discussions of Fukaya categories of  surfaces (note some of the latter references do not take the split-closure).

\subsection{Background}
Fix a coefficient field $k$.  Let $\bK= \Lambda_{k}$ denote the single-variable Novikov field, with formal variable $q$,  of formal series $\sum_i a_i q^{t_i}$ with $a_i \in k$, $t_i \in \bR$ and $\lim t_i = +\infty$.  The valuation map
\[
val: \Lambda \to \bR \cup \{\infty\}, \quad val(0) = +\infty
\]
associates to a non-zero element of $\Lambda$ its smallest $q$-power. 
The subring $\Lambda_{\geq 0} = val^{-1}[0,\infty]$ comprises series with $t_i \geq 0$ for every $i$, and there is a homomorphism 
\[
\eta: \Lambda_{\geq 0} \rightarrow k
\]
which extracts the constant coefficient.  The kernel of this homomorphism is denoted $\Lambda_{>0}$. 
The unitary subgroup $U_{\Lambda} = val^{-1}(0)$ is the subgroup of elements $a + \sum_{t_i>0} a_i q^{t_i}$ where $a\in k^*$ is non-zero. The field $\Lambda_{\bC}$ is algebraically closed of characteristic zero.

A non-unital $A_{\infty}$-category $\scrA$ over the field $\bK$ comprises: a set of objects Ob$\,\scrA$; for each $X_0, X_1 \in$ Ob$\,\scrA$ a $\bZ/2$-graded $\bK$-vector space $hom_{\scrA}(X_0, X_1)$; and $\bK$-linear composition maps, for $k \geq 1$,
\[
\mu_{\scrA}^k: hom_{\scrA}(X_{k-1},X_k) \otimes \cdots \otimes hom_{\scrA}(X_0,X_1) \longrightarrow hom_{\scrA}(X_0,X_k)[2-k]
\]
(here $[j]$ denotes downward shift by $j \in \bZ/2$, and all degrees are mod 2 degrees; we write $2-k$ rather than $-k$ since $\bZ$-graded categories may be more familiar).  The maps $\{\mu^k\}$ satisfy a hierarchy of quadratic equations
\[
\sum_{m,n} (-1)^{\maltese_n} \mu_{\scrA}^{k-m+1} (a_k,\ldots,a_{n+m+1}, \mu_{\scrA}(a_{n+m},\ldots,a_{n+1}),a_n \ldots, a_1) = 0
\]
with $\maltese_n = \sum_{j=1}^n |a_j|-n$ and where the sum runs over all possible compositions: $1 \leq m \leq k$, $0\leq n \leq k-m$.  If the characteristic of $\bK$ is not equal to $2$, the signs in the $A_{\infty}$-associativity equations depend on the mod 2 degrees of generators, and the existence of a $\bZ/2$-grading on $\scrA$ is essential.

We denote by $\Tw^{\pi}(\scrA) = \scrA^{per}$ the idempotent completion of the category of twisted complexes $\Tw(\scrA)$ \cite{Seidel:FCPLT}.    If the smallest split-closed triangulated $A_{\infty}$-category containing a subcategory $\scrA' \subset \scrA$  is $\scrA^{per}$, then we will say that $\scrA'$ split-generates $\scrA$. 

In a curved $A_{\infty}$-category each object $A$ comes with a curvature $\mu^0 \in hom^{ev}_{\scrA}(A,A)$, where $hom^{ev}_{\scrA}$ denotes the subgroup of morphisms of even degree;  the quadratic associativity relations are modified to take account of $\mu^0$.  In general the resulting sums could \emph{a priori} be infinite, so some geometric conditions are required to ensure convergence.  The following result controls the curvature of mapping cones on non-closed morphisms. 

\begin{Lemma} \label{Lem:Curvature_of_cone}
If $\scrA$ is a curved $A_{\infty}$-category and $X_1 \stackrel{f}{\longrightarrow} X_2$ is a mapping cone in $\Tw(\scrA)$ between objects $X_i$ with $\mu^0(X_i) = 0$, then 
\[
\mu^0_{\scrA}(X_1 \stackrel{f}{\longrightarrow} X_2)  = \mu^1_{\scrA}(f).\]
\end{Lemma}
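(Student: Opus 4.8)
The statement is a direct unwinding of the definition of the curvature of a twisted complex, so the plan is essentially a bookkeeping exercise. First I would recall the general formula: if $Y = \bigoplus_i (V_i, \delta_{ij})$ is a twisted complex in $\Tw(\scrA)$ built from objects $V_i$ of $\scrA$ with connecting maps $\delta_{ij} \in hom_{\scrA}(V_i, V_j)$, then the curvature of $Y$ is
\[
\mu^0_{\scrA}(Y) = \sum_{k \geq 0} \mu^k_{\scrA}(\delta, \ldots, \delta),
\]
where each $\mu^k$ is applied to strings of connecting maps and the $k=0$ term contributes $\sum_i \mu^0_{\scrA}(V_i)$. For the cone $X_1 \stackrel{f}{\longrightarrow} X_2$, the underlying objects are $X_1$ and $X_2$, and the only nonzero connecting map is $f \in hom_{\scrA}(X_1, X_2)$; strictly triangularity forces any composition of two or more copies of $f$ to vanish (it would map $X_1 \to X_2 \to ?$ and there is no further arrow), so the only surviving terms are the $k = 0$ term $\mu^0_{\scrA}(X_1) \oplus \mu^0_{\scrA}(X_2)$ and the $k = 1$ term $\mu^1_{\scrA}(f)$.

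Next I would invoke the hypothesis $\mu^0_{\scrA}(X_i) = 0$ for $i = 1, 2$, which kills the $k=0$ contribution, leaving $\mu^0_{\scrA}(X_1 \stackrel{f}{\longrightarrow} X_2) = \mu^1_{\scrA}(f)$ as claimed. I would also remark that because all higher compositions with repeated $f$ vanish, the \emph{a priori} convergence worry mentioned before the lemma statement does not arise here: the sum defining the curvature is finite, with at most two terms. This is the only place where one must be slightly careful, since in a genuinely curved setting one wants to be sure the formula for $\mu^0$ of a twisted complex makes sense before using it.

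The main (and only) obstacle is notational: pinning down the sign and grading conventions so that the $k=1$ term reads exactly $\mu^1_{\scrA}(f)$ with no extra sign, and confirming that $f$ being a degree-zero (or appropriately shifted) morphism is exactly the condition built into the definition of a twisted complex, so that $f$ is an admissible connecting datum. Once the conventions of \cite{Seidel:FCPLT} for twisted complexes over a curved base are in place, there is nothing further to check.
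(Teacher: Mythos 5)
Your proposal is correct: unwinding the definition of the curvature of a twisted complex, noting that strict lower-triangularity leaves only the $\mu^0(X_i)$ and $\mu^1(f)$ terms, and using $\mu^0(X_i)=0$ is exactly the standard computation the paper has in mind (its proof is simply the word ``Standard''). Nothing further is needed.
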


\begin{proof} Standard. \end{proof}

The Hochschild cohomology $HH^*(\scrA,\scrA) = \Hom_{(\scrA-mod-\scrA)}(\id_{\scrA},\id_{\scrA})$
of an (uncurved) $A_\infty$-category is the endomorphisms of the diagonal bimodule. 
Hochschild cohomology of an $A_{\infty}$-category is invariant under passing to the derived category,  so we will not distinguish notationally between $HH^*(\scrA,\scrA)$ and $HH^*(D^{\pi}\scrA,D^{\pi}\scrA)$. 

We say that an $A_{\infty}$-category $\scrA$ is \emph{homologically smooth} if the diagonal bimodule  is perfect (split-generated by Yoneda bimodules), and \emph{proper}\footnote{Some authors call this  `locally proper', and say $\scrA$ is proper if  it also admits a compact generator.} if it is cohomologically finite.

\subsection{Immersed curves}

Let $\Sigma$ be a closed oriented surface of genus $g(\Sigma) = g \geq 2$ equipped with an area form $\omega$ of unit total area, $\int_{\Sigma} \omega = 1$.  

The Fukaya category $\scrF(\Sigma)$ has objects which are called Lagrangian branes, and which comprise an immersed curve $\iota: S^1 \rightarrow \Sigma$ with the following properties and additional data.  First note that if $\gamma = \iota(S^1)$ is immersed, it may bound ``teardrop discs", i.e. images of holomorphic discs with a unique boundary puncture where the puncture is mapped to a self-intersection point of $\gamma$.  Let $\mu^0(\gamma) \in CF^0(\gamma,\gamma)$ be the algebraic sum of all such discs, which is the obstruction term in the self-Floer complex.  We insist:

\begin{itemize}
\item $\iota(S^1)= \gamma$ is unobstructed\footnote{An `unobstructed' Lagrangian is usually defined to be a pair $(L,b)$ where $b \in CF^*(L,L;\Lambda_{>0})$ solves the $A_{\infty}$-Maurer Cartan equation $\sum_{k\geq 0} \mu^k(b,\ldots,b) = 0$.  Unobstructedness for us is the special case in which $b=0$ is a Maurer-Cartan solution, which is all that we will require.}, meaning that $\mu^0(\gamma) \in CF^0(\gamma,\gamma)$  vanishes;
\item all self-intersections of $\gamma$ are transverse;
\item the domain $S^1$ is equipped with a flat unitary local system of $\Lambda$-vector spaces;
\item $\gamma$ is equipped with a $spin$ structure.
\end{itemize}

Unitarity of the local system means that the monodromy takes values in the
subgroup of valuation-preserving elements of $GL(n,\Lambda)$, i.e., square matrices
with entries in $\Lambda_{\geq 0}$ and such that, after discarding
positive powers of $q$, the constant terms form an invertible matrix.

Note that the first condition is that $\mu^0(\gamma)$ vanishes identically (i.e. at each self-intersection point).  We could equally well  insist that objects of $\scrF(\Sigma)$ are tautologically unobstructed in the sense of bounding no teardrop
discs; in fact the two conditions are equivalent:

\begin{Lemma} \label{l:unobstructed}
If $\gamma$ is a homotopically non-trivial immersed curve, then
the following are equivalent: $(1)$ $\mu^0(\gamma)=0$;
$(2)$ $\gamma$ does not bound any teardrop discs; $(3)$ 
$\gamma$ lifts to a properly embedded arc in the universal cover of
$\Sigma$.
\end{Lemma}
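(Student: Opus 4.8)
The plan is to prove the chain of implications $(3)\Rightarrow(2)\Rightarrow(1)\Rightarrow(3)$, where the first two implications are essentially formal and the last is the substantive one. First, $(3)\Rightarrow(2)$: a teardrop disc for $\gamma$ is a holomorphic map $u\colon(D^2,\partial D^2)\to(\Sigma,\gamma)$ with exactly one boundary puncture mapping to a self-intersection point. Lift such a disc to the universal cover $\widetilde\Sigma$. If $\gamma$ lifts to a \emph{properly embedded} arc, then the boundary of the lifted disc, minus the puncture, maps into a disjoint union of embedded arcs; the corner at the puncture would force two distinct lifts of $\gamma$ to meet, but the boundary of a disc is connected, so both boundary rays emanating from the corner lie on the \emph{same} lifted arc, which is embedded — contradicting that the arc is an immersed curve with a transverse self-crossing there (the two branches at a self-intersection are distinct arcs upstairs precisely when the lift is embedded; I should phrase this as: an embedded arc has no self-intersections, so there is no corner). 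Since $\widetilde\Sigma$ is contractible (it is $\mathbb{R}^2$), and teardrops would have to be non-constant, this rules them out. Then $(2)\Rightarrow(1)$ is immediate from the definition: $\mu^0(\gamma)\in CF^0(\gamma,\gamma)$ is by definition the signed count of teardrop discs, so no teardrops means $\mu^0(\gamma)=0$.

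The main work is $(1)\Rightarrow(3)$, or contrapositively $\neg(3)\Rightarrow\neg(1)$: if $\gamma$ does \emph{not} lift to a properly embedded arc in $\widetilde\Sigma=\mathbb{R}^2$, then $\mu^0(\gamma)\neq 0$. Pick a lift $\widetilde\gamma\colon\mathbb{R}\to\widetilde\Sigma$ of $\gamma$ (precomposing $\iota$ with the covering $\mathbb{R}\to S^1$). Since $\gamma$ is homotopically non-trivial, $\widetilde\gamma$ is a proper map (it is a lift of an essential loop, so it is invariant under a deck translation and escapes every compact set in both directions). The hypothesis $\neg(3)$ then says $\widetilde\gamma$ is not embedded, i.e. it has a transverse self-intersection. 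A proper immersed line in $\mathbb{R}^2$ with a self-intersection must, by the Jordan curve theorem applied to an innermost loop, bound an embedded monogon or bigon; more precisely, among all self-intersection points choose one cutting off an embedded sub-loop of $\widetilde\gamma$ enclosing minimal area (such exists because the relevant sub-loops form a finite poset under inclusion of the regions they bound). The resulting innermost region is an embedded disc whose boundary consists of a single arc of $\widetilde\gamma$ with one corner — an embedded monogon (teardrop). Projecting back down, one gets an immersed teardrop in $\Sigma$; by positivity of intersection and the Riemann mapping theorem (for the standard complex structure, or by the monotonicity/area argument valid for any tame almost complex structure), this innermost embedded teardrop contributes a holomorphic disc, and by choosing it innermost its count is not cancelled — so $\mu^0(\gamma)\neq 0$.

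The hard part is the last paragraph: extracting a genuinely \emph{holomorphic} teardrop, with nonzero algebraic count, from the purely topological existence of an embedded monogon. The topological input (an innermost embedded disc exists) is clean, but one must argue that the innermost teardrop is actually realized by a $J$-holomorphic disc and that its contribution to $\mu^0$ survives in the signed count. The standard way is to work with an integrable complex structure near the teardrop and invoke the Riemann mapping theorem to produce the holomorphic representative, combined with an area/Gromov-compactness argument showing an innermost teardrop (minimal enclosed area) cannot be cancelled by another teardrop of equal area with opposite sign, since any such would share boundary with it and force a contradiction with embeddedness or minimality. A subtlety worth flagging: the spin structure enters the sign of the count, but for the \emph{innermost} disc one shows directly there is nothing to cancel against, so the sign is irrelevant to non-vanishing. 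I would also remark that, conversely, this argument shows $\mu^0(\gamma)$, when nonzero, is detected by an honest embedded teardrop upstairs — which is how $(1)\Rightarrow(2)$ gets upgraded implicitly. This kind of surface-specific positivity argument is well known (it appears in Abouzaid's and in Haiden–Katzarkov–Kontsevich's treatments of Fukaya categories of surfaces), so I would cite those rather than belabor the Gromov-compactness details.
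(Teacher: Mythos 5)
Your overall architecture --- the formal implications $(3)\Rightarrow(2)\Rightarrow(1)$, then $(1)\Rightarrow(3)$ by passing to the universal cover and extracting an embedded teardrop from a non-embedded lift --- is the same as the paper's. The genuine gap is in the final step: you never address the local convexity of the corner of your innermost monogon. At a transverse self-crossing, the embedded disc cut off by an embedded sub-loop of $\tilde\gamma$ can occupy either one quadrant (locally convex corner) or three quadrants (locally concave corner), and only locally convex teardrops arise as images of the rigid holomorphic discs counted by $\mu^0(\gamma)$; a concave-cornered embedded teardrop contributes nothing, so its existence is perfectly compatible with $\mu^0(\gamma)=0$. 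Thus your assertion that ``this innermost embedded teardrop contributes a holomorphic disc'' is precisely the point that needs proof, and the Riemann mapping theorem does not supply it without convexity. The paper's proof spends most of its effort exactly here: if the embedded teardrop $D_a$ has a locally concave corner, the branches of $\tilde\gamma$ entering $D_a$ at the corner must eventually exit (properness), producing a self-intersection $b$ on $\partial D_a$ whose sub-loop (after passing to a nested crossing to make it embedded) bounds a strictly smaller embedded teardrop $D_b\subsetneq D_a$; iterating yields a teardrop with a locally convex corner. Your minimal-area choice would in fact do the job, but only once this concave-case argument is in place (minimality together with the existence of a strictly smaller teardrop inside any concave-cornered one forces the minimal one to be convex); as written, the claim is unsupported.

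A secondary weakness: your non-cancellation argument (``innermost, so not cancelled,'' via Gromov compactness and equal areas) is vaguer than needed. Once one has a convex-cornered embedded teardrop upstairs, the clean point, which the paper uses, is uniqueness: any teardrop contributing to the given generator lifts so that its boundary is forced to be the embedded sub-arc $\tilde\gamma_a$, which bounds a unique disc in the plane; hence that generator appears in $\mu^0(\gamma)$ with coefficient $\pm q^{\mathrm{area}}\neq 0$, with no further holomorphic-analysis input beyond the standard combinatorial description of rigid discs on a surface. Your $(3)\Rightarrow(2)$ and $(2)\Rightarrow(1)$ steps are essentially correct (modulo the momentary confusion about ``two distinct lifts''), and the finiteness needed for the minimal-area choice should be justified by properness of the lift rather than asserted.
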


\begin{proof}
Since $\gamma$ is homotopically non-trivial, it lifts to a properly immersed
arc $\tilde{\gamma}$ in the universal cover $\tilde{\Sigma}$.
Teardrop discs bounded by $\gamma$ lift to teardrop discs in
$\tilde{\Sigma}$ with
boundary on $\tilde{\gamma}$; thus (3) implies (2), and (2) implies (1).

Conversely, assume that $\tilde{\gamma}$ is not embedded, and let $a\in
\tilde{\Sigma}$ be any self-intersection of $\tilde{\gamma}$. If the arc
$\tilde{\gamma}_a$ consisting of the portion of $\tilde{\gamma}$ which connects $a$ to itself is not embedded, then there
exists another ``nested'' self-intersection $b$ such that the arc $\tilde{\gamma}_b$ connecting $b$
to itself is a strict subset of $\tilde{\gamma}_a$.
Considering $b$ instead of $a$, and repeating the process if needed, we
can assume that $\tilde{\gamma}_a$ is embedded in
$\tilde{\Sigma}$; the portion $D_a$ of $\tilde{\Sigma}$ enclosed by
$\tilde\gamma_a$ is then an embedded teardrop disc with a corner at $a$.

The teardrop $D_a$ may be either locally convex or locally concave near its
corner, meaning it occupies either 1 or 3 of the quadrants delimited by
the two branches of $\tilde{\gamma}$ intersecting at $a$. Recall that only
locally convex teardrops contribute to $\mu^0(\gamma)$.  We claim that,
if $D_a$ has a locally concave corner at $a$, then there are
smaller embedded teardrop discs contained inside it. Indeed, in the locally
concave case, the portions of $\tilde{\gamma}$ just before and after the arc
$\tilde{\gamma}_a$ lie inside $D_a$. Continuing along $\tilde{\gamma}$ until
it exits $D_a$ (which must eventually happen since
$\tilde{\gamma}$ is properly immersed), we find another self-intersection
$b$ lying on the boundary of $D_a$, such that the arc $\tilde{\gamma}_b$ 
connecting $b$ to itself is entirely contained in $D_a$ (and is not the
entire boundary of $D_a$).  If $\tilde{\gamma}_b$ is
not embedded, then we replace $b$ by a nested self-intersection as above;
this allows us to assume that $\tilde{\gamma}_b$ is embedded, while still
ensuring that $\tilde{\gamma}_b$ is entirely contained in $D_a$.
The region of $\tilde{\Sigma}$ bounded by $\tilde{\gamma}_b$ then gives a
new embedded teardrop disc $D_b$ which is strictly contained in $D_a$.

Repeating the process, we conclude that if $\tilde{\gamma}$ is not embedded
then it must bound an embedded teardrop disc with a locally convex
corner.  The generator of $CF^0(\gamma,\gamma)$ corresponding to this
self-intersection must then have a non-zero coefficient in
$\mu^0(\gamma)$, since there are no other teardrops with the same corner.
Thus, if $\mu^0(\gamma)=0$ then $\tilde\gamma$ must be embedded.
\end{proof}

\begin{Remark}
Regular homotopies of immersed curves do not
preserve the absence of teardrops or the vanishing of $\mu^0$.
For instance, pushing an embedded arc sideways then back through itself to create 
a pair of self-intersections gives rise to a teardrop (and a bigon).
Thus, in the arguments below we will take care to only consider
regular homotopies which preserve unobstructedness.
\end{Remark}

\begin{Remark}
On a two-dimensional surface, rigid $J$-holomorphic discs with pairwise distinct boundary conditions are immersed polygons with convex corners, which are purely combinatorial. One can set up the Fukaya category either via moving Lagrangian boundary conditions and honest $J$-holomorphic curves or via (the more usual approach with) Hamiltonian perturbation terms in the Floer equation.   See \cite{Seidel:HMSgenus2} for an implementation.
\end{Remark}

\begin{Remark}
The results of this section apply, \emph{mutatis mutandis}, to the case of a surface $S$ with non-empty boundary. The main difference is that it is sometimes necessary to consider immersed curves which live in the (infinite area) completion of $S$, rather than in $S$ itself. We will pay careful attention to this issue when it arises in the sequel. 
\end{Remark}

\subsection{Isotopies and twists}

For each $a\in H^1(\Sigma;\bR)$ there is a symplectomorphism $\phi_a$ with flux $a$, and one can move Lagrangian submanifolds by such symplectomorphisms.  This obviously preserves unobstructedness, since it preserves all teardrop discs and their areas.  There is a more general statement for isotopies not induced by global symplectomorphisms.

Two curves on a surface meet \emph{minimally} if they meet transversely in their geometric intersection number of points.

\begin{Lemma}  \label{Lem:find_sigma} Let $\gamma \subset \Sigma$ be an immersed curve and $\sigma$ a simple closed curve. One can isotope $\gamma$ by a regular homotopy so as to meet $\sigma$ minimally.
Moreover, the regular homotopy can be chosen to preserve the unobstructedness of $\gamma$.
\end{Lemma}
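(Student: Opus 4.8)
The plan is to combine a purely topological minimal-position argument with the characterization of unobstructedness from Lemma~\ref{l:unobstructed}. First I would work in the universal cover $\tilde\Sigma$. By Lemma~\ref{l:unobstructed}, since $\gamma$ is homotopically non-trivial and unobstructed, it lifts to a collection of properly embedded arcs $\tilde\gamma$ in $\tilde\Sigma$ (the various lifts of $\gamma$), and similarly each lift of the simple closed curve $\sigma$ is a properly embedded arc. Two lifts meet in at most one point, and the geometric intersection number of $\gamma$ with $\sigma$ counts unremovable intersections between lifts, i.e.\ pairs of lifts whose endpoints ``link'' on the circle at infinity $\partial\tilde\Sigma$. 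The strategy is: whenever two lifts $\tilde\gamma$ and $\tilde\sigma$ meet but their ideal endpoints do \emph{not} link, they cobound an innermost disc (a bigon) in $\tilde\Sigma$, and we remove that intersection by an isotopy supported near the bigon; the key point is to do this \emph{equivariantly} (compatibly with the deck group action, so that it descends to a regular homotopy of $\gamma$) and \emph{without creating teardrops}.

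The main steps, in order. (1) Put $\gamma$ and $\sigma$ transverse with all self-intersections of $\gamma$ transverse, and lift to $\tilde\Sigma$. (2) If $\gamma$ and $\sigma$ are not in minimal position, standard surface topology (the bigon criterion) produces an innermost embedded bigon $B$ in $\tilde\Sigma$ bounded by a sub-arc of a lift $\tilde\gamma$ and a sub-arc of a lift $\tilde\sigma$, with interior disjoint from $\tilde\sigma$. Innermost means $B\cap\tilde\sigma$ is just the one boundary arc, but $B$ may still be crossed by other lifts of $\gamma$; by taking $B$ innermost among \emph{all} bigons between lifts of $\gamma$ and lifts of $\sigma$, we may assume no other lift of $\gamma$ enters $B$ either. (3) Push the $\tilde\gamma$-side of $B$ across $B$ to the other side of $\tilde\sigma$, removing the two intersection points; perform this move simultaneously on all $\Gamma$-translates of $B$ (these translates are disjoint, since $B$ is innermost and a nontrivial translate meeting $B$ would itself contain or be contained in a bigon, contradicting innermost-ness), so the isotopy is $\Gamma$-equivariant and descends to a regular homotopy of $\gamma$ in $\Sigma$. (4) Check this regular homotopy preserves unobstructedness. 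Since $B$ is disjoint from all other lifts of $\gamma$, the finger-move lifts to a move taking $\tilde\gamma$ to another properly embedded arc in $\tilde\Sigma$: we are just isotoping an embedded arc across an embedded bigon whose interior it does not meet, which keeps it embedded. By Lemma~\ref{l:unobstructed} again (now read backwards: embedded lift $\Rightarrow$ $\mu^0=0$), the new curve is unobstructed. (5) Each such move strictly decreases $\#(\gamma\cap\sigma)$, and the count is bounded below by the geometric intersection number, so the process terminates with $\gamma$ meeting $\sigma$ minimally.

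The main obstacle is step (4), or rather the interface between (3) and (4): a generic finger move that removes a bigon of $\gamma$ with $\sigma$ could, as the Remark after Lemma~\ref{l:unobstructed} warns, drag $\gamma$ through one of its own self-intersections or through another strand and create a teardrop. This is why the bigon $B$ must be chosen innermost not just among bigons involving $\tilde\sigma$ but so that \emph{no lift of $\gamma$ passes through its interior}; one must argue such a bigon exists whenever the position is non-minimal. The clean way to see this is to first arrange, within the fixed homotopy class and without touching $\sigma$, that $\gamma$ is in minimal position with itself (unobstructed $\Rightarrow$ embedded lifts, so $\gamma$ already has no self-bigons in $\tilde\Sigma$ — self-intersections of $\gamma$ come only from distinct lifts crossing, and those crossings are essential), so the only bigons present are genuine $\gamma$-versus-$\sigma$ bigons, and an innermost one among these has interior meeting neither $\tilde\sigma$ nor any other lift of $\gamma$; sliding across it then visibly preserves the embeddedness of the lift. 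Once that is in place the rest is routine induction on geometric complexity.
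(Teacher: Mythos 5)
Your overall strategy (remove $\gamma$--$\sigma$ bigons one at a time, and control unobstructedness by lifting to the universal cover and invoking Lemma \ref{l:unobstructed}) is the same as the paper's, but the step you yourself flag as the main obstacle contains a genuine gap. You want a bigon whose interior is disjoint from \emph{all} lifts of $\gamma$, and you justify its existence by asserting that unobstructedness already puts $\gamma$ in minimal position with itself: ``unobstructed $\Rightarrow$ embedded lifts, so $\gamma$ already has no self-bigons in $\tilde\Sigma$ --- self-intersections of $\gamma$ come only from distinct lifts crossing, and those crossings are essential.'' This is false. Unobstructedness controls each individual lift (no teardrops), but says nothing about how \emph{distinct} lifts intersect: two different lifts of an unobstructed curve can perfectly well cross twice and bound a bigon. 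Pushing one strand of an embedded curve across a different strand creates exactly such a self-bigon without creating any teardrop; the paper's Hass--Scott lemma and the example of Figure \ref{Fig:bad} are precisely about unobstructed immersed curves bounding embedded self-bigons. Consequently your innermost $\gamma$--$\sigma$ bigon may still be crossed by other lifts of $\gamma$ (either entering and leaving through the $\tilde\gamma$-edge, which is the self-bigon configuration, or cutting a corner from the $\tilde\gamma$-edge to the $\tilde\sigma$-edge, which innermost-ness among $\gamma$--$\sigma$ bigons does not exclude either), so the bigon with the strong property your argument needs is not shown to exist. The equivariant-disjointness of the translates of $B$ in your step (3) rests on the same unproved property.

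The fix --- and the point of the paper's argument --- is that the strong property is not needed. To preserve unobstructedness one only has to keep each single lift embedded; crossings between \emph{distinct} lifts of $\gamma$ (i.e.\ self-intersections of $\gamma$ downstairs) are irrelevant and may be created or destroyed freely during the slide. So it suffices to choose, inside a bigon whose interior avoids all lifts of $\sigma$, a sub-bigon that is innermost with respect to the one lift $\tilde\gamma$ whose arc is being slid: its intersection with the disc is a disjoint union of embedded arcs because that single lift is embedded, so such an innermost sub-bigon exists. Sliding across it keeps $\tilde\gamma$ (hence, by equivariance, every lift) embedded even if other lifts of $\gamma$ pass through its interior, and it strictly decreases $\#(\gamma\cap\sigma)$; moreover one can simply perform the finger move downstairs across the (possibly non-embedded) projected bigon rather than arguing about disjoint translates. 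With that correction your induction goes through; as written, the argument does not.
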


\begin{proof}
Suppose the intersection is not minimal, so there is a not necessarily embedded bigon bound by $\sigma\cup\gamma$.  Pull back $\sigma$ to the domain of such a bigon; changing the choice of bigon to one that is ``innermost" if necessary, we can assume that there is a bigon $H$ which does not meet $\sigma$ in its interior. The boundary of this bigon therefore contains a proper subset of $\gamma$, since the boundary arc of the bigon lying along $\gamma$ connects two intersections in $\gamma \cap \sigma$ which are consecutive as read along $\gamma$. (Note that the $\sigma$-edge of this bigon might loop fully around $\sigma$ even with the bigon being innermost; see Figure \ref{Fig:lemmafind_sigma}.)  Now push the arc of $\gamma$ lying along $H$ across $H$, to decrease the total intersection number of $\gamma$ and $\sigma$. This can be iterated to reduce to a minimal situation.

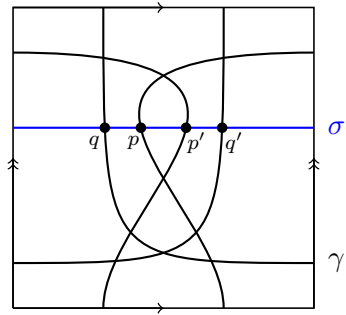
\begin{figure}[t]
\begin{center} 
\begin{tikzpicture}[scale = 2]
\draw[semithick] (0,0)--(1,0)--(2,0)--(2,2)--(0,2)--(0,0);
\draw[semithick,->](0,0)--(1,0);
\draw[semithick,->](0,2)--(1,2);
\draw[semithick,->>](0,0)--(0,1);
\draw[semithick,->>](2,0)--(2,1);
\draw[thick, blue] (0,1.2)--(2,1.2);
\draw[blue] (2.15,1.2) node {$\sigma$};
\draw[thick] (0,1.7) .. controls (2.2,1.7) and (0.6,0.7) .. (0.6,0);
\draw[thick] (0.6,2) .. controls (0.6,0.3) .. (2,0.3);
\draw[thick] (2,1.7) .. controls (-0.2,1.7) and (1.4,0.7) .. (1.4,0);
\draw[thick] (1.4,2) .. controls (1.4,0.3) .. (0,0.3);
\draw(2.15,0.3) node {$\gamma$};
\fill(0.61,1.2) circle (0.035); 
\fill(0.85,1.2) circle (0.035); 
\fill(1.39,1.2) circle (0.035); 
\fill(1.15,1.2) circle (0.035); 
\draw (0.54,1.1) node {\SMALL $q$};
\draw (0.8,1.1) node {\SMALL $p$};
\draw (1.22,1.1) node {\SMALL $p'$};
\draw (1.47,1.1) node {\SMALL $q'$};
\end{tikzpicture}
\end{center}
\caption{$\sigma$ and $\gamma$ bound four bigons in $T^2$; the boundaries of
the innermost bigons (those not meeting $\sigma$ in their interior:
above $p-p'$ and below $q-q'$) loop slightly more than once around $\sigma$.}
\label{Fig:lemmafind_sigma}
\end{figure}

By Lemma \ref{l:unobstructed}, $\gamma$ is unobstructed if and
only if its lift $\tilde\gamma$ to the universal cover of $\Sigma$ is
embedded. Lifting to $\tilde\Sigma$ the bigon $H$ along which we slide $\gamma$
in the above argument, we obtain a bigon $\tilde{H}$ with boundary on 
$\tilde\gamma$ and on a lift $\tilde\sigma$ of $\sigma$, whose interior is moreover disjoint
from all lifts of $\sigma$. If the interior of $\tilde{H}$ is disjoint from
$\tilde{\gamma}$ then sliding $\gamma$ across $H$ preserves unobstructedness.
Otherwise, by pulling back $\tilde\gamma$ to the domain of the bigon $\tilde{H}$ 
(which yields a disjoint collection of embedded arcs, since $\tilde\gamma$
is embedded) and changing the choice of bigon to one that is innermost, we obtain
a bigon with boundary on $\tilde\gamma \cup \tilde\sigma$ whose interior is 
disjoint from $\tilde\gamma$ and from all lifts of $\sigma$. Projecting back
to $\Sigma$ and sliding across this bigon decreases the intersection number
without affecting unobstructedness.
\end{proof}

\begin{Remark} \label{Rmk:find_sigma}
This argument also gives the following generalization of 
Lemma \ref{Lem:find_sigma}: let $S$ be a
compact surface with boundary, $\sigma$ a simple closed curve, and 
$\gamma$ either an immersed curve
or an immersed arc with ends in $\partial S$. Then one can isotope $\gamma$
by a regular homotopy so as to meet $\sigma$ minimally, in a way that
preserves the absence of teardrops with boundary on $\gamma$.  
\end{Remark}

\begin{Lemma} \label{Lem:sweeparea}
For any immersed curve $\gamma \subset \Sigma$ with $\mu^0(\gamma) = 0$, and $a\in \bR$, there is a regular isotopy $\gamma_t$  of $\gamma$ through (generically) immersed unobstructed curves which sweeps area $a$. 
\end{Lemma}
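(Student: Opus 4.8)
The plan is to produce the isotopy in two stages: first realize a small area shift by a localized ``finger move,'' then iterate. The key point is that sweeping a prescribed signed area $a$ can always be achieved by a regular homotopy supported in a disc, provided we are willing to perform the move in many small steps; the only subtlety is keeping unobstructedness throughout.

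First I would pick a point $p$ on $\gamma$ at which $\gamma$ is embedded (such points exist since $\gamma$ has only finitely many self-intersections), and a small embedded disc $D$ around $p$ meeting $\gamma$ in a single arc. Inside $D$ one can perform a regular homotopy that pushes the arc of $\gamma$ to one side, sweeping any area $\epsilon$ with $|\epsilon|$ less than the area of a half-disc component of $D\setminus\gamma$; this is an embedded isotopy inside $D$, so in particular it creates no teardrops and the lift to $\tilde\Sigma$ stays embedded, hence by Lemma \ref{l:unobstructed} it preserves $\mu^0(\gamma)=0$. This handles all sufficiently small $a$. For general $a\in\bR$, write $a$ as a finite sum $a=\epsilon_1+\cdots+\epsilon_m$ with each $|\epsilon_j|$ small, and perform the above move $m$ times in succession, each time choosing a fresh embedded disc $D_j$ around an embedded point of the (isotoped) curve. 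Concatenating these regular isotopies gives a regular isotopy $\gamma_t$ of $\gamma$ through immersed unobstructed curves sweeping total area $a$; the ``generically immersed'' caveat accounts for the finitely many parameter values at which one slides across a self-intersection, which can be avoided by a small perturbation of the path if one insists on immersion at every time.

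The main obstacle is the unobstructedness bookkeeping: a naive large finger move can push the arc of $\gamma$ across another branch of $\gamma$ and create a teardrop (as the Remark following Lemma \ref{l:unobstructed} warns). Restricting each elementary move to an embedded sub-disc $D_j$ on which $\gamma$ appears as a single unknotted arc avoids this — inside such a disc the isotopy is an honest ambient isotopy and carries the embedded lift $\tilde\gamma$ to an embedded arc — so the only work is to check that one can always find such a disc after finitely many previous moves and that the swept areas add up; both are routine. Alternatively, one can shortcut the whole argument by invoking the symplectomorphisms $\phi_a$ of flux $a$ from the start of Section 2.3 when $a$ happens to lie in the image of a class under the flux pairing, but since we want arbitrary real $a$ and a compactly-supported-type statement, the iterated local finger move is the cleaner route.
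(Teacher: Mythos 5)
There is a genuine gap, and it is exactly at the point your construction is designed to avoid. Your elementary moves are each supported in a disc $D_j$ meeting the curve in a single arc, so they never create (or remove) self-intersections; in particular, if $\gamma$ is an embedded curve, every curve in your family stays embedded. But then the total flux you can accumulate is bounded: for a null-homologous $\gamma$ — say a separating simple closed curve, which is the crucial case for this lemma — an isotopy through embedded curves changes the areas of the two complementary subsurfaces by the swept flux, and both areas must remain positive, so the achievable flux is bounded by the area of $\Sigma$ (indeed by the area of one side). Writing $a=\epsilon_1+\cdots+\epsilon_m$ does not help: the constraint is global, not per-step, so ``the swept areas add up'' is true but the sum can never exceed this bound. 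This is precisely the phenomenon the paper flags right after Lemma \ref{Lem:space} and in Figure \ref{Fig:bad} (and in Remark \ref{Rmk:sweeparea} for boundary-parallel curves): to sweep large area with a separating or otherwise null-homologous curve one \emph{must} push $\gamma$ across itself and accept new self-intersections. Your fallback of using global symplectomorphisms $\phi_a$ fails in the same case, since flux pairs trivially with $[\gamma]=0$.

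The paper's proof confronts this head-on: it chooses a simple closed curve $\sigma$ meeting $\gamma$ minimally with non-zero \emph{geometric} intersection number (Lemma \ref{Lem:find_sigma}), and slides a finger of $\gamma$ repeatedly around $\sigma$, creating a cancelling pair of self-intersections each time the finger is pushed across $\gamma$; this sweeps arbitrarily large (positive or negative) area. The real content is then the preservation of unobstructedness, which is verified in the universal cover: lifts of $\gamma$ and $\sigma$ can be chosen to meet exactly once, so the lifted family acquires no new self-intersections and hence, by Lemma \ref{l:unobstructed}, bounds no teardrops. So the step you dismiss as avoidable — pushing $\gamma$ across another branch of itself while controlling teardrops — is in fact the heart of the lemma; an argument that keeps the curve's self-intersection pattern fixed cannot prove the statement as stated.
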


\begin{proof}
Pick a non-separating simple closed curve $\sigma$ which meets $\gamma$ minimally and with non-zero geometric intersection number.  Such a curve can be obtained from  Lemma \ref{Lem:find_sigma}. (The construction of $\sigma$ in that Lemma  involves an isotopy of $\gamma$ which might now have non-trivial flux, but the conclusion of the current Lemma for the modified curve would then yield the same result for the initial one, just with a different base-point to the one-parameter family).  
One obtains $\gamma_t$ by sliding a portion of $\gamma$ along $\sigma$, introducing a cancelling pair of self-intersections each time $\gamma$ is being pushed across itself. The resulting curves bound no teardrops other than those bound by $\gamma$.  
Indeed, in the universal cover, we can choose lifts of $\gamma$ and $\sigma$ which meet exactly once, by the minimal intersection assumption; and then the family $\gamma_t$ which slides along $\sigma$ bounds no new teardrops since its lift to the universal cover acquires no new self-intersections.
\end{proof}

\begin{Remark} \label{Rmk:sweeparea}
Let $\gamma$ be an unobstructed immersed curve in a compact surface with boundary $S$.
If $\gamma$ is not boundary-parallel
(i.e., it cannot be homotoped to a curve contained in a collar neighborhood of
$\partial S$), then there exists
a simple closed curve which has non-zero geometric intersection number
with $\gamma$, and the above argument shows that there are unobstructed
regular isotopies of $\gamma$ in $S$ which sweep arbitrary area $a\in \bR$. 
On the other hand, if $\gamma$ is boundary-parallel then
the total area between $\gamma$
and $\partial S$ is finite, and any regular homotopy sweeping more
than this amount of area must introduce a teardrop.
\end{Remark}

\begin{Lemma} \label{Lem:space}
Let $\gamma \subset \Sigma$ be an embedded simple closed curve. If $\gamma$ is non-separating, then there are embedded simple closed curves $\gamma_t$ smoothly isotopic to $\gamma$ and obtained by an isotopy of flux $t$, for every $t \in \bR$.
\end{Lemma}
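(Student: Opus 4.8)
The statement asserts that a non-separating simple closed curve $\gamma \subset \Sigma$ can be moved by a symplectic isotopy of any prescribed flux $t \in \bR$ through embedded curves isotopic to $\gamma$. The plan is to reduce to a local model around $\gamma$ and build the isotopy by hand. First I would fix a tubular (annular) neighbourhood $\nu(\gamma) \cong S^1 \times (-\epsilon, \epsilon)$ of $\gamma$, and, since $\gamma$ is non-separating, choose a simple closed curve $\delta$ meeting $\gamma$ transversely in exactly one point (such $\delta$ exists precisely because $[\gamma]$ is a primitive non-zero homology class, e.g. complete $[\gamma]$ to a symplectic basis of $H_1$). Thicken $\delta$ to an embedded arc $A$ transverse to $\gamma$. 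The union $\nu(\gamma) \cup \nu(\delta)$ contains a once-punctured torus, and inside it I can find curves $\gamma_t$ isotopic to $\gamma$ whose homology class is unchanged but which are displaced "along $\delta$"; concretely, Dehn-twisting $\gamma$ along $\delta$ a fractional amount, or equivalently pushing $\gamma$ across the annulus $\nu(\delta)$, produces a smooth isotopy.

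The symplectic bookkeeping is the heart of the matter: I must arrange the flux of the isotopy to be exactly $t$. Recall the flux of an isotopy $\gamma_s$, $s \in [0,1]$, through embedded curves is $\int_{[0,1]\times S^1} \gamma_s^* \omega$, i.e. the signed area swept. So the plan is: (i) perform an isotopy of $\gamma$ supported in $\nu(\delta)$ that "slides $\gamma$ once around $\delta$" — this returns $\gamma$ to a curve smoothly isotopic to itself and sweeps an area equal to $\pm\int_{\nu(\delta)}\omega$ (up to sign and orientation conventions, the area of the annulus traversed); (ii) by instead sliding only part way, or by composing such slides with their reverses through sub-annuli of varying width, realise any area in the interval $(-a_0, a_0)$ where $a_0 = \int_{\nu(\delta)}\omega$; (iii) to realise arbitrarily large $|t|$, iterate, sliding around $\delta$ multiple times (each loop contributes $\pm a_0$) and interpolating on the last loop. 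Throughout, embeddedness is automatic because the isotopy is a genuine ambient isotopy supported near $\delta \cup \gamma$ and the curve stays a graph over $\gamma$ in the relevant charts. This is essentially the embedded analogue of the construction in Lemma \ref{Lem:sweeparea}, but now sliding along $\delta$ (intersecting $\gamma$ once) rather than along a curve $\sigma$ intersecting an immersed $\gamma$, so no new self-intersections are ever created.

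Alternatively, and perhaps more cleanly, I would produce $\gamma_t$ directly as the image of $\gamma$ under a symplectomorphism $\phi_{a}$ of flux $a \in H^1(\Sigma;\bR)$ chosen with $\langle a, [\gamma]\rangle$ having the right value — but this does not quite work because such a $\phi_a$ need not fix the smooth isotopy class setwise in the way we want, and the displacement of $\gamma$ it induces sweeps area $\langle a, [\gamma]\rangle$, which is what we want, yet one must still check $\phi_a(\gamma)$ is isotopic to $\gamma$ (true since $\phi_a$ is isotopic to the identity). So in fact the quickest argument is: take any $a \in H^1(\Sigma;\bR)$ with $\langle a, [\gamma]\rangle = t$ (possible since $[\gamma]\ne 0$), let $\phi_s$ be the flux-$sa$ isotopy, set $\gamma_t = \phi_1(\gamma)$; then $\gamma_s := \phi_s(\gamma)$ is an isotopy of embedded curves smoothly isotopic to $\gamma$ whose flux is $\langle a, [\gamma]\rangle = t$. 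I expect the main obstacle to be purely notational — pinning down the sign and the precise relation between "flux of the isotopy", "area swept", and the pairing $\langle a,[\gamma]\rangle$ — rather than any geometric difficulty; the non-separating hypothesis enters exactly once, to guarantee the pairing $\langle \cdot, [\gamma]\rangle$ on $H^1(\Sigma;\bR)$ is surjective onto $\bR$.
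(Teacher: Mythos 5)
Your proposal is correct, and your final ``quickest argument'' is essentially the paper's proof in slightly more abstract clothing: in both cases $\gamma_t$ is the image of $\gamma$ under a globally defined symplectomorphism isotopic to the identity whose flux class pairs with $[\gamma]$ to give $t$, so embeddedness and the smooth isotopy class come for free, and the non-separating hypothesis enters only to make $\langle\,\cdot\,,[\gamma]\rangle:H^1(\Sigma;\bR)\to\bR$ surjective.

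The one difference worth noting is how the symplectomorphism is produced. You invoke the general existence of a flux-$a$ symplectomorphism $\phi_a$ with $\langle a,[\gamma]\rangle=t$ (which the paper indeed states at the start of the subsection), whereas the paper constructs it explicitly as $\tau_{\sigma^+}\circ\tau_{\sigma^-}^{-1}$ for parallel copies $\sigma^{\pm}$ of a dual curve $\sigma$ bounding a subcylinder of area $\varepsilon$, iterating and varying $\varepsilon$ to reach arbitrary flux. For Lemma \ref{Lem:space} itself either route suffices, but the explicit version has the additional feature of being supported in an arbitrarily thin annulus around $\sigma$, which is exactly what gets reused later (in the proof of Corollary \ref{Cor:rankHF}, with $\sigma=\gamma_1$) to correct flux without creating new intersections; your first, more hands-on plan of sliding along a curve $\delta$ meeting $\gamma$ once is in the same spirit as that construction and as Lemma \ref{Lem:sweeparea}, but is not needed once the global-symplectomorphism argument is in place.
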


\begin{proof}
Fix a simple closed curve $\sigma$ with non-zero algebraic intersection number with $\gamma$, and fix a small embedded cylinder centred on $\sigma$. Let $\sigma^{\pm}$ be disjoint embedded curves in this cylinder which bound a subcylinder of area $\varepsilon>0$.  The curve 
\[
(\tau_{\sigma^+} \circ \tau_{\sigma^-}^{-1})(\gamma)
\]
is then smoothly isotopic to $\gamma$ but differs from it by a flux of area $\varepsilon$ times the algebraic intersection number. Since it is the image of $\gamma$ under a diffeomorphism, it is obviously embedded.  By varying $\varepsilon$ and iterating the map $\tau_{\sigma^+} \circ \tau_{\sigma^-}^{-1}$ or its inverse, one obtains embedded curves differing from $\gamma$ by arbitrary real values of flux.
\end{proof}

The analogue of Lemma \ref{Lem:space} does not hold for separating simple closed curves; in that case, if one tries to move $\gamma$ by an isotopy sweeping a flux larger than the area of a subsurface bound by $\gamma$, one may have to introduce self-intersections, cf. Figure \ref{Fig:bad} below. 

For embedded curves, isotopies that sweep zero area are induced by
Hamiltonian diffeomorphisms, and the invariance of Floer theory is classical.
We record the following consequence (and note that the result also holds for
surfaces with boundary):

\begin{Corollary} \label{Cor:rankHF}
Let $\gamma_1,\gamma_2$ be two embedded simple closed curves with non-zero
algebraic intersection number, and let $X_1,X_2$ be the objects of
$\scrF(\Sigma)$ obtained by equipping $\gamma_1,\gamma_2$ with rank one
local systems. Then the rank of the Floer cohomology group $HF^*(X_1,X_2)$
is equal to the geometric intersection number of $\gamma_1$ and
$\gamma_2$.
\end{Corollary}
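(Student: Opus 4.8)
The plan is to reduce the claim to the case where both curves carry \emph{trivial} rank one local systems, and then combine Floer invariance under Hamiltonian isotopy with a direct count of the polygon differential on a minimal-position model. First I would use Lemma~\ref{Lem:space}: after possibly applying $\tau_{\sigma^+}\circ\tau_{\sigma^-}^{-1}$ to $\gamma_1$ (where $\sigma$ meets $\gamma_1$ with non-zero algebraic intersection number), I may arrange $\gamma_1$ and $\gamma_2$ to be in minimal position --- i.e., meeting transversely in exactly $i(\gamma_1,\gamma_2)$ points, which by hypothesis equals the non-zero algebraic intersection number. Because $\tau_{\sigma^+}\circ\tau_{\sigma^-}^{-1}$ is a diffeomorphism isotopic to the identity (indeed a composition of Hamiltonian twists about disjoint curves, adjusted to sweep zero net area on $\gamma_1$), the resulting object is quasi-isomorphic in $\scrF(\Sigma)$ to $X_1$, so it suffices to compute $HF^*$ for this minimal-position representative.

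Next I would observe that the Floer complex $CF^*(X_1,X_2)$, for $\gamma_i$ in minimal position and carrying rank one local systems $\xi_i$, has underlying $\bK$-module $\bigoplus_{p\in\gamma_1\cap\gamma_2}\Hom(\xi_1|_p,\xi_2|_p)\cong\bK^{i(\gamma_1,\gamma_2)}$, so the rank of $HF^*$ is \emph{at most} $i(\gamma_1,\gamma_2)$. The real content is the reverse inequality, i.e. that the differential $\mu^1$ vanishes. For this I would invoke the standard fact on surfaces that rigid $J$-holomorphic bigons between two embedded curves in minimal position carry the differential, together with the classical topological statement that two embedded simple closed curves in minimal position on a surface co-bound \emph{no} immersed bigons at all (this is precisely what minimal position means, via the bigon criterion: an immersed bigon would allow one to reduce the intersection number). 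Hence $\mu^1=0$ on the nose, and $HF^*(X_1,X_2)\cong\bK^{i(\gamma_1,\gamma_2)}$.

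The main obstacle, and the point requiring care, is twofold. First, one must make sure the isotopy provided by Lemma~\ref{Lem:space} can be chosen to sweep \emph{zero} area, so that it is genuinely Hamiltonian and Floer cohomology is invariant; this is handled by composing the positive and negative Dehn twists about the nearby curves $\sigma^\pm$ so that the areas cancel, exactly as in the proof of that Lemma, and then appealing to the classical Hamiltonian invariance of Floer theory for embedded Lagrangians recalled in the paragraph preceding the statement. Second, one must know that equipping the curves with \emph{rank one} local systems (with unitary monodromy in $U_\Lambda$) does not alter the rank: the local systems only twist the coefficients of the bigon count by units of $\bK$, so the absence of bigons already forces $\mu^1=0$ regardless of $\xi_1,\xi_2$. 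I do not expect either point to be genuinely hard --- the whole statement is essentially the two-dimensional avatar of the unobstructedness of Lagrangian Floer theory in minimal position --- but writing it cleanly requires being explicit about which moves are Hamiltonian and citing the bigon criterion for embedded curves. Finally, the parenthetical remark about surfaces with boundary follows verbatim, since minimal position and the bigon criterion hold there as well, with curves taken in the completion when necessary as flagged in the preceding remarks.
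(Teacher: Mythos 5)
Your proposal is correct and follows essentially the same route as the paper: isotope to a minimal-position representative, then cancel the swept flux by the sliding trick of Lemma~\ref{Lem:space} (the paper slides $\gamma_2$ along a thin cylinder centred on $\gamma_1$, which is exactly where the non-zero algebraic intersection hypothesis enters), and conclude from the absence of bigons in minimal position that the differential vanishes, with rank one local systems only rescaling coefficients by units. Two small slips that do not affect the argument: the geometric intersection number need not equal the algebraic one (only the latter's non-vanishing is hypothesized), and the curve $\sigma$ used for the flux-cancelling slide must be taken to be (parallel to) the \emph{other} curve so that minimal position is preserved, as in the paper's choice $\sigma=\gamma_1$.
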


\begin{proof}
Move $\gamma_2$ by an isotopy in order to obtain a simple closed curve
$\gamma'_2$ which intersects $\gamma_1$ minimally, so that there are no
bigons bound by $\gamma_1\cup \gamma'_2$. The isotopy from $\gamma_2$ to
$\gamma'_2$ may sweep a non-zero amount of flux, but this can be remedied by
sliding $\gamma'_2$ along $\gamma_1$: applying the construction in the proof
of Lemma \ref{Lem:space} to $\gamma'_2$, 
taking $\sigma=\gamma_1$, yields $\gamma''_2$ which is Hamiltonian isotopic
to $\gamma_2$ and intersects $\gamma_1$ minimally. Replacing $X_2$ by the
quasi-isomorphic object $X''_2$ given by $\gamma''_2$ with the appropriate
local system, we find that the Floer complex $CF^*(X_1,X''_2)$ has rank
equal to the geometric intersection number and vanishing Floer differential.
\end{proof}

Invariance of Floer cohomology under isotopies that sweep zero area holds more generally for
immersed curves, even when we allow the areas of the regions bounded by
the curve to vary, or if we allow isotopies through a curve that does not
self-intersect transversely, creating or cancelling self-intersections.
We note that such regular homotopies are still induced by a
(time-dependent) Hamiltonian on the {\em domain} of the immersion, and
thus we will abusively refer to them as Hamiltonian isotopies. The following is closely related to \cite[Proposition 4.1]{Abouzaid:higher_genus}.

\begin{Lemma}\label{Lem:hamiltonian_isomorphic}
If unobstructed immersed curves $\gamma_0$ and $\gamma_1$ are regular homotopic through 
(generically self-transverse) unobstructed immersed curves by an isotopy that 
sweeps zero area (and which identifies their $Spin$ structures and local
systems), then 
$\gamma_0$ and $\gamma_1$ define quasi-isomorphic objects of $\scrF(\Sigma)$. 
\end{Lemma}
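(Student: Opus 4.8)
The plan is to reduce the statement to the known invariance of Floer cohomology under "moving Lagrangian boundary conditions" by realizing the regular homotopy as a continuation map and showing it is a quasi-isomorphism. First I would set up the continuation/flow picture: a generic regular homotopy $\{\gamma_t\}_{t\in[0,1]}$ through generically self-transverse unobstructed immersed curves, sweeping zero area, defines an object of the Fukaya category of $\Sigma\times[0,1]\times\bR$ (or, more hands-on, a bimodule morphism), and the key point is that because the homotopy is induced by a time-dependent Hamiltonian on the \emph{domain} $S^1$ of the immersion, each intermediate curve $\gamma_t$ bounds exactly the teardrops coming from $\gamma_0$ with the same areas, so unobstructedness and zero net action are preserved throughout. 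Then I would invoke the standard continuation-map machinery (e.g.\ \cite{Abouzaid:higher_genus}, \cite{Seidel:FCPLT}) to produce a closed element $c\in CF^0(\gamma_0,\gamma_1)$ together with its partner $c'\in CF^0(\gamma_1,\gamma_0)$, defined by counting rigid holomorphic sections of the mapping-cylinder family.

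The second step is to show $c$ is a quasi-isomorphism. The cleanest route is a \emph{filtration / degeneration argument}: parametrize the homotopy and break it into short pieces over which $\gamma_t$ is $C^2$-close to $\gamma_{t_0}$, so each piece is a small Hamiltonian perturbation for which the continuation element is, up to lower-order (higher $q$-valuation) terms, the identity quasi-isomorphism; composing finitely many such pieces (using that $q$-valuations add up and stay controlled because the swept area is zero, so no positive-area discs can destroy the leading term) shows the total continuation element has leading-order term the identity, hence is invertible over the Novikov field. This is where one must be careful about the homotopies that pass through non-transverse curves (creating or cancelling self-intersections): near such a moment one decomposes the homotopy into a self-transverse part plus a "birth/death" local model, and checks by an explicit local computation that the birth of a cancelling pair of self-intersections does not change the quasi-isomorphism type — the two new generators form an acyclic summand, killed by $\mu^1$ with unit coefficient. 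The matching of $Spin$ structures and local systems under the homotopy guarantees the continuation element respects these decorations, so the signs and monodromies line up.

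The main obstacle I expect is precisely the bookkeeping at the non-transverse moments and the verification that no "hidden" holomorphic discs with positive area appear along the family: one needs the zero-area hypothesis to rule out the possibility that a continuation strip, which may look like a bigon swept out over the parameter $t$, carries positive symplectic area and hence nontrivial $q$-weight that could cancel the leading term. Here the honest input is that, lifting the whole family to the universal cover $\tilde\Sigma$, each $\tilde\gamma_t$ stays embedded (by Lemma \ref{l:unobstructed}) and the lifted homotopy sweeps zero area, so any continuation section lifts to one between embedded arcs with zero area, forcing it to be a constant section up to reparametrization — exactly the situation handled classically. Once this is established, the bimodule morphism $c$ is a quasi-isomorphism, so $\gamma_0\simeq\gamma_1$ in $\scrF(\Sigma)$; the same argument applies verbatim to a surface with boundary, with curves living in the completion, since the zero-area condition again precludes new discs.
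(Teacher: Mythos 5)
Your overall strategy (cut the homotopy into small pieces, control Novikov valuations, use the universal cover to tame the immersed points) is in the same spirit as the paper's proof, which reduces by concatenation to $C^1$-small isotopies and then writes down explicit mutually inverse cocycles $p=\sum_{p_i\in\min(H)}q^{H(p_i)}p_i$ and $p'=\sum_{p_j\in\max(H)}q^{-H(p_j)}p_j$ for the Morse function $H$ generating the small isotopy, verifying $\mu^1(p)=\mu^1(p')=0$ and $\mu^2(p',p)=\mathrm{id}$, $\mu^2(p,p')=\mathrm{id}$ by the classical bigon count, with the universal cover ruling out any contribution from the self-intersection generators. However, your central composition step has a genuine gap as written: a zero-area regular homotopy does \emph{not} decompose into short pieces each of which sweeps zero area, so your "each piece is a small Hamiltonian perturbation whose continuation element is the identity up to higher $q$-valuation" is false in general. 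A $C^2$-small piece with nonzero swept area is a genuine flux deformation, and for such a pair of curves the Floer cohomology typically vanishes outright (the two bigons between the created minimum and maximum have unequal areas, so the differential is $(q^{a_1}-q^{a_2})\neq 0$ times the generator); there is then no sense in which the piece's continuation element is invertible to leading order, and "composing invertible-up-to-higher-order pieces" is not available. The phrase "valuations add up and stay controlled because the swept area is zero" only constrains the total, so to make the argument work you must either first correct the family so that the area swept up to every time $t$ vanishes (this is what the paper's reduction to small isotopies \emph{sweeping zero area} presupposes, and it uses the sliding constructions of Lemmas \ref{Lem:sweeparea}--\ref{Lem:space} to absorb the intermediate flux), or argue directly about the endpoints of the whole path --- which is exactly the explicit computation the paper performs.

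Two further points. First, your claim that each intermediate curve "bounds exactly the teardrops coming from $\gamma_0$ with the same areas" is false (a regular homotopy induced by a Hamiltonian on the domain can create teardrops, cf.\ the Remark after Lemma \ref{l:unobstructed}); it is also unnecessary, since unobstructedness of every intermediate curve is a hypothesis of the Lemma. Second, your treatment of the birth/death moments and the assertion that lifted continuation sections of zero area must be constant are precisely where the real work lies; the paper avoids setting up continuation maps for immersed curves with changing self-intersection combinatorics altogether by comparing only the two endpoint curves through the explicit cocycles above, and note that in the paper the "cancelling an empty bigon" statement (Lemma \ref{Lem:bigon-quasiiso}) is a \emph{consequence} of this Lemma, so if you intend to feed a birth/death cancellation result into your proof you must establish the local acyclic-summand computation independently, again using the universal cover and an area estimate --- at which point your argument has essentially converged to the paper's.
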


\begin{proof} By concatenation, this follows from the result for $C^1$-small
isotopies sweeping zero area.  When $\gamma_0$ and $\gamma_1$ are
$C^1$-close to each other, there exists a small time-independent Morse function
$H$ on the domain of the immersion whose Hamiltonian vector field generates
the isotopy. Each critical point of $H$ gives rise to an intersection of
$\gamma_0$ and $\gamma_1$; considering separately the maxima and minima, we
let $$p=\sum_{p_i \in \min(H)} q^{H(p_i)} p_i\in CF^0(\gamma_0,\gamma_1)
\quad \text{and} \quad p'=\sum_{p_j\in \max(H)} q^{-H(p_j)} p_j \in
CF^0(\gamma_1,\gamma_0).$$
When $\gamma_0$ and $\gamma_1$ are embedded, a classical argument
considering the bigons that connect the consecutive minima and maxima of $H$
shows that $\mu^1(p)=0$, $\mu^1(p')=0$, i.e.\ $p$ and $p'$ are Floer cocycles, 
and $\mu^2(p',p)=\mathrm{id}_{\gamma_0}$,
$\mu^2(p,p')=\mathrm{id}_{\gamma_1}$, i.e.\ $p$ and $p'$ provide an
explicit isomorphism between $\gamma_0$ and $\gamma_1$. When $\gamma_0$ and
$\gamma_1$ are immersed, there are additional generators of
$CF(\gamma_0,\gamma_1)$ and $CF(\gamma_1,\gamma_0)$ near the
self-intersections. However, by considering lifts of $\gamma_0$ and
$\gamma_1$ to the universal cover (which are embedded by our assumption of 
unobstructedness), we find that the lifts of holomorphic polygons which 
contribute to $\mu^1(p)$, $\mu^1(p')$, $\mu^2(p',p)$ and $\mu^2(p,p')$ 
cannot involve the generators coming from the self-intersections, and the
outcome of the calculation is exactly the same as in the embedded case.
\end{proof}

Any simple closed curve $\sigma \subset \Sigma$ has an associated Dehn twist $\tau_{\sigma}$. The identity $\id$ and $\tau_{\sigma}$ both define $A_{\infty}$-equivalences of $\scrF(\Sigma)$, and viewed as functors, there is a distinguished morphism $\Phi_\sigma: \id \to \tau_{\sigma}$ in $\Hom_{nu-fun}(\id,\tau_{\sigma})$ obtained from the count of sections of a Lefschetz fibration over a disc  with one interior critical point and vanishing cycle $\sigma$.

\begin{Lemma} \label{Lem:determines}
If $\sigma$ is non-separating, then $\tau_{\sigma}$ determines $\sigma$ up to Hamiltonian isotopy.  If $\sigma$ is separating, then the pair $(\tau_{\sigma}, \Phi_\sigma)$ determines $\sigma$ up to Hamiltonian isotopy.
\end{Lemma}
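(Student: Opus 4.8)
The plan is to recover the isotopy class of $\sigma$ from the algebraic data by testing it against the family of simple closed curves, using Floer cohomology ranks as the main tool. First I would recall that $\tau_\sigma$ acts on objects of $D^\pi\scrF(\Sigma)$, and in particular on the objects $(\xi,\gamma)$ associated to embedded simple closed curves with rank one local systems; by Corollary \ref{Cor:rankHF}, the rank of $HF^*((\xi_1,\gamma_1),(\xi_2,\gamma_2))$ equals the geometric intersection number of $\gamma_1$ and $\gamma_2$ whenever these have non-zero algebraic intersection number. The long exact triangle associated to the Dehn twist (the exact triangle relating $\mathrm{Cone}(\Phi_\sigma)$ to the object-level twist, i.e.\ the surgery exact triangle $\gamma \to \tau_\sigma(\gamma) \to \gamma\otimes HF^*(\sigma,\gamma)$, where $\sigma$ is $\gamma_\sigma$ with the trivial rank one local system) shows that $i(\tau_\sigma(\gamma),\gamma')$ is determined, together with all the higher intersection data, by $i(\gamma,\gamma')$, $i(\sigma,\gamma)$ and $i(\sigma,\gamma')$ whenever the relevant curves pairwise intersect essentially.

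The key step is then: since a functor equivalent to $\tau_\sigma$ gives, for every embedded non-separating $\gamma$, the object $\tau_\sigma(\xi,\gamma)$ up to quasi-isomorphism, and since by Theorem \ref{Thm:Main} (or more elementarily here, since $\tau_\sigma(\gamma)$ is manifestly geometric) this object is represented by a simple closed curve, we can read off the geometric intersection number $i(\tau_\sigma(\gamma),\gamma')$ for all such $\gamma'$ from Floer cohomology ranks. The function $\gamma'\mapsto i(\tau_\sigma(\gamma),\gamma')$, ranging over simple closed curves $\gamma'$, determines $\tau_\sigma(\gamma)$ as an isotopy class of simple closed curves (this is a standard fact in surface topology: a simple closed curve is determined by its geometric intersection numbers with all other simple closed curves, e.g.\ via its image in $\mathcal{PMF}$). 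Hence the permutation of isotopy classes of simple closed curves induced by $\tau_\sigma$ is determined by the equivalence class of $\tau_\sigma$ as a functor, and this permutation is the one induced by the mapping class $\tau_\sigma$; since the action of the mapping class group on isotopy classes of curves is faithful (for $g\geq 2$ it has trivial center contribution on curves, or one cites that $\tau_\sigma$ is determined among mapping classes by its action on curves), we recover $\tau_\sigma$ as a mapping class, and in particular recover $\sigma$ when $\sigma$ is non-separating --- because a non-separating $\sigma$ is characterised among simple closed curves as (up to conjugacy) the unique curve fixed by $\tau_\sigma$ with $\tau_\sigma$ acting trivially on a regular neighbourhood, or simply because the mapping class $\tau_\sigma$ determines $\sigma$ up to isotopy by the classical theory of Dehn twists.

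For the separating case the permutation of curves still recovers $\tau_\sigma$ as a mapping class, but a separating curve is \emph{not} determined by its Dehn twist alone: the failure is that $\tau_\sigma$ and $\tau_{\sigma'}$ can coincide as mapping classes when $\sigma,\sigma'$ cobound --- in fact the issue is that a separating $\sigma$ and the "same" curve traversed so as to bound the complementary subsurface give the same twist, and more seriously one must pin down which side is which; this is exactly the role of the distinguished natural transformation $\Phi_\sigma:\id\to\tau_\sigma$. I would argue that $\Phi_\sigma$, evaluated on the object $(\xi,\gamma)$, is the map $\gamma\to\tau_\sigma(\gamma)$ in the surgery triangle obtained by counting sections of the Lefschetz fibration, so its cone is $(\xi,\gamma)\otimes HF^*(\sigma,(\xi,\gamma))$; the \emph{support} of this cone object --- i.e.\ which object $(\eta,\sigma)$ (with which local system) appears --- together with the dependence on $\gamma$, pins down $\sigma$ together with the coorientation/orientation data needed to distinguish the two subsurfaces it bounds. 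Concretely: from the cones of $\Phi_\sigma$ on two curves $\gamma_1,\gamma_2$ hitting $\sigma$ once each from opposite sides, one recovers $\sigma$ and distinguishes it from any other separating curve inducing the same underlying twist.

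The main obstacle I expect is the separating case, specifically making precise that $\Phi_\sigma$ (not just $\tau_\sigma$) is recoverable from the functor data and that its cones genuinely determine $\sigma$ rather than merely $\tau_\sigma$ --- this requires identifying the morphism $\Phi_\sigma$ geometrically via the Lefschetz fibration count and checking that the resulting "marked" data (curve plus a preferred side) is a complete invariant of the isotopy class of a separating curve; the non-separating case is comparatively routine once one has Corollary \ref{Cor:rankHF} and the standard surface-topology fact that a curve is determined by its intersection pattern.
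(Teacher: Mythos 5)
There is a genuine gap: your argument only recovers $\sigma$ up to \emph{smooth} isotopy, but the content of the lemma is the refinement to \emph{Hamiltonian} isotopy, and your tools cannot see it. Floer cohomology ranks and geometric intersection numbers are unchanged if $\sigma$ is replaced by a curve in the same smooth isotopy class that differs by an isotopy sweeping non-zero area, so the ``permutation of isotopy classes of curves'' you extract from $\tau_\sigma$ can never distinguish two such curves. The paper dismisses the smooth isotopy class determination in one sentence; the actual work is quantitative and uses the Novikov coefficients. For $[\sigma]\neq 0$ the point is that moving $\sigma$ by an isotopy of flux $\alpha$ changes $\tau_\sigma$ by composition with a symplectomorphism of flux $\alpha\cdot\mathrm{PD}([\sigma])\neq 0$, which is a genuinely different autoequivalence of $\scrF(\Sigma)$ over $\Lambda$ (flux is detected by areas of Floer strips), so the functor alone pins down the Hamiltonian isotopy class; none of this appears in your proposal.

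Your diagnosis of why the separating case needs $\Phi_\sigma$ is also incorrect: it is false that $\tau_\sigma$ and $\tau_{\sigma'}$ can coincide as mapping classes for non-isotopic essential curves, and the issue is not ``which side $\sigma$ bounds.'' The issue is that when $[\sigma]=0$ the flux $\alpha\cdot\mathrm{PD}([\sigma])$ vanishes, so sliding $\sigma$ within its smooth isotopy class (changing the areas of the two complementary subsurfaces) does \emph{not} change $\tau_\sigma$ up to quasi-isomorphism of functors; the twist functor is therefore blind to the Hamiltonian isotopy class. The paper recovers the lost area parameter from $\Phi_\sigma$: evaluating it on a curve $\gamma$ meeting $\sigma$ twice, the class $\Phi_\sigma(\gamma)\in HF^0(\gamma,\tau_\sigma(\gamma))$ is proportional to $q^B a + q^A b$, where $A,B$ are areas of explicit triangles, and $B-A$ shifts by exactly the flux $\alpha$ when $\sigma$ is moved; hence the pair $(\tau_\sigma,\Phi_\sigma)$ detects the Hamiltonian isotopy class. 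Your sketch of ``the support of the cone of $\Phi_\sigma$'' cannot do this job, because the cone (as an object) again only sees topological data; one must look at the actual Novikov-weighted coefficients of the natural transformation.
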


\begin{proof}
It is straightforward to see that $\tau_{\sigma}$ determines the smooth isotopy class of $\sigma$. 
Deforming a simple closed curve $\sigma$  by an isotopy that sweeps  area $\alpha$, the Dehn twist $\tau_{\sigma}$ changes by a non-Hamiltonian isotopy whose flux is $\alpha \cdot \mathrm{PD}([\sigma])$. If $[\sigma]\neq 0$ the result follows. 

If $[\sigma] = 0$, then $\sigma$ separates $\Sigma$ into two subsurfaces. 
Pick a simple closed curve $\gamma$ whose geometric intersection number 
with $\sigma$ is two, and let $\gamma'$ be a simple closed curve which is
Hamiltonian isotopic to $\tau_\sigma(\gamma)$ and intersects $\gamma$ and $\sigma$ minimally,
as in Figure \ref{Fig:separatingtwist}.

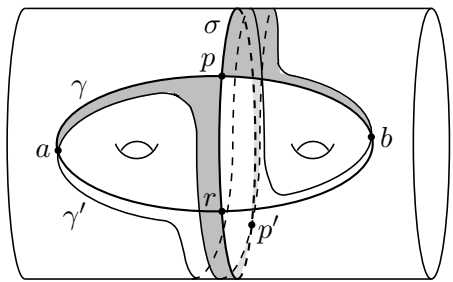
\begin{figure}[ht]
\begin{center} 
\begin{tikzpicture}[scale = 0.6]
\fill[gray!25!white] (0.1,-3) arc (-90:-42:0.9 and 3) arc (-42:-90:0.4 and 3);
\fill[gray!25!white] (1.3,3) arc (90:180:0.3 and 3) arc (0:-42:0.9 and 3)
arc (-42:90:0.4 and 3);
\fill[gray!50!white] (-3.5,-0.2) arc (187:87:3.5 and 1.5)
 arc (150:270:0.4 and 3) -- (0.1,-3) arc (270:180:0.5 and 3)
arc (0:60:0.6 and 1.15) -- (-1,1.15) arc (107:180:3.5 and 1.5);
\fill[gray!50!white] (3.5,0.2) arc (7:88:3.5 and 1.5)
arc (150:90:0.4 and 3) -- (1.3,3) arc (90:0:0.1 and 0.7) arc
(180:240:0.1 and 0.85) arc (65:0:3.5 and 1.5);
\draw[semithick] (-4.2,3) -- (4.8,3);
\draw[semithick] (-4.2,-3) -- (4.8,-3);
\draw[semithick] (4.8,0) ellipse (0.4 and 3);
\draw[semithick] (-4.2,3) arc (90:270:0.4 and 3);
\draw[semithick] (-2.2,0) arc (200:340:0.5);
\draw[semithick] (1.7,0) arc  (200:340:0.5);
\draw[semithick] (-2.07,-0.15) arc (150:30:0.4);
\draw[semithick] (1.83,-0.15) arc (150:30:0.4);
\draw[thick, dashed] (0.5,-3) arc (-90:90:0.4 and 3);
\draw[thick] (0.5,3) arc (90:270:0.4 and 3);
\draw[thick] ellipse (3.5 and 1.5);
\draw[semithick, rounded corners] (0.1,-3) arc (270:180:0.5 and 3) 
arc (0:60:0.6 and 1.15) -- (-1,1.15) arc (107:255:3.5 and 1.5) 
-- (-0.7,-2.3) arc (220:270:0.4 and 1.9);
\draw[semithick, dashed] (0.1,-3) arc (-90:0:0.9 and 3)
arc (180:90:0.3 and 3);
\draw[semithick, dashed] (-0.5,-3) arc (-90:0:0.9 and 3)
arc (180:90:0.4 and 3);
\draw[semithick, rounded corners] (1.3,3) arc (90:0:0.1 and 0.7) arc
(180:240:0.1 and 0.85) arc (65:-67:3.5 and 1.5) -- (1.1,-0.5) 
arc (0:90:0.3 and 3.5);
\draw (-3,1.2) node {$\gamma$};
\draw (-0.05,2.6) node {$\sigma$};
\draw (-3.1,-1.6) node {$\gamma'$};
\fill (-3.47,-0.15) circle (0.08);
\fill (3.47,0.15) circle (0.08);
\fill (0.16,1.5) circle (0.08); 
\fill (0.16,-1.5) circle (0.08);
\fill (0.82,-1.8) circle (0.08);
\draw (-3.8,-0.15) node {$a$};
\draw (3.8,0.15) node {$b$};
\draw (-0.15,1.8) node {$p$};
\draw (-0.1,-1.25) node {$r$};
\draw (1.2,-1.9) node {$p'$};
\end{tikzpicture}
\end{center}
\caption{Dehn twisting about a separating curve \label{Fig:separatingtwist}}
\end{figure}

By the work of Seidel \cite{Seidel:FCPLT}, there is an exact
triangle in $\scrF(\Sigma)$ taking the form
\[ \xymatrix{ CF(\sigma,\gamma)\otimes \sigma \ar[r]^-{e} & \gamma
\ar[r]^{f} & \ar@/^1.5pc/[ll] \gamma'} \\[1em] {} \]

where $e$ is a tautological evaluation map, which can be written
in terms of the generators $p,r$ of $CF(\sigma,\gamma)$ as 
$e=(p,r):\sigma\oplus \sigma[1]\to \gamma$, and $f\in CF^0(\gamma, \tau_\sigma(\gamma))$ is given
by the natural transformation $\Phi_\sigma$. Since $\sigma\oplus \sigma[1]$
is not isomorphic to $\gamma \oplus \gamma'[1]$, the morphism $f$ is a non-zero
linear combination of the generators $a,b$ of $CF^0(\gamma,\gamma')$,
and determined uniquely up to scaling by the property that 
$\mu^2(f,p)=\mu^2(f,r)=0$. 

Denoting by $A$ and $B$ the symplectic areas
of the two shaded triangles on Figure \ref{Fig:separatingtwist}, we find
that $\mu^2(a,p)=\pm q^A p'$ and $\mu^2(b,p)=\mp q^B p'$, so the vanishing of
$\mu^2(f,p)$ implies that $f$ is proportional to $q^B a + q^A b$.
(As expected, $\mu^2(f,r)$ then vanishes as well: since $\gamma'$ is Hamiltonian isotopic
to $\tau_\sigma(\gamma)$, the areas of the two triangles contributing to $\mu^2(f,r)$
differ by the same amount $B-A$).

Keeping $\gamma$ and $\gamma'$ fixed, when $\sigma$ moves by
an isotopy of flux $\alpha$ (without creating new intersections) the quantity $B-A$ changes by $\alpha$, so that
the class $[f]=\Phi_\sigma(\gamma)\in HF^0(\gamma,\tau_\sigma(\gamma))$ must change as well. 
This implies that $\Phi_\sigma$ detects the Hamiltonian isotopy class of $\sigma$.
\end{proof}

An object $Y \in \scrF(\Sigma)^{per}$ is \emph{spherical} if $H^*(hom_{\scrF^{per}}(Y,Y)) = H^*(S^1;\Lambda)$. A spherical object $Y$ has an associated twist functor $T_Y$.  

\begin{Lemma} \label{Lem:twists_forces_objects}
Let $Y, Y'$ be spherical objects on a surface $S$. Suppose the twist functors $T_Y$ and $T_{Y'}$ are quasi-isomorphic.  If $Y$ is a homologically non-trivial simple closed curve, then $Y$ and $Y'$ are quasi-isomorphic in $\Tw^{\pi}\scrF(S)$.
\end{Lemma}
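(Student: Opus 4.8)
The plan is to show that $Y'$, being spherical with $T_{Y'}$ equivalent to a Dehn twist, must itself be (a shift of) a simple closed curve carrying a rank one local system, and then to read the curve, its local system, and its flux directly off the twist functor. Since $T_{Z[1]}=T_Z$ for every spherical $Z$, the most one can hope for is $Y'\simeq Y$ up to shift, and that is what I would establish. First I would reduce to a trivial local system on $Y$: writing $Y=(\xi,\gamma)$ with $\gamma$ embedded, $[\gamma]\neq 0$ and $\xi$ rank one unitary, I pick a flat line bundle $\mathcal M\in H^1(S;\Lambda^*)$ with $\mathcal M|_\gamma\simeq\xi^{-1}$ (possible since $[\gamma]\neq 0$ makes $H^1(S;\Lambda^*)\to H^1(\gamma;\Lambda^*)$ surjective) and replace $(Y,Y')$ by $(\mathcal M\otimes Y,\mathcal M\otimes Y')$; tensoring by $\mathcal M$ is an autoequivalence of $\scrF(S)$, conjugates both twist functors, and preserves every hypothesis, so I may assume $Y=(\triv,\gamma)$ and hence $T_Y=\tau_\gamma$, the Dehn twist autoequivalence (Seidel), with $\gamma$ non-separating.

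Next I would check that $Y'$ is geometric. The twist $\tau_\gamma$ acts on $HH_0(\scrF(S))\cong H_1(S;\Lambda)$ by the Picard--Lefschetz transvection along $[\gamma]$, which is non-trivial because $[\gamma]\neq 0$; hence $T_{Y'}\simeq\tau_\gamma$ acts non-trivially on $HH_0$. A spherical twist $T_E$ acts trivially on $HH_0$ whenever $\ch(E)=0$, so $\ch(Y')\neq 0$, and Theorem \ref{Thm:Main} (or Corollary \ref{Cor:spherical_on_punctured} when $\partial S\neq\emptyset$) produces an embedded simple closed curve $\delta$ and a rank one unitary local system $\eta$ with $Y'\simeq(\eta,\delta)$. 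Comparing $HH_0$-actions once more forces the two transvections to coincide, so $[\delta]=\pm[\gamma]$; in particular $\delta$ is non-separating and $[\delta]$ primitive.

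Finally I would match $(\eta,\delta)$ with $(\triv,\gamma)$. Realising $\eta=\mathcal N|_\delta$ for a flat line bundle $\mathcal N$ on $S$, one has $T_{(\eta,\delta)}=\mathcal N\circ\tau_\delta\circ\mathcal N^{-1}$, and since $\tau_\delta^*$ fixes $\mathrm{PD}[\delta]$ (Picard--Lefschetz, as $[\delta]\cdot[\delta]=0$) this rewrites as $T_{(\eta,\delta)}=L_\nu\circ\tau_\delta$, where $L_\nu$ denotes tensoring by the flat line bundle $\nu$ proportional to $\mathrm{PD}[\delta]$ whose monodromy around a loop $\ell$ has valuation $\pm\,\mathrm{val}(\mathrm{hol}(\eta))\cdot([\delta]\cdot[\ell])$. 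Now $L_\nu\tau_\delta\simeq T_{Y'}\simeq\tau_\gamma$; since tensoring by $L_\nu$ fixes every isotopy class of curve, $\tau_\delta$ and $\tau_\gamma$ induce the same permutation of isotopy classes of simple closed curves, while $\tau_\gamma\tau_\delta^{-1}$ acts trivially on $H_1(S)$ (using $[\delta]=\pm[\gamma]$), so the only possible ambiguity — the genus-two hyperelliptic involution — is excluded, and faithfulness of the $\MCG(S)$-action on isotopy classes of curves gives $\tau_\gamma=\tau_\delta$ in $\MCG(S)$; thus $\gamma$ and $\delta$ are smoothly isotopic, say by an isotopy of flux $a$. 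By the computation in the proof of Lemma \ref{Lem:determines}, $\tau_\delta$ then agrees as an autoequivalence with $\tau_\gamma$ post-composed with the symplectomorphism of flux $a\,\mathrm{PD}[\gamma]$, which acts on $\scrF(S)$ by tensoring with the line bundle $\mathcal L_a$ whose monodromy around $\ell$ has valuation $a\,([\gamma]\cdot[\ell])$; cancelling $\tau_\gamma$ gives $L_\nu\circ L_{\mathcal L_a}\simeq\id$, so $\nu\otimes\mathcal L_a$ is trivial. Evaluating monodromies on a loop $\ell$ with $[\gamma]\cdot[\ell]=1$ and taking valuations yields $\mathrm{val}(\mathrm{hol}(\eta))=\mp a$; but $\eta$ is \emph{unitary}, so $\mathrm{val}(\mathrm{hol}(\eta))=0$, forcing $a=0$ and $\nu$ trivial, hence $\eta$ trivial. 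Therefore $\gamma$ and $\delta$ are Hamiltonian isotopic, and $Y'\simeq(\triv,\delta)\simeq(\triv,\gamma)=Y$ up to shift.

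The main obstacle is the last step: one has to genuinely separate the ``Dehn twist part'' and the ``flat line bundle part'' of the twist along the (possibly non-trivially decorated) curve $(\eta,\delta)$ and then rule out a flux discrepancy between $\delta$ and $\gamma$, and the input that makes this close is precisely that objects of the Fukaya category carry \emph{unitary} local systems. A secondary point to verify with care is the applicability of Theorem \ref{Thm:Main}, i.e.\ the non-vanishing $\ch(Y')\neq 0$, which is the Hochschild-homology computation of the second step.
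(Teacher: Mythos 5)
Your route is genuinely different from the paper's, and much heavier. The paper's proof is a short, self-contained categorical argument: pick a simple closed curve $\delta$ with geometric intersection number one with $Y$; since $HF(\delta,T(\delta))$ has rank one there are only two possible mapping cones on a morphism $\delta\to T(\delta)$, one of which is $Y$ (because $T=T_Y$) and the other of which is $\delta\oplus T(\delta)[1]$; because $T=T_{Y'}$ is the cone on an evaluation functor landing in objects $V\otimes Y'$, the canonical cone must be a sum of (shifted) copies of $Y'$; Krull--Schmidt rules out the split cone, and indecomposability of $Y$ gives $Y\simeq Y'$. None of Theorem \ref{Thm:Main}, the Chern-character formalism, or mapping class group rigidity enters. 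Your proposal is not circular (Lemma \ref{Lem:twists_forces_objects} is not an input to Theorem \ref{Thm:Main}), but it front-loads the paper's main theorem and the $HH_0$/Mukai machinery to prove a lemma that sits in the background section; your observation that $T_{Z[1]}\simeq T_Z$ limits the conclusion to ``up to shift'' applies equally to the paper's argument and is a fair caveat.

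There are, however, genuine gaps in your closing step. First, you assert that the symplectomorphism of flux $a\,\mathrm{PD}[\gamma]$ ``acts on $\scrF(S)$ by tensoring with the line bundle $\mathcal{L}_a$'' whose holonomy has valuation $a([\gamma]\cdot[\ell])$. Such non-unitary $\Lambda^*$-local systems are not objects of this Fukaya category, and identifying the flux deformation with a $\Lambda^*$-twist at the level of autoequivalences is a substantive family-Floer/flux statement that the paper never proves (the semidirect product structure $H^1(\Sigma;\Lambda^*)\rtimes\Gamma(\Sigma)$ of the kernel is explicitly only a conjecture in the introduction). So the valuation bookkeeping that excludes the flux discrepancy $a\neq 0$ and trivializes $\nu$ rests on an unproved identification; to make it honest you would need to detect the discrepancy directly, e.g.\ by testing $L_\nu\circ\tau_\delta\simeq\tau_\gamma$ on a curve $\ell$ with $[\gamma]\cdot[\ell]=1$ and running the area computation of Lemma \ref{Lem:determines}, which is essentially redoing that lemma with an extra unitary twist. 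Second, the deduction ``$\tau_\gamma=\tau_\delta$ in $\MCG(S)$'' uses that quasi-isomorphic embedded-curve objects have isotopic underlying curves; this is fine for non-separating test curves (via Corollary \ref{Cor:rankHF}), but you quantify over all simple closed curves, where flux phenomena (cf.\ Lemmas \ref{Lem:determines} and \ref{Lem:bad}) make the claim delicate -- you should restrict the test family to non-separating curves, where the elementary argument (find a non-separating $c$ disjoint from $\gamma$ but essentially meeting $\delta$) already suffices without Schmutz's theorem. Third, in the case $\partial S\neq\emptyset$ the $HH_0\cong H_1$ and transvection formalism you invoke is not set up in the paper (Corollary \ref{Cor:QH} is for closed surfaces, and Corollary \ref{Cor:spherical_on_punctured} is stated with a $\chi\Hom$ hypothesis), so the step ``$ch(Y')\neq 0$'' needs to be rephrased there -- most simply by applying both functors to a dual curve and comparing Euler characteristics, which also gives $[\delta]=\pm[\gamma]$ more directly than the $HH_0$ comparison.
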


\begin{proof}
Write $T$ for $T_Y \simeq T_{Y'}$. Let $\delta$ be a simple closed curve with geometric intersection number $1$ with $Y$. The group $HF(\delta, T(\delta))$ has rank one, so up to quasi-isomorphism there are only two distinct mapping cones
\[
Y \simeq \{\delta \stackrel{x}{\longrightarrow} T(\delta)\} \quad \mathrm{and} \quad \delta \oplus T(\delta)[1] \simeq \{\delta \stackrel{0}{\longrightarrow} T(\delta)\}.
\]
Since the functor $T=T_{Y'}$ is a cone over an evaluation functor which has image in the subcategory with objects $V\otimes Y'$ for graded vector spaces $V$, one of these mapping cones is isomorphic to direct sums of copies of $Y'$.  The sum $\delta \oplus T(\delta)[1]$ is a sum of two non-isomorphic indecomposables, so by the Krull-Schmidt property cannot be a sum of copies of a single indecomposable $Y'$.  Therefore $Y$ is isomorphic to a direct sum of copies of $Y'$, and indecomposability of $Y$ implies $Y \simeq Y'$ as required. 
\end{proof}

\subsection{Generation}

Consider an $A_{2g}$-chain of $2g$ curves $\{\zeta_i\}_{1 \leq i\leq 2g}$ on $\Sigma$, as depicted in Figure \ref{Fig:genus2} when $g=2$. 

\begin{figure}[ht]
\setlength{\unitlength}{1cm}
\begin{picture}(5,2)(0,-1)
\qbezier[200](0,0)(0,1.2)(1.5,1)
\qbezier[200](1.5,1)(2.5,0.85)(3.5,1)
\qbezier[200](3.5,1)(5,1.2)(5,0)
\qbezier[200](0,0)(0,-1.2)(1.5,-1)
\qbezier[200](1.5,-1)(2.5,-0.85)(3.5,-1)
\qbezier[200](3.5,-1)(5,-1.2)(5,0)
\qbezier[60](1,0)(1,0.3)(1.5,0.3)
\qbezier[60](2,0)(2,0.3)(1.5,0.3)
\qbezier[60](1,0)(1,-0.3)(1.5,-0.3)
\qbezier[60](2,0)(2,-0.3)(1.5,-0.3)
\qbezier[60](3,0)(3,0.3)(3.5,0.3)
\qbezier[60](4,0)(4,0.3)(3.5,0.3)
\qbezier[60](3,0)(3,-0.3)(3.5,-0.3)
\qbezier[60](4,0)(4,-0.3)(3.5,-0.3)
\put(1.5,0){\circle{1.1}}
\put(3.5,0){\circle{1.1}}
\qbezier[60](0,0)(0,-0.1)(0.5,-0.1)
\qbezier[60](1,0)(1,-0.1)(0.5,-0.1)
\qbezier[20](0,0)(0,0.1)(0.5,0.1)
\qbezier[20](1,0)(1,0.1)(0.5,0.1)
\qbezier[60](2,0)(2,-0.1)(2.5,-0.1)
\qbezier[60](3,0)(3,-0.1)(2.5,-0.1)
\qbezier[20](2,0)(2,0.1)(2.5,0.1)
\qbezier[20](3,0)(3,0.1)(2.5,0.1)
\put(-0.35,0){$\zeta_1$}
\put(0.8,0.5){$\zeta_2$}
\put(2.1,-0.35){$\zeta_3$}
\put(3.9,0.5){$\zeta_4$}
\end{picture}
\caption{Split-generating curves for $\scrF(\Sigma_2)$\label{Fig:genus2}}
\end{figure}

\begin{Proposition} \label{Prop:Split_generators}
The curves $\zeta_i$ split-generate $\scrF(\Sigma)$.
\end{Proposition}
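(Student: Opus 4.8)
The plan is to show the $A_{2g}$-chain $\{\zeta_i\}$ split-generates by exhibiting enough Lagrangian branes in the triangulated envelope, and then invoking a known generation criterion (e.g.\ Abouzaid's split-generation criterion, or for surfaces the more elementary ``every curve is built from a chain'' argument). The cleanest route on a surface is essentially combinatorial: first I would observe that the twisted complexes built from the $\zeta_i$ contain, up to quasi-isomorphism, every simple closed curve obtained from the $\zeta_i$ by iterated Dehn twists, using the exact triangle $\zeta \to \tau_{\sigma}(\gamma) \to \gamma[1]$ (Seidel) relating a curve, its image under a Dehn twist about a chain element, and a cone. Since the Dehn twists $\tau_{\zeta_i}$ generate the mapping class group $\Gamma_g$, and $\Gamma_g$ acts transitively on isotopy classes of non-separating simple closed curves, this already puts every non-separating simple closed curve (with trivial local system) into the split-closed triangulated category $\langle \zeta_1,\dots,\zeta_{2g}\rangle^{per}$ generated by the chain.

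The next step is to upgrade ``every non-separating curve'' to ``all of $\scrF(\Sigma)$''. For this I would argue that (i) separating simple closed curves are cones on morphisms between non-separating ones (a separating curve is, up to isotopy, the boundary of a regular neighbourhood of a pair of transversally intersecting non-separating curves, and the relevant surgery/Dehn-twist triangle expresses it as such); (ii) an arbitrary immersed unobstructed curve with local system is quasi-isomorphic to a twisted complex built from embedded simple closed curves — one resolves self-intersections via the same mapping-cone mechanism (each transverse self-intersection contributes a cone), reducing an immersed curve to a complex of embedded ones, and a rank-$n$ local system is filtered by rank-one ones so contributes only iterated extensions; and (iii) rank-one unitary local systems on a fixed embedded curve are all quasi-isomorphic to the trivial one after tensoring by a global flat line bundle, or are handled directly since they do not change the underlying object in $\Tw^\pi$. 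Combining, every object of $\scrF(\Sigma)$ lies in $\langle \zeta_1,\dots,\zeta_{2g}\rangle^{per}$, which is the definition of split-generation.

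The main obstacle is step (ii)–(iii): making precise that an arbitrary brane (immersed, with higher-rank local system, with a choice of spin structure) is genuinely reconstructed from the chain, rather than merely having the same class in some Grothendieck group. One must check that the relevant morphisms in the Dehn-twist exact triangles are the correct ones (i.e.\ that the cone really is the geometric curve, with its spin structure, not some a priori different object with the same homology), and that the resolution of self-intersections terminates and assembles into a genuine twisted complex. I expect this to be routine given Lemma~\ref{Lem:hamiltonian_isomorphic}, Lemma~\ref{Lem:determines}, and Seidel's Dehn-twist long exact sequence, but it is where the real content lies; alternatively one could bypass it entirely by citing that $\scrF(\Sigma)$ is known (via homological mirror symmetry, or directly) to be split-generated by any collection of curves cutting $\Sigma$ into a disc, which the $A_{2g}$-chain does.
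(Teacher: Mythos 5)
Your route is genuinely different from the paper's, but it has two concrete gaps, the first of which is fatal as stated. You assert that the Dehn twists $\tau_{\zeta_1},\dots,\tau_{\zeta_{2g}}$ generate $\Gamma_g$, and you use this to sweep out all non-separating simple closed curves by the twist exact triangles. This is false: by Humphries' theorem the mapping class group of a closed genus $g\geq 2$ surface cannot be generated by fewer than $2g+1$ Dehn twists, and the twists along an $A_{2g}$-chain generate only a proper (hyperelliptic-type) subgroup. So the orbit of the $\zeta_i$ under the group you actually have access to is not the set of all non-separating curves, and the first step of your induction does not get off the ground. Second, even granting all embedded curves, your step (ii) -- resolving an immersed unobstructed curve into a twisted complex of embedded ones by treating each self-intersection as a mapping cone -- is exactly the point where the difficulty of this paper lives: Lemma~\ref{Lem:not_cone} and Figure~\ref{Fig:bad_surgery2} show that Lagrange surgery at a Floer cocycle need not be the mapping cone, and that an immersed curve can lie outside the triangulated envelope of the embedded curves it is ``built from''; there is no general cone mechanism attached to a self-intersection point. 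Your fallback (``cite that any collection of curves cutting $\Sigma$ into a disc split-generates'') is essentially the statement of Proposition~\ref{Prop:Split_generators} itself and is not an available citation for the closed, $\bZ/2$-graded, Novikov-linear category at hand.

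For comparison, the paper avoids both issues by a global argument that treats an arbitrary object $\delta$ (immersed, with local system) all at once: the chain relation $u^{4g+2}=1$ with $u=\prod_i\tau_{\zeta_i}$ is a positive relation, whose square defines a Lefschetz fibration over $\bP^1$ with fibre $\Sigma$ in which every section has square $\leq -2$. Concatenating the Dehn twist exact triangles for the word $u^{8g+4}$ produces a morphism $\hat p:\delta\to u^{8g+4}(\delta)\cong\delta$ given by a count of sections, which vanishes for dimension reasons; hence $\delta$ is a summand of a twisted complex on the $\zeta_i$. This buys generation for every object directly, with no transitivity statement and no geometric resolution of self-intersections needed. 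If you want to salvage your approach, you would at minimum need to replace the transitivity claim (e.g.\ by enlarging the generating set or proving the needed orbit statement for the subgroup actually generated) and supply an honest mechanism for step (ii), neither of which is routine.
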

\begin{proof}
This is a consequence of \cite[Proposition 3.8]{Smith:HFQuadrics}, a variant of 
\cite[Lemma 6.4]{Seidel:HMSgenus2}. Briefly, if $u = \prod_{i=1}^{2g} \tau_i$,  the Dehn twists $\tau_i$ in the curves $\zeta_i$ satisfy the positive relation $u^{4g+2} = 1 \in \Gamma_g$. The square of this relation defines a Lefschetz fibration $X \rightarrow \bP^1$ with fibre $\Sigma$, with $2g(8g+4)$ critical fibres, and for which every section of the fibration has square $\leq -2$.   

Let $\delta \in \scrF(\Sigma)$ be an arbitrary curve (equipped with a local system, which we suppress from the notation). There are exact triangles associated to the Dehn twists $\tau_i$,  on the cohomological category $H(\scrF(\Sigma))$ taking the form
\[
\xymatrix{ HF(\delta, \phi(\delta)) \ar[r]^-{p} & HF(\delta,\tau_i\circ\phi(\delta)) \ar[r] & \ar@/^1.5pc/[ll] HF(\delta,\zeta_i) \otimes HF(\zeta_i, \phi(\delta))} \\[1em]
{}
\]

for a subword $\phi = \prod_{j<i} \tau_{j}$ of the monodromy.   This triangle is induced from an exact triangle in $\scrF(\Sigma)$ of the form
\[
\xymatrix{ \phi(\delta)\ar[r]^-{p} & \tau_i\circ\phi(\delta)) \ar[r]   & \ar@/^1.5pc/[ll]  V\otimes \zeta_i} \\[1em]
\] 
where $V=HF(\zeta_i, \phi(\delta))$ is a $\bZ/2$-graded vector space and the arrow $p$ is multiplication by the section count $\Phi_{\tau_i}$. Concatenating such triangles for all the twists in $u^{8g+4}$ yields 
\begin{equation}\label{eqn:concatenate}
\delta \stackrel{\hat{p}}{\longrightarrow} u^{8g+4}(\delta) \cong \delta
\end{equation}
The morphism $\hat{p}$ counts sections of $X\rightarrow \bP^1$, and there are no holomorphic such for generic almost complex structure,  since all sections have square $\leq -2$ and live in moduli spaces of virtual dimension $<0$. Therefore the arrow $\hat{p}$ vanishes, and $\delta$ is exhibited as a summand in a triangle whose third entry is a twisted complex on the vanishing cycles $\zeta_i$.
\end{proof}

\begin{Corollary} \label{Cor:QH}
The closed-open map $H^*(\Sigma;\Lambda) \rightarrow HH^*(\scrF(\Sigma), \scrF(\Sigma))$ is an isomorphism; similarly for the open-closed map $HH_*(\scrF(\Sigma),\scrF(\Sigma)) \to H_*(\Sigma;\Lambda)$.
\end{Corollary}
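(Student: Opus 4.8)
The statement to be proved, Corollary \ref{Cor:QH}, asserts that the closed-open map $\mathcal{CO}\colon H^*(\Sigma;\Lambda)\to HH^*(\scrF(\Sigma),\scrF(\Sigma))$ is an isomorphism, and dually that the open-closed map $\mathcal{OC}\colon HH_*(\scrF(\Sigma),\scrF(\Sigma))\to H_*(\Sigma;\Lambda)$ is an isomorphism. The backbone of the argument is the interplay between (i) the generation statement just proved, Proposition \ref{Prop:Split_generators}, which says the $A_{2g}$-chain $\{\zeta_i\}$ split-generates $\scrF(\Sigma)$, and (ii) the standard structural properties of the open-closed and closed-open maps: unitality (i.e.\ $\mathcal{CO}$ sends the unit $1\in H^0(\Sigma;\Lambda)$ to the identity natural transformation $\id\in HH^0$), the fact that $\mathcal{OC}$ and $\mathcal{CO}$ are module maps over the quantum cohomology ring $QH^*(\Sigma;\Lambda)=H^*(\Sigma;\Lambda)$, and the non-degeneracy of the Mukai pairing. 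The plan is to invoke the generation criterion of Abouzaid (and its implementation in the surface setting) together with these structural inputs.

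\textbf{Key steps.} First I would recall that for a split-generating collection, Abouzaid's split-generation criterion gives that the open-closed map $\mathcal{OC}\colon HH_*(\scrF(\Sigma))\to H_*(\Sigma;\Lambda)$ hits the unit $[\Sigma]\in H_0$ (equivalently $[pt]\in H^2$ under Poincaré duality) — indeed this is exactly the content of the criterion: $\{\zeta_i\}$ split-generates iff the fundamental class lies in the image of $\mathcal{OC}$. Since we \emph{know} split-generation from Proposition \ref{Prop:Split_generators}, $\mathcal{OC}$ is surjective onto the subring generated by the fundamental class under the $QH^*$-module structure, which for a surface is all of $H^*(\Sigma;\Lambda)$ (the cohomology ring of a closed surface is generated in degrees $0,1,2$ with $H^0$ and $H^2$ being the relevant cyclic generators under multiplication once we know $\mathcal{OC}$ is a $QH^*$-module map and $H^1$ is spanned by products of $H^1$-classes with... — more carefully, one uses that $H^*(\Sigma)$ is generated \emph{as a module over itself} by $1$, and $\mathcal{OC}$ being a module map over $\mathcal{CO}$ forces surjectivity once $1$ is in the image of $\mathcal{CO}$, which follows by unitality). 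Thus $\mathcal{CO}$ is unital hence $\mathcal{OC}$ is a surjective $H^*(\Sigma;\Lambda)$-module map from $HH_*$; combined with the fact that both sides have the same (finite) total dimension — $HH_*(\scrF(\Sigma))$ is computable and $\Sigma$ is homologically smooth and proper, so $\dim HH_* = \dim H_*(\Sigma;\Lambda) = 2+2g$ by an Euler-characteristic / Hodge-type count, or alternatively by directly noting that a surjection of finite-dimensional spaces of equal dimension is an isomorphism — we conclude $\mathcal{OC}$ is an isomorphism. Finally, the non-degeneracy of the Mukai/Poincaré pairing on $HH_*$ (valid since $\scrF(\Sigma)$ is smooth and proper — a Calabi–Yau-type duality) together with the compatibility of $\mathcal{OC}$ and $\mathcal{CO}$ under this pairing and the Poincaré pairing on $H^*(\Sigma)$ upgrades the isomorphism for $\mathcal{OC}$ to one for $\mathcal{CO}$.

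\textbf{The main obstacle.} The crux is establishing that $\dim_\Lambda HH_*(\scrF(\Sigma)) \le 2+2g$, or some equivalent input that promotes ``surjective'' to ``bijective.'' Surjectivity of $\mathcal{OC}$ onto $H_*(\Sigma;\Lambda)$ is the easy half once generation is in hand; the reverse inequality requires either (a) a direct computation of Hochschild homology of the $A_{2g}$-chain algebra — plausible since this is an explicit finite-dimensional (or graded) algebra and there are known mirror-symmetry identifications — or (b) invoking that $\scrF(\Sigma)$ is homologically smooth and proper so that $HH_*$ is finite-dimensional and then running a trace-pairing non-degeneracy argument on \emph{both} $\mathcal{OC}$ and $\mathcal{CO}$ simultaneously: if $\mathcal{CO}$ is injective (which follows from unitality plus the module structure, since the image is a unital subalgebra and $H^*(\Sigma)$ has no proper unital subalgebra-modules of the right form on a surface — actually one argues $\mathcal{CO}$ injective because a class killed by $\mathcal{CO}$ pairs trivially against everything via $\mathcal{OC}$) and $\mathcal{OC}$ is surjective, and the two are adjoint under non-degenerate pairings, then both are isomorphisms. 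I expect the cleanest route, and the one the paper likely intends, is: unitality of $\mathcal{CO}$ $\Rightarrow$ (via Abouzaid's criterion, read backwards, now legitimate because generation is independently known) $\mathcal{OC}$ surjective $\Rightarrow$ (via smooth-properness and Poincaré duality compatibility) $\mathcal{OC}$ and $\mathcal{CO}$ isomorphisms. The subtlety to be careful about is that the open-closed map here is being identified with the composite defining $ch(A)$ in the introduction, and one must make sure the $\bZ/2$-grading conventions and the Novikov coefficients do not introduce any discrepancy in the dimension count (e.g.\ $H^{even}$ vs $H^{odd}$ matching $HH_{even}$ vs $HH_{odd}$ — for $\Sigma_g$ these are $2$ and $2g$ respectively, and the same must hold on the Hochschild side).
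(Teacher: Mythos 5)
There is a genuine gap, at exactly the point you flag as ``the main obstacle,'' and also one step earlier. First, your passage from split-generation to surjectivity of $OC$ rests on reading Abouzaid's criterion backwards, and knowing generation independently does not legitimise this: the criterion is the implication ``if the fundamental class (an invertible element) lies in the image of $OC$ restricted to a subcategory, then that subcategory split-generates,'' and its converse is precisely the nondegeneracy-type statement that needs proof. Proposition \ref{Prop:Split_generators} establishes generation by a geometric section-counting argument, not by exhibiting a Hochschild cycle hitting $[\Sigma]$, so nothing in what precedes Corollary \ref{Cor:QH} tells you that the unit is in the image of $OC$; the only open-closed computation available at this stage is $OC([e_\gamma])=[\gamma]\in H_1$, which does not hit an invertible element. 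Second, even granting surjectivity of $OC$, you still need either the upper bound $\dim_\Lambda HH_*(\scrF(\Sigma))\leq 2g+2$ or injectivity of $OC$ (equivalently, via the adjunction you invoke, surjectivity of $CO$); you correctly identify this as the crux but neither of your two proposed routes is carried out, and the trace-pairing version is circular as phrased, since ``a class killed by $CO$ pairs trivially against everything via $OC$'' already presupposes the compatibility and nondegeneracy statements whose consequence ($CO$ an isomorphism) is what is being proved, and in any case adjointness only converts injectivity of one map into surjectivity of the other, leaving half of the statement unaddressed.

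For comparison, the paper does not argue this way at all: Corollary \ref{Cor:QH} is quoted from \cite[Corollary 3.11]{Smith:HFQuadrics}, whose hypotheses are supplied by Proposition \ref{Prop:Split_generators}. The mechanism there is a resolution of the diagonal in $\Sigma^-\times\Sigma$ by products of the vanishing cycles coming from the same Lefschetz fibration (the same input that gives Lemma \ref{Lem:proper_and_smooth}), which identifies $HH^*(\scrF(\Sigma),\scrF(\Sigma))$ with $HF^*(\Delta,\Delta)\cong QH^*(\Sigma;\Lambda)$ compatibly with the closed-open map, and dually for $OC$. That diagonal argument is exactly the missing input that simultaneously produces the ``hard'' inequality on the rank of $HH$ and the identification of the map; without it (or an equivalent computation, e.g.\ via homological mirror symmetry), the formal package of unitality, module structure, Mukai pairing and Shklyarov's Riemann--Roch does not close up into a proof.
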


\begin{proof}
This is a special case of \cite[Corollary 3.11]{Smith:HFQuadrics}, where the required hypotheses are obtained from Proposition \ref{Prop:Split_generators}.
\end{proof}

\begin{Corollary} \label{Cor:nice_split_generators}
Given a simple closed curve $\sigma \subset \Sigma$, there are curves $\{\xi_1,\ldots,\xi_{2g}\}$ which are split-generators for $\scrF(\Sigma)$ and which meet $\sigma$ minimally.
\end{Corollary}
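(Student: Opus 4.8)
The plan is to start from the $A_{2g}$-chain $\{\zeta_i\}$ of Proposition \ref{Prop:Split_generators}, which split-generates $\scrF(\Sigma)$, and to modify it into a new collection $\{\xi_i\}$ that still split-generates but now meets the given simple closed curve $\sigma$ minimally. The natural mechanism is to apply Lemma \ref{Lem:find_sigma}: for each $i$, isotope $\zeta_i$ by a regular homotopy (preserving unobstructedness, which is automatic here since the $\zeta_i$ are embedded) so that it meets $\sigma$ minimally. The only subtlety is that such an isotopy may have non-trivial flux, so a priori it changes the quasi-isomorphism class of the object $\zeta_i$ in $\scrF(\Sigma)$.

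The key point is that split-generation is insensitive to this. First I would observe that an isotopy of flux $a$ is the restriction of a global symplectomorphism $\phi_a$ of $\Sigma$ with flux $a$ (as recalled in Section 2.3 ``Isotopies and twists''); since $\phi_a$ induces an $A_\infty$-autoequivalence of $\scrF(\Sigma)$, the image $\phi_a(\zeta_i)$ split-generates if and only if $\zeta_i$ does. Applying this one curve at a time is slightly awkward because the $\phi_{a_i}$ for different $i$ need not be compatible; the cleaner route is to note that Lemma \ref{Lem:find_sigma} can be applied to all the $\zeta_i$ simultaneously (isotope them one after another, each time only modifying the curve being moved and not $\sigma$), yielding curves $\xi_i$ each of which is regular-homotopic to $\zeta_i$ through unobstructed curves. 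Since $\zeta_i$ and $\xi_i$ are embedded and smoothly isotopic, and Lemma \ref{Lem:space} (for the non-separating ones; all $\zeta_i$ in an $A_{2g}$-chain are non-separating) together with Corollary \ref{Cor:rankHF}-style reasoning shows the isotopy can be corrected to zero flux by sliding along a transverse curve without destroying minimality of intersection with $\sigma$, we conclude $\xi_i \simeq \zeta_i$ in $\scrF(\Sigma)$. Hence $\{\xi_i\}$ split-generates.

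Concretely, I would argue as follows. By Lemma \ref{Lem:find_sigma} choose, for each $i$, an unobstructed regular homotopy taking $\zeta_i$ to a curve $\xi_i'$ meeting $\sigma$ minimally. If $\sigma$ is non-separating, pick a simple closed curve $\sigma_i$ with non-zero algebraic intersection with $\zeta_i$ and disjoint from $\sigma$ (possible since one has freedom in the complement of $\sigma$); sliding $\xi_i'$ along $\sigma_i$ as in the proof of Lemma \ref{Lem:space} produces $\xi_i$ Hamiltonian isotopic to $\xi_i'$ with flux corrected to zero, and this slide does not change the intersection pattern with $\sigma$. Then by Lemma \ref{Lem:hamiltonian_isomorphic}, $\xi_i \simeq \zeta_i$ in $\scrF(\Sigma)$; since quasi-isomorphic objects generate the same split-closed triangulated subcategory, $\{\xi_i\}$ split-generates $\scrF(\Sigma)$. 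If $\sigma$ is separating one runs a variant of the same argument (now the flux-correcting curve $\sigma_i$ lives inside one of the two components of $\Sigma \setminus \sigma$ and can be chosen disjoint from $\sigma$).

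The main obstacle is the bookkeeping around flux: one must be sure that the flux-correcting slide can always be performed \emph{in the complement of $\sigma$}, so that the minimal-intersection property established via Lemma \ref{Lem:find_sigma} is not destroyed. This reduces to the elementary topological fact that given a non-separating (resp. separating) curve $\zeta_i$, one can find a simple closed curve with non-zero algebraic intersection with $\zeta_i$ but disjoint from $\sigma$; this follows from a Mayer--Vietoris / change-of-coordinates argument on $\Sigma$, using that $\zeta_i \cap \sigma$ is already minimal so the two curves are in ``standard position''. Everything else is a direct appeal to the lemmas already in hand.
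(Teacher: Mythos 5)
Your strategy insists on producing curves $\xi_i$ that are \emph{quasi-isomorphic} to the original chain $\zeta_i$, and this is where the argument breaks. The ``elementary topological fact'' you invoke is false in general: if $\sigma$ is non-separating and $[\zeta_i]=\pm[\sigma]$ in $H_1(\Sigma;\bZ)$, then every simple closed curve disjoint from $\sigma$ has zero algebraic intersection number with $\sigma$, hence with $\zeta_i$, so no flux-correcting slide supported in the complement of $\sigma$ exists. This is not a removable technicality but a sign that the intermediate goal itself is unattainable: take $\sigma$ Hamiltonian isotopic to $\zeta_1$. Minimal position for a curve isotopic to $\zeta_1$ relative to $\sigma$ means \emph{disjoint} from $\sigma$; but any object $\xi_1\simeq\zeta_1$ in $\scrF(\Sigma)$ satisfies $HF^*(\xi_1,\sigma)\cong HF^*(\sigma,\sigma)\neq 0$ (rank two), so a curve representing it can never be disjoint from $\sigma$. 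Thus there is no curve quasi-isomorphic to $\zeta_1$ which meets $\sigma$ minimally, and any proof that tries to keep the generators in their original quasi-isomorphism classes (Lemma \ref{Lem:hamiltonian_isomorphic} plus flux correction) must fail in this case. Your argument does go through for separating $\sigma$, and for non-separating $\sigma$ with $[\zeta_i]\neq\pm[\sigma]$, but the excluded case genuinely occurs.

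The paper's proof sidesteps the issue entirely: up to an automorphism of $\Sigma$ there are only finitely many topological types of simple closed curve $\sigma$, and one checks by inspection of the standard pattern (Figure \ref{Fig:genus2}) that the chain meets each standard model minimally; transporting the whole chain by a single (area-preserving, via Moser) automorphism carrying $\sigma$ to its standard model then gives the desired $\xi_i$. The point is that the new curves need not be quasi-isomorphic to the old ones individually --- it suffices that they are the image of a split-generating collection under a symplectomorphism, which induces an autoequivalence of $\scrF(\Sigma)$ (equivalently, Proposition \ref{Prop:Split_generators} applies verbatim to the transported $A_{2g}$-chain). Replacing your curve-by-curve flux bookkeeping with this single global change of coordinates repairs the argument.
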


\begin{proof}
Up to automorphism, there are only finitely many possibilities for $\sigma$, so the result follows by inspection of the pattern of curves in Figure \ref{Fig:genus2}.
\end{proof}

The same methods that underlie Corollary \ref{Cor:QH} also show:

\begin{Lemma} \label{Lem:proper_and_smooth}
If $\Sigma$ is a closed surface, then $\scrF(\Sigma)^{per}$ is homologically smooth and proper.
\end{Lemma}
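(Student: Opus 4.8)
The plan is to deduce smoothness and properness of $\scrF(\Sigma)^{per}$ from the split-generation statement of Proposition \ref{Prop:Split_generators} together with finiteness of Floer cohomology between the generating curves. First I would recall the general principle (as in \cite{Smith:HFQuadrics}, following Abouzaid's generation criterion and standard homological algebra): if an $A_\infty$-category $\scrA$ is split-generated by finitely many objects $G_1,\dots,G_n$, and if $\bigoplus_{i,j} H^*(hom_\scrA(G_i,G_j))$ is a finite-dimensional $\bK$-vector space, then $\scrA^{per}$ is proper; and if in addition the diagonal bimodule is split-generated by the Yoneda bimodules of the $G_i$, then $\scrA^{per}$ is homologically smooth. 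So the task reduces to (a) verifying the finiteness of morphism spaces among the $\zeta_i$, and (b) establishing that the diagonal bimodule is perfect.

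For (a): the curves $\zeta_i$ of the $A_{2g}$-chain are embedded simple closed curves, and consecutive ones meet in a single point while non-consecutive ones are disjoint. By Corollary \ref{Cor:rankHF} (and its boundary-case analogue, which we do not need here) the Floer cohomology $HF^*(\zeta_i,\zeta_j)$ has rank equal to the geometric intersection number, hence rank $1$ when $|i-j|=1$ and rank $0$ otherwise; while $HF^*(\zeta_i,\zeta_i) \cong H^*(S^1;\Lambda)$ has rank $2$. Thus $\bigoplus_{i,j} H^*(hom(\zeta_i,\zeta_j))$ is finite-dimensional over $\bK$, which gives properness of $\scrF(\Sigma)^{per}$ directly from the split-generation of Proposition \ref{Prop:Split_generators}.

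For (b): the cleanest route is to invoke the relationship between the open-closed map and the diagonal bimodule. By Corollary \ref{Cor:QH} the open-closed map $HH_*(\scrF(\Sigma),\scrF(\Sigma)) \to H_*(\Sigma;\Lambda)$ is an isomorphism, and in particular the unit $1 \in H^0(\Sigma;\Lambda) \cong QH^0(\Sigma)$ lies in the image of the closed-open map $H^*(\Sigma;\Lambda) \to HH^*(\scrF(\Sigma),\scrF(\Sigma))$. By the criterion of Abouzaid--Ganatra--Perutz--Sheridan (the form used in \cite[Cor.\ 3.11]{Smith:HFQuadrics}), hitting the unit of $QH^*$ under the closed-open map — equivalently, the nondegeneracy of the Mukai pairing together with the finiteness from (a) — implies that $\scrF(\Sigma)$ is homologically smooth and proper, i.e.\ the diagonal bimodule is perfect and the category is cohomologically finite. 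Concretely, one writes the diagonal bimodule as split-generated by the Yoneda bimodules $\zeta_i^\vee \otimes \zeta_j$ using the resolution of the diagonal furnished by the generators, the finiteness in (a) guaranteeing that only finitely many summands occur.

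The main obstacle is purely expository: the statement is an immediate corollary of \cite{Smith:HFQuadrics} once one has Proposition \ref{Prop:Split_generators} and the finite-dimensionality in (a), so the real content is citing the correct general lemma and checking its hypotheses (finite generation, finite-dimensionality of total morphism space, and surjectivity/nondegeneracy of the closed-open or open-closed map) are all in place — all of which are recorded above. I do not anticipate any genuine geometric difficulty; the proof is three or four lines citing Proposition \ref{Prop:Split_generators}, Corollary \ref{Cor:rankHF}, Corollary \ref{Cor:QH}, and \cite[Cor.\ 3.11]{Smith:HFQuadrics}.
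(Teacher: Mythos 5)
Your properness argument is correct but takes a detour: since objects of $\scrF(\Sigma)$ are closed (immersed) curves, every morphism complex is already of finite rank, so properness of $\scrF(\Sigma)$, and hence of $\scrF(\Sigma)^{per}$, is immediate without invoking Proposition \ref{Prop:Split_generators} or Corollary \ref{Cor:rankHF} at all; this is exactly how the paper disposes of that half in one line.

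The smoothness half has a genuine gap. You deduce perfection of the diagonal bimodule from Corollary \ref{Cor:QH} via a criterion you attribute to \cite[Cor.\ 3.11]{Smith:HFQuadrics} and to Abouzaid--Ganatra--Perutz--Sheridan, but the statement you use is garbled and, as cited, not available. First, ``the unit of $QH^0$ lies in the image of the closed-open map'' has no content: $CO$ goes from $QH^*$ to $HH^*$ and is automatically unital. The meaningful hypothesis is that the \emph{open-closed} map hits an invertible element, and Abouzaid's criterion then yields split-generation, not smoothness. Second, ``nondegeneracy of the Mukai pairing implies smoothness'' reverses Shklyarov's theorem (smooth and proper implies nondegenerate); it is not a criterion you can quote. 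Third, and most importantly, the logical order is backwards relative to the source you lean on: in the paper, Corollary \ref{Cor:QH} and the smoothness statement are \emph{both} consequences of the same geometric input, namely that the diagonal in $\Sigma^-\times\Sigma$ can be resolved (split-generated) by product Lagrangians, which is the argument of \cite[Section 3.4]{Smith:HFQuadrics}; so deriving smoothness \emph{from} Corollary \ref{Cor:QH} either is circular or requires an independent algebraic theorem of the form ``nondegeneracy implies homological smoothness'' for compact Fukaya categories, which you would have to state and cite precisely (the results of \cite{Ganatra2016} invoked elsewhere in the paper go in the direction smoothness $\Rightarrow$ generation/isomorphism). The sentence in your last step, that the diagonal bimodule is split-generated by the Yoneda bimodules of the $\zeta_i$, is precisely the substantive claim to be proved; it does not follow formally from finiteness of the $HF^*(\zeta_i,\zeta_j)$ or from $CO$ being unital. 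To repair the proof, either argue the resolution of the diagonal by product Lagrangians directly (the paper's route), or use the paper's stated alternative: $\scrF(\Sigma)^{per}$ is equivalent to a category of matrix factorizations of an isolated hypersurface singularity, and such categories are smooth and proper.
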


\begin{proof}[Sketch] Properness  is immediate since the objects are closed Lagrangians. Smoothness follows from the fact that one can resolve the diagonal on $\Sigma\times\Sigma$ by product Lagrangians, which follows for instance from the argument of \cite[Section 3.4]{Smith:HFQuadrics}.  (An alternative is to use that $\scrF(\Sigma)^{per}$ is equivalent to a category of matrix factorizations of an isolated hypersurface singularity, and such categories are always smooth and proper\footnote{Also in the stronger sense of admitting a compact generator.}.)
\end{proof}

\subsection{Mukai pairing}

Let $\scrA$ be a proper (i.e. cohomologically finite) $A_{\infty}$-category, linear over $\bK$.  The \emph{Chern character} is a map
\[ ch: K_0(\scrA) \to HH_0(\scrA),\]
whilst the \emph{Mukai pairing}  is a graded bilinear pairing
\begin{align*}
\langle \bullet, \bullet \rangle: HH_*(\scrA) \otimes HH_*(\scrA) & \to \bK.
\end{align*}
These were introduced by Shklyarov in the case of $dg$-categories \cite{Shklyarov}; since any $A_\infty$ category is quasi-equivalent to a $dg$-category, their definitions and basic properties extend to the $A_{\infty}$-setting.  
Shklyarov proved that 
\begin{equation} \label{eqn:non-comm-rr}
\langle ch(X) , ch(Y) \rangle  = -\chi(X,Y).
\end{equation}

In the case of $\scrF(\Sigma)$, these notions are quite explicit, using
the open-closed map to identify $HH_0(\scrF(\Sigma))$ with $H_1(\Sigma;\Lambda)$.
If $\gamma$ is an unobstructed immersed curve
(equipped with a rank one local system), the absence of non-constant holomorphic discs
with boundary on $\gamma$ implies that the image of
$ch(\gamma)=[\id_\gamma]\in HH_0(\scrF(\Sigma))$ under the open-closed map is exactly
$[\gamma]\in H_1(\Sigma;\bZ)\subset H_1(\Sigma;\Lambda)$.
Comparing \eqref{eqn:non-comm-rr} with the classical identity
$\chi \, HF(\gamma,\gamma')=-[\gamma]\cdot [\gamma']$, we find that
the Mukai pairing is simply the intersection pairing on $H_1(\Sigma;\Lambda)$.

\begin{Lemma} \label{Cor:HH_integral}
Let $X \in \scrF(\Sigma)^{per}$. Then $ch(X) \in H_1(\Sigma;\Lambda)$ represents an integral class, i.e. $ch(X) \in \mathrm{image}\{H_1(\Sigma;\bZ) \to H_1(\Sigma;\Lambda)\}$.
\end{Lemma}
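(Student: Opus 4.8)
The plan is to reduce to the case where $X$ is built out of the split-generators $\zeta_i$ and then compute $ch(X)$ combinatorially. First I would recall that by Proposition \ref{Prop:Split_generators} the curves $\zeta_i$ split-generate $\scrF(\Sigma)$, so $X$ is a summand of a twisted complex built from the $\zeta_i$ (equipped with trivial rank-one local systems). The Chern character is additive over exact triangles and over direct summands (in the sense that $ch$ is defined on $K_0$, and $K_0$ is generated by the classes $[\zeta_i]$), so $ch(X)$ lies in the subgroup of $HH_0(\scrF(\Sigma))$ generated by $ch(\zeta_i) = [\id_{\zeta_i}]$, $i = 1,\dots,2g$.

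Next I would invoke the explicit description of the open-closed map on $HH_0(\scrF(\Sigma)) \cong H_1(\Sigma;\Lambda)$ recorded just before the statement: since $\zeta_i$ is an embedded (hence unobstructed) curve bounding no non-constant holomorphic discs, the image of $ch(\zeta_i)$ under the open-closed map is exactly the integral homology class $[\zeta_i] \in H_1(\Sigma;\bZ) \subset H_1(\Sigma;\Lambda)$. Therefore $ch(X)$, being a $\bZ$-linear combination of the $ch(\zeta_i)$ (the coefficients are integers because $K_0$-classes are integral combinations of generators), maps under the open-closed identification into the $\bZ$-span of the $[\zeta_i]$, which is contained in the image of $H_1(\Sigma;\bZ)$. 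This proves the claim.

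The only genuine subtlety — and the step I would be most careful about — is the bookkeeping that guarantees integrality of the coefficients: one must argue at the level of $K_0(\scrF(\Sigma)^{per})$ rather than just $HH_0$, using that $ch$ factors through $K_0$ and that $K_0$ is generated as an abelian group (not merely as a $\Lambda$-module) by the classes of the split-generators. This follows because a twisted complex is an iterated cone, and in $K_0$ the class of a cone is the difference of the classes of its two sides, so every class in $K_0$ is a finite $\bZ$-linear combination of $[\zeta_i]$ and their shifts $[\zeta_i[1]] = -[\zeta_i]$; a split summand then has class a $\bZ$-linear combination as well. One might alternatively phrase this via the compatibility of $ch$ with the open-closed map and the fact that the open-closed images of the generators are integral, but the cleanest route is the $K_0$-level statement. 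No hard analysis is involved; the content is entirely the combination of Proposition \ref{Prop:Split_generators}, the computation of open-closed images of embedded curves, and elementary properties of $K_0$ and the Chern character.
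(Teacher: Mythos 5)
There is a genuine gap at the step you yourself flag as the main subtlety. Split-generation does \emph{not} imply that $K_0(\scrF(\Sigma)^{per})$ is generated as an abelian group by the classes $[\zeta_i]$. Your argument for twisted complexes is fine (iterated cones give $\bZ$-linear combinations), but the passage to idempotent summands fails: if $X\oplus X'\simeq T$ with $T\in\Tw\scrF(\Sigma)$, you only learn that $[X]+[X']$ is an integral combination of the $[\zeta_i]$, not that $[X]$ is. Passing to the idempotent completion can genuinely enlarge the Grothendieck group --- by Thomason's theorem, dense (cofinal) triangulated subcategories correspond to subgroups of $K_0$, and the subgroup generated by $\Tw\scrF(\Sigma)$ inside $K_0(\Tw^{\pi}\scrF(\Sigma))$ may be proper. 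The paper itself points at this phenomenon: for a closed elliptic curve the Grothendieck group of the split-closed Fukaya category is not of finite rank, so it is certainly not generated by finitely many split-generators. Since the lemma is precisely about arbitrary objects of $\scrF(\Sigma)^{per}$, i.e.\ arbitrary idempotent summands, the assertion ``a split summand then has class a $\bZ$-linear combination as well'' is exactly the point that needs proof, and it does not follow from elementary properties of $K_0$.

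The paper closes this gap by a duality argument rather than a generation argument: by Shklyarov's formula \eqref{eqn:non-comm-rr}, $\langle ch(X), ch(\gamma)\rangle = -\chi\, HF(X,\gamma)\in\bZ$ for every simple closed curve $\gamma$, the open-closed map sends $ch(\gamma)$ to $[\gamma]\in H_1(\Sigma;\bZ)$, and the Mukai pairing is identified with the intersection pairing on $H_1(\Sigma;\Lambda)$. Since the intersection form is unimodular on $H_1(\Sigma;\bZ)$, a class in $H_1(\Sigma;\Lambda)$ pairing integrally with every integral class must itself be integral. Note that this argument needs no control of $K_0$ of the split-closure at all; your approach could only be salvaged by first proving such control, which is a substantially harder statement than the lemma itself.
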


\begin{proof}
By \eqref{eqn:non-comm-rr}, $\langle ch(X), ch(\gamma)\rangle = -\chi \, HF(X,\gamma) \in \bZ$ for any simple closed curve $\gamma \subset \Sigma$.  
As noted above, under the open-closed map $ch(\gamma)$ maps to $[\gamma] \in H_1(\Sigma;\bZ)$.
It follows that $ch(X)$ has integral pairing with all of $H_1(\Sigma;\bZ)$, which is only possible if the class is integral.
\end{proof}



\begin{Corollary} Let $X \in \scrF(\Sigma)^{per}$ be spherical. There is a non-zero class $a\in H^1(\Sigma;\bZ)$ with $\langle a, ch(X)\rangle = 0$.
\end{Corollary}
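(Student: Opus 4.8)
The plan is to reduce the statement to an elementary rank count, the only non-formal ingredient being the integrality established in Lemma~\ref{Cor:HH_integral}. First I would invoke that lemma to write $ch(X)$ as (the image of) a class in $H_1(\Sigma;\bZ)$ under the inclusion $H_1(\Sigma;\bZ)\hookrightarrow H_1(\Sigma;\Lambda)$, using the open--closed identification $HH_0(\scrF(\Sigma))\cong H_1(\Sigma;\Lambda)$ of Corollary~\ref{Cor:QH}. With this in hand, for every $a\in H^1(\Sigma;\bZ)$ the quantity $\langle a, ch(X)\rangle$ is just the ordinary Kronecker pairing of an integral cohomology class with an integral homology class, hence an integer.

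Next I would consider the group homomorphism
\[
\phi\colon H^1(\Sigma;\bZ)\longrightarrow \bZ,\qquad \phi(a)=\langle a, ch(X)\rangle .
\]
Since $g\geq 2$, the source $H^1(\Sigma;\bZ)\cong\bZ^{2g}$ is free of rank $2g\geq 4$; as $\operatorname{image}(\phi)$ is a (possibly trivial) subgroup of $\bZ$, the kernel $\ker\phi$ has rank at least $2g-1\geq 3$, and in particular is non-zero. Any non-zero element $a\in\ker\phi$ is then the required class: a non-zero integral cohomology class with $\langle a, ch(X)\rangle=0$. If it happens that $ch(X)=0$ the functional $\phi$ vanishes identically and the claim is trivial; the content of the argument is precisely that even when $ch(X)\neq 0$ the annihilator of a single class in a free abelian group of rank $>1$ is still non-trivial.

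There is essentially no obstacle: sphericality of $X$ is not used (Lemma~\ref{Cor:HH_integral} already holds for every perfect module), and the genus hypothesis $g\geq 2$ --- equivalently $\operatorname{rk} H^1(\Sigma;\bZ)\geq 2$ --- is the only thing that makes the conclusion fail-safe. The statement is recorded here because it is this class $a$, and the associated $\bG_m$-action on $\scrF(\Sigma)$, that drives the equivariant reduction in steps (2)--(4) of the introduction's outline. If one wanted more --- say $a$ primitive, or Poincar\'e dual to a \emph{non-separating} simple closed curve meeting $X$ controllably --- one would argue a little more carefully using the unimodularity of the intersection form, but the bare existence statement needs nothing beyond the rank count above.
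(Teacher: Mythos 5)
Your argument is correct and is exactly what the paper intends: the paper's proof is the one-line "Evident from Lemma \ref{Cor:HH_integral}", i.e.\ integrality of $ch(X)$ plus the observation that the annihilator of a single class in $H^1(\Sigma;\bZ)\cong\bZ^{2g}$ with $2g\geq 4$ is non-zero. Your rank count simply spells out the same reasoning, and your remark that sphericality is not needed is also accurate.
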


\begin{proof} Evident from Lemma \ref{Cor:HH_integral}. \end{proof}

\subsection{Balancing\label{Sec:Monodromy}}

There is a natural map 
\[
\Symp(\Sigma) \to \mathrm{nu{\text-}fun}(\scrF(\Sigma),\scrF(\Sigma))
\]
which takes any Hamiltonian symplectomorphism to an equivalence which is quasi-isomorphic to the identity\footnote{The notation $\mathrm{nu{\text-}fun}$ follows \cite{Seidel:FCPLT}; the Fukaya category admits cohomological but not strict units, and the functors are therefore not strictly unital; they are however cohomologically unital.}. This yields a map $\Symp(\Sigma)/\Ham(\Sigma) \to \Auteq(D^{\pi}\scrF(\Sigma))$, where the domain is viewed as a discrete group.  For a surface of genus $\geq 2$, 
\[
\Symp(\Sigma)/\Ham(\Sigma) = H^1(\Sigma;\bR) \rtimes \Gamma(\Sigma)
\]
by Moser's theorem and the vanishing of the flux group.
To build a homomorphism $\Gamma_g \to \Auteq(D^{\pi}\scrF(\Sigma_g))$ requires some additional choice.  Suppose $g\geq 2$, and fix a primitive $\theta$ for the pullback of $\omega_{\Sigma}$ to the unit tangent bundle $S(T\Sigma)$. For any simple closed curve $\sigma \subset \Sigma$, a choice of orientation of $\sigma$ defines a canonical lift $\sigma \subset S(T\Sigma)$, and we then have a real number $t_{\sigma} = \int_{\sigma} \theta$.  Say $\sigma$ is balanced if $t_{\sigma} = 0$, and define a \emph{balanced} symplectomorphism $f: \Sigma \to \Sigma$ to be one which takes balanced curves to balanced curves, i.e. for which $t_{f(\sigma)} = t_{\sigma}$ for every oriented simple closed curve $\sigma \subset \Sigma$.  

\begin{Lemma}
$\{\mathrm{Balanced \ symplectomorphisms}\} / \Ham(\Sigma) \ \simeq \ \Gamma_g$.
\end{Lemma}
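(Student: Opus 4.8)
The plan is to identify the group of balanced symplectomorphisms modulo Hamiltonian diffeomorphisms with $\Gamma_g$ by exhibiting it as a complement to $H^1(\Sigma;\bR)$ inside $\Symp(\Sigma)/\Ham(\Sigma) = H^1(\Sigma;\bR)\rtimes\Gamma_g$. First I would note that the balanced condition, $t_{f(\sigma)}=t_\sigma$ for every oriented simple closed curve $\sigma$, is preserved under composition and under isotopy, and that it only depends on the class of $f$ in $\Symp(\Sigma)/\Ham(\Sigma)$: a Hamiltonian isotopy moves $\sigma$ through curves of constant $\int\theta$ (since $d\theta$ is the pullback of $\omega$ and Hamiltonian isotopies sweep zero area), so $\Ham(\Sigma)$ acts trivially on all the numbers $t_\sigma$, and in particular $\Ham(\Sigma)$ consists of balanced maps. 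Thus $\{\text{balanced}\}/\Ham(\Sigma)$ is a well-defined subgroup of $H^1(\Sigma;\bR)\rtimes\Gamma_g$.

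Next I would pin down how the flux factor $H^1(\Sigma;\bR)$ acts on the invariants $t_\sigma$. A symplectomorphism $\phi_a$ of flux $a\in H^1(\Sigma;\bR)$ moves an oriented curve $\sigma$ to a curve $\sigma'$ with $[\sigma']=[\sigma]$ and such that the cylinder traced between them has area $\langle a,[\sigma]\rangle$; since $d\theta=\omega$, Stokes' theorem on that cylinder gives $t_{\sigma'}-t_\sigma = \langle a,[\sigma]\rangle$ (modulo the choice of lift, which is canonical once an orientation is fixed — this is exactly the kind of computation already used in the proof of Lemma \ref{Lem:determines}). Hence the translation subgroup $H^1(\Sigma;\bR)$ acts on the collection $\{t_\sigma\}$ by $t_\sigma \mapsto t_\sigma + \langle a,[\sigma]\rangle$, and this action is free: the intersection form on $H_1(\Sigma;\bR)$ is nondegenerate, so the classes $[\sigma]$ of simple closed curves span $H_1$ and a nonzero $a$ changes some $t_\sigma$. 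It follows that no nontrivial element of the flux subgroup is balanced, i.e. $\{\text{balanced}\}/\Ham(\Sigma)$ meets $H^1(\Sigma;\bR)$ trivially, so the projection $\{\text{balanced}\}/\Ham(\Sigma)\to\Gamma_g$ is injective.

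For surjectivity I would argue as follows: given $[\psi]\in\Gamma_g$, lift it to a diffeomorphism and then (using Moser) to a symplectomorphism $\psi$; its class in $\Symp/\Ham$ is $(a,[\psi])$ for some $a\in H^1(\Sigma;\bR)$. The function $\sigma\mapsto t_{\psi(\sigma)}-t_\sigma$ is, up to the flux ambiguity, a well-defined homomorphic-looking gadget; one checks that $\psi\mapsto$ (the corresponding class in $H^1(\Sigma;\bR)$) is precisely the obstruction to $\psi$ being balanced, and that composing $\psi$ with a symplectomorphism of flux $-a$ produces a balanced representative of the same mapping class. Concretely, by the displayed formula the map $f\mapsto f\circ\phi_{-a}$ corrects the flux so that $t_{(f\circ\phi_{-a})(\sigma)} = t_{f(\sigma)} - \langle a,[\sigma]\rangle$, and choosing $a$ to be the unique class with $\langle a,[\sigma]\rangle = t_{\psi(\sigma)}-t_\sigma$ for all $\sigma$ (this class exists because the right-hand side is linear in $[\sigma]$, a fact forced by the cocycle-type identity $t_{\psi_1\psi_2(\sigma)}-t_\sigma = (t_{\psi_1\psi_2(\sigma)}-t_{\psi_2(\sigma)}) + (t_{\psi_2(\sigma)}-t_\sigma)$ together with the mapping-class-invariant normalization) makes $\psi\circ\phi_{-a}$ balanced. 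Therefore every mapping class has a balanced representative, and combined with the injectivity above this gives the isomorphism $\{\text{balanced symplectomorphisms}\}/\Ham(\Sigma)\simeq\Gamma_g$.

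The main obstacle I anticipate is the last surjectivity step — specifically verifying that $\sigma\mapsto t_{\psi(\sigma)}-t_\sigma$ is genuinely represented by a cohomology class in $H^1(\Sigma;\bR)$ (equivalently, that it descends to a well-defined linear functional on $H_1(\Sigma;\bR)$, not merely a function on isotopy classes of simple closed curves). This requires care about the canonical-lift construction and the dependence of $t_\sigma$ on the homology class versus the isotopy class of $\sigma$; the cleanest route is probably to observe that $t_\sigma$ for the canonical lift of an oriented $\sigma$ differs from $t_{\sigma'}$ for a homologous $\sigma'$ by the area swept, which is a homological quantity, so that the "flux defect" of $\psi$ is visibly the class in $H^1(\Sigma;\bR)$ recording areas swept between $\sigma$ and $\psi(\sigma)$, i.e. exactly the flux component of $[\psi]\in\Symp/\Ham$. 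Once that identification is made, balancedness is literally the vanishing of the flux component, and the result is immediate.
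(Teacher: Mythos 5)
The paper offers no argument of its own here (it simply cites Seidel's genus-two paper), and your overall strategy is the standard one: work inside $\Symp(\Sigma)/\Ham(\Sigma)=H^1(\Sigma;\bR)\rtimes\Gamma_g$, check that $\Ham$ preserves every $t_\sigma$, check that a flux-$a$ isotopy shifts $t_\sigma$ by $\langle a,[\sigma]\rangle$ so that a balanced symplectomorphism smoothly isotopic to the identity is Hamiltonian, and then balance an arbitrary representative of a mapping class by correcting its ``flux defect''. The injectivity half of your argument is fine (including the lifting subtlety: an isotopy of $\sigma$ lifts to an isotopy of canonical lifts, and the integral of $d\theta=p^*\omega$ over the lifted cylinder is the swept area).

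The genuine gap is precisely the point you flag but do not close: surjectivity needs the defect $D_\psi(\sigma)=t_{\psi(\sigma)}-t_\sigma$ to equal $\langle a_\psi,[\sigma]\rangle$ for a single class $a_\psi\in H^1(\Sigma;\bR)$, and neither of your justifications gives this. The displayed ``cocycle-type identity'' is a tautology about composing maps and says nothing about how $D_\psi$ depends on $\sigma$; and the assertion that $t_\sigma$ differs from $t_{\sigma'}$, for homologous $\sigma'$, ``by the area swept, which is a homological quantity'' is false: homologous simple closed curves need not be isotopic (so no swept area is defined), and even for isotopic curves the enclosed area is a geometric, not homological, invariant --- indeed $t_\sigma$ itself is not a function of $[\sigma]$. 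The correct way to fill this in: $\psi$ induces, by normalizing its differential, a diffeomorphism $\widetilde\psi$ of $S(T\Sigma)$ covering $\psi$ which takes the canonical lift $\widetilde\sigma$ of $\sigma$ to the canonical lift of $\psi(\sigma)$; since $\psi^*\omega=\omega$, the $1$-form $\widetilde\psi^{\,*}\theta-\theta$ is closed, so $D_\psi(\sigma)=\int_{\widetilde\sigma}\bigl(\widetilde\psi^{\,*}\theta-\theta\bigr)$ depends only on $[\widetilde\sigma]$, and because $p^*:H^1(\Sigma;\bR)\to H^1(S(T\Sigma);\bR)$ is an isomorphism (as the paper notes, using $\chi(\Sigma)\neq 0$) this equals $\langle a_\psi,[\sigma]\rangle$ with $a_\psi:=(p^*)^{-1}[\widetilde\psi^{\,*}\theta-\theta]$. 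One then checks $a_{\psi\circ\varphi}=\varphi^*a_\psi+a_\varphi$ and $a_{\varphi_b}=b$ for $\varphi_b$ of flux $b$, so composing $\psi$ with a flux $-a_\psi$ symplectomorphism (which does not change the mapping class) produces a balanced representative, and the rest of your argument goes through. This closed-form construction is the missing idea; with it, the proof is complete and is essentially the argument the paper outsources to Seidel.
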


\begin{proof} See \cite{Seidel:HMSgenus2}. \end{proof}

It follows that the choice of $\theta$ defines a map 
\begin{equation} \label{eqn:exists}
\Gamma_g \to \Auteq(D^{\pi}\scrF(\Sigma_g)).
\end{equation} 

Given two primitives $\theta, \theta'$ for $p^*\omega_{\Sigma}$, with $p: S(T\Sigma) \to \Sigma$, one obtains a class $[\theta-\theta'] \in H^1(S(T\Sigma);\bR)$. 
Note that, since $\chi(\Sigma)\neq 0$, the pullback $p^*$ induces an isomorphism on first cohomology, so we can think
of $[\theta-\theta']$ as an element of $H^1(\Sigma;\bR)$. 
Changing $\theta$ to $\theta'$ conjugates the image of \eqref{eqn:exists}  by the action of this element of $H^1(\Sigma;\bR)$.

\begin{Remark} The analogous construction for punctured surfaces may be more
familiar to the reader: the choice of an exact symplectic structure on a
punctured surface $S$ (i.e.\ a primitive $\theta$ for the
symplectic form itself, rather than its lift to the unit tangent bundle)
determines a homomorphism from the mapping class group of $S$ to $\Auteq(D^\pi\scrF(S))$
by considering exact symplectomorphisms of $S$, i.e.\ those which take exact curves to exact
curves, up to Hamiltonian isotopy.
\end{Remark}

\subsection{Cautionary examples}

Despite its generally elementary character, there are some surprises in Floer theory for curves on surfaces.

\begin{figure}[ht]
\begin{center} 
\begin{tikzpicture}[scale = 0.6]

\draw[semithick] ellipse (4.5 and 3);
\draw[semithick] (-2,0) arc (200:340:0.5);
\draw[semithick] (1,0) arc  (200:340:0.5);
\draw[semithick] (-1.87,-0.15) arc (150:30:0.4);
\draw[semithick] (1.13,-0.15) arc (150:30:0.4);
\draw[fill] (-3,0.5) circle (0.1); 
\draw[fill] (-2.5,0.5) circle (0.1);
\draw[semithick] (-3,-0.55) -- (-3,1.5);
\draw[semithick] (-2.5,-1) -- (-2.5,0.7);
\draw[semithick, rounded corners] (-3,1.3) -- (-3,2) -- (2,2) -- (2,1) -- (0.2,1) -- (-2.5,1) -- (-2.5,0.7);
\draw[semithick,rounded corners] (-2.5,-0.9) -- (-2.5,-1.1) -- (0.5,-1.1) -- (0.5,0.5) -- (-4.4,0.5);
\draw[semithick, rounded corners, dashed] (-4.4,0.5) -- (-4,0.7) -- (0.2,0.7) -- (0.2,-0.8) -- (-4.3,-0.8);
\draw[semithick, rounded corners] (-4.3,-0.8) -- (-4,-0.6) -- (-3,-0.6) -- (-3,-0.5);
\draw (0.5,1.5) node {$C$};
\draw (-3.5,0) node {$A$}; 

\end{tikzpicture}
\end{center}
\caption{An exotic spherical object when $C>A$\label{Fig:bad}}
\end{figure}
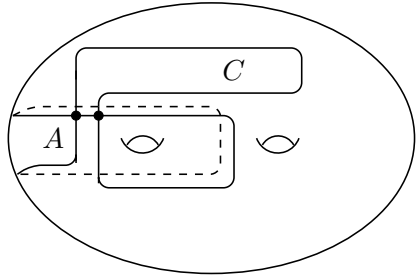

\begin{Lemma} \label{Lem:bad} There are spherical objects in $\scrF(\Sigma)$ which are not quasi-isomorphic to any simple closed curve with local system.
\end{Lemma}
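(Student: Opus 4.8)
The plan is to exhibit an explicit exotic spherical object, guided by Figure~\ref{Fig:bad}: take the genus-two surface (or any $\Sigma_g$, $g \geq 2$) and a non-separating simple closed curve $\delta$ sitting inside a subsurface of small area, together with an immersed curve $\gamma$ obtained from $\delta$ by dragging a finger of $\delta$ around a loop that passes through a region $C$ of large area and back, creating a single self-intersection, so that $\gamma$ bounds one teardrop whose area is $C - A > 0$. Since $C > A$, Remark~\ref{Rmk:sweeparea} (or rather its obstruction clause) is the relevant phenomenon: the teardrop cannot be removed by any area-controlled regular homotopy. One first checks, using Lemma~\ref{l:unobstructed}, that a suitable choice makes $\mu^0(\gamma) = 0$: the lift of $\gamma$ to the universal cover must be embedded, which can be arranged by routing the finger so that the locally convex teardrop is cancelled in the cover, i.e. the self-intersection is of the ``bigon'' rather than teardrop type after passing to $\tilde\Sigma$ — equivalently $\gamma$ is homotopic to an embedded curve but is \emph{not regularly homotopic to one through unobstructed immersed curves sweeping bounded area}. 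Hence $\gamma$, equipped with a rank one local system, is a genuine object $X$ of $\scrF(\Sigma)$.

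Next I would compute $H^*(\hom(X,X))$ and verify it is $H^*(S^1;\Lambda)$, so that $X$ is spherical. The Floer complex $CF^*(\gamma,\gamma)$ has the generators coming from a perturbation (the two ``diagonal'' generators giving a copy of $H^*(S^1)$) together with a pair of generators at the self-intersection point; the claim is that the self-intersection generators cancel in Floer cohomology, with the differential between them weighted by a power of $q$ recording the teardrop area $C-A$, and that no further differentials survive. This is exactly the kind of computation that is transparent after lifting to the universal cover as in the proof of Lemma~\ref{Lem:hamiltonian_isomorphic}: the relevant holomorphic polygons are bigons in $\tilde\Sigma$ between the self-intersection and itself, and since $\tilde\gamma$ is embedded one controls them combinatorially. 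Alternatively — and perhaps more cleanly — one realises $X$ as the mapping cone of the canonical morphism $\delta \to \tau_\rho(\delta)$ for an appropriate curve $\rho$ dual to the finger, but with the cone taken using a non-Hamiltonian representative so that the resulting twisted complex is quasi-isomorphic to $\gamma$; the spherical twist structure then gives $H^*(\hom(X,X)) \cong H^*(S^1)$ for free, as in Lemma~\ref{Lem:twists_forces_objects}.

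Finally I would show $X$ is \emph{not} quasi-isomorphic to any simple closed curve with local system. The obstruction is homological: compute $ch(X) \in H_1(\Sigma;\Lambda)$. Since $ch$ is additive in triangles and $ch(\delta) = ch(\tau_\rho(\delta)) = [\delta]$ (teardrop-free curves contribute their homology class), the cone computation gives $ch(X) = [\delta] - [\tau_\rho(\delta)]$, and if $\rho$ is chosen so that $\tau_\rho(\delta)$ is homologous to $\delta$ (e.g. $\rho$ separating, or the finger's loop null-homologous) then $ch(X) = 0$. So $X$ is a spherical object with \emph{vanishing} Chern character. But by the Mukai-pairing discussion any embedded simple closed curve $\gamma'$ with rank one local system has $ch(\gamma') = [\gamma'] \in H_1(\Sigma;\bZ)$, and for $X$ to be such an object we would need $[\gamma'] = 0$, i.e. $\gamma'$ separating; yet a separating curve $\gamma'$ has $HF^*(\gamma',\sigma) = 0$ for $\sigma$ in one of the two complementary pieces, whereas one checks directly that $X$ has nonvanishing Floer cohomology with a suitable test curve on each side of any separating curve (the finger visits region $C$), a contradiction. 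Hence no such $\gamma'$ exists.

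The main obstacle I expect is the second step: pinning down $CF^*(\gamma,\gamma)$ precisely enough to conclude that it is quasi-isomorphic to $H^*(S^1;\Lambda)$ and not, say, that the self-intersection generators survive or that extra differentials appear. The universal-cover argument makes the count of contributing polygons tractable, but one must be careful that the areas are arranged (this is where $C > A$ enters, ensuring the cancelling differential carries a strictly positive power of $q$ and hence lies in the Novikov ring, so the cancellation is valid over $\Lambda_{\geq 0}$ and not merely formally). Getting the signs and the $Spin$-structure bookkeeping right in this cancellation is the delicate point; everything else is either a direct homological consequence of Shklyarov's formula \eqref{eqn:non-comm-rr} or a standard surface-topology observation about separating curves.
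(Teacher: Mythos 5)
Your construction is not internally consistent, and the step you flag as delicate is in fact broken at an earlier point. A curve that ``bounds one teardrop'' is obstructed: by Lemma~\ref{l:unobstructed} the conditions $\mu^0(\gamma)=0$, ``no teardrops'' and ``embedded lift to the universal cover'' are all equivalent, and teardrops lift to teardrops, so there is no routing of the finger that cancels the teardrop upstairs while keeping it downstairs. If instead you take a genuinely unobstructed curve with a single self-intersection, there is no embedded bigon between two distinct self-intersection points to supply the cancelling Floer differential, so sphericality is exactly what fails to be clear; the paper's example (Figure~\ref{Fig:bad}) is a \emph{separating} curve pushed through itself, producing a \emph{pair} of self-intersections and an honest bigon of area $C$ whose differential kills the extra generators. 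Your two descriptions of $X$ also contradict each other: an unobstructed immersed curve with a rank-one local system has $ch(X)=[\gamma]\in H_1(\Sigma;\bZ)$, and for a finger dragged around a null-homologous loop on a non-separating $\delta$ this is $[\delta]\neq 0$ --- but then Theorem~\ref{Thm:Main} says such an $X$ \emph{is} quasi-isomorphic to a simple closed curve, so it cannot serve as the counterexample. Meanwhile the cone on the canonical morphism $\delta\to\tau_\rho(\delta)$ is quasi-isomorphic to copies of $\rho$ (Seidel's exact triangle), not to your immersed curve, and replacing it by a ``non-Hamiltonian representative'' does not make its sphericality come ``for free'' from Lemma~\ref{Lem:twists_forces_objects}, which concerns something else entirely.

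The final step is also the wrong kind of argument. Floer cohomology ranks between curve-like objects and embedded test curves are governed by geometric intersection numbers (cf.\ Corollary~\ref{Cor:rankHF}), i.e.\ by homotopy classes only; the exotic object therefore has \emph{exactly} the same vanishing/non-vanishing pattern as an embedded curve in its homotopy class, so no pattern of $HF(X,\sigma)\neq 0$ versus $HF(\gamma',\sigma)=0$ can prove the lemma. Concretely, your curve has one self-intersection, hence lies in a regular neighbourhood of a wedge of two circles; the boundary of that neighbourhood contains a separating curve with everything on the far side Floer-orthogonal to $X$, so the claim that $X$ pairs nontrivially with a test curve on each side of \emph{any} separating curve is false for your own example. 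The actual obstruction in the paper is quantitative rather than qualitative: one first argues that any candidate simple closed curve must lie in the same (separating) homotopy class, and then Lemma~\ref{Lem:determines} shows that the natural transformation $\id\to\tau$ remembers the enclosed area through the Novikov weights $q^A,q^B$; the hypothesis $C>A$ means the exotic object behaves like a separating curve enclosing area $A-C<0$, which no embedded representative can achieve. Your use of $C>A$ (merely to make a $q$-power positive) misses this mechanism, which is the heart of the lemma.
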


\begin{proof}[Sketch]
See Figure \ref{Fig:bad}, which shows an immersed curve obtained by pushing a separating simple closed curve through itself to create a single bigon.  The region labelled $C$ contributes to a non-trivial Floer differential, so this immersed curve is spherical. However, the hypothesis that the area $C > A$ implies that one cannot deform the curve to be embedded through an isotopy which has trivial flux: the end result would have to separate $\Sigma$ into two regions, one of area $A-C < 0$.  It is not hard to see that, if this immersed curve was quasi-isomorphic to a simple closed curve, that curve would have to be in the same homotopy class, and then consideration of Lemma \ref{Lem:determines} would show that the natural transformation from the identity to
the associated twist functor would involve a different linear combination of Floer generators.
\end{proof}

\begin{Lemma} There is an immersed curve $S^1 \to \Sigma$ with rank one local system which admits non-trivial idempotents.
\end{Lemma}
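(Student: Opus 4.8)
The plan is to construct an explicit immersed curve $\gamma: S^1 \to \Sigma$ with a rank one local system $\xi$ such that the self-Floer algebra $H^*(\hom_{\scrF(\Sigma)}((\xi,\gamma),(\xi,\gamma)))$ contains a nontrivial idempotent, i.e. is not a local ring. The model to keep in mind is the ``figure-eight'' type example, or more precisely a curve obtained by taking two disjoint embedded simple closed curves $c_1, c_2$ that together bound an embedded cylinder (subsurface) and connect-summing them along an arc crossing that cylinder, so that the resulting immersed curve $\gamma$ has a single self-intersection point $x$. Then $CF^*(\gamma,\gamma)$ has generators: the unit (and cohomological top class) from the diagonal-type minimum/maximum, plus two generators $u, v$ of odd degree at the self-intersection $x$ (the two ways of jumping between the branches). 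The $A_\infty$-products $\mu^2(u,v)$ and $\mu^2(v,u)$ count immersed bigons/triangles with corner at $x$, and these are exactly the two ``halves'' of the cylinder bounded by $c_1$ and $c_2$, carrying $q$-powers equal to the respective areas $A_1, A_2$ and local system monodromies.

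First I would fix the combinatorics: choose $c_1, c_2$ homologous (or even isotopic) disjoint simple closed curves cobounding a subcylinder split by $\gamma$ into two regions of areas $A_1$ and $A_2$, and choose the rank one local system $\xi$ so that around the loop $c_i$ (realized as a sub-loop of the domain $S^1$ after smoothing) the holonomy is some $\lambda_i \in U_\Lambda$. Next I would compute the relevant Floer products directly from the polygon count on the surface, lifting to the universal cover as in Lemma~\ref{l:unobstructed} to see that $\gamma$ is unobstructed (it lifts to an embedded arc) and that no spurious discs contribute. The key computation is that, in $H^*(\hom(\gamma,\gamma))$, after identifying the degree-zero part, one finds an element $e$ with $e^2 = e$ and $e \neq 0, 1$: concretely $e$ will be a linear combination $\alpha \cdot \mathrm{id}_\gamma + \beta \cdot \mu^2(u,v)$ (the latter being a multiple of $\mathrm{id}_\gamma$ only if the two area/holonomy contributions coincide), together with the observation that the degree-zero cohomology is a two-dimensional commutative algebra isomorphic to $\Lambda \times \Lambda$ precisely when the monodromy/area parameters are generic. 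One should choose the geometric parameters so that the self-Floer cohomology ring is $\Lambda \oplus \Lambda x$ with $x^2 = x$ rather than $x^2 = 0$; this splitting of $\gamma$ as a sum of two objects in $\Tw^\pi\scrF(\Sigma)$ is exactly the claimed nontrivial idempotent. (Morally, this is mirror to a skyscraper at a node splitting into the structure sheaves of the two branches.)

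The main obstacle I anticipate is the sign and $q$-power bookkeeping in computing $\mu^2(u,v)$, $\mu^2(v,u)$, and checking that $H^0(\hom(\gamma,\gamma))$ really is two-dimensional and semisimple rather than being the dual numbers $\Lambda[x]/x^2$ (which would give a spherical-type object with no nontrivial idempotent). This forces one to verify that there is a genuine triangle (not merely a bigon) or a pair of regions whose area difference is nonzero and whose holonomies do not cancel, so that $\mu^2(u,v)$ is a unit times $\mathrm{id}_\gamma$ plus a non-nilpotent correction; equivalently one checks that the two eigenvalues of multiplication by the degree-zero generator are distinct. Once the product structure is pinned down, the existence of the idempotent (and hence the failure of $\gamma$ to be indecomposable in $\Tw^\pi\scrF(\Sigma)$) is immediate; identifying the two summands with the objects $(\xi_i, c_i)$ for appropriate rank one local systems $\xi_i$ is then a matter of matching Floer cohomology ranks using Corollary~\ref{Cor:rankHF}, which I would include as a remark rather than a full proof since only the existence statement is claimed.
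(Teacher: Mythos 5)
Your example is, up to homotopy, the same one the paper uses: the oriented connect sum of two parallel curves across the cylinder is an immersed curve homotopic to the double cover $\sigma^2$ of the core curve $\sigma$. Where you differ is in method. The paper's proof is soft and avoids all polygon counting: by invariance under unobstructed regular homotopy (cf.\ Lemma \ref{Lem:hamiltonian_isomorphic}), a rank one local system on such a curve defines an object quasi-isomorphic to $\sigma$ equipped with the rank two pushforward local system, whose monodromy is conjugate to $\left(\begin{smallmatrix} 0 & \lambda \\ 1 & 0\end{smallmatrix}\right)$ and hence has two distinct eigenvalues $\pm\sqrt{\lambda}$ over the algebraically closed Novikov field $\Lambda_{\bC}$; the two eigenlines are rank one sub-local systems and their projections are the non-trivial idempotents. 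No genericity of areas or holonomies is needed, and your concern about distinguishing $\Lambda\times\Lambda$ from $\Lambda[x]/x^2$ evaporates: by the same invariance the answer is always the pushforward model, which is semisimple in degree zero in characteristic zero.

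If you want to keep the direct Floer computation, there is a concrete slip to fix: the two generators $u,v$ at the self-crossing do not both have odd degree; they are dual to one another and their $\bZ/2$-degrees sum to $1$, so one is even and one is odd. Consequently $\mu^2(u,v)$ is an odd class, and your candidate idempotent $\alpha\cdot\mathrm{id}_\gamma+\beta\cdot\mu^2(u,v)$ is not homogeneous of degree zero, hence cannot be the sought idempotent. The correct statement is that $H^0$ is spanned by $\mathrm{id}_\gamma$ and the \emph{even} crossing generator $u$, with $\mu^2(u,u)=c\cdot\mathrm{id}_\gamma$ for a unit $c$ (a $q$-power times unitary holonomy, matching $A^2=\lambda\,\mathrm{id}$ in the pushforward model), so that $H^0\cong\Lambda[u]/(u^2-c)\cong\Lambda\times\Lambda$ and the idempotents are $\frac{1}{2}(\mathrm{id}_\gamma\pm c^{-1/2}u)$. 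With that repair your computational route does prove the lemma (and identifying the two summands with $\sigma$ carrying the eigenvalue local systems, e.g.\ via Lemma \ref{lem:rank_one} and Corollary \ref{Cor:rankHF}, is consistent), but it is considerably more work than the paper's two-line argument.
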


\begin{proof} Take an immersed curve $\gamma$ which is homotopic to the double cover of a simple closed curve $\sigma$. Then a rank one local system on $\gamma$ defines an object quasi-isomorphic to a rank two local system $\xi$ on $\sigma$, and rank one sub-local-systems of $\xi$ define idempotents.
\end{proof}

\begin{Lemma} \label{Lem:not_cone} Let $p$ be a transverse intersection point of curves $\gamma,\gamma'$ such that $p$ is a Floer cocycle in $CF^*(\gamma,\gamma')$. The immersed curve resulting from surgery at $p$ need not be quasi-isomorphic to the mapping cone $\gamma \stackrel{p}{\longrightarrow} \gamma'$.
\end{Lemma}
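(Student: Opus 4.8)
The plan is to produce an explicit local model where surgery at a cocycle $p$ changes the Floer-theoretic behaviour depending on the \emph{areas} of the relevant bigons, whereas the mapping cone $\{\gamma \stackrel{p}{\longrightarrow} \gamma'\}$ is determined purely by the homotopy-theoretic data of $p$ in $CF^*(\gamma,\gamma')$. Concretely, I would take $\gamma$ and $\gamma'$ to be two simple closed curves meeting at exactly two points $p, r$ (for instance, two isotopic non-separating curves bounding an annulus on each side, or two curves in a once-punctured torus), chosen so that $p$ is a Floer cocycle but $r$ need not be. The surgery at $p$ yields an immersed curve $\sigma_p = \gamma \#_p \gamma'$ whose self-Floer complex, and whose class in $HH_0$, one can compute directly from the combinatorics of immersed polygons. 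The key contrast: the mapping cone $C := \{\gamma \stackrel{p}{\longrightarrow}\gamma'\}$ has $ch(C) = [\gamma']-[\gamma] \in H_1(\Sigma;\Lambda)$ by additivity of the Chern character under cones, while the surgered curve has $ch(\sigma_p) = [\sigma_p] = [\gamma] + [\gamma']$ or $[\gamma]-[\gamma']$ depending on orientations — in particular, choosing orientations so these classes differ (e.g. so that one is zero in homology and the other is not), $\sigma_p$ and $C$ cannot be quasi-isomorphic.

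If one insists instead on a setting where the homology classes agree, the sharper version of the statement is an area obstruction, and here I would lean on the mechanism already illustrated in Lemma \ref{Lem:bad}: take $\gamma, \gamma'$ isotopic curves, so that $[\gamma]-[\gamma'] = 0$ and the cone $C \simeq \gamma \oplus \gamma'[1]$ when $p$ lifts to an isomorphism-type generator, but is a nontrivial spherical object otherwise. Then arrange (as in Figure \ref{Fig:bad}) that the surgered curve $\sigma_p$, while spherical, sits in a homotopy class forcing it — if it were a cone on $p$ — to bound a region of negative area, which is impossible. The computation reduces to: (i) identifying which immersed bigons bounded by $\sigma_p$ contribute to $\mu^1$ and with what $q$-powers, using the universal-cover picture of Lemma \ref{l:unobstructed}; (ii) writing down the generators of $CF^*(C,C)$ from the cone formula; and (iii) observing the two graded vector spaces or, failing that, the two $A_\infty$-structures (via the natural transformation to a twist functor, as in Lemma \ref{Lem:determines}) disagree.

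The main obstacle is (iii): merely exhibiting that $\sigma_p$ and $C$ \emph{can} differ is cheap once one picks non-matching homology classes, but to make the lemma genuinely informative one wants an example where all the ``obvious'' invariants — homology class, rank of self-Floer cohomology, sphericity — coincide, and only a finer invariant (the area data, equivalently the precise Floer cocycle representing the twist natural transformation $\Phi$) separates them. Verifying that finer invariant requires carefully tracking symplectic areas of the polygons involved and invoking Lemma \ref{Lem:determines} to conclude that the curve underlying $C$, if it were embedded, would have to be the specific simple closed curve with a specific area-normalisation, contradicting the chosen areas. I expect this bookkeeping of areas in the universal cover to be the delicate part; everything else is a routine unwinding of the mapping-cone and surgery constructions.
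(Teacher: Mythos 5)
Your first mechanism does not work, and it is where the real gap lies. The homology class of the surgered curve is not data you can tune independently of the cone: the surgery at $p$ is Lagrangian cobordant to $\gamma\cup\gamma'$, and with consistent orientation/grading conventions (the $\bZ/2$-degree of $p$ is itself determined by the local intersection sign, and an orientation reversal of $\gamma$ is the shift $\gamma[1]$) the class of the surgered curve coincides with $ch$ of the corresponding cone, namely $[\gamma']\pm[\gamma]$ with matching signs on both sides. A quick sanity check: your argument uses only homology classes and makes no reference to bigons, so if it were valid it would equally show that surgery and cone differ in the bigon-free case --- contradicting the Remark immediately following Lemma \ref{Lem:not_cone} (and the standard surgery exact triangle). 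What you are really doing when you ``choose orientations so the classes differ'' is comparing the cone on $p\in CF^*(\gamma,\gamma')$ with the \emph{other} smoothing of the crossing, i.e.\ with the surgery associated to $p$ viewed with a shifted/reversed brane structure; that mismatch of conventions is not the content of the lemma.

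Your second mechanism is the right flavour (the phenomenon is area-sensitive), but it is only a sketch and misses the idea that makes the paper's proof short. The paper arranges for $p$ to be an \emph{exact} cocycle: in Figure \ref{Fig:bad_surgery} two bigons give $dr_1=q^{\alpha_1}p$, $dr_2=-q^{\alpha_2}p$, so $\mathrm{Cone}(p)\simeq \gamma[1]\oplus\gamma'$ is a direct sum, while the visible strip through the surgery neck identifies the surgered curve with the cone on the \emph{reverse-direction} morphism $q^{\alpha_1}r_1\in CF^*(\gamma',\gamma)$, whose class in $HF^*(\gamma',\gamma)$ is non-zero; hence the surgery is a non-split extension and cannot be quasi-isomorphic to the sum (a second example, Figure \ref{Fig:bad_surgery2}, even leaves the triangulated envelope, the discrepancy being accounted for by a bounding cochain on the surgered curve). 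By contrast, your route through Lemma \ref{Lem:determines} and the twist natural transformation would require exactly the delicate area bookkeeping you flag as unresolved, and even then you have not specified an example in which $p$ is a cocycle, the surgery is spherical, and the finer invariant demonstrably separates the two objects. As it stands the proposal does not contain a complete argument for the lemma.
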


\begin{figure}[ht]
\begin{center} 
\begin{tikzpicture}[scale=0.8]

\draw[semithick] (-3,0) -- (3,0);
\draw[semithick] (-1,-1) parabola (-2,1.5);
\draw[semithick] (-1,-1) parabola (0,0);
\draw[semithick] (1,1) parabola (0,0);
\draw[semithick] (1,1) parabola (2, -1.5);
\draw[fill] (0,0) circle (0.1);
\draw[fill] (-1.65,0) circle (0.1);
\draw[fill] (1.65,0) circle (0.1);
\draw (-2,-0.3) node {$r_2$};
\draw (0.2,-0.3) node {$p$};
\draw (2,-0.3) node {$r_1$};
\draw (-3.3,0) node {$\gamma$};
\draw (1.6,-1.3) node {$\gamma'$};

\draw[semithick, rounded corners] (4,0) -- (6.7,0) -- (7,0.3);
\draw[semithick, rounded corners] (10,0) -- (7.3,0) -- (7,-0.3);
\draw[semithick] (8,1) parabola (7,0.3);
\draw[semithick] (8,1) parabola (9,-1.5);
\draw[semithick] (6,-1) parabola (7,-0.3);
\draw[semithick] (6,-1) parabola (5,1.5);
\draw[fill] (5.35,0) circle (0.1);
\draw[fill] (8.65,0) circle (0.1);
\draw (5,-0.3) node {$r_2$};
\draw (9,-0.3) node {$r_1$};
\draw (10.3,0) node {$\sigma$};

\end{tikzpicture}
\end{center}
\caption{The mapping cone is not the surgery\label{Fig:bad_surgery}}
\end{figure}
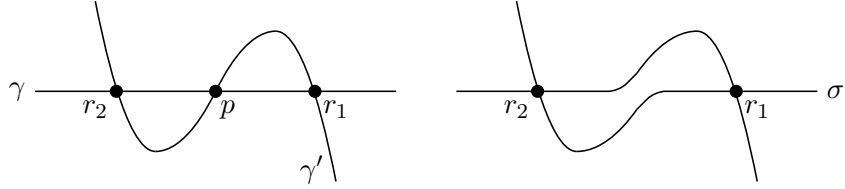

\begin{proof}
Consider Figure \ref{Fig:bad_surgery}, where in the first image the Floer complex $CF^*(\gamma,\gamma')$ has 
\[
dr_1 = q^{\alpha_1} \cdot p, \ dr_2 = -q^{\alpha_2} \cdot p
\]
with $\alpha_1,\alpha_2$ the areas of the bigons; we assume there are no holomorphic strips other than those in the picture. Then $p$ is an \emph{exact} Floer cocycle, so 
\[
\mathrm{Cone}(p) \simeq \gamma[1] \oplus \gamma'.
\]
In this case, the Lagrange surgery  $\sigma$ at $p$ is quasi-isomorphic to 
the cone on $q^{\alpha_1}r_1$ viewed as a (closed)
morphism in the reverse direction, from $\gamma'$ to $\gamma$ 
(or equivalently, $q^{\alpha_2} r_2$, which is cohomologous). However,
there are also examples where the Lagrange surgery at an exact Floer cocycle
between a pair of simple closed curves $\gamma,\gamma'$ yields an immersed
curve $\sigma$ which lies outside of their triangulated 
envelope (Figure \ref{Fig:bad_surgery2}). In all these examples, the surgered curve $\sigma$ remains cobordant to $\gamma \cup \gamma'$,
but the Lagrangian cobordism between them is obstructed, and the cobordism
only yields an exact triangle
after deforming $\sigma$ by a suitable bounding cochain (which amounts geometrically to smoothing
a self-intersection of $\sigma$, to obtain a curve
in a different homotopy class -- in fact, homotopic to $\gamma \cup \gamma'$ rather than $\sigma$).
This is a purely one-dimensional phenomenon -- for instance, in higher dimensions, there would be no rigid strip passing through the neck region after surgery, in contrast to the visible strip in the right image of Figure \ref{Fig:bad_surgery}.
\end{proof}

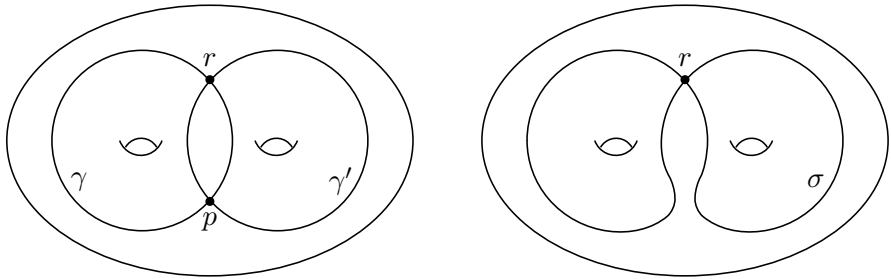
\begin{figure}[ht]
\begin{center} 
\begin{tikzpicture}[scale = 0.6]
\draw[semithick] ellipse (4.5 and 3);
\draw[semithick] (-2,0) arc (200:340:0.5);
\draw[semithick] (1,0) arc  (200:340:0.5);
\draw[semithick] (-1.87,-0.15) arc (150:30:0.4);
\draw[semithick] (1.13,-0.15) arc (150:30:0.4);
\draw[semithick] (-1.5,0) ellipse (2 and 2);
\draw[semithick] (1.5,0) ellipse (2 and 2);
\draw (-2.9,-0.9) node {$\gamma$};
\draw (2.9,-0.9) node {$\gamma'$};
\fill (0,-1.35) circle (0.1);
\fill (0,1.35) circle (0.1);
\draw (0,-1.8) node {$p$};
\draw (0,1.8) node {$r$};
\end{tikzpicture}
\qquad
\begin{tikzpicture}[scale = 0.6]
\draw[semithick] ellipse (4.5 and 3);
\draw[semithick] (-2,0) arc (200:340:0.5);
\draw[semithick] (1,0) arc  (200:340:0.5);
\draw[semithick] (-1.87,-0.15) arc (150:30:0.4);
\draw[semithick] (1.13,-0.15) arc (150:30:0.4);
\draw[semithick, rounded corners] (-3.5,0) arc (180:-30:2) -- (0.214,-1.53) arc
(-130:210:2) -- (-0.214,-1.53) arc (310:180:2);
\draw (2.9,-0.9) node {$\sigma$};
\fill (0,1.35) circle (0.1);
\draw (0,1.8) node {$r$};
\end{tikzpicture}

\end{center}
\caption{The mapping cone differs from the surgery by a bounding cochain\label{Fig:bad_surgery2}}
\end{figure}

\begin{Remark}
In the situation of Lemma \ref{Lem:not_cone}, if $\gamma \cup \gamma'$ bounds no (immersed) bigons, then the mapping cone and the surgery do agree, cf. \cite[Section 5]{Abouzaid:higher_genus}.
\end{Remark}

\section{Immersed curves and bigons}\label{sec:bigons}

\subsection{Bigons}

In this section, $S$ denotes a compact surface which may have empty or non-empty boundary.  Let $\gamma \subset S$ be an
unobstructed immersed curve, which we always assume has only transversal self-intersections. An \emph{embedded bigon} with boundary on $\gamma$ is a map $u: D \to S$ from the closed disc to $S$ which takes $\pm 1 \in \partial D$ to self-intersection points of $\gamma$, which takes the boundary $\partial D \backslash \{\pm 1\}$  to  $\gamma$, and which is an embedding $D \to S$.  Note that there will in general be arcs of $\gamma$ which meet the interior of the bigon. If there are no such arcs, so $\gamma \cap u(D) = u(\partial D)$, then we say the bigon is \emph{empty}. 

\begin{Lemma}
An immersed curve $\gamma \subset S$ bounds at most finitely many embedded bigons.
\end{Lemma}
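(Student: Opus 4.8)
The plan is to show that an embedded bigon is determined by finite combinatorial data, so that the (finitely many) possibilities can simply be enumerated. First I would record that a compact immersed curve with only transverse self-intersections has finitely many self-intersection points; thus $\gamma$ is the image of a finite graph $G\subset S$ whose vertices are the self-intersection points (each $4$-valent) and whose edges are the arcs of $\gamma$ joining consecutive self-intersection points as read along $S^1$.

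Given an embedded bigon $u\colon D\to S$ with corners $p=u(+1)$ and $q=u(-1)$, I would extract from it a pair of embedded arcs in $G$. Since $u$ is an embedding, $p\neq q$; let $\alpha_1,\alpha_2$ be the two components of $\partial D\setminus\{\pm 1\}$. Each $u|_{\overline{\alpha_i}}$ is a continuous injection of a compact interval, hence a homeomorphism onto its image $\beta_i$, and by definition $\beta_i\subset\gamma=G$; so $\beta_i$ is an embedded arc in the finite graph $G$ with both endpoints at the vertices $p,q$ — i.e.\ a simple edge-path. Injectivity of $u$ forces $\beta_1\cap\beta_2=\{p,q\}$, so $C:=\beta_1\cup\beta_2$ is a simple closed curve in $S$ with $u(\partial D)=C$. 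A finite graph contains only finitely many simple edge-paths between two given vertices (each visits each vertex at most once), so across all embedded bigons there are only finitely many possibilities for the unordered pair $\{\beta_1,\beta_2\}$, hence for $C$.

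It remains to bound the number of bigons with a fixed boundary curve $C$. Here I would argue that $u(\mathrm{int}\,D)$ is open in $S$ (by invariance of domain, interior points of $D$ mapping to interior points of $S$) and disjoint from $C$, hence contained in a single component $V$ of $S\setminus C$, and that $u(\mathrm{int}\,D)$ is then both open and closed in $V$, so $u(D)=\overline{V}$. Thus the image region is determined by the choice of one of the at most two components of $S\setminus C$, and for a fixed region with fixed corners the bigon is unique up to reparametrization. Combining the three steps — finitely many corner pairs, times finitely many boundary curves, times at most two discs per curve — yields the finiteness claim.

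The crux is the middle step, and in particular the essential use of the embedding hypothesis: it is precisely because $u$ is injective on $\partial D$ that each boundary arc is an \emph{embedded} arc in $G$, hence cannot wind around $\gamma$ repeatedly and is one of the finitely many simple edge-paths; for merely immersed bigons (with $u|_{\partial D\setminus\{\pm1\}}$ mapping into $\gamma$ but $u$ not an embedding) the conclusion is false. The final step is standard point-set topology — a simple closed curve bounds at most two embedded discs, and at most one unless $S\cong S^2$ — and note that the arcs of $\gamma$ which may cross the interior of a non-empty bigon play no role in the count.
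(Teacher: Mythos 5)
Your proof is correct and follows essentially the same route as the paper's: finitely many self-intersection points and hence finitely many possible boundary arcs of $\gamma$ between a given pair of them, after which the pair of arcs (the boundary curve) determines the embedded disc, hence the bigon, up to the at-most-two choices of complementary component. Your write-up merely makes the paper's one-line argument explicit (graph formalization, injectivity forcing simple edge-paths, invariance of domain for the disc-determination step).
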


\begin{proof} For a given pair of intersection points, there are only finitely many possible boundary arcs in $\gamma$ between them.  A pair of arcs which cuts out a disc in $S$ defines a unique bigon.
\end{proof}

\begin{Lemma}[Hass, Scott]
Let  $\gamma \subset \Sigma$ be an immersed closed curve which does not
bound any teardrops. If $\gamma$ is homotopic to a simple closed curve, but is not embedded, then $\gamma$ bounds an embedded bigon.
\end{Lemma}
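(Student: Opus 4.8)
The statement is the Hass--Scott theorem: a homotopically non-trivial immersed closed curve $\gamma \subset \Sigma$ which bounds no teardrops, is homotopic to a simple closed curve, but is not itself embedded, must bound an embedded bigon. The plan is to lift to the universal cover and argue there. By Lemma \ref{l:unobstructed}, the hypothesis that $\gamma$ bounds no teardrops means its lift $\tilde\gamma$ to $\tilde\Sigma$ (a properly embedded arc, since $\gamma$ is homotopically non-trivial) is in fact embedded; moreover, different lifts of $\gamma$ (translates under the deck group) are again embedded properly embedded arcs. The self-intersections of $\gamma$ correspond exactly to transverse intersection points between distinct lifts $\tilde\gamma_0$ and $g\cdot\tilde\gamma_0$ for $g \in \pi_1(\Sigma)$. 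Since $\gamma$ is not embedded, some pair of distinct lifts meets.

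The key step is to produce an \emph{innermost} bigon between two lifts. First I would use the fact that $\gamma$ is homotopic to a simple closed curve $c$: lifting a homotopy, $\tilde\gamma$ is properly homotopic (bounded distance) to a lift $\tilde c$ of $c$, and distinct lifts of $c$ are disjoint (since $c$ is embedded and simple closed curves on surfaces have disjoint lifts when homotopic to embedded ones, using that $\pi_1$ acts on the circle at infinity). Hence the two endpoints-at-infinity of $\tilde\gamma_0$ and of $g\cdot\tilde\gamma_0$ are \emph{unlinked} on $\partial_\infty\tilde\Sigma \cong S^1$ whenever they are distinct; and if a nontrivial $g$ fixes the pair of endpoints setwise, then (because $\gamma$ is primitive, being homotopic to a simple closed curve, or after passing to the primitive root) $\tilde\gamma_0$ and $g\cdot\tilde\gamma_0$ have the \emph{same} pair of endpoints, so being two disjoint-at-infinity embedded arcs with common endpoints they cobound a strip; an innermost region in this strip is an embedded bigon. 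In the generic case where two lifts $\tilde\gamma_0, \tilde\gamma_1$ have four distinct, pairwise unlinked endpoints on $S^1_\infty$ but nonetheless intersect in the interior, the number of intersection points must be even and at least two (an arc leaving and re-entering must cross back), and a standard innermost-disc argument on the finitely many intersection points along $\tilde\gamma_0$ produces two consecutive intersection points $p, q$ cutting out a subarc of $\tilde\gamma_0$ and a subarc of $\tilde\gamma_1$ that together bound an embedded disc $\tilde H$ in $\tilde\Sigma$ whose interior meets neither arc; one then checks that no \emph{other} lift of $\gamma$ can pass through the interior of $\tilde H$ either, by a further innermost-choice argument (choosing $\tilde H$ to have smallest possible area among all such candidate bigons between all pairs of lifts). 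Projecting $\tilde H$ down to $\Sigma$ gives an embedded bigon on $\gamma$, because minimality of $\tilde H$ forbids the projection from having self-overlaps.

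For the remaining step, once a bigon in $\tilde\Sigma$ is found whose interior is disjoint from all lifts of $\gamma$, its image in $\Sigma$ is an embedded bigon with boundary on $\gamma$, and the proof is complete. The last thing to verify is that $\tilde H$ really is a bigon and not a more degenerate configuration: the two arcs meet at $p$ and $q$ transversally (self-intersections of $\gamma$ are transverse by hypothesis), so the two corners are genuine, and $\tilde H$ is a disc since $\tilde\Sigma$ is planar.

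\textbf{Main obstacle.} The delicate point is ruling out that every ``innermost'' region obtained this way still has \emph{some} third lift of $\gamma$ passing through its interior — i.e. producing a bigon that is embedded in $\Sigma$, not merely an immersed bigon or one with extra arcs inside. This is exactly where one must set up the right minimality: order all candidate cobounding discs (over all pairs of lifts and all pairs of consecutive intersection points) by area, and argue that a smallest one has empty interior, since any lift crossing its interior would, by the embeddedness of all lifts, cut off a strictly smaller candidate disc — reproducing the nested-teardrop bookkeeping already used in the proof of Lemma \ref{l:unobstructed}. Getting this induction clean, and simultaneously handling the two topologically distinct cases (two lifts sharing both endpoints at infinity, versus two lifts with four distinct unlinked endpoints), is the real content of the argument; everything else is routine covering-space and innermost-disc manipulation.
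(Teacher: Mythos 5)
The paper does not actually prove this statement; it simply cites Hass--Scott, so your sketch is an attempt to reprove a genuinely hard theorem, and its central step has a gap. You conflate two different notions: the lemma asks for an \emph{embedded} bigon in the paper's sense (the disc embeds in $\Sigma$, but arcs of $\gamma$ are allowed to cross its interior), whereas your construction aims for a bigon in the universal cover whose interior is disjoint from \emph{all} lifts of $\gamma$, i.e.\ after projection an \emph{empty} bigon. That stronger statement is false in general: as the paper notes immediately after the lemma, Hass and Scott give examples of curves satisfying the hypotheses which bound no empty bigon at all, and this is precisely why the separate Steinitz lemma (triple-point moves) is invoked before a bigon can be emptied. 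Consequently your area-minimality induction cannot close: a third lift can traverse a candidate bigon from its $\tilde\gamma_0$-edge to its $\tilde\gamma_1$-edge, and such a crossing cuts off no smaller candidate bigon inside it (it creates triangle-type regions, exactly the configurations triple-point moves are designed to remove), so ``minimal area'' does not force an empty interior.

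Even for the correct, weaker goal, the genuine difficulty remains unaddressed: an innermost disc between two lifts, with interior disjoint from those two lifts, need not project injectively to $\Sigma$ --- deck translates of the disc can overlap it as soon as other lifts cross its interior, so embeddedness of the projected bigon is not automatic. Controlling this is the actual content of Hass--Scott's theorem (their argument goes through least-area/curve-shortening representatives rather than a naive innermost argument), and your sketch supplies no substitute once the empty-interior claim is withdrawn. A smaller slip: since $\gamma$ is homotopic to a simple closed curve it is primitive, so two \emph{distinct} lifts can never share both endpoints at infinity; the ``common endpoints, cobounding a strip'' case you treat does not occur, while the case you must treat (four distinct unlinked endpoints, lifts crossing an even number of times, at least two) is exactly where the projection-injectivity problem lives.
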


\begin{proof} This is proved in \cite{Hass-Scott}. \end{proof}

Hass and Scott show by examples that one can not in general assume that $\gamma$ bounds an empty bigon. We introduce a combinatorial move on immersed curves:
\begin{itemize}
\item The \emph{triple-point move} changes a configuration of three intersecting arcs which cut out a small downwards-pointing triangle to one defining a small upwards-pointing triangle, cf. Figure \ref{Fig:triangle_move}.
\end{itemize}

\begin{figure}[ht]
\begin{center} 
\begin{tikzpicture}[scale=0.6]
\draw[semithick] (2,-2) -- (-2,2);
\draw[semithick] (-2,-2) -- (2,2);
\draw[semithick] (-2.5,0) arc (135:45:3.7);

\draw[semithick] (8,-2) -- (4,2);
\draw[semithick] (4,-2) -- (8,2);
\draw[semithick] (3.5,0) arc (-135:-45:3.7);
\end{tikzpicture}
\end{center}
\label{triple_point_via_bigons}
\caption{The triple-point move\label{Fig:triangle_move}}
\end{figure}
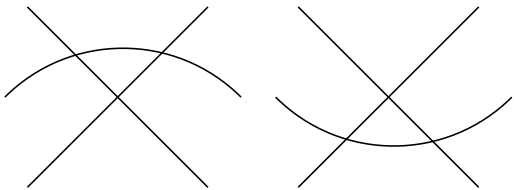

\begin{Lemma} The triple-point move preserves the total number of self-intersections of $\gamma$, and does not increase the total number of embedded bigons on $\gamma$.  
\end{Lemma}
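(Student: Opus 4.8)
The first assertion is immediate. The triple-point move is supported in a small closed disc $\Delta \subset S$ in which $\gamma$ consists of three arcs $\ell_1,\ell_2,\ell_3$, pairwise transverse, each pair meeting in exactly one point $c_{12},c_{13},c_{23}$; outside $\Delta$ the curve is unchanged. The move replaces this configuration by another three-arc configuration with the same three crossings (slightly displaced) and no others, so the total number of self-intersections is unchanged. Moreover it preserves, for each strand $\ell_i$, the unordered pair of crossings lying on $\ell_i\cap\Delta$ (namely the two among $\{c_{12},c_{13},c_{23}\}$ that involve $\ell_i$), both before and after. Consequently there is a canonical bijection between arcs of $\gamma$ joining two self-intersection points and arcs of the moved curve $\gamma'$ joining the corresponding points: it is the identity on the portion lying outside $\Delta$, and on each strand-segment inside $\Delta$ it sends it to the evident segment of $\gamma'$; it is compatible with concatenation.

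\textbf{Bigons.} Since the inverse of a triple-point move is again a triple-point move, it suffices to produce an injection from embedded bigons of $\gamma'$ to embedded bigons of $\gamma$; I would take it to be the bijection above. Let $B'$ be an embedded bigon of $\gamma'$ with boundary arcs $a',b'$ and vertices $p,q$, and let $a,b$ be the corresponding arcs of $\gamma$. First, $a\cup b$ is an embedded loop: outside $\Delta$ it agrees with $a'\cup b'$, and inside $\Delta$ two distinct strand-segments of $\gamma$ are disjoint (resp.\ meet in a single crossing) exactly when the corresponding segments of $\gamma'$ are, so $a\cap b=a'\cap b'=\{p,q\}$. Next, $a\cup b$ is null-homotopic, since it differs from the null-homotopic loop $a'\cup b'$ by a homotopy supported in the disc $\Delta$; hence it bounds a disc $B\subset S$. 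It remains to check that $B$ is embedded. Here $B$ and $B'$ agree outside $\Delta$, and inside $\Delta$ they differ only by sliding one strand across one of the three crossing points (Figure~\ref{Fig:triangle_move}). The boundary $a'\cup b'$ meets $\Delta$ in a controlled way: it enters along a subset of $\{\ell_1,\ell_2,\ell_3\}$, each such strand either running straight through $\Delta$ or turning at a crossing that must then be one of the vertices $p,q$. Enumerating these finitely many local types and tracking on which side of each strand the bigon lies, one verifies directly that embeddedness of $B'\cap\Delta$ forces embeddedness of $B\cap\Delta$, hence of $B$. The key point is that the only mechanism by which embeddedness could fail under the move is that the slid strand sweeps across a vertex $p$ or $q$ of the bigon and thereby pushes an arc of $\partial B$ out of $\overline{B}$; but the slid strand passes through at most one of the three crossings, and $p,q$ are crossings, so in the local model the slid portion of $\partial B$ stays on the correct side. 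Thus the bijection restricts to an injection on embedded bigons, and the total number of embedded bigons does not increase.

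\textbf{Main obstacle.} The genuinely delicate part is the last embeddedness-transfer step: making the local case analysis watertight rather than merely ``clear from the picture'', especially for bigons whose boundary turns at one of the three crossing points, and bookkeeping which side of the slid strand the bigon occupies (the move does move that strand from one side of a double point to the other). I expect the cleanest rigorous treatment is to work in a disc neighbourhood of $\Delta$, list the local types of $(\partial B)\cap\Delta$ up to homeomorphism, and check each; alternatively, one can present a triple-point move as an ambient isotopy composed with a single ``Reidemeister-III'' passage and observe that such a passage alters an embedded bigon only when a strand crosses one of the bigon's two vertices, which never occurs here since the vertices are crossings and the passage moves a strand only across the \emph{opposite} crossing.
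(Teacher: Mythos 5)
Your first assertion (invariance of the number of self-intersections) is fine, but the bigon half of your argument has a genuine gap, and it sits exactly where you located your ``main obstacle''. The claim that ``inside $\Delta$ two distinct strand-segments of $\gamma$ are disjoint (resp.\ meet in a single crossing) exactly when the corresponding segments of $\gamma'$ are'', and hence that $a\cap b=a'\cap b'$, is false in general, because the triple-point move permutes the order of the three crossings along each strand. In the local model where the three strands are $\{y=x\}$, $\{y=-x\}$ and $\{y=-\varepsilon\}$ (moved to $\{y=+\varepsilon\}$), a segment of $\{y=x\}$ which ends at its crossing with the horizontal strand and approaches it from the side away from the origin is disjoint from $\{y=-x\}$ before the move, but contains the crossing at the origin after the move; symmetrically, a segment of the horizontal strand ending at its crossing with $\{y=x\}$ picks up the crossing with $\{y=-x\}$. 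So if one edge of a candidate bigon ends at the $13$-crossing along strand $1$ (approached from the far side) while the other edge elsewhere runs through $\Delta$ along strand $2$ through the $12$-crossing, the corresponding boundary loop acquires an interior intersection point at the $12$-crossing and is no longer embedded. Your concluding heuristic --- that the slid strand only passes across the crossing \emph{opposite} to it and never across a vertex of the bigon --- overlooks precisely this case: when a vertex of the bigon is one of the two crossings lying \emph{on} the moving strand, that vertex itself slides past the other two crossings along both of its branches, changing which crossings lie in the interiors of the bigon's edges. (Note also that embeddedness of the closed bigon is automatic once its boundary is an embedded null-homotopic circle, so the real issue is the embedded-loop step that you dispatched with the incorrect intersection claim, not the disc.)

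Two further remarks. First, your reduction ``since the inverse of a triple-point move is again a triple-point move, it suffices to produce an injection'' commits you to proving the stronger statement that the number of embedded bigons is exactly preserved; that raises, rather than lowers, the burden on the embeddedness-transfer step, and any correct argument must either establish that invariance or handle the crossings-on-the-moving-strand cases directly rather than via a naive arc-by-arc correspondence. Second, for calibration: the paper offers no argument at all here (its proof is the single word ``Evident''), so there is no hidden technique you failed to reproduce; but as written your proposal is incomplete by its own admission (the case analysis is deferred), and the one structural claim meant to substitute for that analysis is the one that fails.
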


\begin{proof} Evident.  \end{proof}

\begin{Lemma}[Steinitz]
If $\gamma$ bounds an embedded bigon, then after a finite sequence of triple-point moves, $\gamma$ bounds an empty bigon.
\end{Lemma}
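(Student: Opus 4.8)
The plan is to induct on some complexity measure attached to $\gamma$ together with a chosen embedded bigon $H$, the measure being (roughly) the number of arcs of $\gamma$ passing through the interior of $H$, with the number of self-intersections of $\gamma$ held fixed. If $H$ is already empty there is nothing to prove, so assume its interior is met by at least one arc of $\gamma$. The goal of each step is to produce, via a triple-point move, either an empty bigon or a bigon of strictly smaller complexity, so that the process terminates.

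First I would set up the local picture. Pull $\gamma$ back to the domain disc $D$ of $u: D \to S$; the preimage of $\gamma$ in the interior is a finite collection of properly embedded arcs with endpoints on $\partial D$, together possibly with arcs that cross the two vertices — but those correspond to branches of $\gamma$ through the corner points and can be handled separately (or excluded by a small perturbation / by choosing $H$ innermost among bigons with the same pair of corner arcs). Among all the arcs crossing $H$, choose one that is \emph{outermost}, i.e. cuts off a sub-disc $H' \subset H$ of the domain whose interior meets no other arc of $\gamma$. This sub-disc $H'$ is bounded by one interior arc of $\gamma$ and one sub-arc of $\partial H$, the latter lying either along the $\gamma$-edge of $H$ or running over one of the two corners of $H$. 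In either case $H'$ is (the image of) a small embedded triangle or bigon: its three (or two) sides are arcs of $\gamma$ and its interior is disjoint from $\gamma$.

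The key step is then a case analysis on this innermost region $H'$, using it to feed a triple-point move. If $H'$ is an empty \emph{triangle} — three arcs of $\gamma$ cutting out a disc with empty interior — then applying the triple-point move of Figure \ref{Fig:triangle_move} slides the offending arc across the triangle; this does not change the number of self-intersections of $\gamma$, does not increase the total number of embedded bigons, and strictly decreases the number of arcs meeting the interior of $H$ (the arc we pushed now lies outside $H$, and by outermostness no new arcs are pushed in). So the complexity of $(\gamma, H)$ drops and we induct. If instead $H'$ is an empty \emph{bigon} — meaning the innermost interior arc runs between the two $\gamma$-edges of $H$, or is itself already an empty bigon sitting inside $H$ — then we are done: $\gamma$ bounds an empty bigon (namely $H'$, or the bigon cut off by it). The remaining degenerate configurations, where $H'$ straddles a corner of $H$, give a triangle two of whose sides meet at a corner of $H$; this is still a valid input for the triple-point move and reduces complexity the same way. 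In all cases we either terminate with an empty bigon or strictly reduce the complexity, and since the complexity is a non-negative integer the induction closes.

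The main obstacle I expect is bookkeeping the corner cases correctly: ensuring that the triple-point move is genuinely applicable to the innermost region (that it really is a small embedded triangle and not something the ambient topology obstructs), that it does not accidentally create a teardrop or destroy unobstructedness, and that after the move one can re-select an innermost arc without the complexity having secretly gone back up — in particular verifying that pushing an outermost arc out of $H$ cannot drag other arcs of $\gamma$ into $H$. Carefully chosen "innermost/outermost" conventions, combined with the already-established fact that the triple-point move does not increase the bigon count, should make all of this go through, but it is the part that needs genuine care rather than the word "Evident".
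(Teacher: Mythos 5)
The paper does not actually prove this lemma --- it simply cites Steinitz \cite{Steinitz, Steinitz2} and \cite{Schleimer_et_al} --- so your attempt has to stand on its own, and it has a genuine gap at its central step. You choose ``an outermost arc, i.e.\ one that cuts off a sub-disc $H'\subset H$ whose interior meets no other arc of $\gamma$,'' and your whole case analysis assumes $H'$ is bounded by \emph{one} interior strand plus a piece of $\partial H$. But the strands of $\gamma$ crossing the interior of $H$ are not disjoint embedded arcs as in an innermost-arc argument for intersections of surfaces: they are strands of an immersed curve and generically cross each other inside $H$. Already for two strands, each running from one edge of $H$ to the other and crossing each other once inside (an ``X'' in the bigon), no strand cuts off a sub-disc free of the others, so the outermost arc you need does not exist; the innermost empty regions are instead triangles bounded by \emph{two} interior strands and an edge of $H$ (or by three interior strands), configurations your cases never treat. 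Handling them is exactly the content of Steinitz's lemma: one first passes to a bigon minimal with respect to the number of crossings in its closure (forcing every strand to run from one edge to the other, strands to be embedded inside, and any two to cross at most once there), and then an extremal choice of triangle --- e.g.\ an interior crossing whose triangle cut off together with a sub-arc of an edge is minimal --- is shown to be empty, so a triple-point move can push that crossing out across the edge. Your termination measure also breaks in this regime: pushing an interior crossing out through an edge does not reduce ``the number of arcs through the interior of $H$,'' so one must instead track crossings in the closed bigon (which is why the classical induction is set up that way).

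In the special case where the strands crossing $H$ are pairwise disjoint, your argument is essentially correct (an outermost strand cuts off an empty bigon or an empty corner triangle, and pushing it out across a corner reduces the count by one), but that case is not where the difficulty lies. To repair the general case you would need the minimal-bigon reduction and the empty-triangle existence argument sketched above, i.e.\ the actual Steinitz induction, rather than the innermost-arc shortcut.
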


\begin{proof} This is proved in \cite{Steinitz, Steinitz2}, see also \cite{Schleimer_et_al}. \end{proof}

\subsection{Removing bigons\label{Sec:bigons}}

Let $v: \gamma \to S$ be an immersion with $[v(\gamma)] \in H_1(S;\bZ)$ a non-zero class.

%
%

\begin{Lemma} \label{Lem:shrink_area}
If $v(\gamma)$ bounds an empty embedded polygon, then there is a 
Hamiltonian isotopy of $v(\gamma)$ which decreases the area of this polygon 
to be arbitrarily small without creating any self-intersections.
However, if $S$ has non-empty boundary and $[v(\gamma)]$ vanishes in
$H_1(S,\partial S;\bZ)$ then the isotopy may require enlarging the surface $S$.
\end{Lemma}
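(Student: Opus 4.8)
The plan is to realize the area-shrinking by an explicit compactly-supported Hamiltonian isotopy of the immersed curve, working on the domain $\gamma$ of the immersion. Since the embedded polygon $P$ cut out by $v(\gamma)$ is empty, $v$ restricts to an embedding on the union of the boundary edges of $P$, so the preimage under $v$ of a neighbourhood of $\overline{P}$ is a disjoint union of arcs in the domain $S^1$, one for each edge of $P$; no other part of $\gamma$ enters $\overline{P}$. Thus one may choose a smooth function $H$ on $S$ supported in a small neighbourhood of $\overline{P}$ whose Hamiltonian flow $\psi_t$ pushes each edge of $P$ inward across $P$; because the edges are disjoint arcs of $\gamma$, pushing them simultaneously does not create any new self-intersections, and since each edge sweeps only region lying inside $\overline{P}$, the only areas that change are the areas of $P$ and of the polygons adjacent to it across its edges. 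By choosing $H$ appropriately (e.g. proportional to a cutoff of the distance to the vertices, scaled by a parameter) one can arrange that the area of $P$ after the isotopy is any prescribed value in $(0,\area(P))$, in particular arbitrarily small.

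First I would set up the local model: fix coordinates near $\overline{P}$, identify the edges of $P$ with disjoint arcs $I_1,\dots,I_k\subset S^1$ under $v$, and check that the complement $\gamma\setminus v^{-1}(\text{nbhd of }\overline P)$ stays fixed, so the construction is genuinely a compactly supported regular isotopy; by Lemma~\ref{Lem:hamiltonian_isomorphic} it will also preserve the quasi-isomorphism type of the corresponding object, though for the present statement only the absence of new self-intersections is asserted. Next I would verify the ``no new self-intersections'' claim carefully: a new self-intersection could only occur where two pieces of the moving curve collide, but all motion takes place inside $\overline P$, the moving arcs are the images of the disjoint $I_j$, and each is pushed monotonically inward along a vector field transverse to it, so for small enough displacement (bounded by half the in-radius of $P$) the pushed edges remain disjoint embedded arcs. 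Then the area bookkeeping: the signed area swept by edge $I_j$ is subtracted from $P$ and added to the neighbouring polygon, and one solves for the profile of $H$ that brings $\area(P)$ down to $\varepsilon$.

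The main obstacle is the second sentence of the statement — the caveat about enlarging $S$ when $S$ has boundary and $[v(\gamma)]=0$ in $H_1(S,\partial S;\bZ)$. The point is that if a neighbouring polygon adjacent to an edge of $P$ is itself "pinned" against $\partial S$ (i.e. $v(\gamma)$ together with arcs of $\partial S$ already bounds only a finite-area region on that side), then pushing the edge of $P$ outward has nowhere to go inside $S$; one must instead attach a collar to the relevant boundary component of $S$ (equivalently, pass to the completion) to accommodate the swept region. I would phrase this by saying: the isotopy described above is supported in $S$ provided each polygon adjacent to $P$ has infinite area available on its far side; the homological hypothesis $[v(\gamma)]\neq 0\in H_1(S,\partial S;\bZ)$ guarantees (by a standard argument that a null-homologous curve rel boundary separates off finite-area pieces) that $v(\gamma)$ is not boundary-parallel and one can always route the swept area off to infinity after possibly enlarging $S$ — so in general one works in the completion $\hat S$, as flagged in the Remark following Lemma~\ref{Lem:find_sigma}. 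With that understood, the construction goes through verbatim, and the area of the empty polygon is shrunk to be arbitrarily small. \qed
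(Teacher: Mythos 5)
There is a genuine gap, and it sits at the heart of your construction. You propose to shrink the polygon $P$ by the flow $\psi_t$ of a Hamiltonian $H$ on $S$ supported near $\overline{P}$. But an ambient Hamiltonian flow is a symplectomorphism of $(S,\omega)$: the new curve is $\psi_t\circ v$, the polygon bounded by the corresponding edges is exactly $\psi_t(P)$, and $\int_{\psi_t(P)}\omega=\int_P\omega$. So no compactly supported ambient Hamiltonian (indeed no area-preserving) isotopy can change the area of the polygon at all; your claim that one can arrange any prescribed value in $(0,\mathrm{area}(P))$ is false. The ``area bookkeeping'' paragraph implicitly switches to a different move --- pushing only the edges of $P$ while keeping the rest of the curve fixed --- but that is a regular homotopy of the immersed curve which in general sweeps a nonzero net (signed) area, hence is \emph{not} a Hamiltonian isotopy in the sense required here (a homotopy induced by a Hamiltonian on the domain, i.e.\ of zero total flux, which is what Lemma \ref{Lem:hamiltonian_isomorphic} and Lemma \ref{Lem:bigon-quasiiso} need). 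In other words, the area of the empty polygon is a flux-type quantity that cannot be killed by anything local and flux-free; depending on how $\gamma$ traverses the two (or more) edges, the inward pushes need not cancel.

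This is precisely why the paper's proof has a second, global step that your proposal omits: first perform the smooth edge-sliding isotopy (accepting its nonzero flux), then compensate the flux by sliding $v(\gamma)$ around a thin cylinder centred on a simple closed curve $\sigma$ with nonzero \emph{algebraic} intersection number with $v(\gamma)$, exactly as in Lemma \ref{Lem:space}. The caveat in the second sentence is also explained by this step, not by a neighbouring polygon being ``pinned'' against $\partial S$: when $[v(\gamma)]=0$ in $H_1(S,\partial S;\bZ)$ no such closed curve $\sigma$ exists, so the compensating slide must be performed along a properly embedded arc with nonzero algebraic intersection number, and that slide may push the curve out of $S$ into its completion --- hence the possible need to enlarge the surface. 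Without the flux-correction mechanism, both the main assertion and the caveat are left unproved.
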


\begin{proof} 
Sliding the portions of $v(\gamma)$ that bound the empty polygon gives
a smooth isotopy that decreases its area as required. All that is
required, then, is to correct the isotopy by the flow of a symplectic vector
field on $S$ in order to ensure that it sweeps zero area (so that it is induced by a 
Hamiltonian on the domain of the immersion). If $v(\gamma)$ represents a
non-zero class in $H_1(S,\partial S;\bZ)$ then this can be achieved exactly
as in  Lemma \ref{Lem:space}, by considering a simple closed curve $\sigma$
with non-zero algebraic intersection number with $v(\gamma)$ and sliding
around a thin cylinder centred on $\sigma$. If $[v(\gamma)]$ vanishes in
relative cohomology, then instead we find a properly embedded arc with
non-zero algebraic intersection number with $v(\gamma)$ and slide along it; this may push
$v(\gamma)$ outside of $S$ and require us to enlarge the surface.
\end{proof}

\begin{Lemma} \label{Lem:bigon-quasiiso}
If $\gamma$ bounds an empty bigon, then $\gamma$ is quasi-isomorphic to
a curve $\gamma'$, with two fewer self-intersections, obtained by cancelling the bigon. 
\end{Lemma}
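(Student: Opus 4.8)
The plan is to exhibit an explicit quasi-isomorphism between $\gamma$ and $\gamma'$ by first shrinking the empty bigon and then doing a local Floer-theoretic computation near the collapsing bigon, exactly as in the proof of Lemma~\ref{Lem:hamiltonian_isomorphic}. First, I would invoke Lemma~\ref{Lem:shrink_area} to Hamiltonian-isotope $\gamma$ so that the empty bigon has arbitrarily small area; by Lemma~\ref{Lem:hamiltonian_isomorphic} (and its immersed version), this does not change the quasi-isomorphism type, and it allows us to work in a small neighbourhood of the bigon where the picture is purely local. Having done so, let $\gamma'$ be the embedded-bigon-cancellation of $\gamma$, which has two fewer self-intersections and agrees with $\gamma$ outside a disc $D$ containing the bigon; the claim is $\gamma \simeq \gamma'$ in $\scrF(S)$.

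Next I would write down the comparison morphisms. The two self-intersection points $a_+, a_-$ of $\gamma$ at the tips of the bigon give, together with the obvious ``short'' intersection points where $\gamma$ and $\gamma'$ run parallel outside $D$, generators of $CF^*(\gamma,\gamma')$ and $CF^*(\gamma',\gamma)$. As in Lemma~\ref{Lem:hamiltonian_isomorphic} one sets $p = \sum q^{A(p_i)} p_i \in CF^0(\gamma,\gamma')$ (summing the minimum-type intersections with appropriate area weights) and $p' = \sum q^{-A(p_j)} p_j \in CF^0(\gamma',\gamma)$. The key local input is that near the collapsing bigon the only rigid holomorphic polygons are small triangles/bigons whose areas can be read off from the picture, and that the extra generators near the two tip self-intersections do not enter the relevant counts once one lifts to the universal cover (using unobstructedness, i.e.\ Lemma~\ref{l:unobstructed}, so that all lifts are embedded arcs — teardrop-type contributions are excluded). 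One then checks $\mu^1(p)=0$, $\mu^1(p')=0$ and $\mu^2(p',p) = \id_\gamma$, $\mu^2(p,p') = \id_{\gamma'}$ up to the usual area bookkeeping, so that $p$ is a quasi-isomorphism.

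The main obstacle I expect is the local holomorphic-polygon analysis near the two tip self-intersections: unlike in Lemma~\ref{Lem:hamiltonian_isomorphic}, here $\gamma$ genuinely self-intersects, so $CF^*(\gamma,\gamma)$ (and the mixed complexes) carry extra generators, and one must argue carefully that no unwanted discs with corners at those self-intersections contribute to $\mu^1(p), \mu^1(p'), \mu^2(p',p), \mu^2(p,p')$. The clean way to handle this is the universal-cover trick from Lemma~\ref{Lem:hamiltonian_isomorphic}: lift $\gamma$ and $\gamma'$ to $\tilde\Sigma$ (or the universal cover of $S$, enlarging $S$ if necessary as allowed by Lemma~\ref{Lem:shrink_area}); since $\gamma$ is unobstructed the lifts are embedded, the bigon lifts to an embedded bigon, and any polygon contributing to the four structure maps lifts to a polygon with boundary on two embedded arcs that cannot turn at a self-intersection corner — so the count reduces to the embedded case, where it is the classical Morse-bigon computation. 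A secondary point to address is that the hypothesis $[v(\gamma)]\neq 0$ is used (via Lemma~\ref{Lem:shrink_area}) only to ensure the shrinking isotopy can be made to sweep zero area, possibly after enlarging $S$; this is harmless since Floer cohomology in $\scrF(S)$ for the curves in question is insensitive to such an enlargement. Assembling these pieces gives $\gamma \simeq \gamma'$.
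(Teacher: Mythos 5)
Your proposal is correct and follows essentially the same route as the paper: shrink the empty bigon to arbitrarily small area using Lemma~\ref{Lem:shrink_area}, then cancel the pair of self-intersections by a regular homotopy sweeping zero area and conclude by Hamiltonian-isotopy invariance. The only difference is that you re-derive the local computation (the cocycles $p,p'$ and the universal-cover argument excluding polygons with corners at self-intersections), which is already the content of the proof of Lemma~\ref{Lem:hamiltonian_isomorphic}; since that lemma is stated for regular homotopies through generically self-transverse unobstructed curves, including ones that cancel self-intersections, the paper simply cites it at this point.
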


\begin{proof}
The previous Lemma shows that we can deform $\gamma$ by a Hamiltonian
isotopy to decrease the area of the bigon to be arbitrarily small. 
Once the bigon is sufficiently small, we can cancel the pair of
self-intersections by a regular homotopy that sweeps zero area, without
creating any other intersections; the result then follows from
Lemma \ref{Lem:hamiltonian_isomorphic}.
\end{proof}

\begin{Lemma} \label{Lem:triangle-quasiiso}
If $\gamma$ bounds an empty triangle, then $\gamma$ is
quasi-isomorphic to a curve $\gamma'$ obtained by performing a triple point
move.
\end{Lemma}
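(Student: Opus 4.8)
The plan is to reduce the statement about the triple-point move to the already-established Lemma \ref{Lem:bigon-quasiiso} about cancelling empty bigons. The key observation is that a triple-point move can be realized as the composition of two elementary moves, each of which creates and then cancels a pair of self-intersections via an empty bigon. Concretely, if $\gamma$ bounds an empty triangle $T$ with vertices $p_1, p_2, p_3$, one arc of $\gamma$ (say the one opposite vertex $p_1$) can be pushed across $T$: first push it slightly past the opposite vertex so that it crosses the other two arcs, creating a new empty bigon on each of those two arcs; then the original two intersection points at $p_2$ and $p_3$, together with the arc that has been moved, now bound empty bigons which can be cancelled. More carefully, I would describe the isotopy as a finger move of the arc opposite $p_1$ sweeping across $T$ and out the other side; this passes through a curve with more self-intersections and then returns to a curve with the same self-intersection count — the result of the triple-point move.

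First I would set up the local picture: the three arcs near the triangle $T$, and fix which arc is to be slid across. Second, I would perform the finger move as an explicit regular homotopy, noting that because $T$ is \emph{empty} (no other arcs of $\gamma$ pass through it), the only self-intersections created or destroyed during the homotopy are the ones visible in the local picture; in particular the homotopy preserves unobstructedness, since (after lifting to the universal cover, as in the proof of Lemma \ref{l:unobstructed}) the portion of $\tilde\gamma$ bounding the lift of $T$ is embedded and no new teardrops appear. Third, I would observe that the homotopy factors through curves bounding empty bigons at the intermediate stages, so that by Lemma \ref{Lem:shrink_area} the swept area can be taken to be zero (sliding along a suitable simple closed curve or arc with non-zero algebraic intersection number with $v(\gamma)$, using the hypothesis that $[v(\gamma)] \neq 0$, exactly as in Lemmas \ref{Lem:space} and \ref{Lem:shrink_area}). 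Finally, an application of Lemma \ref{Lem:hamiltonian_isomorphic} gives the quasi-isomorphism between $\gamma$ and the curve $\gamma'$ obtained by the triple-point move.

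An alternative, perhaps cleaner, route is to realize the triple-point move directly as a zero-area regular homotopy: slide the arc opposite $p_1$ across the triangle $T$ in one motion. Since $T$ is empty, at the moment the arc coincides with the opposite vertex the curve acquires a single triple point (degenerate, non-transverse self-intersection), and the isotopy on either side is generically self-transverse; Lemma \ref{Lem:hamiltonian_isomorphic} is stated precisely to allow isotopies through a curve that is not self-transverse, creating or cancelling self-intersections, so it applies once we have arranged the area swept to vanish. Arranging zero swept area is again handled by correcting with the flow of a symplectic vector field, using the non-triviality of $[v(\gamma)]$ as in Lemma \ref{Lem:shrink_area}; if $[v(\gamma)]$ vanishes in $H_1(S,\partial S;\bZ)$ one slides along a properly embedded arc instead, possibly enlarging $S$.

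The main obstacle I anticipate is bookkeeping the self-intersections and the swept area during the finger move: one must check that the emptiness of $T$ really does prevent any stray arcs of $\gamma$ from interfering (so that no spurious teardrops or extra bigons are created, keeping the regular homotopy within unobstructed curves), and that the area-correction argument of Lemma \ref{Lem:shrink_area} applies verbatim — in particular that one can slide along $\sigma$ (or an arc) in a thin cylinder disjoint from the support of the triple-point move. Both of these are routine given the universal-cover perspective already developed in Lemma \ref{l:unobstructed} and the area-correction technique of Lemma \ref{Lem:space}, so I expect the proof to be short, essentially quoting Lemmas \ref{Lem:shrink_area} and \ref{Lem:hamiltonian_isomorphic}.
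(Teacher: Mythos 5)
Your proposal is correct, and your ``alternative, cleaner'' route is precisely the paper's argument: the paper's proof simply shrinks the empty triangle via Lemma \ref{Lem:shrink_area}, performs the triple-point move by a regular homotopy sweeping zero area, and concludes with Lemma \ref{Lem:hamiltonian_isomorphic}. Your first route (decomposing the move into bigon creations and cancellations via Lemma \ref{Lem:bigon-quasiiso}) is a workable but unnecessary detour resting on the same two ingredients.
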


\begin{proof} The argument is the same as for bigons: Lemma
\ref{Lem:shrink_area} shows we can shrink the triangle to have arbitrarily
small area by an isotopy that sweeps zero area, and once the triangle is
sufficiently small we can perform the triple point move by a regular
homotopy that sweeps zero area. The result then follows from
Lemma \ref{Lem:hamiltonian_isomorphic}.
\end{proof}

\subsection{Analyticity of Floer cohomology\label{Sec:analyticity}}

Let $(X,\omega)$ be a symplectic manifold and $\{F_b\}_{b\in B}$ a family of 
unobstructed Lagrangian submanifolds parametrized by a smooth manifold $B$.  (The prototypical situation in the literature would be that $B$ is a subset of a tropical SYZ base, and we will sometimes refer to the $F_b$ as fibres.) Recall that $\xi \to F_b$ denotes a rank one $U_{\Lambda}$-local system over $F_b$. 

Over a small disk $b_0\in P\subset B$, the fibres $F_b$ are graphs of closed one-forms $\alpha_b$ over $F_{b_0}$;  the Hamiltonian isotopy class of $F_b$ depends only on the de Rham cohomology class of the one-form $\alpha_b$. The space of choices $(b,\xi\to F_b)$ is therefore naturally a  domain centred on the base-point $b_0=(0,1)$ inside $(\Lambda^*)^k = H^1(F_{b_0};\Lambda^*) = H^1(F_{b_0};\bR) \times H^1(F_{b_0}; U_{\Lambda})$, where $k= \rk_{\bZ}\,H^1(F_{b_0};\bZ)$.
To be more explicit, fixing a basis $a_1,\dots,a_k$ for $H_1(F_{b_0};\bZ)$, we have
$\Lambda^*$-valued co-ordinates $z_1,\dots,z_k$ given by
$z_i(b,\xi)=q^{[\alpha_b]\cdot a_i} \mathrm{hol}_\xi(a_i)$. More
intrinsically, to every element
$\gamma=\sum \gamma_i a_i\in H_1(F_b;\bZ)$ corresponds a monomial
$$z_{b,\xi}^\gamma=z_1^{\gamma_1} \dots z_k^{\gamma_k}=q^{[\alpha_b]\cdot \gamma}\, \mathrm{hol}_\xi(\gamma).$$

Consider a Lagrangian submanifold $L\subset X$ transverse to the fibres $F_b$ over $P$.
Over $P$, $L$ defines an unbranched cover of $P$, so the intersection points $\{L \pitchfork F_b\}$ may be identified with $L \pitchfork F_{b_0}$. Fix a local section of the family $\{F_b\}$, so that for each fibre $F_b$ we can fix a smoothly varying base-point $\star \in F_{b}$. In $F_{b_0}$, fix a (homotopy class of) path $\gamma_x$ from $x$ to $\star$, for each $x \in L\pitchfork F_{b_0}$.   We fix a rank one local system $\xi \to F_{b_0}$ and an arbitrary identification
$\xi_{\star} \cong \Lambda$; by parallel transport along $\gamma_x$, this identifies $\xi_x \cong \Lambda$ for each intersection
$x\in L \cap F_{b_0}$.

Now consider another point $b \in P$. Assuming $P$ is convex, over a path joining $b$ and $b_0$ the path $\gamma_x$ sweeps a two-chain $\Gamma_x$, yielding an area $a_x(b) = \int_{\Gamma_x} \omega$.  By a trick due to Fukaya (essentially the observation that taming is an open condition on almost complex structures), a rigid holomorphic strip $u$ with boundaries on $L$ and $F_{b_0}$ will deform, for $P$ sufficiently small, to a rigid holomorphic strip $u'$ with boundaries on $L$ and $F_b$. By concatenating the boundary arcs of $u'$ with the reference paths, we get a well-defined element $[\partial u'] \in H_1(F_b;\bZ)$. 

\begin{Lemma} \label{Lem:analytic}
Suppose the arc of the boundary of $u$ connects intersection points $x,y$.  We have 
\[
q^{E(u')} \mathrm{hol}_{\xi}(\partial u') = q^{a_y(b)-a_x(b)} \cdot q^{E(u)} z_{b,\xi}^{[\partial u']}.
\]
\end{Lemma}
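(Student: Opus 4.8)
The plan is to reduce the statement to an elementary bookkeeping of symplectic areas. First unwind the right–hand side: by definition $z_{b,\xi}^{[\partial u']}=q^{[\alpha_b]\cdot[\partial u']}\,\mathrm{hol}_\xi([\partial u'])$. On the left, $\mathrm{hol}_\xi(\partial u')$ is the parallel transport of $\xi$ along the boundary arc of $u'$ lying on $F_b$, read off using the fixed trivialisations $\xi_x\cong\Lambda$ and $\xi_y\cong\Lambda$; since those trivialisations were defined by parallel transport along the reference paths $\gamma_x,\gamma_y$, this is precisely $\mathrm{hol}_\xi$ of the loop obtained by closing up that arc with $\gamma_x$ and $\gamma_y$, i.e. $\mathrm{hol}_\xi([\partial u'])$ (a rank-one $U_\Lambda$-local system has abelian holonomy, so this depends only on the homology class and is compatible with the identification $H_1(F_b;\bZ)\cong H_1(F_{b_0};\bZ)$). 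The holonomy factors on the two sides thus cancel, and the claim becomes the area identity
\[ E(u')-E(u)=a_y(b)-a_x(b)+[\alpha_b]\cdot[\partial u']. \]

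To prove this I would work with the $1$-parameter family of strips $\{u_{b'}\}_{b'\in[b_0,b]}$ interpolating from $u$ to $u'$, whose existence is Fukaya's deformation argument recalled above, regarded as a $3$-chain $W$ in $X$ (each $u_{b'}$ is a bigon with vertices $x,y$). Since $d\omega=0$, Stokes gives $\int_{\partial W}\omega=0$; and $\partial W=(u'-u)-(V_F+V_L)$ where $V_F$, $V_L$ are the $2$-chains swept respectively by the $F_{b'}$- and $L$-boundary arcs of $u_{b'}$. As $L$ is Lagrangian, $V_L\subset L$ and $\int_{V_L}\omega=0$, hence $E(u')-E(u)=\int_{V_F}\omega$. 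Next, close up the $F_{b'}$-boundary arc of each $u_{b'}$ with the transported reference paths $\gamma_x^{b'},\gamma_y^{b'}$ to obtain a loop $\ell_{b'}\subset F_{b'}$ with $[\ell_b]=[\partial u']$; the $2$-chain $V_\ell$ swept by $\{\ell_{b'}\}$ is, up to the $1$-chains along which the pieces are glued (which do not contribute to $\int\omega$), the union of $V_F$ with the chains $\Gamma_x,\Gamma_y$ swept by $\gamma_x,\gamma_y$, taken with appropriate signs. Keeping track of orientations one finds $\int_{V_\ell}\omega=\int_{V_F}\omega+a_y(b)-a_x(b)$, using $a_x(b)=\int_{\Gamma_x}\omega$, $a_y(b)=\int_{\Gamma_y}\omega$.

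It remains to compute $\int_{V_\ell}\omega$. For $P$ small, $V_\ell$ lies in a Weinstein neighbourhood of $F_{b_0}$, which we identify with a neighbourhood of the zero-section in $T^*F_{b_0}$ carrying $\omega=d\lambda$ for the canonical Liouville $1$-form $\lambda$. Then $\int_{V_\ell}\omega=\int_{\partial V_\ell}\lambda=\int_{\ell_b}\lambda-\int_{\ell_{b_0}}\lambda$. The loop $\ell_{b_0}$ lies on the zero-section, where $\lambda\equiv 0$; the loop $\ell_b$ lies on $F_b=\mathrm{graph}(\alpha_b)$, and the tautological property of $\lambda$ gives $\int_{\ell_b}\lambda=\int_{\pi(\ell_b)}\alpha_b$ for the base projection $\pi$. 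Since $\pi(\ell_b)$ is homotopic in $F_{b_0}$ to $\ell_{b_0}$ (as $u'$ is a small deformation of $u$ and the reference paths are transported along the family) and $\alpha_b$ is closed, $\int_{\ell_b}\lambda=[\alpha_b]\cdot[\ell_{b_0}]=[\alpha_b]\cdot[\partial u']$. Combining the three displayed relations yields the area identity, hence the Lemma.

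I expect the only genuine obstacle to be the orientation and sign bookkeeping: one must choose the orientations of $\Gamma_x,\Gamma_y,V_F$ and fix the direction in which $[\partial u']$ is recorded so that everything is consistent with the conventions governing the Floer differential in \cite{Seidel:FCPLT}. The geometric substance is entirely contained in the closedness of $\omega$ (the $L$-arc sweeps zero area since $L$ is Lagrangian) together with the identity $\lambda|_{\mathrm{graph}(\alpha_b)}=\alpha_b$; once signs are pinned down, the computation is routine.
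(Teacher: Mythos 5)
Your argument is correct and is essentially the expected one: the paper offers no proof of Lemma \ref{Lem:analytic} beyond citing Abouzaid's ICM survey, and the computation there is exactly this Stokes/flux bookkeeping — cancel the holonomy factors using the reference-path trivialisations, compare $u$ and $u'$ through the interpolating family of strips, use that the chain swept by the $L$-boundary has zero area, and evaluate the remaining term via $\lambda|_{\mathrm{graph}(\alpha_b)}=\alpha_b$ in a Weinstein neighbourhood of $F_{b_0}$. The caveat you flag is real but purely conventional: whether one gets $q^{a_y(b)-a_x(b)}$ or $q^{a_x(b)-a_y(b)}$ depends on the direction in which the $F_b$-boundary arc is traversed when closing it up with $\gamma_x,\gamma_y$ to define $[\partial u']$, and these choices must be fixed compatibly with the rescaling $x\mapsto q^{a_x(b)}x$ used immediately after the Lemma.
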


\begin{proof}
See \cite{Abouzaid:ICM}. 
\end{proof}

The key point is that this expression involves the monomial $z_{b,\xi}^{[\partial u']}$,
whereas the quantity $q^{E(u)}$ is \emph{constant}, depending only on the reference point $b_0$.  In particular, under the rescaling
\[
CF^*(L,(\xi\to F_{b_0})) \longrightarrow CF^*(L, (\xi \to F_b)), \qquad x \mapsto q^{a_x(b)}\cdot x = x'
\]
one finds that the Floer differential becomes analytic as a function of the co-ordinates $z_{b,\xi}$; schematically, 
\[
\langle \mu^1(x'), y'\rangle \ = \ \sum_{[u]} \# \mathcal{M}([u]) \cdot q^{E([u])}
z_{b,\xi}^{[\partial u]}.
\]
The dependence on the choice of base-point $b_0$ and of the homotopy classes of the paths $\gamma_x$ are also analytic; the former changes the values $a_x(b)$ by some fixed constant, rescaling $x'$ by a value which does not depend on
$z_{b,\xi}$, whilst the latter rescales $x'$ by a monomial.

\begin{Corollary} \label{Cor:analytic}
There is an affinoid neighbourhood $P$ of $b_0 \in (\Lambda^*)^k$ such that the Floer cohomology groups $HF^*(L,(\xi\to F_b))$ are the fibres of an analytic sheaf over $P$.
\end{Corollary}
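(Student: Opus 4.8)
The plan is to upgrade the pointwise computation of Lemma \ref{Lem:analytic} and the schematic formula for $\mu^1$ into a genuine statement about a coherent (analytic) sheaf on a small affinoid polydisc. First I would fix the reference fibre $F_{b_0}$ and the reference paths $\gamma_x$ for $x \in L \pitchfork F_{b_0}$, so that the underlying $\Lambda$-module $\bigoplus_x \Lambda \cdot x'$ of the rescaled Floer complex $CF^*(L,(\xi \to F_b))$ is identified, as a module, with a fixed free module tensored up over the ring $\mathcal{O}(P)$ of analytic functions on an affinoid polydisc $P \subset (\Lambda^*)^k$ centred at $b_0 = (0,1)$. The content of Lemma \ref{Lem:analytic} is that, after the rescaling $x \mapsto q^{a_x(b)} x = x'$, each structure constant $\langle \mu^1(x'), y' \rangle$ is a finite sum of terms $\# \mathcal{M}([u]) \cdot q^{E(u)} z_{b,\xi}^{[\partial u]}$; since there are only finitely many rigid strips contributing (by Gromov compactness and transversality, exactly as in the definition of the Floer differential over $F_{b_0}$), each such structure constant is a Laurent polynomial in $z_1,\dots,z_k$, hence an element of $\mathcal{O}(P)$ once $P$ is chosen small enough that Fukaya's deformation trick applies uniformly to all the finitely many strips. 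Thus $\mu^1$ is an $\mathcal{O}(P)$-linear endomorphism $d$ of a finite free $\mathcal{O}(P)$-module $\mathcal{C}$, and the same argument applied to $\mu^2$ (the product with the diagonal, or more simply $d^2 = 0$, which holds fibrewise and hence identically by the identity theorem for analytic functions) shows $d^2 = 0$, so $(\mathcal{C}, d)$ is a two-term complex of finite free $\mathcal{O}(P)$-modules.

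Next I would define the candidate sheaf to be the cohomology $\mathcal{H}^* := \ker(d)/\im(d)$ of this complex of $\mathcal{O}(P)$-modules, sheafified over $P$; since $\mathcal{O}(P)$ is Noetherian, $\mathcal{H}^*$ is a coherent sheaf. The remaining point is the base-change / fibrewise-identification statement: for each $b \in P$ (equivalently each maximal ideal $\mathfrak{m}_b \subset \mathcal{O}(P)$), the fibre $\mathcal{H}^* \otimes_{\mathcal{O}(P)} \mathcal{O}(P)/\mathfrak{m}_b$ should be $HF^*(L, (\xi \to F_b))$. This is where one must be slightly careful: cohomology does not commute with arbitrary base change, but the rescaling was engineered precisely so that the specialization of the complex $(\mathcal{C}, d)$ at $b$ \emph{is} the honest Floer complex $CF^*(L, (\xi \to F_b))$ with its honest differential — this is the combination of Lemma \ref{Lem:analytic} (the areas $a_x(b)$ absorb exactly the area discrepancy $E(u') - E(u)$) and the fact that the monomial $z_{b,\xi}^{[\partial u']}$ evaluates at $b$ to $q^{[\alpha_b]\cdot[\partial u']}\mathrm{hol}_\xi([\partial u'])$, which is the holonomy-weighted area appearing in the differential for the fibre $F_b$. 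So the specialized complex computes $HF^*(L,(\xi \to F_b))$ on the nose, and it suffices to observe that for a two-term complex of finite free modules, $H^*$ does commute with specialization at a point up to the usual flatness subtlety; shrinking $P$ if necessary so that the ranks of $\ker d$ and $\im d$ are locally constant (which can be arranged on a Zariski-open, hence on a smaller affinoid, neighbourhood, using semicontinuity of the rank of $d(b)$), one gets an honest coherent — indeed locally free after further shrinking — sheaf whose fibres are the Floer cohomologies.

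The main obstacle, and the step deserving the most care, is the uniformity of Fukaya's trick: one needs a \emph{single} affinoid $P$ small enough that every one of the finitely many rigid strips contributing to $\mu^1$ (and $\mu^2$, and the relevant higher $A_\infty$ operations if one wants the full quasi-isomorphism type rather than just cohomology) deforms to a rigid strip over $F_b$ for all $b \in P$, with no new strips appearing and none disappearing. This is exactly where the finiteness of the strip count (Gromov compactness plus the fact that $L$ and the $F_b$ are compact, or in the punctured-surface case suitably behaved at infinity) is essential, and where the hypothesis that $P$ be a small disk in $B$ over which $L$ is transverse to all the fibres gets used; the bound on $P$ also has to be small enough that the energy $E(u)$ of each strip stays below the energy gap to the next broken configuration. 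Once that uniform $P$ is fixed the rest is the formal homological algebra above. I would also note in passing that the same argument gives the stronger statement that the $A_\infty$-structure on $\bigoplus_{L,L'} CF^*(L,\cdot)$ varies analytically, so that one gets a sheaf of $A_\infty$-categories over $P$, but for the corollary as stated only the $\mu^1$ analysis is needed.

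\begin{proof}
This is the content of the preceding discussion. Choose $P$ small enough that, for every rigid holomorphic strip $u$ contributing to the Floer differential $\mu^1$ on $CF^*(L,(\xi\to F_{b_0}))$, Fukaya's deformation argument produces a unique rigid strip $u'$ with boundary on $L$ and $F_b$ for all $b\in P$, and no further strips appear; such a $P$ exists because there are only finitely many such $u$ and each deforms for $b$ in some neighbourhood of $b_0$. After the rescaling $x\mapsto q^{a_x(b)}x=x'$ of Lemma \ref{Lem:analytic}, the resulting two-term complex of finite free $\mathcal O(P)$-modules $(\mathcal C,d)$ has differential whose matrix entries $\langle \mu^1(x'),y'\rangle$ are, by that Lemma, finite $\Lambda$-linear combinations of the monomials $z_{b,\xi}^{[\partial u']}$, hence elements of $\mathcal O(P)$; the relation $d^2=0$ holds fibrewise and therefore identically. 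The cohomology $\mathcal H^*=\ker(d)/\mathrm{im}(d)$, sheafified over the affinoid $P$, is coherent since $\mathcal O(P)$ is Noetherian. By construction the specialization of $(\mathcal C,d)$ at any $b\in P$ is the honest Floer complex $CF^*(L,(\xi\to F_b))$ with its honest differential, so $\mathcal H^*\otimes_{\mathcal O(P)}\mathcal O(P)/\mathfrak m_b\cong HF^*(L,(\xi\to F_b))$; shrinking $P$ further so that the rank of $d(b)$ is constant makes $\mathcal H^*$ locally free with the stated fibres. The independence of the construction from the auxiliary choices is as noted after Lemma \ref{Lem:analytic}.
\end{proof}
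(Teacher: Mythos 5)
Your overall route is the same as the paper's: the corollary is deduced directly from Lemma \ref{Lem:analytic} together with the rescaling $x\mapsto q^{a_x(b)}x$, Fukaya's trick identifying moduli spaces for nearby fibres, and the observation that the resulting matrix entries of $\mu^1$ are analytic functions of the co-ordinates $z_{b,\xi}$ on a small affinoid $P$. The extra homological-algebra layer you add (coherence of $\ker d/\mathrm{im}\,d$, base change, shrinking $P$ so that the rank of $d(b)$ is constant) is a reasonable elaboration of what the paper leaves implicit, and is compatible with the paper's formulation (note that in Theorem \ref{Thm:Floer_analytic} the paper prefers to speak of an analytic \emph{dg}-sheaf with stalks the Floer complexes, which sidesteps the base-change discussion entirely).

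The one step that does not survive in the stated generality is the claim that ``there are only finitely many rigid strips contributing, \ldots hence each structure constant is a Laurent polynomial.'' The subsection is set up for an arbitrary symplectic manifold $(X,\omega)$ with a family of fibres $F_b$, and over the Novikov field the Floer differential is in general an \emph{infinite} sum: Gromov compactness only gives finitely many rigid strips below each energy level, with energies tending to infinity. The matrix entries $\langle\mu^1(x'),y'\rangle=\sum_{[u]}\#\mathcal{M}([u])\,q^{E([u])}z_{b,\xi}^{[\partial u]}$ are therefore convergent series, i.e.\ elements of the affinoid algebra $\mathcal{O}(P)$, not Laurent polynomials, and the actual analytic content of the corollary is precisely this convergence: one needs Fukaya's trick to apply uniformly in $b\in P$ (this is automatic, since tameness of the pulled-back almost complex structures is an open condition independent of the particular strip, rather than a condition involving an ``energy gap'' per strip as you suggest), together with a uniform lower bound ensuring that the valuations of the terms $q^{E(u)}z_{b,\xi}^{[\partial u]}$ go to infinity on all of $P$, which is what restricting to a small affinoid buys. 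With that correction your argument goes through, and in the surface applications of the paper (where bigon counts between fixed generators are genuinely finite) your version is literally correct; but as a proof of the corollary as stated the finiteness assertion is the gap to repair.
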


The same argument would apply to a family of immersed Lagrangians equipped with analytically varying bounding cochains.

Up to this point, we have assumed that the Lagrangian $L$ is transverse to all the fibres $\{F_b\}_{b\in P}$.  To obtain a more global statement, given a family of Lagrangians $\{F_b\}_{b\in B}$ one chooses a finite set of Hamiltonian perturbations $L_i$ of $L$ for which the corresponding caustics of the projections $L_i \to B$ have empty intersection.  This finite set of Hamiltonian perturbations can be spanned by a simplex of Hamiltonian perturbations, and there are (higher) continuation maps on Floer cochains associated to the edges and higher-dimensional facets of this simplex.  These yield a module for a Cech complex of the corresponding covering of $B$, and (for a sufficiently fine cover) gluing maps of the local analytic sheaves over affinoids in $B$ constructed previously.  Summing up:

\begin{Theorem}[Abouzaid] \label{Thm:Floer_analytic}
Let $\{F_b\}_{b\in B}$ be a family of unobstructed Lagrangians pa\-ram\-e\-trized by an open $B \subset H^1(F_b;\bR)$. There is an analytic $dg$-sheaf over the annulus $B\times U_{\Lambda} \subset H^1(F_b;\Lambda^*)$ with stalk quasi-isomorphic to $CF^*(L, (\xi \to F_b))$.  Furthermore, this association yields an $A_{\infty}$-functor from the tautologically unobstructed Fukaya category of $(X,\omega)$ into the $dg$-category of complexes of sheaves over $B\times U_{\Lambda}$.
\end{Theorem}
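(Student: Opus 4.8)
The plan is to globalize Corollary~\ref{Cor:analytic} by a \v{C}ech descent argument. Fix a Lagrangian $L$. For a generic Hamiltonian perturbation, the caustic of the projection of the perturbed Lagrangian to $B$ (the locus where it fails to be transverse to the fibre) is a proper closed subset of $B$, so one can choose finitely many perturbations $L_1,\dots,L_m$ of $L$ whose caustics have empty common intersection; the open sets $U_i := \{b\in B : L_i\pitchfork F_b\}$ then form an open cover of $B$. Covering each $U_i$ by small affinoids and gluing the pieces produced by Corollary~\ref{Cor:analytic} (as in the paragraph preceding the statement), one obtains an analytic $dg$-sheaf $\calF_i$ on $U_i\times U_{\Lambda}$ whose stalk at $(b,\xi)$ is quasi-isomorphic to $CF^*(L_i,(\xi\to F_b))$.

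Next I would assemble the gluing data. Over a double overlap $U_i\cap U_j$ both $L_i$ and $L_j$ are transverse to the fibres, and the continuation map $c_{ij}\colon CF^*(L_i,F_b)\to CF^*(L_j,F_b)$ is a quasi-isomorphism; the same energy-and-holonomy bookkeeping as in Lemma~\ref{Lem:analytic} --- the continuation strips carry an energy whose non-constant part is recorded by the boundary class in $H_1(F_b;\bZ)$, hence by a monomial in the coordinates $z_{b,\xi}$ --- shows that after the uniform rescaling $x\mapsto q^{a_x(b)}x$ the map $c_{ij}$ becomes analytic, i.e.\ promotes to a morphism $\calF_i|_{U_{ij}}\to\calF_j|_{U_{ij}}$. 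Higher continuation maps $c_{i_0\cdots i_k}$, attached to the faces of a simplex of Hamiltonian perturbations interpolating between $L_{i_0},\dots,L_{i_k}$, supply the coherence homotopies, so that $\{\calF_i\}$ together with $\{c_{i_0\cdots i_k}\}$ is a homotopy-coherent descent datum (a cocycle in the \v{C}ech complex) for the cover $\{U_i\times U_{\Lambda}\}$; its totalization is a global complex of analytic sheaves $\calF$ on $B\times U_{\Lambda}$. The stalk of $\calF$ at $(b,\xi)$ is computed by choosing any $i$ with $b\in U_i$ and is quasi-isomorphic to $CF^*(L,(\xi\to F_b))$ since the $c_{ij}$ are quasi-isomorphisms, and independence of the auxiliary choices (the base points $b_0$, the reference paths $\gamma_x$, the perturbations $L_i$, the affinoid covers) follows from a further continuation argument, using that the resulting discrepancies are themselves analytic --- rescalings by monomials and by constants independent of the $z_{b,\xi}$, as observed after Lemma~\ref{Lem:analytic}.

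Finally, for the $A_\infty$-functor statement I would fix, once and for all, a coherent system of perturbation and Floer data valid for all objects and all products simultaneously, in the manner of Abouzaid's family Floer construction, and observe that each $\mu^k$ of the (tautologically unobstructed) Fukaya category of $(X,\omega)$ is again computed by counts of rigid holomorphic polygons whose energy splits into a constant independent of the $z_{b,\xi}$ and a monomial in these coordinates recording the boundary homology class; hence, after the uniform rescaling, every $\mu^k$ varies analytically in $(b,\xi)$. Thus the assignment $L\mapsto\calF$, with the Floer operations providing the higher components of the functor, intertwines the $A_\infty$-structures on source and target, giving the desired $A_\infty$-functor into the $dg$-category of complexes of sheaves on $B\times U_{\Lambda}$. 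The step I expect to be the main obstacle is the coherence of the gluing: arranging the higher continuation maps into genuine descent data, and verifying that the glued object is canonically independent of the finite set of perturbations and of the covers, all uniformly enough to stay compatible with the $A_\infty$-structure --- which is exactly where one leans on the foundational work of Abouzaid (and Fukaya) cited in the statement.
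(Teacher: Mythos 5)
Your proposal is correct and follows essentially the same route as the paper, whose ``proof'' of Theorem \ref{Thm:Floer_analytic} consists of the expository paragraph immediately preceding it (a finite simplex of Hamiltonian perturbations $L_i$ with non-intersecting caustics, higher continuation maps furnishing \v{C}ech-type gluing data for the local analytic sheaves of Corollary \ref{Cor:analytic}) together with a citation to Abouzaid's family Floer functor work. Like the paper, you rightly defer the hard coherence and independence-of-choices arguments to that foundational reference rather than reproving them.
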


\begin{proof} See \cite{Abouzaid:FFFF}. \end{proof}

\begin{Remark} 
Suppose we fix a class $a \in H^1(F_b;\bR)$ and consider the corresponding real one-parameter family of Lagrangians $F_b^t$ given by moving $F_b$ by symplectomorphisms of flux $t\cdot a$. Then for a test Lagrangian $L$, there is a discrete set of values of $t\in\bR$ where the transversality condition $L \pitchfork F_b^t$ fails. Picking Hamiltonian perturbations, the Cech complex above amounts to a zig-zag diagram of quasi-isomorphisms associated to neighbouring such intervals; generically, these quasi-isomorphisms  furthermore correspond to the simplest birth-death bifurcations of Floer complexes, adding or subtracting an acyclic subcomplex with two generators and a unique minimal area bigon.  
\end{Remark}

\subsection{Spherical objects}
Let $S$ be a surface which may be closed or have non-empty boundary. Let $\gamma \subset S$ be an
unobstructed immersed curve and $\xi \to \mathrm{domain}(\gamma)$ a local system.  Equip $S$ with a symplectic form $\omega$ and compatible complex structure $j$.

\begin{Lemma} If $(\xi,\gamma)$ defines a spherical object of $\scrF(S)$, then $\gamma$ is
regular homotopic to a simple closed curve.
\end{Lemma}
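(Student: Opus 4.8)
The plan is to compare the self-Floer cohomology of $(\xi,\gamma)$, computed combinatorially from self-intersections and immersed polygons, with the known answer $H^*(S^1;\Lambda)$, and to extract topological constraints on $\gamma$ from the rank of $HF^*((\xi,\gamma),(\xi,\gamma))$. First I would lift $\gamma$ to the universal cover $\widetilde{S}$; by Lemma \ref{l:unobstructed}, unobstructedness means the lift $\widetilde{\gamma}$ is a properly embedded arc, so all self-intersections of $\gamma$ arise from pairs of distinct lifts that cross. Each such crossing contributes two generators to $CF^*(\gamma,\gamma)$ (one in each degree), plus the two ``diagonal'' generators coming from the $S^1$ of constant paths, so that $\dim_\Lambda CF^*(\gamma,\gamma) = 2 + 2k$ where $k$ is the number of self-intersection points. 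Since the total Euler characteristic of the Floer complex equals the Euler characteristic of $H^*(S^1;\Lambda)$, namely $0$, the $\bZ/2$-graded complex has equal-dimensional even and odd parts, consistent with any $k$; the real input is that the total homology has rank $2$, so the differential must kill $2k$ of the $2+2k$ generators.

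Next I would argue that this forces $\gamma$ to be \emph{primitive}, i.e.\ not homotopic to a multiple cover of another immersed curve, and then to have geometric self-intersection number realizable by a simple closed curve. The key mechanism is the one already used in the proof of Lemma \ref{l:unobstructed} and in Lemma \ref{Lem:find_sigma}: innermost embedded bigons on $\gamma$ (which exist whenever $\gamma$ is not in minimal position with respect to itself, detected in the universal cover) give rise to an acyclic two-generator subcomplex with a unique minimal-area holomorphic strip, exactly as in the birth-death bifurcations discussed in the Remark after Theorem \ref{Thm:Floer_analytic}. Cancelling such a bigon by a Hamiltonian isotopy in the sense of Lemma \ref{Lem:hamiltonian_isomorphic} (admissible once the bigon has small area, via Lemma \ref{Lem:shrink_area} and Lemma \ref{Lem:bigon-quasiiso}) reduces the self-intersection count by $2$ without changing the quasi-isomorphism type. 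Iterating, we may assume $\gamma$ is in minimal position, i.e.\ bounds no embedded bigons and (by Lemma \ref{l:unobstructed}) no teardrops; for a curve in minimal position on a surface, the Floer differential vanishes identically, since every rigid holomorphic polygon in the universal cover would be an embedded bigon between two lifts. Hence $HF^*((\xi,\gamma),(\xi,\gamma))$ has rank exactly $2 + 2k$, and the spherical hypothesis forces $k=0$: the minimal-position representative $\gamma'$ of $\gamma$ is embedded.

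Finally I would record that an embedded curve $\gamma'$ on $S$ is automatically a simple closed curve (it is an embedded circle), and that $\gamma$, being regular homotopic to $\gamma'$ through the bigon-cancelling isotopies above, is regular homotopic to a simple closed curve, which is the assertion. I expect the main obstacle to be the claim that minimal position forces the Floer differential to vanish identically: one must be careful that \emph{all} rigid $J$-holomorphic strips — not merely those bounding embedded bigons — are accounted for, which is where the passage to the universal cover is essential (any contributing strip lifts to a strip between two embedded arcs $\widetilde{\gamma}_1, \widetilde{\gamma}_2$, and such a strip, having convex corners at two intersection points of two embedded arcs, is an embedded bigon, contradicting minimal position). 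A secondary subtlety is handling the non-compact completion when $S$ has boundary and $\gamma$ is boundary-parallel, where sweeping area to shrink a bigon may require enlarging $S$ as in Lemma \ref{Lem:shrink_area}; this does not affect the quasi-isomorphism type, so the argument goes through.
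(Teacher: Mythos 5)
Your step (2) (vanishing of the differential for a minimal-position unobstructed curve, via lifts to the universal cover) and the final dimension count are fine, but step (1) has a genuine gap: you assume that $\gamma$ can be brought to minimal position \emph{within its quasi-isomorphism class}, by shrinking and cancelling embedded bigons through isotopies that sweep zero area. Shrinking a bigon requires compensating the swept area elsewhere, and this needs room: Lemma \ref{Lem:shrink_area} is stated (and is only true) under the hypothesis that $[\gamma]\in H_1(S;\bZ)$ is non-zero (with a further caveat, requiring enlarging $S$, in the boundary-parallel case). The Lemma you are proving has \emph{no} homological hypothesis, and the failure is not hypothetical: the spherical object of Lemma \ref{Lem:bad} (Figure \ref{Fig:bad}) is an immersed curve with a single embedded bigon of area $C>A$ which cannot be emptied and cancelled by any zero-flux regular homotopy, and which is \emph{not} quasi-isomorphic to any embedded curve --- although it is, of course, regular homotopic to one. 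So your chain ``spherical $\Rightarrow$ quasi-isomorphic to a minimal-position representative $\Rightarrow$ that representative is embedded'' breaks at the first arrow precisely in the cases the statement must cover; this is also why the paper only imposes the homological hypothesis later, in Corollary \ref{Cor:immersed_spherical}, where bigon cancellation is genuinely used.

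The paper's proof is designed to avoid moving $\gamma$ by area-preserving isotopies altogether. It takes a hyperbolic metric and uses curve shortening to get a regular homotopy (with no area constraint, since no quasi-isomorphism is claimed) from $\gamma$ to the unique geodesic $\eta$ in its class; it then exploits the splitting of $CF^*((\xi,\gamma),(\xi,\gamma))$ into summands indexed by homotopy classes of pairs of lifts, notes the diagonal summand already has rank $\geq 2$, and rules out: (i) $\eta$ multiply covered, by a translation-along-the-underlying-geodesic trick producing a zero-area regular homotopy of $\gamma$ to itself that exchanges lifts, forcing a second non-acyclic summand; and (ii) crossing lifts of $\eta$, because two crossing geodesic lifts have algebraic intersection number $1$, so the corresponding summand has non-zero Euler characteristic and cannot be acyclic regardless of any bigons. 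If you want to salvage your approach, you would need to replace the quasi-isomorphism step by an argument of this second kind --- extracting non-acyclicity of off-diagonal summands from algebraic (not geometric) intersection numbers of lifts --- rather than by geometric bigon cancellation.
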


\begin{proof} 
Equipping $S$ with a hyperbolic metric, it is a classical fact that there
is a unique geodesic $\eta$ in the homotopy class of $\gamma$.
In fact, performing the homotopy by a suitable
curve-shortening flow (see e.g.\ \cite{Hass-Scott-Shortening}), one finds
that $\gamma$ is regular homotopic to $\eta$ among generically 
self-transverse unobstructed immersed curves. We claim that $(\xi,\gamma)$ being
spherical implies that the geodesic $\eta$ is a simple closed curve.

Recall that the Floer complex $CF^*((\xi,\gamma),(\xi,\gamma))$ splits
into a direct sum of complexes corresponding to the various 
lifts of $\gamma$ to the universal cover of $S$ which have non-trivial
intersection with a fixed lift $\tilde{\gamma}$.
(This is because Floer generators which correspond to intersections between
different pairs of lifts of $\gamma$ cannot be connected by bigons.)
The summand which corresponds to the trivial homotopy class (i.e., intersections of
$\tilde{\gamma}$ with a small Hamiltonian
perturbation of itself) contributes $H^*(S^1;\mathrm{End}(\xi))$, which
has rank at least two (considering the identity endomorphism of $\xi$).
Thus, all other summands must be acyclic.

If the geodesic $\eta$ is multiply covered, then by homotoping $\gamma$
to $\eta$, then translating along the underlying simple geodesic, and
homotoping back, we find that there is a regular homotopy of $\gamma$ to
itself which sweeps zero area and turns the chosen lift $\tilde{\gamma}$ 
into a different lift. Hamiltonian isotopy invariance (cf.\ the proof of
Lemma \ref{Lem:hamiltonian_isomorphic})
then implies that this pair of lifts also contributes non-trivially to the
Floer cohomology of $(\xi,\gamma)$ to itself, which contradicts the
assumption that $(\xi,\gamma)$ is spherical. Hence $\eta$ cannot be multiply
covered.

Finally, observe that each lift of $\gamma$ to the universal cover of $S$
lies within bounded distance of a lift of the geodesic $\eta$.
If two lifts of $\eta$ intersect each other (necessarily at
a unique point, since they are hyperbolic geodesics), then the 
corresponding lifts of $\gamma$ also have algebraic intersection number equal to 1, 
and hence they must contribute non-trivially to Floer cohomology, which 
contradicts the assumption. Thus, the lifts of $\eta$ to the universal 
cover of $S$ are pairwise disjoint, which implies that $\eta$ is embedded.
\end{proof}

\begin{Lemma} \label{lem:rank_one} Let $\xi \rightarrow \gamma$ be an indecomposable local system over a simple closed curve $\gamma$. The endomorphism ring $H^*(\hom_{\scrF}((\xi,\gamma), (\xi,\gamma))) = H^*(\gamma; \End(\xi))$ has rank $2$ if and only if $\xi$ has rank $1$.
\end{Lemma}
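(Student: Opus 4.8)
The plan is to reduce the statement to a short linear-algebra computation of $H^*(\gamma;\End(\xi))$, which by the identification quoted in the lemma statement is what we must analyse. Since $\gamma$ is an embedded simple closed curve, $\gamma \cong S^1$ and $\End(\xi) = \xi\otimes\xi^\vee$ is a genuine local system on $S^1$. Writing $n = \rk\xi$ and letting $A\in\GL(n,\Lambda)$ denote the monodromy of $\xi$, the monodromy of $\End(\xi)$ is the conjugation operator $\add_A\colon M\mapsto AMA^{-1}$ on $\mathrm{Mat}_n(\Lambda)$. (The identification of $H^*(\hom_{\scrF}((\xi,\gamma),(\xi,\gamma)))$ with $H^*(\gamma;\End(\xi))$ is the one already invoked in the proof that a spherical object is regular homotopic to a simple closed curve — concretely, a $C^1$-small Morse perturbation of $\gamma$ turns the self-Floer complex into the Morse complex of $S^1$ with coefficients in $\End(\xi)$.)

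Next I would use that for any local system $V$ on $S^1$ with monodromy $M$ one has $H^0(S^1;V)\cong\ker(M-\id)$ and $H^1(S^1;V)\cong\coker(M-\id)$, and that these have equal $\Lambda$-dimension since $M-\id$ is an endomorphism of a finite-dimensional space. Applying this to $V=\End(\xi)$ with $M=\add_A$ gives
\[
\dim_\Lambda H^*(\gamma;\End(\xi)) \;=\; 2\,\dim_\Lambda\ker(\add_A-\id) \;=\; 2\,\dim_\Lambda Z(A),
\]
where $Z(A)=\{M\in\mathrm{Mat}_n(\Lambda) : AM=MA\}$ is the centralizer of $A$. Since $\id\in Z(A)$, this rank is always at least $2$, which is consistent with $(\xi,\gamma)$ never being "smaller than spherical".

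It then remains to show $\dim_\Lambda Z(A)=n$ when $\xi$ is indecomposable. I would view $\Lambda^n$ as a module $V_A$ over the principal ideal domain $R=\Lambda[t,t^{-1}]$ with $t$ acting by $A$, so that $Z(A)=\End_R(V_A)$. Indecomposability of $\xi$ says $V_A$ is an indecomposable $R$-module, hence by the structure theorem over a PID it is cyclic, $V_A\cong R/(f)$ with $\dim_\Lambda R/(f)=n$; and the endomorphism ring of a cyclic module $R/(f)$ is $R/(f)$ itself. Therefore $\dim_\Lambda Z(A)=n$, so $\dim_\Lambda H^*(\gamma;\End(\xi))=2n$, which equals $2$ precisely when $n=1$. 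The converse direction is transparent: when $\rk\xi=1$, the local system $\End(\xi)$ is trivial and $H^*(\gamma;\End(\xi))=H^*(S^1;\Lambda)$ has rank $2$.

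I do not expect a serious obstacle here. The only point deserving care is that $\Lambda$ is not assumed algebraically closed over a general coefficient field, so one should avoid invoking Jordan normal form and instead argue via "indecomposable $\Rightarrow$ cyclic $R$-module $\Rightarrow$ $A$ is non-derogatory $\Rightarrow\dim_\Lambda Z(A)=n$", which is insensitive to the ground field (and in the algebraically closed case is just the statement that the centralizer of a single Jordan block $J_n(\lambda)$ has dimension $n$).
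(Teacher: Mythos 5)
Your argument is correct and is essentially the paper's own proof: both identify $H^0$ with the centralizer of the monodromy, i.e.\ with $\Lambda[t^{\pm1}]$-module endomorphisms of the fibre, use indecomposability to see the fibre is a cyclic module (so the endomorphism ring has $\Lambda$-dimension $n$), and then get $H^1$ of the same rank via Euler characteristic / rank--nullity. Your explicit remark about avoiding Jordan form is a minor refinement of wording, not a different route.
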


\begin{proof}
An indecomposable rank $r$ local system $\xi$ is determined by its monodromy $A$.  The fibre $\Lambda^r$ is cyclic as
a $\Lambda[t^{\pm 1}]$-module (where $t$ acts by $A$), generated by any vector, and can thus be
identified with $\Lambda[t^{\pm 1}] / \langle \chi_A(t)\rangle$, where $\chi_A$ is the characteristic polynomial of the monodromy.  
Then $H^0(\End(\xi))$ contains the maps $\Lambda^r \to \Lambda^r$ which commute with the monodromy, 
i.e.\ $\Lambda[t^{\pm 1}]$-module maps. Since module maps
\[
\Lambda[t^{\pm 1}]/(\chi_A(t)) \longrightarrow \Lambda[t^{\pm 1}]/(\chi_A(t))
\]
are determined by the image of 1, which can be any element of
$\Lambda[t^{\pm 1}]/(\chi_A(t))\simeq \Lambda^r$, we conclude that
$H^0(\End(\xi))$ has rank $r$ over $\Lambda$; the same is then true for $H^1$ by considering Euler characteristic.
\end{proof}

\begin{Corollary}  \label{Cor:immersed_spherical} If $(\xi,\gamma) \subset S$ is an immersed curve with local system which defines a spherical object $X \in \scrF(S)$, and if $[\gamma] \in H_1(S;\Z)$ is non-zero, then $\gamma$ is quasi-isomorphic to an embedded simple closed curve and $\xi$ has rank one.
(If $S$ has non-empty boundary and $[\gamma]$ vanishes in $H_1(S,\partial S;\bZ)$ then we may need to enlarge $S$.) \end{Corollary}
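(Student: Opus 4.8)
The plan is to combine the Lemma above --- that a spherical $(\xi,\gamma)$ is regular homotopic to a simple closed curve --- with the surface-topology input of Hass--Scott and Steinitz from Section~\ref{sec:bigons} to replace $\gamma$ by an embedded curve up to quasi-isomorphism, and then to read off the rank of the local system from Lemma~\ref{lem:rank_one}. First I would invoke that Lemma: since $(\xi,\gamma)$ is spherical, $\gamma$ is regular homotopic to a simple closed curve, hence freely homotopic to an embedded curve; note the regular homotopy produced there need not sweep zero area, so it does not by itself give a quasi-isomorphism. If $\gamma$ is already embedded there is nothing further to do in this step; otherwise $\gamma$ is homotopic to a simple closed curve but not embedded and, being unobstructed, bounds no teardrops, so the Hass--Scott Lemma supplies an embedded bigon on $\gamma$.

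Next I would run an induction on the number of self-intersections of $\gamma$. Given an embedded bigon on $\gamma$, Steinitz's Lemma yields a finite sequence of triple-point moves after which $\gamma$ bounds an \emph{empty} bigon; arranging, by passing to innermost configurations, that every triple-point move in the sequence is performed across an empty triangle, Lemma~\ref{Lem:triangle-quasiiso} makes each such move a quasi-isomorphism through unobstructed curves, and Lemma~\ref{Lem:bigon-quasiiso} then cancels the resulting empty bigon. All of these are regular homotopies, so they preserve the free homotopy class of $\gamma$ and carry the local system $\xi$ along unchanged; in particular $[\gamma]\in H_1(S;\bZ)$ remains non-zero at every stage, which is precisely the hypothesis needed to apply Lemma~\ref{Lem:shrink_area} inside Lemmas~\ref{Lem:bigon-quasiiso} and~\ref{Lem:triangle-quasiiso}. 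Iterating, after finitely many steps I obtain an embedded simple closed curve $\bar\gamma$ carrying the same local system $\xi$ with $X\simeq(\xi,\bar\gamma)$. When $S$ has boundary and $[\gamma]$ vanishes in $H_1(S,\partial S;\bZ)$, the area-correction in Lemma~\ref{Lem:shrink_area} slides along a properly embedded arc and may require enlarging $S$; only finitely many enlargements occur, accounting for the parenthetical in the statement.

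It remains to identify $\xi$. Since $X\simeq(\xi,\bar\gamma)$ is spherical, $H^*(\hom_{\scrF(S)}(X,X))$ has total rank $2$. If $\xi$ were decomposable, say $\xi\cong\xi_1\oplus\xi_2$, then $X$ would split as $(\xi_1,\bar\gamma)\oplus(\xi_2,\bar\gamma)$ and $H^*(\hom(X,X))$ would contain the diagonal summands $H^*(\bar\gamma;\End(\xi_1))$ and $H^*(\bar\gamma;\End(\xi_2))$, each of total rank at least $2$ (the identity in degree $0$, plus an equal-rank contribution in degree $1$ by the vanishing of $\chi(S^1)$), forcing total rank $\geq 4$, a contradiction. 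Hence $\xi$ is indecomposable, so $H^*(\hom(X,X))=H^*(\bar\gamma;\End(\xi))$ has rank $2$, and Lemma~\ref{lem:rank_one} then forces $\xi$ to have rank $1$.

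The step I expect to be the main obstacle is the interface between Steinitz's purely combinatorial triple-point moves and the Floer-theoretic cancellation lemmas: Lemma~\ref{Lem:triangle-quasiiso} applies only across an \emph{empty} triangle, whereas Steinitz's moves need not a priori be of that kind, so one has to refine the sequence so that it involves only empty triangles and empty bigons --- an innermost-disc induction --- while keeping the homology class of the intermediate curves non-zero so that the zero-area correction of Lemma~\ref{Lem:shrink_area} remains available throughout.
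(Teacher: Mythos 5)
Your proposal is correct and follows essentially the same route as the paper's proof: Hass--Scott plus Steinitz to produce an empty bigon, Lemmas \ref{Lem:bigon-quasiiso} and \ref{Lem:triangle-quasiiso} to realise the moves as quasi-isomorphisms through unobstructed curves (with the non-vanishing of $[\gamma]$ feeding into Lemma \ref{Lem:shrink_area}, and the boundary case handled by enlarging $S$), followed by Lemma \ref{lem:rank_one} for the rank-one conclusion. The extra precautions you flag are compatible with the paper's argument: the Steinitz moves are performed across empty triangular faces, and your explicit indecomposability check for $\xi$ is left implicit in the paper's invocation of Lemma \ref{lem:rank_one}.
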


\begin{proof} Being spherical implies that $\gamma$ is homotopic to a simple closed curve, hence bounds an embedded bigon.  By a sequence of triple point moves, we may find an empty bigon bound by $\gamma$, which we may then shrink by Hamiltonian isotopy and cancel to obtain a new immersed curve $\gamma'$ in the same quasi-isomorphism class. 
By repeated applying Lemmas \ref{Lem:bigon-quasiiso} and
\ref{Lem:triangle-quasiiso}, we eventually arrive at a simple closed curve.
Finally, Lemma \ref{lem:rank_one} implies that $\xi$ must have rank one.
\end{proof}

 \section{Geometrization on punctured surfaces}

\subsection{The wrapped Fukaya category of a non-compact surface}

Let $(S,\partial S)$ be a symplectic surface with non-empty boundary; fix a finite subset $\Lambda \subset \partial S$ of
boundary marked points (the ``stops'') and a
homotopy class of line field $\eta\subset TS$. 
Associated to this data is a $\bZ$-graded partially wrapped Fukaya 
category $\scrW(S,\Lambda,\eta)$.
If one does not make a choice of line field, there is also a $\bZ/2$-graded category $\scrW(S,\Lambda)$, where the $\bZ/2$-grading is given by orientation. When the set $\Lambda \subset \partial S$ of stops is empty, we will simply write $\scrW(S)$ or $\scrW(S,\eta)$. 

If the line field $\eta$ is orientable (i.e. lifts from a section of $\bP(TS)$ to the unit sphere bundle of $TS$), there is a forgetful functor $\scrW(S,\Lambda,\eta) \to \scrW(S,\Lambda)$ which forgets the grading structure. There are also  localization functors $\scrW(S,\Lambda) \to \scrW(S,\Lambda')$  which forget some of the stops whenever $\Lambda' \subset \Lambda$; in particular there are ``acceleration" functors $\scrW(S,\Lambda) \to \scrW(S)$ which are (by definition) surjective on objects.


The objects of $\scrW(S,\Lambda)$ (resp.\ $\scrW(S,\Lambda,\eta)$) are (graded) unobstructed properly
immersed curves or arcs with boundary in $\partial S\setminus \Lambda$,
equipped with local systems. While the construction is usually carried out
in the exact setting, we work over the Novikov field and allow non-exact
objects into our category. Recall that generators of the wrapped Floer
complex arise not only from intersection points but also from
(positively oriented) boundary chords in $\partial S\setminus \Lambda$
connecting the end points of a pair of arcs.
Fixing a Liouville structure on $S$, the structure maps of the wrapped
category count isolated solutions of Floer's equation with a suitable
Hamiltonian perturbation in the Liouville completion of $S$. These counts are
weighted by the topological energy of the
solutions and by holonomy terms. In the case of
arcs, the weights can be cancelled out by trivializing the local systems and rescaling generators by
their Floer action; it is only for non-exact objects that
Novikov coefficients are necessary.  Note that
the wrapped category only depends on the Liouville completion of
$(S,\Lambda)$; in particular it is independent of the choice of Liouville 
structure.

\begin{Remark}
As in the case of Fukaya categories of closed Riemann surfaces,
the structure maps of $\scrW(S,\Lambda)$ can be determined
combinatorially in terms of immersed polygons with convex corners.
Indeed, solutions to Floer's equation which
do not lie entirely in the cylindrical ends of the completion can be
reinterpreted as immersed polygons in $S$ whose
boundary lies partially on the given Lagrangians and partially
along chords in $\partial S\setminus \Lambda$; and solutions which
lie entirely in the cylindrical ends only contribute a ``classical'' term to
$\mu^2$ which concatenates two boundary chords with a common end point.
\end{Remark}
 
 \subsection{Geometricity of twisted complexes}

We will say that an object $Y$ of $\Tw^{\pi}\scrW(S,\Lambda)$ is \emph{geometric} if it is quasi-isomorphic to a union of  immersed arcs or curves with local systems
in $S$ or its Liouville completion.   The category $\Tw\scrW(S,\Lambda,\eta)$ has a combinatorial model, due to \cite{HKK}, using which they prove the remarkable:

\begin{Theorem}[Haiden, Katzarkov, Kontsevich]  \label{Thm:HKK} Let $(\Sigma,\partial \Sigma, \Lambda)$ be a surface with non-empty boundary and a (possibly empty) collection of boundary marked points $\Lambda \subset \partial\Sigma$. Let $Y \in \Tw\scrW(\Sigma,\Lambda)$ be a $\bZ/2$-graded twisted complex. Then $Y$ is geometric.
\end{Theorem}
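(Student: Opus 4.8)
The plan is to prove the statement by reducing an arbitrary $\bZ/2$-graded twisted complex on $\scrW(\Sigma,\Lambda)$ to geometric form, using the combinatorial description of twisted complexes over the HKK model together with the topological results from Section~\ref{sec:bigons}. First, I would recall from \cite{HKK} that $\Tw\scrW(\Sigma,\Lambda,\eta)$ is equivalent to the homotopy category of twisted complexes over a formal (graded) model that is a directed $A_\infty$-category associated to an arc system, and that in that model every object admits a \emph{minimal} representative, in which the components of the differential lie in the maximal ideal (no ``identity'' components). The point of passing to a minimal model is that a minimal twisted complex over a gentle-type $A_\infty$-algebra decomposes, by a Gabriel-style classification, into indecomposable summands which are either \emph{strings} or \emph{bands}; HKK show that each such indecomposable corresponds geometrically to an immersed arc (string) or immersed closed curve (band), the latter carrying an indecomposable local system encoded by the band datum. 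Since geometricity is manifestly preserved under taking finite direct sums and under quasi-isomorphism, it then suffices to treat a single indecomposable.

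The second step is to make sure the geometric object one extracts is a legitimate object of $\scrW(\Sigma,\Lambda)$: it must be \emph{unobstructed} (its immersed representative bounds no teardrops), and in the $\bZ/2$-graded setting no global grading obstruction can arise because the grading is by orientation and is automatically consistent. Here I would invoke Lemma~\ref{l:unobstructed}: an immersed curve in a surface is unobstructed iff it lifts to a properly embedded arc in the universal cover, and the strings/bands produced by the HKK dictionary are, after a regular homotopy, taut representatives (e.g.\ geodesics for a hyperbolic-type metric on the completion) which automatically have this property. For arcs with ends on $\partial S\setminus\Lambda$ the analogous statement holds; some care is needed because, as flagged in the section on wrapped categories, the immersed curve may only live in the Liouville completion of $(S,\Lambda)$ rather than in $S$ itself, but the definition of ``geometric'' was set up to allow exactly this.

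The third step handles the interaction between the combinatorial model and the actual $A_\infty$-structure: one has to know that the HKK equivalence $\Tw\scrW(\Sigma,\Lambda,\eta)\simeq(\text{combinatorial category})$ is compatible with the geometric interpretation of objects, i.e.\ that the object of $\scrW$ assigned to a band really \emph{is} quasi-isomorphic to the corresponding immersed curve with local system as an object of the geometrically-defined wrapped category. This is precisely the content of \cite{HKK}, so at this point one is simply quoting their classification theorem; the work of the present paper is to extract the statement in the $\bZ/2$-graded generality and to note (via the forgetful functor $\scrW(S,\Lambda,\eta)\to\scrW(S,\Lambda)$) that dropping the line field $\eta$ only forgets grading data and hence cannot break geometricity. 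I expect the main obstacle to be bookkeeping rather than conceptual: namely, carefully matching the two a~priori different meanings of ``the wrapped Fukaya category'' — the combinatorial $A_\infty$-model of HKK and the Floer-theoretic one used elsewhere in the paper — and checking that an indecomposable twisted complex, once put in minimal form, yields an \emph{unobstructed} immersed representative (so that it is an honest object of $\scrW$, not merely a formal symbol), for which Lemma~\ref{l:unobstructed} and a curve-shortening argument as in the proof that spherical objects are geodesic do the job.

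Finally, I would remark that only the $\bZ/2$-graded statement is needed in the sequel (applied to the wrapped category of the cut-open surface $C$), so there is no need to track the finer $\bZ$-grading; and that the split-closure $\Tw^\pi\scrW(\Sigma,\Lambda)$ is \emph{not} asserted to be geometric here — indeed idempotents can produce non-geometric summands — which is why the theorem is stated for $\Tw\scrW$ rather than $\Tw^\pi\scrW$, the split-generation issue being dealt with separately via the $K$-theoretic criterion of \cite{AAEKO} and homological mirror symmetry.
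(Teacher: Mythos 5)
Your overall route is the paper's: quote the HKK arc-system model, classify indecomposable twisted complexes into string and band objects, and read these off geometrically as immersed arcs and closed curves with local systems. But there is a genuine gap at the point you describe as mere bookkeeping, namely the passage to the $\bZ/2$-graded statement. You propose to deduce it from the $\bZ$-graded classification ``via the forgetful functor $\scrW(\Sigma,\Lambda,\eta)\to\scrW(\Sigma,\Lambda)$'', arguing that forgetting the line field cannot break geometricity. That only handles objects which \emph{lie in the image} of the forgetful functor; a general $\bZ/2$-graded twisted complex need not admit a $\bZ$-lift for \emph{any} choice of line field $\eta$. This is exactly the obstruction recorded in Remark \ref{Rem:not_gradeable}: for instance, a simple closed curve separating an unpunctured genus-one subsurface cannot be graded with respect to any line field, by Poincar\'e--Hopf, so its (perfectly legitimate) $\bZ/2$-graded twisted complexes are invisible to the graded category. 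The paper's proof instead observes that the net-classification argument of \cite[Section 4.4]{HKK} never uses the integer grading — the two filtrations come from the boundary ordering of the arc system and from the components of the twisted-complex differential — so the argument is re-run verbatim with indexing set $\pi_0(\partial\Sigma\setminus\Lambda)\times\bZ/2$ in place of $\pi_0(\partial\Sigma\setminus\Lambda)\times\bZ$. Your reduction, as stated, does not prove the theorem.

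Two secondary points. First, working over the Novikov field there is a step you omit: the band construction produces a closed curve whose local system need not be unitary, and the paper fixes this by noting the holonomy of an indecomposable band has a single eigenvalue whose valuation can be normalised by an isotopy sweeping a prescribed flux (Lemma \ref{Lem:sweeparea}, Remark \ref{Rmk:sweeparea}), possibly after enlarging into the Liouville completion for boundary-parallel curves; without this the output is not an honest object of $\scrW(\Sigma,\Lambda)$ as set up here. Second, your worry about unobstructedness of the geometric representatives, and the proposed curve-shortening/geodesic argument via Lemma \ref{l:unobstructed}, is a detour the paper does not need: the geometric objects arise directly from iterated boundary connected sum surgeries in the HKK dictionary, which produces unobstructed representatives; the real subtleties are the gradeability and unitarity issues above.
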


We include a brief discussion of the proof, to illustrate why its ingredients do not readily generalise to the case of closed surfaces treated in this paper, and to clarify its applicability to the $\bZ/2$-graded case
and to the non-exact setting.

\begin{proof}[Sketch]   Suppose that $\Lambda \cap C \neq \emptyset$ for each component $C \subset \partial \Sigma$; the general case will follow from this by localization.  
A ``full formal arc system" is a collection $\{a_i\}$ of disjoint embedded arcs 
with boundary in $\partial \Sigma \setminus \Lambda$ which
decompose $\Sigma$ into a union of discs each containing exactly one
point of $\Lambda$. Any such system of arcs generates the category 
$\scrW(\Sigma,\Lambda)$. The
$A_{\infty}$-algebra of endomorphisms of the collection of objects $\{a_i\}$ has
a particularly simple description: it is formal, and the only
non-trivial products correspond to the concatenation of boundary chords on
each component of $\partial \Sigma\setminus \Lambda$.
This can be described by a nilpotent quiver algebra, with vertices the arcs 
and arrows the boundary chords connecting successive end points along each
component of $\partial \Sigma\setminus \Lambda$. 

Since the arcs $a_i$ generate the wrapped category, any object of 
$\Tw \scrW(\Sigma,\Lambda)$ can be expressed as a twisted complex 
$A=(\bigoplus V_i\otimes a_i, \delta)$ for some ($\bZ$ or $\bZ/2$) graded vector
spaces $V_i$. This twisted complex can be viewed as a representation of 
a ``net", i.e.\ a collection of vector spaces which carry two filtrations, 
together with prescribed isomorphisms between certain pieces of the 
associated gradeds.  

To a component $c$ of $\partial \Sigma\setminus \Lambda$, 
containing end points of the arcs $a_{i_1}, \dots,a_{i_k}$ in that order,
we associate the ($\bZ$ or $\bZ/2$) graded vector space 
$$V_c:=V_{i_1}[d_1]\oplus \dots \oplus 
V_{i_k}[d_k],$$ where the shifts $d_1,\dots,d_k$ reflect the gradings of
the arcs near the relevant end points. This carries two filtrations.
One comes from the ordering of the arcs along the boundary of $\Sigma$:
\[
V_{i_1}[d_1]\subset V_{i_1}[d_1]\oplus V_{i_2}[d_2]\subset \dots
\subset V_c.
\] 
The other comes from the part of the differential of the
twisted complex which involves
boundary chords lying along $c$, viewed as an endomorphism $\delta_c
\in \mathrm{End}(V_c)$ which squares to zero, giving the filtration
$$\im(\delta_c) \subset \ker(\delta_c) \subset V_c.$$
In the latter filtration, $\delta_c$ induces isomorphisms 
$V_c/\ker(\delta_c)\simeq \im(\delta_c)$, whereas in the former,
each $V_i$ appears twice in the associated gradeds (once for each end point
of the arc $a_i$). 

Because the language of nets is formulated for
ungraded vector spaces, the argument of \cite[Section 4.4]{HKK} 
actually splits the vector spaces $V_c$ according to cohomological degree:
in the $\bZ$-graded case, the indexing set for the net is $\pi_0(\partial\Sigma\setminus\Lambda)\times
\bZ$, and one considers the collection of vector spaces $V_c^d$ for all $c\in
\pi_0(\partial\Sigma\setminus \Lambda)$ and $d\in \bZ$ (each
equipped with the two filtrations described above).

With this understood, a classification theorem
generalising results of \cite{Nazarova-Roiter} implies that any indecomposable 
representation of a net is pushed forward from an indecomposable 
representation of a net of ``height 1", i.e.\ one in which all the filtrations
have length 1. These correspond to twisted complexes ``locally'' built from pieces
that involve a single arc among the $a_i$, and connecting differentials that
are isomorphisms between multiplicity vector spaces that are concentrated in
a single degree and correspond to a single boundary chord (not a
linear combination). Such twisted complexes look like either
\[ \xymatrix{ 
V_{j_1}\otimes a_{j_1} \ar[r] & V_{j_2}\otimes a_{j_2} & \ar[l] \ \ \dots \ \ \ar[r]
& V_{j_\ell}\otimes a_{j_\ell}} \qquad \text{or}
\]
\[ \xymatrix{ V_{j_1}\otimes a_{j_1} \ar[r] \ar@/_1.5pc/[rrr] & V_{j_2}\otimes a_{j_2} & \ar[l] \ \ \dots \ \ \ar[r]
&  V_{j_\ell}\otimes a_{j_\ell}} \qquad \phantom{or} \\[1em] {} \]
where the vector spaces $V_{j_i}$ are all isomorphic up to grading shift,
and the arrows between $V_{j_i}\otimes a_{j_i}$ and $V_{j_{i+1}}\otimes
a_{j_{i+1}}$ can point in either direction and each correspond to a 
single boundary chord.
Interpreting the mapping cone of a boundary chord geometrically as a boundary
connected sum surgery, these two kinds of twisted complexes correspond respectively to
immersed arcs and curves equipped with local systems.
(See \cite{HKK}; see also \cite{Burban-Drozd} for an earlier classification of objects of the derived category of a cycle of rational curves based on the same algebraic formalism.) 

Since we are working over the Novikov field, there is one more subtlety that
arises: when
using boundary connected sum surgeries to build a geometric object out of
the second kind of indecomposable twisted complex,
one arrives at an
immersed closed curve carrying a local system which is not necessarily
unitary. However, irreducibility implies that the holonomy has a single
eigenvalue, whose valuation can be adjusted by modifying the boundary connected sum
construction by an isotopy that sweeps a suitable amount of flux.
(The isotopy may however require replacing $\Sigma$ with a larger domain 
inside the Liouville completion; as noted in Remark \ref{Rmk:sweeparea}, for boundary-parallel curves this is
unavoidable, whereas for all other curves one can find enough space
within $\Sigma$ by applying the trick of Lemma \ref{Lem:sweeparea}).
After performing an isotopy to ensure that the eigenvalue has valuation zero, 
a suitable choice of basis of the local system (e.g., reducing to the Jordan normal form)
ensures that the holonomy is a valuation-preserving element of $GL_n(\Lambda)$.
We then arrive at an immersed curve with a unitary local system.

In \cite{HKK} the classification theorem is stated for objects of the $\bZ$-graded category (where gradings are defined with respect to any choice of line field $\eta$). 
However, the argument above uses filtrations coming from the boundary
structure of the full formal arc system and from the differential of the
twisted complex, and \emph{not} from the $\bZ$ indexing degrees.
The argument of \cite[Section 4.4]{HKK} carries over without modification to the $\bZ/2$-graded case simply by
using $\pi_0(\partial\Sigma\setminus\Lambda)\times \bZ/2$ instead of
$\pi_0(\partial\Sigma\setminus\Lambda)\times \bZ$ as indexing set
for the net and reducing the second factor mod 2 in all the statements.
This yields geometricity for $\bZ/2$-graded twisted complexes.
(In the special case of a once-punctured torus, an explicit algorithm for producing the geometric replacement of a $\bZ/2$-graded twisted complex guaranteed by Theorem \ref{Thm:HKK} is given in \cite{HRW}.)
\end{proof}


\begin{Remark} \label{Rem:not_gradeable}
Fix a grading structure on $S$ and a full formal arc system $\scrA$.  
The category $\Tw_{gr}(\scrA)$ of graded twisted complexes over $\scrA$ is split-closed, since it admits a stability condition by \cite{HKK}; hence $\Tw_{gr}(\scrA) = \Tw_{gr}^{\pi}(\scrA)$. There is a commuting diagram
\[
\xymatrix{
\Tw_{gr}(\scrA) \ar[r] \ar[d] & \Tw^{\pi}(\scrA) \ar[d] \\
SH^*(S) \ar@{=}[r] & SH^*(S)
}
\]
where the top arrow collapses the $\bZ$-graded structure to its underlying $\bZ/2$-grading and the vertical arrows are open-closed maps (these factor through the localisation functors from partially to fully wrapped categories).  Both vertical maps hit the unit, by \cite{GPS:structural}, so the image of $\Tw_{gr}(\scrA)$ in $\Tw^{\pi}(\scrA)$ is a split-generating subcategory. Nonetheless, the only idempotents which are admitted in $\Tw_{gr}(\scrA)$ are those of degree zero. Concretely, a simple closed curve which separates a genus one subsurface of $S$  containing no punctures cannot be graded for \emph{any} choice of line field, since the winding number of the line field is necessarily non-zero by the Poincar\'e-Hopf theorem. Such a curve defines an object of $\Tw^{\pi}(\scrA)$ which does not lift to $\Tw^{\pi}_{gr}(\scrA) = \Tw_{gr}(\scrA)$ for any choice of grading structure. \end{Remark}

Theorem \ref{Thm:HKK} shows that objects of $\Tw\scrW(S)$ are geometric, but not that objects of the split-closure $\Tw^{\pi}\scrW(S)$ are geometric. For a line field $\eta$ on $S$ we have a $\bZ$-graded full subcategory $\Tw\scrW(S,\eta) \subset \Tw\scrW(S)$. The existence of stability conditions on $\Tw\scrW(S,\eta)$ (as constructed in \cite{HKK}) implies that it is split-closed. However,  Remark \ref{Rem:not_gradeable} implies that one cannot reduce geometricity of idempotent summands of $\Tw\scrW(S)$ to geometricity of  objects of $\Tw\scrW(S,\eta)$, since there are objects in the former which don't lift to the latter for any choice of $\eta$. 

Section \ref{Sec:split-closure}, following a strategy from \cite[Appendix B]{AAEKO}, uses homological mirror symmetry to prove that the $\bZ/2$-graded category $\Tw\scrW(S)$ is split-closed whenever $S \subset (\bC^*)^2$ is a very affine curve in a maximally degenerating family. The next section reviews the relevant mirror symmetry input, due to Heather Lee \cite{HLee}.

\subsection{Homological mirror symmetry for punctured surfaces}

We consider a finite subset $A\subset \bZ^2$ and a function $\rho: A \to \bR$ which is the restriction of a convex piecewise-linear function $\bar\rho: \mathrm{Conv}(A) \to \bR$.  We assume that the maximal domains of linearity of $\bar\rho$ are the cells of a ``regular" polyhedral decomposition $\scrP$ of $\mathrm{Conv}(A)$, i.e. one with vertex set $A$ and for which every maximal cell is congruent to a $GL(2,\bZ)$-image of the standard simplex. We consider a punctured surface
\[
S_t = \left\{ \sum_{a\in A} c_a t^{-\rho(a)} z^a = 0 \right\} \subset \bC^* \times \bC^*, \quad t\gg 0
\]
with its natural exact convex symplectic structure. Explicitly, we can take 
\[
\omega_t = \frac{i}{2|\log(t)|^2} \sum_{j=1}^2 d\log z_j \wedge d\log \bar{z}_j.
\] 
The regularity hypothesis ensures that the genus and number of punctures of $S_t$ are independent of the choice of $\rho$, and the wrapped Fukaya category $\scrW(S_t,\omega_t)$ is independent of $t\gg 0$ up to quasi-isomorphism.  Let
\[
\Log_t: (\bC^*)^2 \to \bR^2, \ (z_1,z_2) \mapsto \frac{1}{|\log (t)|} (\log |z_1|, \log |z_2|).
\]
As $t\to\infty$ the images $\mathrm{Log}_t(S_t)$ Gromov-Hausdorff converge to the ``tropical amoeba", the 1-dimensional polyhedral complex $\Pi$ which is the singular locus of the Legendre transform of the convex function $\rho$, defined by 
\begin{equation} \label{eqn:Lrho}
L_\rho: \bR^2 \to \bR, \quad \xi \mapsto \mathrm{max} \, \left\{\langle a,\xi\rangle - \rho(a) \, | \, a\in A \right\}.
\end{equation}
This is combinatorially the 1-skeleton of the dual cell complex of $\scrP$.  The regions $R^a \subset \bR^2\backslash \Pi$ in the complement of the tropical curve $\Pi$  are labelled by elements of $A$, according to which term in \eqref{eqn:Lrho} achieves the maximum. Let 
\begin{equation} \label{eqn:LG_mirror}
\Delta_{Z,\rho} = \left\{(\xi,\eta) \in \bR^2 \times \bR \, | \, \eta \geq L_{\rho}(\xi) \right\},
\end{equation}
let $Z$ be the corresponding 3-dimensional toric variety, and $W: Z\to \bC$ the function defined by the toric monomial $(0,0,1)$, which vanishes to order $1$ along each component of $W^{-1}(0)$.  The mirror to $(S_t,\omega_t)$ is the
Landau-Ginzburg model $(Z,W)$, i.e.\ the symplectic geometry of $S_t$ is
reflected in the singularities of the toric divisor $W^{-1}(0) = Z_0$. 
For further discussion and context, see \cite[Section 3]{AAK}.  In particular, we point out that the topology of $Z$ depends on the choice of polyhedral decomposition $\scrP$, with different choices differing by flops. The irreducible toric divisors of $Z$ are labelled by the components $R^a \subset \bR^2\backslash \Pi$
(whose closures are their moment polytopes), and their intersections are determined by the combinatorics of the tropical curve.  

\begin{Remark} \label{Rem:numerology}
By considering general elements of linear systems of  curves in $\bP^2$ of degree $d$ or in $\bP^1\times\bP^1$ of bidegree $(a,b)$, one obtains punctured surfaces of genus $g$ with $\ell$ punctures for pairs $(g,\ell)$ of the form $((d-1)(d-2)/2, 3d)$ and $((a-1)(b-1), 2(a+b))$.  
More generally, one can obtain punctured surfaces for any $(g,\ell)$ with $3\leq \ell\leq 2g+4$ 
by considering the family of tropical plane curves depicted in Figure \ref{Fig:tropicaltripod}.
The upper bound $\ell-3\leq 2g+1$ on the number of horizontal legs ensures that the two `north-east antlers' of the curve don't intersect near infinity.
\end{Remark}

\begin{figure}[ht]
\begin{center} 
\begin{tikzpicture}[scale=0.35]
\draw[semithick] (-1,-3) -- (0,-1) -- (0,1) -- (-0.5,1.5);
\draw[semithick, dotted] (-0.5,1.5) -- (-1,2);
\draw[semithick] (-1,2) -- (-1.5,2.5) -- (-3.5,3.5) -- (-5.75,4.25);
\draw[semithick] (0,-1) -- (2,1) -- (3,1.5);
\draw[semithick, dotted] (3,1.5) -- (4,2);
\draw[semithick] (4,2) -- (5,2.5) -- (8,3.5) -- (12,4.5);
\draw[semithick] (2,1)--(0,1);
\draw[semithick] (5,2.5)--(-1.5,2.5);
\draw[semithick] (8,3.5)--(-3.5,3.5);
\draw[decorate,decoration={brace,amplitude=4pt}]
    (7.5,2.5)--(0.5,-1.5) node [midway,xshift=0.5em,yshift=-1em] {\small $g$};
\draw[semithick] (-9,4.25)--(-5.75,4.25)--(-7.25,5)--(-8,5.75)--(-8,6.5)--(-7.25,7.25);
\draw[semithick] (-9,5)--(-7.25,5);
\draw[semithick] (-9,5.75)--(-8,5.75);
\draw[semithick] (-9,6.5)--(-8,6.5);
\draw[decorate,decoration={brace,amplitude=4pt}]
    (-9.7,4)--(-9.7,6.75) node [midway,xshift=-2em] {\small $\ell-3$};
\end{tikzpicture}
\end{center}
\caption{Tropical plane curves of genus $g$ with $\ell$ punctures \label{Fig:tropicaltripod}}
\end{figure}
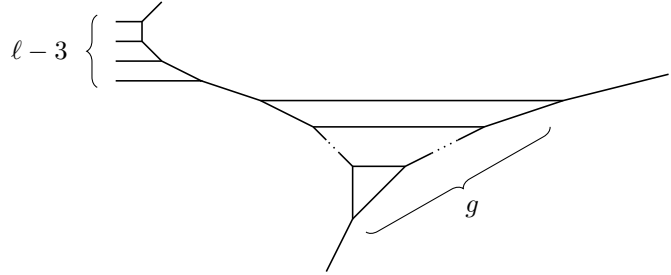

For any scheme $Z$, we will write $\mathrm{Perf}(Z)$ for the dg-category of perfect complexes over $Z$, which is the full subcategory of $D^b\Coh(Z)$ of objects admitting finite locally free resolutions. The dg-quotient of the latter by the former is the derived category of singularities $D_{sg}(Z)$. This is not in general split-closed and its split-closure is denoted $D^{\pi}_{sg}(Z)$.  The main result of \cite{HLee} asserts:

\begin{Theorem}[Heather Lee] \label{Thm:HMS-HLee}
There is an equivalence of $\bZ/2$-graded split-closed $\bC$-linear triangulated categories
\[
D^{\pi}\scrW(S_t) \simeq D^{\pi}_{sg}(Z_0).
\]
\end{Theorem}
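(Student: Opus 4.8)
The plan, following \cite{HLee}, is to realise both sides as the output of a descent (Mayer--Vietoris) procedure over one and the same combinatorial diagram: the nerve of the pair-of-pants decomposition of $S_t$ dual to the tropical curve $\Pi$ (equivalently, the dual cell complex of the regular decomposition $\scrP$). On the symplectic side this decomposition writes $S_t$ as a union of pairs of pants $P_v$, one for each trivalent vertex $v$ of $\Pi$ (equivalently each maximal cell of $\scrP$), glued along annuli $C_e$, one for each bounded edge $e$ of $\Pi$. By sectorial descent for wrapped Fukaya categories --- Lee's pair-of-pants cover argument, or equivalently the generation and gluing results of \cite{GPS:structural} --- the split-closed category $D^{\pi}\scrW(S_t)$ is recovered from the $\scrW(P_v)$ and $\scrW(C_e)$, glued along the Viterbo restriction functors to the cylindrical overlaps. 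On the algebraic side, the unimodularity of the maximal cells of $\scrP$ makes the toric $3$-fold $Z$ smooth, so $Z_0 = W^{-1}(0)$ is a reduced simple normal crossings divisor, a union of smooth toric surfaces $D^a$ indexed by the regions $R^a\subset \bR^2\setminus \Pi$, meeting along toric curves and triple points. One covers $Z$ by the affine charts $U_v\cong \bC^3$ dual to the maximal cells of $\scrP$ and invokes the corresponding descent for the derived category of singularities (equivalently, after Orlov's comparison, \'etale descent for matrix factorisations of $(Z,W)$) to recover $D^{\pi}_{sg}(Z_0)$ from the local pieces $D^{\pi}_{sg}(Z_0\cap U_v)$ and $D^{\pi}_{sg}(Z_0\cap U_v\cap U_{v'})$ over a diagram that is combinatorially identical to the symplectic one.

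The content then lies in the local mirror equivalences and their compatibility with restriction. The basic local case is the thrice-punctured sphere, where one must show $D^{\pi}\scrW(\Sigma_{0,3})\simeq D^{\pi}_{sg}(\{x_1x_2x_3=0\}\subset \bC^3)\simeq \mathrm{MF}(\bC^3,x_1x_2x_3)$; this is precisely the computation of \cite{AAEKO}, matching an explicit three-arc generator of $\scrW(\Sigma_{0,3})$ --- whose formal $A_{\infty}$-structure is computed combinatorially in terms of immersed polygons and boundary chords, as in \cite{HKK} --- with a Koszul-type matrix factorisation generating the $B$-side via Orlov's theorem. Over an overlap chart $U_v\cap U_{v'}$, which meets $Z_0$ in a double-curve stratum $D^a\cap D^b$ (a copy of $\bC^*$ in that chart, crossed with a transverse node), the local $B$-side category is $\mathrm{Perf}$ of that $\bC^*$, which matches $\scrW(C_e)$ for an annulus --- mirror symmetry of the cylinder. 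One then checks that the local equivalences intertwine restriction: that Viterbo restriction $\scrW(S_t)\to\scrW(P_v)$, resp.\ $\to\scrW(C_e)$, corresponds to the algebraic restriction $D^{\pi}_{sg}(Z_0)\to D^{\pi}_{sg}(Z_0\cap U_v)$, resp.\ to $D^{\pi}_{sg}(Z_0\cap U_v\cap U_{v'})$ --- concretely, one tracks how cotangent-fibre-type Lagrangian generators restrict to each piece and matches this with the restriction of the corresponding matrix factorisations. Since the gluing combinatorics on both sides is governed by the same trivalent graph $\Pi$, assembling the local matches gives the asserted equivalence.

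I expect the main obstacle to be exactly this assembly step: building the symplectic (co)sheaf with enough functoriality, and verifying that Viterbo restriction is mirror to toric restriction, uniformly over the decomposition. This requires care with the grading conventions --- the argument is $\bZ/2$-graded throughout, since over $\bC$ matrix factorisations are --- with the distinction between partially and fully wrapped categories along the gluing necks, and with the fact that an annulus $C_e$ separating genus-carrying pieces need not be gradeable, the phenomenon flagged in Remark \ref{Rem:not_gradeable}. Two further, more routine points: since $S_t$ is exact one may work over $\bC$ rather than the Novikov field for this theorem (Novikov coefficients only become essential later, once non-exact objects appear), which avoids the unitarity subtleties in the sketch of Theorem \ref{Thm:HKK}; and both sides are assembled from the non-idempotent-complete local pieces and split-closed only at the end, which is consistent with --- and underlies --- the fact exploited in Section \ref{Sec:split-closure} that $D_{sg}(Z_0)$ itself need not be split-closed.
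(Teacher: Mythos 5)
Your outline is essentially the argument the paper attributes to \cite{HLee}: both sides are written as limits of restriction diagrams over the same trivalent graph---pairs of pants glued along cylinders on the $A$-side, toric charts $(\bC^3,z_1z_2z_3)$ on the $B$-side, with the basic local equivalence for the pair of pants taken from \cite{AAEKO}---and the paper gives no independent proof beyond citing Lee and summarizing exactly this strategy. The only cosmetic difference is that Lee's gluing is implemented by her dipping-Hamiltonian restriction functors (the technology reviewed in Section \ref{Sec:restriction}) rather than by Viterbo/GPS-style sectorial descent, but this is the same approach in substance.
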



Lee's proof of this theorem involves writing both sides as limits over restrictions to certain simple
pieces, and matching the two restriction diagrams in order to conclude that their
limits are equivalent. On one hand, the tropical curve $\Pi$ induces a decomposition of
$S_t$ into pairs of pants (indexed by the vertices of $\Pi$) glued together
along cylinders (corresponding to the bounded edges of $\Pi$); on the other hand, the
Landau-Ginzburg model $(Z,W)$ admits a matching decomposition into
toric affine charts $(\C^3,z_1z_2z_3)$. 
In Section \ref{Sec:restrict}, we will revisit the argument
in order to apply analogous technology to a closed symplectic surface.  For
now, we note the following consequence:

\begin{Proposition}\label{Prop:HMS-sub-HLee}
Let $S'_t\subset S_t$ be the union of the pairs of pants
corresponding to a given subset of the vertices of $\Pi$. 
Let $Z'$ be the union of the corresponding affine charts of $Z$, i.e\ the
toric 3-fold obtained from $Z$ by removing all the toric strata whose
closure does not contain any of the selected vertices, and $Z'_0=Z'\cap Z_0$. Then $D^\pi
\scrW(S'_t)\simeq D^{\pi}_{sg}(Z'_0)$.
\end{Proposition}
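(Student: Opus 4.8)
The plan is to read the statement off the \emph{structure} of Lee's proof of Theorem \ref{Thm:HMS-HLee}, rather than argue from scratch. That proof realizes both sides as limits of ``restriction diagrams'' indexed by the pieces of a compatible decomposition of $S_t$ and of $(Z,W)$, and passing to a subset $V'$ of the vertices of $\Pi$ amounts precisely to restricting both diagrams to the full sub-poset on $V'$.

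First I would recall the two decompositions. On the symplectic side, $\Pi$ cuts $S_t$ into pairs of pants $P_v$ (one for each vertex $v$ of $\Pi$) glued along annuli $A_e$ (one for each bounded edge $e$); this is a Weinstein sectorial cover, so by the descent property of wrapped Fukaya categories used in \cite{HLee} (cf.\ \cite{GPS:structural}), $D^{\pi}\scrW(S_t)$ is the limit of the diagram $v\mapsto D^{\pi}\scrW(P_v)$, $e\mapsto D^{\pi}\scrW(A_e)$. On the algebraic side, the regular polyhedral decomposition $\scrP$ exhibits $(Z,W)$ as glued from toric charts $(\C^3,z_1z_2z_3)$, one for each vertex, along overlap charts, one for each bounded edge; restricting to $Z_0=W^{-1}(0)$ and using Zariski (Mayer--Vietoris) descent for $D^{\pi}_{sg}$, the category $D^{\pi}_{sg}(Z_0)$ is the limit of the corresponding diagram. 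These are exactly the two diagrams whose reformulation is carried out in Section \ref{Sec:restrict}.

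Next I would identify $S'_t$ and $Z'_0$ with the objects attached to the full sub-poset on $V'$. By construction $S'_t$ is the union of the $P_v$, $v\in V'$, together with the annuli $A_e$ for edges $e$ both of whose endpoints lie in $V'$; the edges of $\Pi$ with exactly one endpoint in $V'$ become cylindrical ends of $S'_t$, and since no stops are in play there is nothing else to track, so descent identifies $D^{\pi}\scrW(S'_t)$ with the limit of the restricted sub-diagram. Dually, $Z'$ is obtained from $Z$ by deleting the toric strata whose closure misses $V'$, which is exactly the open toric subvariety $\bigcup_{v\in V'}U_v$ covered by the charts indexed by $V'$ together with their pairwise overlaps along edges within $V'$; hence $D^{\pi}_{sg}(Z'_0)$ is the limit of the corresponding restricted sub-diagram.

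Finally, Lee's equivalence is assembled chart by chart: $(\C^3,z_1z_2z_3)$ at $v$ is mirror to $P_v$, the overlap charts correspond to the gluing annuli $A_e$, and these local equivalences intertwine the restriction functors. Restricting this matching to the full sub-poset on $V'$ yields an equivalence of the two restricted diagrams, and passing to limits gives $D^{\pi}\scrW(S'_t)\simeq D^{\pi}_{sg}(Z'_0)$. The step I expect to be the real obstacle is the one I have glossed over: upgrading Lee's conclusion to an honest equivalence of \emph{diagrams} of $A_{\infty}$/dg-categories, which in particular requires that both $\scrW$ and $D^{\pi}_{sg}$ satisfy descent for these specific covers, compatibly with split-closure, so that the limit of the restricted sub-diagram genuinely computes $D^{\pi}\scrW(S'_t)$ and $D^{\pi}_{sg}(Z'_0)$. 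This is the content that the reformulation in Section \ref{Sec:restrict} is designed to supply, and I would invoke it rather than re-prove descent here.
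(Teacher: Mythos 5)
Your proposal is correct and follows essentially the same route as the paper: the paper's proof simply restricts Lee's two restriction diagrams to the sub-collection of pairs of pants/cylinders and affine charts determined by the chosen vertices, and observes that the limits of these restricted diagrams compute $D^\pi\scrW(S'_t)$ and $D^{\pi}_{sg}(Z'_0)$. The descent/diagram-matching input you flag as the potential obstacle is exactly what the paper attributes to Lee's proof of Theorem \ref{Thm:HMS-HLee} rather than re-proving.
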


\begin{proof} This follows immediately from Lee's proof of Theorem
\ref{Thm:HMS-HLee} by considering only the parts of the restriction
diagrams that correspond to the pairs of pants and cylinders in $S'_t$ on
one hand, and to the affine charts of $Z'$ on the other hand. The limits of
these diagrams compute $D^\pi\scrW(S'_t)$ and $D^\pi_{sg}(Z'_0)$
respectively, which yields the result.
\end{proof}

\begin{Remark} \label{Rem:numerology2}
Proposition \ref{Prop:HMS-sub-HLee} allows us to apply Lee's result to punctured surfaces
of arbitrary genus and with any number $\ell\geq 3$ punctures. For $\ell>2g+4$
the graphs of Figure~\ref{Fig:tropicaltripod} are not entire tropical plane
curves because two of the legs would need to intersect each other outside of the depicted region
of the plane; however they describe subsurfaces $S'_t$ of higher genus curves
$S_t\subset (\bC^*)^2$, to which we can apply the Proposition.
The mirror configuration $Z'_0$ is again a union of smooth toric divisors of
$Z'$, as depicted in Figure \ref{Fig:tropicaltripod}; the only difference
with the previous setting is that one of the components of $Z'_0$
(corresponding to the upper-right region of the figure)
now arises as the complement of a toric divisor (e.g.\ a $\bP^1$ with negative
normal bundle) inside a compact component of $Z_0$;
this does not have any incidence on the properties of the derived category of singularities, and the results
below apply without modification to these examples.
\end{Remark}

Lee's proof furthermore matches certain specific objects on the two sides of the mirror.

Given any irreducible component $Z_0^a\subset Z_0$, and a line bundle
$\mathcal{L}_a\to Z_0^a$, the push-forward of $\mathcal{L}_a$ from 
$Z_0^a$ to $Z_0$ defines an object of $D_{sg}(Z_0)$, which by abuse of 
notation we also denote by $\mathcal{L}_a$. 

\begin{Proposition} \label{Prop:match}
The equivalence of Theorem \ref{Thm:HMS-HLee} matches the sheaves $\mathcal{L}_a$ with
properly embedded arcs or simple closed curves in $S_t$, hence with objects in the image
of\/ $\Tw\scrW(S_t) \to \Tw^{\pi}\scrW(S_t)$.
\end{Proposition}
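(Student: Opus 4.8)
\emph{Proof strategy.} The plan is to reduce to a purely local identification via the restriction decompositions underlying Lee's proof of Theorem~\ref{Thm:HMS-HLee}, and then glue. Recall that on the symplectic side the tropical curve $\Pi$ cuts $S_t$ into pairs of pants $P_v$ indexed by the (trivalent) vertices $v$ of $\Pi$, glued along cylinders indexed by the bounded edges; on the mirror side $(Z,W)$ decomposes into toric charts $(\C^3,z_1z_2z_3)$ with the same incidence pattern, and Lee identifies the two restriction diagrams (cf.\ Proposition~\ref{Prop:HMS-sub-HLee} for a version of this). So it suffices to understand each $\mathcal{L}_a$ chart by chart and check that the local pieces glue to an embedded Lagrangian.

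First I would compute the restriction of $\mathcal{L}_a$ to the chart attached to a vertex $v$. The three coordinate planes $\{z_i=0\}\subset\C^3$ of that chart are the three components of $Z_0$ containing $v$, i.e.\ the components $Z_0^{a}$ with $R^a$ one of the three regions incident to $v$; for such an $a$ the restriction of $\mathcal{L}_a$ is a line bundle on $\{z_i=0\}\cong\C^2$, hence (as $\mathrm{Pic}(\C^2)=0$) is $\mathcal{O}_{\{z_i=0\}}$, while for all other $a$ the restriction is the zero object. (For a general line bundle $\mathcal{L}_a\to Z_0^a$ this same computation pins down every restriction up to an invertible scalar, so the only residual data is the gluing, which on the mirror side of the local models will translate into the holonomy of a rank-one local system on the resulting curve rather than its underlying image.)

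Next I would invoke the object-matching part of Lee's argument: under $D^\pi_{sg}$ of the central fibre $\{z_1z_2z_3=0\}\subset\C^3$, equivalent to $D^\pi\scrW(P_v)$, the structure sheaf $\mathcal{O}_{\{z_i=0\}}$ of a coordinate plane corresponds, in the normalization of \cite{HLee}, to a distinguished properly embedded arc $\lambda_i(v)\subset P_v$, and the zero object to the empty Lagrangian. Transporting these through the equivalence of restriction diagrams, the arcs $\lambda_i(v)$ attached to the vertices $v$ incident to $R^a$ must agree along each gluing cylinder: a bounded edge $e$ is incident to exactly two regions, $R^a$ meets $e$ iff it is one of them, and then the arcs coming from the two adjacent pairs of pants both run across the cylinder over $e$ and are identified there. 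Thus $\mathcal{L}_a$ matches a properly embedded $1$-submanifold $\sigma_a\subset S_t$; it is connected because $Z_0^a$ is irreducible, and embedded because its image under $\Log_t$ tracks the embedded cycle $\partial R^a\subset\bR^2\setminus\Pi$ (an honest circle when $R^a$ is bounded, a properly embedded arc running out to punctures when $R^a$ is unbounded). Hence $\sigma_a$ is a single simple closed curve or a single properly embedded arc, carrying a rank-one local system (trivial up to the flux/holonomy adjustment of Remark~\ref{Rmk:sweeparea}, should $\sigma_a$ happen to be boundary-parallel). Such an object already belongs to $\scrW(S_t)$, hence lies in the image of $\Tw\scrW(S_t)\to\Tw^\pi\scrW(S_t)$, which gives the claim.

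\emph{Main obstacle.} The crux is the local identification together with its compatibility with the restriction functors used in the gluing step: one must extract from Lee's proof, in her conventions, that the matrix factorization representing $\mathcal{O}_{\{z_i=0\}}$ in $D_{sg}(\C^3,z_1z_2z_3)$ is carried to an \emph{honest} embedded arc (not merely to something split-isomorphic to one), and that this assignment intertwines the algebraic restriction/corestriction functors with the symplectic restriction and gluing functors, so that the local arcs genuinely assemble into a global Lagrangian rather than only a twisted complex. Everything else — the triviality of $\mathrm{Pic}$ on the affine charts, the combinatorics of $\partial R^a$, and the fact that an embedded curve or arc with rank-one local system is already an object of $\scrW(S_t)$ — is routine.
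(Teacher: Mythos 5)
Your overall route is the same as the paper's: both read the matching off Lee's local-to-global argument, identifying the restriction of $\mathcal{L}_a$ to each toric chart with an arc in the corresponding pair of pants and then gluing along the cylinders over the bounded edges of $\partial R^a$, with the conclusion that the matched object tracks $\partial R^a$ and is therefore an embedded arc or simple closed curve already lying in $\Tw\scrW(S_t)$.

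There is, however, one genuinely incorrect assertion in your argument: the parenthetical claim that for a general line bundle $\mathcal{L}_a$ the gluing/transition data ``translates into the holonomy of a rank-one local system on the resulting curve rather than its underlying image.'' This is not how the correspondence works, and the paper's proof says the opposite: the degrees of $\mathcal{L}_a$ restricted to the compact $\bP^1$'s of $Z_0^a$ (one for each bounded edge of $\partial R^a$) determine the \emph{winding numbers} of the mirror arc or curve over the corresponding cylindrical regions of $S_t$, so changing the degree changes the Hamiltonian isotopy class of the underlying Lagrangian (by Dehn twisting around the core of the cylinder), not merely the local system. One can see that your version cannot be right by counting parameters: on a compact component $Z_0^a$ the line bundles form a lattice of rank equal to the Picard rank of the toric surface, whereas rank-one unitary local systems on a fixed circle form a one-parameter family; moreover non-isomorphic line bundles have different Euler pairings with fixed test sheaves, which a change of local system alone cannot produce. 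Fortunately this error is immaterial for the statement of Proposition \ref{Prop:match}: an arc crossing a cylinder with arbitrary winding is still embedded, and since the convexity of $R^a$ ensures $\partial R^a$ meets each vertex and edge of $\Pi$ at most once, the local pieces still concatenate into a single properly embedded arc or simple closed curve. The remaining issue you flag yourself --- that the local identification of $\mathcal{O}_{\{z_i=0\}}$ with an honest embedded arc must be compatible with the restriction functors used in the gluing --- is exactly what the paper also delegates to the details of \cite{HLee}, so on that point your proposal and the paper's (sketch) proof are on the same footing.
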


\begin{proof}[Sketch]
This follows from the proof of Theorem \ref{Thm:HMS-HLee} as given in \cite{HLee}.  
Specifically, $\mathcal{L}_a$ is matched with
an arc or curve which lies on the portion of $S_t$ whose projection under
$\mathrm{Log}_t$ collapses, as $t\to\infty$, to the boundary of the 
corresponding region $R^a \subset \bR^2\backslash \Pi$. The specific arc 
or curve is determined up to Hamiltonian isotopy, and hence quasi-isomorphism 
in $\scrW(S_t)$,  by its winding over each of the cylindrical regions of $S_t$ which 
collapse to finite edges of $\partial R^a$, and by a normalization condition; 
the winding numbers are determined explicitly by the degrees of the
restriction of $\mathcal{L}_a$ to the corresponding projective lines 
in $Z_0^a$ (see \cite[Section 3.1]{HLee}). The details of the correspondence will not matter in the sequel.
(The discussion in \cite{HLee} concerns specifically those objects
$\mathcal{L}_a$ which arise from powers of the polarization determined by 
the polytope $\Delta_{Z,\rho}$, but the construction easily
extends to general line bundles.)
\end{proof}

\subsection{Split-closure\label{Sec:split-closure}}

Following a strategy from \cite{AAEKO}, this section will prove that the categories $\Tw\scrW(S)$ and $D_{sg}(Z_0)$ appearing in Theorem \ref{Thm:HMS-HLee} are in fact already idempotent complete, i.e. split-closed.  For a scheme $Z$, we write $K_j(Z)$ for $K_j(\mathrm{Perf}(Z))$. 

\begin{Proposition}
$D_{sg}(Z)$ is idempotent complete if and only if $K_{-1}(Z) = 0$.
\end{Proposition}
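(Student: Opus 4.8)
The plan is to compare the Grothendieck group of $D_{sg}(Z)$ with that of its split-closure $D^{\pi}_{sg}(Z)$, and then to identify the obstruction to their coinciding with $K_{-1}(Z)$ via the localization sequence. Throughout I take $Z$ to be (separated and) Noetherian, so that $\mathrm{Perf}(Z)$ and $D^b\Coh(Z)$ are already idempotent complete; write $G_\ast(Z)$ for the $K$-theory of $D^b\Coh(Z)$.

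First I would invoke Thomason's classification of dense triangulated subcategories. The dg-quotient $D_{sg}(Z)=D^b\Coh(Z)/\mathrm{Perf}(Z)$ sits as a \emph{dense} triangulated subcategory of its idempotent completion $D^{\pi}_{sg}(Z)$ (every object of the latter is a summand of one of the former, by construction). Thomason's theorem puts dense triangulated subcategories of the idempotent-complete category $D^{\pi}_{sg}(Z)$ in bijection with subgroups of $K_0(D^{\pi}_{sg}(Z))$, with $D_{sg}(Z)$ corresponding to the image of $K_0(D_{sg}(Z))$. Hence $D_{sg}(Z)=D^{\pi}_{sg}(Z)$ — i.e.\ $D_{sg}(Z)$ is idempotent complete — if and only if the natural map $K_0(D_{sg}(Z))\to K_0(D^{\pi}_{sg}(Z))$ is surjective. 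It remains to compute the cokernel of this map.

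For the source, the elementary part of the localization theorem for Verdier quotients gives $K_0(D_{sg}(Z))=\mathrm{coker}\bigl(K_0(Z)\to G_0(Z)\bigr)=G_0(Z)/\mathrm{image}(K_0(Z))$; no negative $K$-theory is needed here. For the target I would pass to idempotent completions and use that $\mathrm{Perf}(Z)\to D^b\Coh(Z)\to D^{\pi}_{sg}(Z)$ is a localization sequence of idempotent-complete dg-categories, so Schlichting's non-connective localization theorem yields a long exact (Bass–Thomason–Trobaugh) sequence
\[ \cdots \to K_0(Z) \to G_0(Z) \to K_0(D^{\pi}_{sg}(Z)) \to K_{-1}(Z) \to G_{-1}(Z) \to \cdots . \]
Since negative $G$-theory of a Noetherian scheme vanishes ($G_{-1}(Z)=0$, by Quillen's fundamental theorem for $G$-theory and Bass delooping), this collapses to a short exact sequence
\[ 0 \to G_0(Z)/\mathrm{image}(K_0(Z)) \to K_0(D^{\pi}_{sg}(Z)) \to K_{-1}(Z) \to 0 . \]
The left-hand map here is induced by $K_0$ of the localization functor $D^b\Coh(Z)\to D^{\pi}_{sg}(Z)$, which factors $G_0(Z)$ through the very same quotient $G_0(Z)/\mathrm{image}(K_0(Z))$ as the projection $G_0(Z)\twoheadrightarrow K_0(D_{sg}(Z))$ of the previous paragraph; hence it is exactly the inclusion $K_0(D_{sg}(Z))\hookrightarrow K_0(D^{\pi}_{sg}(Z))$, and its cokernel is $K_{-1}(Z)$. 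Combined with Thomason's criterion, $D_{sg}(Z)$ is idempotent complete if and only if $K_{-1}(Z)=0$.

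The main difficulty is simply bookkeeping with the $K$-theory machinery: one has to make sure the localization sequence is of the form to which Schlichting's non-connective localization theorem applies — this is the reason the split-closure $D^{\pi}_{sg}(Z)$, and not the naive Verdier quotient, appears as the cofibre — and one needs the vanishing of negative $G$-theory of $Z$ (equivalently, connectivity of $G(Z)$ for $Z$ Noetherian). Both are standard, and matching the two descriptions of the comparison map on $K_0$ is routine from naturality of the localization sequence. Alternatively, one may simply cite the analogous statement in \cite[Appendix B]{AAEKO}.
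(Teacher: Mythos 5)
Your argument is correct, and it is essentially the proof behind the paper's citation: the paper proves this Proposition simply by quoting \cite[Proposition B.1]{AAEKO}, whose argument is exactly the combination you give of Thomason's classification of dense triangulated subcategories, Schlichting's non-connective localization sequence for $\mathrm{Perf}(Z)\to D^b\Coh(Z)\to D_{sg}(Z)$ (with the idempotent completion appearing as the cofibre), and the vanishing of negative $G$-theory for Noetherian $Z$. So your proposal reconstructs the cited proof rather than offering a different route.
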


\begin{proof} This is \cite[Proposition B.1]{AAEKO}. \end{proof}

\begin{Lemma} \label{Lem:this_model_split_closed}
Let $W: Z \to \bC$ be as after \eqref{eqn:LG_mirror}. Then $D_{sg}(Z_0)$ is split-closed.
\end{Lemma}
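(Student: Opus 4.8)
The plan is as follows. The Proposition just above, applied with the scheme taken to be $Z_0$ itself, reduces the claim to the vanishing $K_{-1}(Z_0) = 0$, where $K_{-1}$ denotes Bass--Thomason--Trobaugh negative $K$-theory of $\mathrm{Perf}(Z_0)$. Recall the shape of $Z_0$: since $\scrP$ is unimodular, $Z$ is a \emph{smooth} toric threefold and $Z_0 = W^{-1}(0)$ is the reduced simple normal crossings divisor formed by all of its toric boundary divisors; the components $Z_0^a$ are smooth toric surfaces, they meet pairwise along smooth toric curves and in triples at toric fixed points, and the combinatorics of this configuration is encoded by the tropical curve $\Pi$. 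In particular the dual complex of the normal crossings variety $Z_0$ is the polyhedral complex $\scrP$, a triangulation of the convex polygon $\mathrm{Conv}(A)$, and is therefore contractible.

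To compute $K_{-1}(Z_0)$ I would exploit this stratification, following the strategy of \cite[Appendix B]{AAEKO}. Cover $Z_0$ by the Zariski open ``pair of pants'' charts $\{z_1z_2z_3 = 0\}\subset\bC^3$, one for each vertex of $\Pi$ (equivalently, each maximal cell of $\scrP$), glued along the ``cylinder'' charts $\bC^*\times\{z_1z_2 = 0\}$ indexed by the bounded edges of $\Pi$; since this is an open cover, $K$-theory satisfies Mayer--Vietoris for it. On the coordinate-cross chart $\{z_1z_2z_3=0\}$ one checks directly, by iterated application of excision for Milnor squares (valid because the birelative term is $1$-connective, by Corti\~nas' theorem, as a shift of birelative cyclic homology), that $K_{\le -1}$ vanishes; the same holds for the cylinder charts and the smooth triple-overlap pieces. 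Feeding this into the Mayer--Vietoris spectral sequence, all of $K_{-1}(Z_0)$ comes from a combinatorial complex built out of $K_0$ (and $\Pic$, $\mathcal{O}^*$) of the smooth toric strata, indexed by the simplices of the dual complex $\scrP$; because $\scrP$ is contractible and the toric strata carry enough line bundles for the relevant restriction maps to be surjective, this complex is acyclic in the pertinent range. Hence $K_{-1}(Z_0) = 0$, and the Proposition gives that $D_{sg}(Z_0)$ is split-closed. (Equivalently one can run the argument through homotopy $K$-theory: $KH_{-1}(Z_0) = H^1_{\mathrm{cdh}}(Z_0;\bZ) = H^1(\scrP;\bZ) = 0$ by contractibility, and the Corti\~nas--Haesemeyer--Schlichting--Weibel comparison between $K$ and $KH$ contributes only coherent-cohomology corrections supported on the rational strata, which vanish for the same combinatorial reason.)

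The main obstacle I anticipate is precisely this bookkeeping: isolating the line of the Mayer--Vietoris (or cdh-descent) spectral sequence that survives into $K_{-1}$, controlling its differentials, and verifying that the $K_0$/$\Pic$-type contributions of the one-dimensional intersection strata are killed --- this is where one genuinely uses both the planarity, hence contractibility, of $\scrP$ and the toric geometry of $Z$. It should be stressed that this is a two-dimensional phenomenon: for a threefold Landau--Ginzburg mirror there is no reason for $K_{-1}$ of the singular fibre to vanish, which is consistent with the geometricity results of this paper being special to surfaces. Finally, one should keep in mind, as throughout this section, that everything is $\bZ/2$-graded and that the subtlety in the Proposition is idempotent-completeness of the Verdier quotient $D_{sg}(Z_0)$ rather than its triangulated structure; both points are already accounted for in the statement being invoked.
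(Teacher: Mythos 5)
Your opening move is the same as the paper's: both reduce, via the Proposition immediately preceding the Lemma (\cite[Proposition B.1]{AAEKO}), to showing $K_{-1}(Z_0)=0$. From there the paper does not argue by chart-by-chart Mayer--Vietoris or cdh-descent over the dual complex; it uses the conductor-type exact sequence for the normalisation $Z_0'\to Z_0$, with $\Gamma$ the singular locus and $\Gamma'=\Gamma\times_{Z_0}Z_0'$,
\[
K_0(Z_0')\oplus K_0(\Gamma)\to K_0(\Gamma')\to K_{-1}(Z_0)\to K_{-1}(Z_0')\oplus K_{-1}(\Gamma)\to K_{-1}(\Gamma'),
\]
and then establishes two facts: (1) $K_{-1}(\Gamma)\to K_{-1}(\Gamma')$ is an isomorphism (both are $\bZ^{g}$ by Weibel's computation for curves whose components are $\bP^1$'s and $\mathbb{A}^1$'s), and (2) $K_0(Z_0')\oplus K_0(\Gamma)\to K_0(\Gamma')$ is surjective. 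Fact (2) is the heart of the proof and is genuinely toric-geometric rather than combinatorial: for each of the $g$ compact components $Z_0^a$ the restriction $K_0(Z_0^a)\to K_0(\Gamma^a)$ is \emph{not} surjective (its cokernel has rank two, generated by two toric divisors whose normal vectors span $\bZ^2$), and surjectivity of the total map is only recovered by also using $K_0(\Gamma)$ and an induction over a carefully chosen ordering of the components.

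This is where your proposal has a genuine gap. You attribute the vanishing essentially to contractibility of $\scrP$, plus the assertion that ``the toric strata carry enough line bundles for the relevant restriction maps to be surjective.'' Contractibility only disposes of the $H^1(\scrP;\bZ)$-type contribution, i.e.\ the analogue of fact (1). The $\Pic$/$K_0$-level contribution --- exactness of the complex formed by line bundles on the components and on the double curves --- is not a consequence of the combinatorics: as noted above, the restriction map fails to be surjective on every one of the $g$ compact components, so ``enough line bundles'' is precisely the statement that needs (and in the paper receives) a nontrivial argument. Your KH parenthetical makes the same slip in sharper form: $KH_{-1}(Z_0)$ is not $H^1_{cdh}(Z_0;\bZ)$, since the descent spectral sequence also contributes a term coming from $H^2_{cdh}(Z_0,\mathcal{O}^*)$ (the units/Picard column), and that is exactly where the content of fact (2) sits; it does not ``vanish for the same combinatorial reason.'' So while your descent/dual-complex skeleton could in principle be carried out, as written it omits --- and in places mis-states --- the key input; completing it would require essentially the paper's analysis of the cokernels of $\Pic(Z_0^a)\to\Pic(\Gamma^a)$ on the compact toric components together with the global induction that kills them.
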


\begin{proof}
Recall $W: Z \to \bC$ is a toric monomial morphism on a toric 3-fold $Z$ with central fibre $Z_0$ a union of (not necessarily compact) toric surfaces. Let $\Gamma \subset Z_0$ denote the one-dimensional subscheme of singular points of $Z_0$, and let $Z_0' \to Z_0$ be the normalisation and $\Gamma' = \Gamma\times_{Z_0} Z_0' \subset Z_0'$.  
Concretely, $Z_0'$ is the disjoint union of the toric surfaces appearing in $Z_0$, and $\Gamma'$
is the union of their toric boundaries.
 There is an exact sequence
\[
K_0(Z_0') \oplus K_0(\Gamma) \to K_0(\Gamma') \to K_{-1}(Z_0) \to K_{-1}(Z_0') \oplus K_{-1}(\Gamma) \to K_{-1}(\Gamma')
\]
(cf. the proof of \cite[Proposition B.2]{AAEKO}, which is itself inspired by
\cite{Weibel}).  In our case, $Z_0'$ is a union of smooth surfaces, so $K_{-1}(Z_0') = 0$. We claim that
\begin{enumerate}
\item the map $K_{-1}(\Gamma) \to K_{-1}(\Gamma')$ is an isomorphism;
\item  the map $K_0(Z_0') \oplus K_0(\Gamma) \to K_0(\Gamma')$ is surjective.
\end{enumerate}
By \cite[Lemma 2.3]{Weibel}, for the curves $\Gamma$ and $\Gamma'$ (whose irreducible components are all
$\bP^1$'s and $\mathbb{A}^1$'s) the $K_{-1}$-group is $\bZ^{b_1(\bullet)}$ where $b_1(\bullet)$ denotes the first Betti number of the curve. Suppose $S_t$ has genus $g$.   Then $b_1(\Gamma) = g = b_1(\Gamma')$, and the natural map $\Gamma' \to \Gamma$ identifies the corresponding cycles of $\bP^1$'s, which implies the first statement. 

For the second statement, recall that $\Gamma'$ is the disjoint union of the toric boundaries $\Gamma^a = \Gamma'\cap Z_0^a$
of the components of $Z_0$. For any non-compact component $Z_0^a \subset Z_0$, 
the map $K_0(Z_0^a) \to K_0(\Gamma^a)$ is surjective. There are $g$ compact components of $Z_0$, on each of which the corresponding map has rank two cokernel. 
This is essentially a cohomological computation, since the relevant $K_0$-groups for rational curves and toric surfaces are given by ranks of cohomology. 
By classical toric geometry, there is an exact sequence $$H^2(Z_0^a)\to H^2(\Gamma^a)
\to \Z^2\to 0,$$ where the second map sends each component of $\Gamma^a$ to
the primitive normal vector of  the corresponding facet of the moment
polytope. The cokernel of the first map is therefore generated by any two irreducible toric divisors whose
corresponding normal vectors form a basis of $\Z^2$: for instance, by the
Delzant condition, any two irreducible toric divisors which meet in one
point.

Given this, an easy inductive argument shows that the total map from
$K_0(Z'_0)\oplus K_0(\Gamma)$ to $K_0(\Gamma')=\bigoplus_a K_0(\Gamma^a)$ is surjective.  
Namely, pick an
ordering of the components of $Z_0$ such
that, for each compact component $Z_0^a\subset Z_0$, there exist two
components of $\Gamma^a$ whose normal vectors generate $\Z^2$ and which
arise as intersections of $Z_0^a$ with two other components of $Z_0$ which 
appear before it in the chosen ordering. (For example, order the components
by scanning $\R^2\setminus\Pi$ from
top to bottom: then the edges meeting at a top-most vertex of a compact component
have the requisite property.)  We then show that the map is surjective onto
each summand $K_0(\Gamma^a)$ by induction on $a$: for
non-compact components the map $K_0(Z_0^a)\to K_0(\Gamma^a)$ is surjective,
and for compact components our assumption yields two generators of $K_0(\Gamma)$ whose
images, after quotienting by the previously encountered summands of
$K_0(\Gamma')$, span the cokernel of $K_0(Z_0^a)\to K_0(\Gamma^a)$. This
implies surjectivity.
\end{proof}

\begin{Lemma}\label{Lem:find_generators}
The sheaves $\mathcal{L}_a$ of Proposition \ref{Prop:match} generate $D_{sg}(Z_0)$.
\end{Lemma}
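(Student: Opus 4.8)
The plan is to reduce the statement, via the structure of $Z_0$ as a union of smooth toric surfaces, to the classical fact that line bundles generate the derived category of a smooth toric surface. Write $Z_0^1,\dots,Z_0^m$ for the irreducible components of $Z_0$, $\iota_a\colon Z_0^a\hookrightarrow Z_0$ for the inclusions, and $\mathcal{I}_a\subset\mathcal{O}_{Z_0}$ for the (reduced) ideal sheaf of $Z_0^a$. Let $\mathcal{T}\subseteq D^b\mathrm{Coh}(Z_0)$ be the thick triangulated subcategory generated by the sheaves $\iota_{a*}\mathcal{L}$ for all $a$ and all $\mathcal{L}\in\mathrm{Pic}(Z_0^a)$; these are precisely the objects $\mathcal{L}_a$ of Proposition~\ref{Prop:match}. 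Since $D_{sg}(Z_0)=D^b\mathrm{Coh}(Z_0)/\mathrm{Perf}(Z_0)$, the quotient functor is essentially surjective, and $D^b\mathrm{Coh}(Z_0)$ is generated by $\mathrm{Coh}(Z_0)$ (stupid filtration), it will suffice to show that every coherent sheaf on $Z_0$ lies in $\mathcal{T}$; this gives $\mathcal{T}=D^b\mathrm{Coh}(Z_0)$ and hence that the images of the $\mathcal{L}_a$ generate $D_{sg}(Z_0)$.

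The first step is to handle the components individually: each $Z_0^a$ is a smooth toric surface, and I will use that $D^b\mathrm{Coh}(Z_0^a)$ is generated by line bundles. For the (finitely many) compact components this is classical, smooth complete toric surfaces carrying full strong exceptional collections of line bundles. For a non-compact component $Z_0^a$ I would complete its fan to a smooth complete fan in $\mathbb{R}^2$ — possible in dimension two — so as to realize $Z_0^a$ as an open toric subvariety of a smooth complete toric surface $\bar X_a$; since coherent sheaves extend across the open immersion, the restriction functor $D^b\mathrm{Coh}(\bar X_a)\to D^b\mathrm{Coh}(Z_0^a)$ is essentially surjective and sends line bundles to line bundles, so line bundles generate here too. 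Applying the exact pushforward $\iota_{a*}$ then yields $\iota_{a*}D^b\mathrm{Coh}(Z_0^a)\subseteq\mathcal{T}$ for every $a$.

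The second step is a dévissage along the components. Because $Z_0$ is reduced and is the union of the $Z_0^a$, the product ideal $\mathcal{I}_1\mathcal{I}_2\cdots\mathcal{I}_m$ has vanishing locus all of $Z_0$, hence is nilpotent, hence zero. Therefore any $\mathcal{F}\in\mathrm{Coh}(Z_0)$ carries the finite filtration
\[
\mathcal{F}\ \supseteq\ \mathcal{I}_1\mathcal{F}\ \supseteq\ \mathcal{I}_2\mathcal{I}_1\mathcal{F}\ \supseteq\ \cdots\ \supseteq\ \mathcal{I}_m\cdots\mathcal{I}_1\mathcal{F}=0,
\]
whose $k$-th subquotient is annihilated by $\mathcal{I}_k$ and is therefore the pushforward of a coherent sheaf on $Z_0^k$, hence lies in $\mathcal{T}$ by the first step. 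As $\mathcal{F}$ is an iterated extension of these subquotients, $\mathcal{F}\in\mathcal{T}$, which finishes the argument.

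I expect the only non-formal point to be the first step, namely generation of $D^b\mathrm{Coh}(Z_0^a)$ by line bundles, including for the non-compact toric surfaces among the components (cf.\ Remark~\ref{Rem:numerology2}), where one must pass through the complete toric case. The reduction in the second step is pure homological algebra: the product-of-ideals observation conveniently bypasses any delicate analysis of supports near the double and triple curves of $Z_0$.
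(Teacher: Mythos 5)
Your argument is correct and is essentially the paper's: generation of $D^b\Coh(Z_0^a)$ by line bundles on each irreducible component, followed by a d\'evissage of an arbitrary coherent sheaf on $Z_0$ into pushforwards from the components; the only real difference is that the paper cites Borisov--Horja for the componentwise statement, where you instead complete the fan and restrict from a smooth complete toric surface (which works, since $D^b\Coh$ of the open piece is the Verdier quotient of $D^b\Coh$ of the compactification by complexes supported on the complement), and you make explicit the product-of-ideals filtration that the paper leaves implicit. One small caution: the downstream use in Corollary \ref{Cor:split-closed} needs the $\mathcal{L}_a$ to generate $D_{sg}(Z_0)$ honestly, not just split-generate it, so you should phrase the argument with the triangulated subcategory rather than its thick closure; this costs nothing, since full exceptional collections give genuine generation on the complete toric surfaces, Verdier quotients and exact pushforwards preserve it, and your filtration exhibits every coherent sheaf as an iterated extension (cones only, no summands).
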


\begin{proof} 
For each irreducible component $Z_0^a$ of $Z_0$, line bundles over the toric surface $Z_0^a$ generate
its derived category $D^b\Coh(Z_0^a)$ \cite[Corollary 4.8]{BorisovHorja}, so
their images under inclusion generate the full subcategory
$D^b_{Z_0^a}\Coh(Z_0)$ of complexes whose cohomology is supported on the
component $Z_0^a$. Considering all components, these sheaves taken together generate
$D^b\Coh(Z_0)$; the result follows.
\end{proof}

\begin{Corollary} \label{Cor:split-closed}
Let $S$ be a surface with $\ell \geq 3$ punctures. 
Then the category of $\bZ/2$-graded twisted complexes $\Tw\scrW(S)$ is split-closed.
\end{Corollary}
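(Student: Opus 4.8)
The plan is to transport the statement across homological mirror symmetry to the derived category of singularities side, where split-closure has already been established in Lemma \ref{Lem:this_model_split_closed}, using crucially that a \emph{classical} generating set there is realized by genuinely geometric objects.

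First, since $\scrW(S)$ depends only on the Liouville completion of $S$, hence only on the topological type $(g,\ell)$ of $S$, I may assume that $S$ arises from the tropical construction of the preceding subsections: for $3\leq \ell\leq 2g+4$ this is Remark \ref{Rem:numerology}, taking $S=S_t$ for a suitable tropical plane curve, while for $\ell>2g+4$ one realizes $S$ as a subsurface $S'_t\subset S_t$ of a higher genus curve by Remark \ref{Rem:numerology2}. In the first case Theorem \ref{Thm:HMS-HLee} gives an equivalence $D^\pi\scrW(S)\simeq D^\pi_{sg}(Z_0)$ with the split-closure of the derived category of singularities of the central fibre of the mirror Landau--Ginzburg model; in the second case Proposition \ref{Prop:HMS-sub-HLee} gives the corresponding equivalence $D^\pi\scrW(S)\simeq D^\pi_{sg}(Z'_0)$, and by Remark \ref{Rem:numerology2} all of the relevant structure (Lemmas \ref{Lem:this_model_split_closed} and \ref{Lem:find_generators}, Proposition \ref{Prop:match}) carries over verbatim. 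To streamline notation write $Z_0$ also for $Z'_0$ in the subsurface case. By Lemma \ref{Lem:this_model_split_closed}, $D_{sg}(Z_0)$ is already idempotent complete, so $D^\pi_{sg}(Z_0)=D_{sg}(Z_0)$.

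Next I would run the transport. The canonical functor $\Tw\scrW(S)\to \Tw^\pi\scrW(S)=D^\pi\scrW(S)$ is cohomologically fully faithful, so it realizes $\Tw\scrW(S)$ as a strictly full triangulated subcategory of $D^\pi\scrW(S)\simeq D_{sg}(Z_0)$; the corollary is exactly the assertion that this subcategory is everything. By Proposition \ref{Prop:match}, under the equivalence each of the sheaves $\mathcal{L}_a$ corresponds to an honest geometric object --- an embedded arc or simple closed curve with a local system --- and therefore lies in the image of $\Tw\scrW(S)$. By Lemma \ref{Lem:find_generators} the objects $\mathcal{L}_a$ generate $D_{sg}(Z_0)$ as a triangulated category, i.e.\ the smallest strictly full triangulated subcategory containing them is all of $D_{sg}(Z_0)$. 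A strictly full triangulated subcategory that contains a classical generating set must be the whole category, so the image of $\Tw\scrW(S)$ is all of $D_{sg}(Z_0)=D^\pi_{sg}(Z_0)=\Tw^\pi\scrW(S)$; that is, $\Tw\scrW(S)\simeq \Tw^\pi\scrW(S)$, and $\Tw\scrW(S)$ is split-closed.

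The genuinely substantive ingredients --- the $K$-theoretic computation underlying Lemma \ref{Lem:this_model_split_closed}, the generation statement of Lemma \ref{Lem:find_generators}, the geometricity of the mirror objects in Proposition \ref{Prop:match}, and Lee's equivalence --- are already available, so what remains is really assembly, and the one point to get right is logical rather than computational: one must use \emph{classical} (triangulated) generation by the $\mathcal{L}_a$, not merely split-generation, so that the triangulated-but-not-a-priori-idempotent-closed subcategory $\Tw\scrW(S)$ is forced to exhaust the split-closed target, together with the fact from Lemma \ref{Lem:this_model_split_closed} that no idempotent completion at all is needed on the mirror side. One should also pause to confirm that every punctured surface with $\ell\geq 3$ genuinely appears (possibly only as a subsurface $S'_t$) in the tropical family, which is exactly the content of Remark \ref{Rem:numerology2} together with Proposition \ref{Prop:HMS-sub-HLee}.
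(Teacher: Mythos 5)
Your proposal is correct and follows essentially the same route as the paper: realize $S$ (or a subsurface model of it) tropically, invoke Lemma \ref{Lem:this_model_split_closed} so that no idempotent completion is needed on the mirror side, and use Proposition \ref{Prop:match} together with Lemma \ref{Lem:find_generators} to see that $D_{sg}(Z_0)$ is classically generated by objects coming from $\Tw\scrW(S)$, forcing $\Tw\scrW(S)=\Tw^\pi\scrW(S)$. Your explicit emphasis on classical (rather than split) generation is exactly the point the paper's proof relies on, just stated more verbosely.
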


\begin{proof}
The hypotheses imply that $S$ can be realised as a hypersurface in $(\bC^*)^2$ defined by a 
Laurent polynomial as in the setting of Theorem \ref{Thm:HMS-HLee} and
Remark \ref{Rem:numerology}, or as a
subsurface as in Proposition \ref{Prop:HMS-sub-HLee} and Remark \ref{Rem:numerology2}. 
Lemma \ref{Lem:this_model_split_closed} (which applies equally well to the
examples of Remark \ref{Rem:numerology2}) shows that 
Theorem \ref{Thm:HMS-HLee} in fact gives an equivalence $\Tw^{\pi}\scrW(S) \simeq D_{sg}(Z_0)$. 
Furthermore, the right-hand side is generated (and not just split-generated)
by objects which lie in $\Tw\scrW(S)$, by Proposition \ref{Prop:match} and 
Lemma \ref{Lem:find_generators}. It follows that $\Tw\scrW(S)$ is split-closed.
\end{proof}

This incidentally shows that, for such a surface $S$, the category $D^{\pi}\scrW(S)$ has finite rank Grothendieck group; the corresponding result is false for a closed elliptic curve.

\begin{Corollary} \label{Cor:boundary_gives_geometric}
Let $S$ be a surface of genus $g$ with $\ell\geq 3$ punctures. 
Any irreducible object $X \in \scrW(S)^{per}$ with finite-dimensional endomorphism ring is 
quasi-isomorphic to a union of immersed closed curves with finite rank local system.
\end{Corollary}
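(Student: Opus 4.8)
The plan is to read off the statement from two results already in hand — the split-closure of Corollary~\ref{Cor:split-closed} and the geometricity theorem of Haiden--Katzarkov--Kontsevich (Theorem~\ref{Thm:HKK}) — after which the finiteness hypothesis does the remaining work.

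First I would use that $\ell\geq 3$ to invoke Corollary~\ref{Cor:split-closed}: the $\bZ/2$-graded category $\Tw\scrW(S)$ is already idempotent complete, so $\scrW(S)^{per}=\Tw^{\pi}\scrW(S)=\Tw\scrW(S)$. Hence $X$ is quasi-isomorphic to a finite twisted complex over $\scrW(S)$, and Theorem~\ref{Thm:HKK} applies to it: $X$ is geometric, so $X\simeq\bigoplus_{i=1}^{m}(\xi_i,\gamma_i)$ with each $\gamma_i$ an immersed arc or immersed closed curve in $S$ or its Liouville completion, and each $\xi_i$ a local system of finite rank (finite because a twisted complex is a finite sum of shifted objects).

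The second step is to eliminate the arc components. Since $\mu^1$ preserves the source and target objects, the self-hom complex of a direct sum splits as a direct sum of complexes, so on cohomology $H^*(\hom_{\scrW(S)}(X,X))\cong\bigoplus_{i,j}H^*(\hom_{\scrW(S)}((\xi_i,\gamma_i),(\xi_j,\gamma_j)))$ and each diagonal term is a direct summand. If some $\gamma_j$ were a properly immersed non-compact arc then, $\scrW(S)$ being the fully wrapped category (no stops), its ends would wrap out the punctures and its self-wrapped-Floer cohomology would be infinite-dimensional over $\Lambda$ (near a puncture $S$ looks like $S^1\times[0,\infty)$ and $\gamma_j$ like a cotangent fibre, whose wrapping contributes a copy of $\Lambda[\pi_1 S^1]$; equivalently, the geometric intersection number of $\gamma_j$ with its $n$-fold wrapping grows without bound). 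That would make $H^*(\hom_{\scrW(S)}(X,X))$ infinite-dimensional, contrary to hypothesis. So every $\gamma_i$ is a closed curve, and $X$ is a union of immersed closed curves with finite rank local systems. If one also wants a single curve, the hypothesis that $X$ is irreducible combined with the HKK classification of indecomposables — zig-zag complexes correspond to arcs, cyclic complexes to closed curves — upgrades this to $X\simeq(\xi,\gamma)$ for one immersed closed curve $\gamma$ with indecomposable local system $\xi$.

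The main substance sits in the cited inputs: Corollary~\ref{Cor:split-closed} depends on Heather Lee's homological mirror symmetry and the $K$-theoretic split-closure criterion of \cite{AAEKO}, and Theorem~\ref{Thm:HKK} is the hard HKK classification. The only point that genuinely needs its own (standard) argument is the infinite-dimensionality of the self-wrapped-Floer cohomology of a non-compact arc in the fully wrapped category, which is precisely what forces a cohomologically finite object to be represented by closed curves alone; making that clean — via the combinatorial model of $\scrW(S)$ or the local picture near a puncture — is the one place to be careful.
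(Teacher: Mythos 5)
Your proposal is correct and follows essentially the same route as the paper: split-closure (Corollary \ref{Cor:split-closed}) reduces $X$ to a twisted complex, Theorem \ref{Thm:HKK} gives a geometric replacement by arcs and closed curves with local systems, and the finite-dimensionality of $\mathrm{End}(X)$ rules out arcs since non-compact arcs have infinite-dimensional wrapped endomorphisms. The extra details you supply (the direct-sum splitting of the endomorphism ring and the local wrapping picture near a puncture) simply flesh out what the paper asserts in one sentence.
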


\begin{proof} 
By Corollary \ref{Cor:split-closed}, $X$ is quasi-isomorphic to a twisted complex.  The geometricity
result, Theorem \ref{Thm:HKK}, then says that $X$ is quasi-isomorphic to the
direct sum of some immersed arcs and immersed closed curves with finite rank local systems.  
However, non-compact arcs have infinite-dimensional endomorphisms in the wrapped category, so cannot appear.
\end{proof}

\begin{Corollary} \label{Cor:spherical_on_punctured}
Let $S$ be a surface with at least one puncture. If $X \in D^{\pi}\scrF(S)$ is a spherical object such that
there exists an object $Y\in \scrW(S)$ with $\chi\Hom(X,Y)\neq 0$, then $X$ is quasi-isomorphic to a simple closed curve with rank one local system.
\end{Corollary}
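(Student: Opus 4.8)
The plan is to transport $X$ into the wrapped category $\scrW(S)$, where the geometrization results already established apply, and then bring the conclusion back to $\scrF(S)$.

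\emph{Setup.} First I would reduce to the case $\ell\geq 3$, where $\ell$ denotes the number of punctures. If $\ell<3$, remove $3-\ell$ further generic points from $S$: only finitely many rigid holomorphic polygons enter the twisted-complex data presenting $X$ over a fixed finite set of split-generators of $\scrF(S)$, and likewise only finitely many enter the computations of $H^*(\hom(X,X))$ and of $\chi\Hom(X,Y)$, so the new points can be chosen disjoint from all of them; the same combinatorial data then defines a spherical object of $\scrW(S^-)^{per}$ for $S^-=S\setminus\{\text{new points}\}$, still satisfying the non-vanishing hypothesis, and any geometric representative it admits will lie in $S^-\subset S$. So assume $\ell\geq 3$. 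Since a closed curve and its wrapping create no new Floer generators, $\scrF(S)$ is a full $A_\infty$-subcategory of $\scrW(S)$, and hence the natural functor $D^{\pi}\scrF(S)\to D^{\pi}\scrW(S)=\scrW(S)^{per}$ is fully faithful; write $\bar X$ for the image of $X$ (this is also the object relative to which $\chi\Hom(X,Y)$ is formed). Since $X$ is spherical, $H^*(\hom(\bar X,\bar X))\cong H^*(S^1;\Lambda)$; in particular $\bar X$ is irreducible (its even self-endomorphisms form the field $\Lambda$, so it is indecomposable) and has finite-dimensional endomorphisms.

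\emph{Geometrization in $\scrW(S)$.} Next I would apply Corollary \ref{Cor:boundary_gives_geometric}: $\bar X$ is quasi-isomorphic to a union of immersed closed curves carrying finite rank local systems, and irreducibility forces this to be a single curve, so $\bar X\simeq(\xi,\gamma)$ for an unobstructed immersed closed curve $\gamma\subset S$ and an indecomposable local system $\xi$. But $(\xi,\gamma)$ is itself an object of $\scrF(S)$, so by full faithfulness $X\simeq(\xi,\gamma)$ already in $D^{\pi}\scrF(S)$; in particular $(\xi,\gamma)$ defines a spherical object of $\scrF(S)$.

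\emph{The homological input.} To feed this into Corollary \ref{Cor:immersed_spherical} I must verify $[\gamma]\neq 0\in H_1(S;\bZ)$. Given $Y\in\scrW(S)$ with $\chi\Hom(X,Y)\neq 0$, I would put the arc or curve underlying $Y$ in minimal position with $\gamma$ (Remark \ref{Rmk:find_sigma}); since $\gamma$ is closed there are no boundary-chord generators in $CW^*((\xi,\gamma),Y)$, just one block of dimension $\rk(\xi)\cdot\rk(\xi_Y)$ per intersection point (with $\xi_Y$ the local system on $Y$), weighted by the local intersection index, so
\[
\chi\Hom(X,Y)\ =\ \rk(\xi)\cdot\rk(\xi_Y)\cdot\big([\gamma]\cdot[Y]\big),
\]
the algebraic intersection number on the right being the value of the pairing $H_1(S;\bZ)\times H_1(S,\partial S;\bZ)\to\bZ$. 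Non-vanishing of the left side forces $[\gamma]$ to pair non-trivially with $H_1(S,\partial S;\bZ)$, hence $[\gamma]\neq 0$ in $H_1(S;\bZ)$. Corollary \ref{Cor:immersed_spherical} then gives that $\gamma$ is quasi-isomorphic to an embedded simple closed curve and $\xi$ has rank one; if $[\gamma]$ happens to vanish in $H_1(S,\partial S;\bZ)$ the embedded curve might be produced inside an enlargement of $S$, but, being closed, it is isotopic back into $S$. This yields the claim: $X$ is quasi-isomorphic to a simple closed curve with rank one local system.

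\emph{Main obstacle.} The substance is imported wholesale from Corollaries \ref{Cor:boundary_gives_geometric} and \ref{Cor:immersed_spherical}. What needs genuine care is the bookkeeping at the beginning: reducing to $\ell\geq 3$ while controlling how the compact Fukaya category and the relevant Floer data behave when auxiliary punctures are added, and the full faithfulness of $D^{\pi}\scrF(S)\to D^{\pi}\scrW(S)$ that licenses moving the geometric conclusion obtained in the wrapped category back to the compact side. I expect the reduction step to be the real obstacle; once $\ell\geq 3$ is in force, the rest is assembly.
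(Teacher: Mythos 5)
Your treatment of the case $\ell\geq 3$ is essentially the paper's proof: combine Corollary \ref{Cor:boundary_gives_geometric} with Corollary \ref{Cor:immersed_spherical}, using that compact curves have the same morphism complexes in $\scrF(S)$ and $\scrW(S)$, and that $\chi\Hom(X,Y)\neq 0$ forces the geometric replacement to represent a non-zero class in $H_1(S;\bZ)$. The genuine gap is in your reduction to $\ell\geq 3$ by deleting extra points. Your finiteness claim -- that only finitely many rigid polygons enter the presentation of $X$ and the computations of $\hom(X,X)$ and $\chi\Hom(X,Y)$ -- is false in general: over the Novikov field the $A_\infty$-structure constants among a fixed finite collection of compact curves are typically \emph{infinite} convergent sums of rigid polygons (already for three lines on an elliptic curve $\mu^2$ is a theta-series, and the same wrapping phenomenon occurs for compact curves on punctured surfaces of positive genus), and the images of these polygons cover the whole surface, so no choice of new puncture avoids them all. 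Deleting a point therefore changes the structure constants, and "the same combinatorial data" need not define a spherical object of $\scrW(S^-)^{per}$; moreover, even granting a geometric replacement $(\xi,\gamma)$ there, the quasi-isomorphism is established in categories built from $S^-$-counts, and you provide no functor comparing $\scrW(S^-)$ with $\scrF(S)$ -- your full-faithfulness statement $D^\pi\scrF(S)\hookrightarrow D^\pi\scrW(S)$ is internal to a fixed surface and does not bridge the two. The paper avoids this by going the opposite way: it \emph{enlarges} $S$, attaching a four-punctured sphere $P$ along a collar of a puncture, where the open mapping principle gives an honest fully faithful functor $\scrF(S)\to\scrF(S^+)$ (no polygons are created or lost), extends $Y$ across $P$ to preserve the $\chi\Hom$ hypothesis, applies the $\ell\geq 3$ case in $S^+$, and then pulls the resulting simple closed curve back into $S$ via the vanishing of $HW^*(X,\eta)$ for arcs $\eta\subset S^+\setminus S$ (or via Lemma \ref{Lem:cut_annulus}).

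A smaller but real issue: your parenthetical that if $[\gamma]$ vanishes in $H_1(S,\partial S;\bZ)$ the embedded curve produced in an enlargement of $S$ is, "being closed, isotopic back into $S$" does not justify the needed conclusion. Smooth isotopy is not enough: two disjoint isotopic curves have vanishing Floer cohomology and are therefore never quasi-isomorphic, so a curve living in the completion beyond $S$ is not automatically quasi-isomorphic to any object of $\scrF(S)$. This flux/boundary-parallel subtlety is precisely why the paper carries the caveat in Corollary \ref{Cor:immersed_spherical} and Remark \ref{Rmk:sweeparea}, and it needs an argument (not just closedness) wherever it is invoked.
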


\begin{proof} For surfaces with $\ell\geq 3$ punctures, this follows
directly from Corollary \ref{Cor:boundary_gives_geometric} and Corollary \ref{Cor:immersed_spherical}.
(The homological assumption on $X$ implies that the geometric replacement obtained by Corollary \ref{Cor:boundary_gives_geometric}
represents a non-zero class in $H_1(S;\bZ)$.)

If $S$ has fewer than three punctures, we reduce to the previous case by
considering the surface $S^+$ obtained by attaching a
4-punctured sphere $P$ to $S$ along an annular neighborhood $A$ of a
puncture, $S^+=S\cup_A P$. There is a fully faithful inclusion functor
$\scrF(S)\to \scrF(S^+)$ which comes from viewing $S$ as a subsurface of
$S^+$ and observing that none of the holomorphic polygons contributing to
the $A_\infty$-operations can escape into $S^+\setminus S$ (due to the open mapping
principle). This extends to a fully faithful functor $\scrF(S)^{per}\to
\scrF(S^+)^{per}$, and we can view $X$ as a spherical object of the latter category.
The inclusion into $S^+$ also preserves the property that $X$ has non-zero $\chi\Hom$ pairing
with some other object: if $Y$ is an arc with an end in the annulus $A$, we extend it in an arbitrary way 
across $P$ to obtain an arc in $S^+$.
Since $S^+$ has at least 3 punctures, we conclude that $X$ is
quasi-isomorphic to a simple closed curve $\gamma$ in $S^+$ with a rank one local
system~$\xi$. Given any properly embedded arc $\eta$ contained in $S^+\setminus S$
(e.g.\ connecting two punctures of~$P$), the vanishing of the wrapped Floer
cohomology $HW^*(X,\eta)$ implies that the geometric intersection number of
$\gamma$ with $\eta$ is zero; this in turn implies that $\gamma$ can be
isotoped away from $S^+\setminus S$. Alternatively, Lemma \ref{Lem:cut_annulus}
below (applied to the object $(\gamma,\xi)\in \scrF(S^+)$ and the waist curve of the annulus
$A$) implies that $\gamma$ is isotopic to a simple closed curve in the completion of $S$.
Either way, we conclude that $X$ is quasi-isomorphic to $(\gamma,\xi)$ in
$D^\pi\scrF(S)$.
\end{proof}

\begin{Remark}
The homological assumption in Corollary
\ref{Cor:spherical_on_punctured} is in fact equivalent to requiring that the geometric
replacement given by Corollary \ref{Cor:boundary_gives_geometric} represents
a non-zero class in $H_1(S;\bZ)$, as needed to apply Corollary
\ref{Cor:immersed_spherical}; the stronger assumption that some
object of $\scrF(S)$ has non-zero $\chi\Hom$ pairing with $X$ would amount to
non-vanishing in $H_1(S,\partial S;\bZ)$. These conditions are direct
analogues of the Chern character condition that appears in Theorem \ref{Thm:Main} for
closed surfaces; we have chosen this formulation in order to avoid a discussion of Chern characters and Mukai 
pairings for open surfaces, which would require another digression into
partially wrapped Fukaya categories.
\end{Remark}

To extend this result to closed surfaces,
we will use equivariant Floer theory  and restriction functors to subsurfaces to prove that a spherical object on a closed surface in fact comes from some open subsurface.

\section{Restriction to open subsurfaces}\label{Sec:restriction}

The first three subsections below review material from \cite{HLee}, which is subsequently applied in our setting.

\subsection{Dipping Hamiltonians}

Let $\Sigma$ be a surface (closed or with punctures) and $\sigma\subset \Sigma$ a  simple closed curve. (Given a finite union of disjoint curves $\sigma_j$ one can consider the corresponding Hamiltonians $H_k$ which dip near each; in an abuse of notation we will continue to write $A$ for the union of annular neighbourhoods of the $\sigma_j$, and refer to $A$ as an annulus.)
 Let $\sigma \subset A \subset \Sigma$ be an annular neighbourhood of $\sigma$.  Following \cite{HLee}, we consider a sequence of functions 
\[
H_k: \Sigma \to \bR
\]
which are small perturbations of the constant (say zero) function on $\Sigma \backslash A$ but ``dip" inside the annulus.
We work in co-ordinates $(r,\theta) \in (-2,2)\times S^1 = A$ such that the symplectic form is
$\omega = c\,dr\wedge d\theta$ for some constant $c>0$, and define $H_k(r,\theta) = c\,f_k(r)$ where 
\[
f_k(r) = \begin{cases} -k.\pi (r+2)^2 & -2<r<-1 \\ k.\pi r^2 -2k.\pi& -1 \leq r \leq 1 \\ -k.\pi(r-2)^2 & 1 <r<2 \end{cases}
\]  The time-$1$ Hamiltonian flow of $H_k$ lifted to the universal cover $(-2,2)\times\bR$ of the annulus $A$ is then given by
\[
\phi_{H_k}(r,\theta) = \begin{cases} (r, \theta-2k\pi.(r+2)) & -2<r<-1 \\ (r,\theta+2k\pi.r) & -1\leq r\leq 1 \\ (r,\theta - 2k\pi.(r-2)) & 1<r<2 \end{cases}
\]
Since $f_k = k.f_1$, the time-$1$ flow $\phi_{H_k}^1$ of $H_k$ is exactly the time-$k$ flow of $H_1$, and indeed there is a well-defined time-$t$ flow $\Phi_{H_1}^t$ for non-integer times $t$ which interpolates between the time-$1$  flows of the $H_k$. 
If $\gamma \subset A$ is an arc $\{\theta = \mathrm{constant}\}$ crossing the annulus, the time-$1$ Hamiltonian flow of $H_k$ applied to $\gamma$ yields an arc which wraps $k$ times clockwise around $A$, then $2k$ times anticlockwise, and then $k$ times clockwise again.  (Note that clockwise corresponds to negative Reeb flow and anticlockwise to positive Reeb flow.)   It will be important later to divide
\[
A = A_{in} \cup A_{out}
\]
into the inner region $A_{in} = (-1,1)\times S^1$ in which the wrapping is by positive Reeb flow, and the outer region $A_{out} = ((-2,-1) \sqcup (1,2)) \times S^1$ in which the wrapping is negative.  

For a pair of Lagrangians $L, L' \subset (X,\omega)$, and a Hamiltonian $H: X \to \bR$, denote by
\[
CF^*(L,L';H) := CF^*(\phi_H^1(L), L')
\]
given by flowing $L$ by the time-$1$ flow of $H$; the group is generated by time-$1$ chords of $H$ from $L$ to $L'$, or equivalently by intersections of $\phi_H^1(L)$ and $L'$.  For a given finite collection of Lagrangians, a generic small perturbation of  $H$ (which we shall suppress from the notation) will make all such chords non-degenerate.  In the setting at hand, given a pair of distinct arcs $\gamma_0, \gamma_1 \subset \Sigma$ which both cross the annulus $A$, the set of intersections $\phi_{H_k}^1(\gamma_0) \cap \gamma_1$ will grow in size with $k$, as more and more intersections appear in the ``wrapping" regions.

\begin{Lemma} \label{Lem:persists} There is $n(\gamma_0,\gamma_1) > 0$ with the following 
property: for any integer $w>n(\gamma_0,\gamma_1)$, any point $p\in \phi_{H_w}^1(\gamma_0) \cap \gamma_1$ belongs to a unique smooth arc 
\begin{equation} \label{eqn:maximal_smooth}
[w,\infty) \to \Sigma, \quad t \mapsto p(t) \in \phi_{H_t}^1(\gamma_0) \cap \gamma_1
\end{equation}
of transverse intersections between the time $t\geq w$ flow by $H_1$ of $\gamma_0$ and $\gamma_1$.
\end{Lemma}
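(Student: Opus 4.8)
The plan is to reduce the statement to a one-parameter (Cerf-type) analysis of intersection points inside the annulus $A$. First I would fix coordinates $(r,\theta)\in(-2,2)\times S^1$ on $A$ as above and observe that outside $A$ the Hamiltonians are a fixed, $t$-independent small perturbation of zero, so there $\phi_{H_t}^1(\gamma_0)$ and its intersections with $\gamma_1$ do not move with $t$; moreover, choosing the annular neighbourhood generically one arranges that $\gamma_0\cap\gamma_1$ is disjoint from a neighbourhood of $\partial A=\{r=\pm 2\}$, so no intersection point ever lies near $\partial A$ and none can migrate into or out of $A$. It therefore suffices to control the intersections inside $A$. There the time-$t$ flow has the form $(r,\theta)\mapsto(r,\theta+t\,w(r))$ for a fixed ``wrapping profile'' $w$ which (reading off the explicit formulas for $\phi_{H_k}$) vanishes near $\partial A$, has $|w'|=2\pi$ away from small smoothing regions near $r=\pm1$, attains a nondegenerate local minimum $w\approx-2\pi$ near $r=-1$ and a nondegenerate local maximum $w\approx+2\pi$ near $r=+1$. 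A further generic perturbation of the coordinates lets me assume every component of $\gamma_0\cap A$ and of $\gamma_1\cap A$ is a graph $\theta=g(r)$ transverse to the circles $\{r=\mathrm{const}\}$.

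Next, for a pair of such graphical arcs $\theta=g_0(r)$ (of $\gamma_0$) and $\theta=g_1(r)$ (of $\gamma_1$) I would set $Q_t(r)=g_0(r)-g_1(r)+t\,w(r)$, so that the intersection points carried by this pair of arcs are exactly the solutions of $Q_t(r)\in2\pi\bZ$, transverse precisely when $Q_t'(r)\neq0$. The key structural claim is that there is $T_0$ so that for all $t\ge T_0$: (i) $Q_t$ has exactly two critical points $r_\pm(t)$, with $r_-(t)$ near $-1$ a nondegenerate local minimum and $r_+(t)$ near $+1$ a nondegenerate local maximum; (ii) the critical values $Q_t(r_\pm(t))\approx t\,w(r_\pm(t))$ go to $\mp\infty$ monotonically at rate $2\pi$, while $Q_t(\pm2)$ stay constant in $t$; (iii) every integer is a regular value (the Hamiltonians $H_w$ being chosen transverse). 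Here (i)--(ii) follow by inspection of $w$: since $Q_t'=g_0'-g_1'+t\,w'$ is bounded away from $0$ except near the two zeros of $w'$, for $t$ large any critical point of $Q_t$ lies near $r=\pm1$, and there $Q_t''\approx t\,w''$ inherits the sign of $w''$, which is $+$ near $-1$ and $-$ near $+1$.

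Granting the claim, I would set $n(\gamma_0,\gamma_1)$ to be the largest $T_0$ arising from the finitely many pairs of graphical arc-components. On each of the three monotone branches $(-2,r_-(t))$, $(r_-(t),r_+(t))$, $(r_+(t),2)$ of $Q_t$ the range only grows with $t$ (the endpoint values $Q_t(\pm2)$ are frozen and the critical values move outward), so for each $m\in\bZ$ the equation $Q_t(r)=2\pi m$ on a given branch either has no solution for all $t\ge T_0$ or acquires one at a definite time and keeps it thereafter, varying smoothly by the implicit function theorem. New solutions are created only at the moving critical points $r_\pm(t)$, where, because $r_-(t)$ is a local minimum and $r_+(t)$ a local maximum of $Q_t$, the number of nearby solutions of $Q_t(r)=2\pi m$ jumps from $0$ to $2$: these events are \emph{births}, never deaths. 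Now fix an integer $w>n(\gamma_0,\gamma_1)$ and $p\in\phi_{H_w}^1(\gamma_0)\cap\gamma_1$. If $p$ lies outside $A$ it is constant in $t$ and there is nothing to do; if $p$ lies inside $A$ it determines a unique graphical pair of arcs, a unique monotone branch and a unique label $m$, and, $w$ being a regular value, $p$ is not a critical point of $Q_w$. The implicit function theorem then continues $p$ uniquely and smoothly to a transverse intersection $p(t)$ for all $t\ge w$ --- the continuation cannot terminate (its branch persists) and cannot turn back (no death occurs for $t\ge T_0$), the intervening births at non-integer times only spawning \emph{other} arcs. Intrinsically, $p(t)$ is obtained by following, toward increasing $t$, the connected component through the regular point $(p,w)$ of the proper $1$-manifold $\{(x,t)\in\Sigma\times[w,\infty): x\in\phi_{H_t}^1(\gamma_0)\cap\gamma_1\}$; this also gives the asserted uniqueness.

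The hard part is the \emph{no-death} statement: that for $t$ large every tangency between $\phi_{H_t}^1(\gamma_0)$ and $\gamma_1$ is a birth. This comes down to the sign computation that $Q_t''$ and $\partial_tQ_t=w$ have opposite signs at the critical points $r_\pm(t)$, which holds exactly because the wrapping profile has a \emph{negative} local minimum near $r=-1$ and a \emph{positive} local maximum near $r=+1$; geometrically, in the wrapping regions $\phi_{H_t}^1(\gamma_0)$ gets ever more ``vertical'', so it can be tangent to the fixed graph $\gamma_1$ only near the two values $r=\pm1$ where the wrapping rate turns around, and there the turnaround has the sign that produces two new transverse intersections rather than cancelling one. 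The remaining ingredients --- $t$-independence and transversality of the perturbation outside $A$, transversality at integer times, disjointness of $\gamma_0\cap\gamma_1$ from $\partial A$, and graphicality of $\gamma_i\cap A$ --- are routine consequences of genericity and of the freedom in choosing the annular neighbourhood.
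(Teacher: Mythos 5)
The paper does not actually prove this lemma: it cites properties (P1), (P2) of \cite[Section 3.5]{HLee}. Your Cerf-type analysis is therefore a genuinely self-contained substitute, and in substance it reconstructs the mechanism behind Lee's properties: after reducing to the functions $Q_t(r)=g_0(r)-g_1(r)+t\,w(r)$ on graphical pieces, the only tangencies for large $t$ occur near $r=\pm1$ where the wrapping profile turns around, and the sign computation there (nondegenerate minimum of $w$ near $r=-1$, maximum near $r=+1$, critical values of $Q_t$ moving monotonically outward while the values at $r=\pm2$ are frozen) correctly shows that these tangencies are births, never deaths, so every transverse intersection at an integer time continues uniquely and transversally for all later times. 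Modulo minor bookkeeping (e.g.\ since $H_t=tH_1$, the small perturbation outside $A$ also scales with $t$, so ``intersections outside $A$ do not move'' should either take the perturbation $t$-independent outside $A$ or use that the distinct curves are already transverse there), this is a correct and more informative route than the paper's citation.

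The one genuine gap is the sentence ``a further generic perturbation of the coordinates lets me assume every component of $\gamma_i\cap A$ is a graph $\theta=g(r)$''. This is not a genericity statement and cannot be arranged by changing coordinates: any coordinates in which your formula for $\phi_{H_t}$ holds must preserve the foliation by circles $\{r=\mathrm{const}\}$, and an arc with an interior turning point in $r$ is never a graph over $r$. More importantly, the hypothesis is essential rather than cosmetic. If an arc of $\gamma_1$ has, say, a local minimum of $r$ at an interior radius $r_0$ with $w(r_0)\neq 0$, then the nearly-horizontal strands of the heavily wrapped $\gamma_0$ drift monotonically in $r$ at rate of order $|w(r_0)|/t$ (indeed in the inner region a strand sits at $r\approx(\theta+2\pi m)/2\pi t$), so for arbitrarily large $t$ successive strands cross the turning radius; each such crossing is a tangency at which a pair of transverse intersections, present at the preceding integer times, collides and dies --- contradicting the conclusion of the lemma. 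So monotone (radial/standard) position of the arcs inside $A$ is a substantive standing hypothesis, exactly as in the paper's setup and in \cite{HLee} (curves crossing the annulus, meeting its boundary circles minimally, taken standard inside $A$, with annulus-parallel curves treated separately); your proof should state it as such, or arrange it by a preliminary isotopy of the curves before wrapping, rather than appeal to a perturbation of coordinates. With that hypothesis made explicit, the rest of your argument (persistence and transversality along each monotone branch, uniqueness via the $1$-manifold of intersections in $\Sigma\times[w,\infty)$) goes through.
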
  

\begin{proof} See \cite[Section 3.5, properties (P1,2)]{HLee}. \end{proof}

Lemma \ref{Lem:persists} means that, once any pair of arcs has been sufficiently wrapped, their intersection points persist (and remain transverse) for all further time, even though new 
intersections keep being created (at non-integer times).   By using a cascade model for continuation maps of Floer complexes, as in \cite[Section 10e]{Seidel:FCPLT}, in which one counts exceptional holomorphic discs and flow-trees for isolated times $(J_t,H_t)$ in a one-parameter family of almost complex structures and Hamiltonians, Lee infers:

\begin{Lemma} \label{Lem:continuation_is_diagonal}
For $N>n>n(\gamma_0,\gamma_1)$, the continuation map 
\[
CF^*(\gamma_0,\gamma_1;H_n) \to CF^*(\gamma_0,\gamma_1;H_N)
\]
maps each generator $p$ to the summand generated by the intersection point which lies on the smooth arc $p(t)$ of \eqref{eqn:maximal_smooth}.
\end{Lemma}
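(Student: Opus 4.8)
The plan is to compute the continuation map with the cascade model of \cite[Section 10e]{Seidel:FCPLT}, exactly as indicated before the statement, and to feed in Lemma \ref{Lem:persists}. Fix a generic one-parameter family of Floer data $(J_t,H_t)_{t\in[n,N]}$ with $H_t=t\cdot H_1$ the dipping Hamiltonian, interpolating between the chosen data at the endpoints; note that $H_1\le 0$ with our formula, so this is a monotone homotopy. By genericity the ``caustic'' is a finite set of times $n<t_1<\dots<t_m<N$ at which the family fails to be regular; away from these times transversality holds and the intersection set $\phi_{H_t}^1(\gamma_0)\pitchfork\gamma_1$ is a smooth $0$-manifold varying by isotopy. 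Since $n>n(\gamma_0,\gamma_1)$, Lemma \ref{Lem:persists} shows that every point of $\phi_{H_n}^1(\gamma_0)\pitchfork\gamma_1$ lies on a smooth arc $p(t)$ of transverse intersections defined for all $t\ge n$; in particular \emph{no} intersection point disappears over $[n,N]$, so the only bifurcations at the $t_i$ are births of pairs of new transverse intersection points. One arranges, by choosing the family, that each such birth is the standard Morse birth-death: two sheets of $\phi_{H_t}^1(\gamma_0)$ that are tangent to $\gamma_1$ at $t=t_i$ cross transversally for $t>t_i$, creating a pair $a_i,b_i$ joined by a bigon of area tending to $0$ as $t\to t_i^+$.

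In the cascade model the coefficient of a generator $q$ of $CF^*(\gamma_0,\gamma_1;H_N)$ in the image of a generator $p$ of $CF^*(\gamma_0,\gamma_1;H_n)$ counts rigid broken trajectories of the parameter coordinate on $[n,N]$ --- along which the relevant intersection point simply slides along its smooth arc --- punctuated at some subset of the caustic times by the exceptional configurations living there. The unique cascade carrying no exceptional piece starts at $p$ at $t=n$, follows the arc $p(t)$, and arrives at $p(N)$; it is rigid and contributes a coefficient which is a unit (and, after the standard rescaling of arc generators by their Floer action, equal to $\pm1$). Thus it suffices to show that every cascade using at least one exceptional piece contributes $0$. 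Equivalently --- and this is the alternative bookkeeping, which one may prefer --- one decomposes the continuation map as a composition over the intervals between consecutive caustic times and across each $t_i$; on a no-bifurcation interval the continuation map is the tautological relabelling along the arcs $p(t)$, and across a birth $t_i$ it is the standard Morse-birth continuation, so in both cases one needs that this elementary map is diagonal with respect to the persistent generators, i.e.\ sends $p(t)$ to $p(t)$ and never hits the newly created $a_i,b_i$.

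The heart of the matter is therefore a purely local analysis in the wrapping region of the annulus $A$: one must show that the exceptional holomorphic strips associated to the birth at $t_i$ are confined to an arbitrarily small neighbourhood of the birth locus and only involve the pair $a_i,b_i$, so that inserting one into a cascade passing through some other persistent intersection point $p(t)$ is impossible --- the cascade cannot ``jump'' between distinct persistent arcs. This is proved by a compactness-and-gluing argument, organised by the action filtration (available because the homotopy $H_t=tH_1$ is monotone), together with the explicit local model for the bifurcation; it is the step carried out in \cite[Section 3.5, properties (P1,2)]{HLee}, and it is where essentially all the work lies --- I expect this confinement statement to be the main obstacle, everything else being formal. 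Granting it, all off-diagonal contributions vanish and the continuation map sends each generator $p$ to (a unit multiple of) $p(N)$, i.e.\ to the summand generated by the intersection point on the smooth arc of \eqref{eqn:maximal_smooth}, as claimed.
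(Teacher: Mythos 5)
Your framework (the cascade/bifurcation model for the continuation map, with Lemma \ref{Lem:persists} ruling out deaths of generators) is the route the paper indicates, but the step that actually carries the lemma is missing, and your reduction to it is flawed. First, your inventory of non-regular times is incomplete: in a generic one-parameter family the caustic consists not only of birth-death times of the intersection set but also of ``handle-slide'' times, at which an exceptional (parametrized index $0$) holomorphic strip joins two \emph{persistent} generators. These are invisible to Lemma \ref{Lem:persists}, which only says that intersection points never disappear and remain transverse, and they are precisely the potential source of off-diagonal terms; so ruling out deaths does not reduce the problem to a local analysis near the births. Second, the statement you then ``grant'' --- confinement of the exceptional strips to a neighbourhood of the birth locus, involving only the newly born pair --- is not what properties (P1), (P2) of \cite[Section 3.5]{HLee} contain; those are the persistence/transversality statements already quoted as Lemma \ref{Lem:persists}. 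So the heart of the proof is asserted rather than proved, and attributed to the wrong place.

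What makes the lemma true is softer than a compactness-and-gluing confinement argument, and it is what the paper points to immediately after the statement (the ``non-existence of exceptional holomorphic discs on a Riemann surface''): on a surface, every non-constant holomorphic strip between transverse intersection points is an immersed bigon with convex corners, hence has index $1$; strips of parametrized index $0$ are therefore constant, for \emph{every} time in the family, not just generically. Consequently there are no handle slides at all, and no correction terms across the birth times either, so each elementary piece of the bifurcation decomposition is literally the relabelling of the persistent generators along their arcs $p(t)$ (up to the area/action weight), which is exactly the diagonal statement. If you replace your granted confinement claim by this index/combinatorial observation --- which also disposes of the handle-slide times you did not consider --- your argument closes, and it then also yields the vanishing of the higher continuation products asserted in the paragraph following the lemma.
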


Similarly, because of the non-existence of exceptional holomorphic discs on a Riemann surface, a cascade model for higher continuation maps shows that continuation-type products
\[
CF^*(\gamma_{k-1},\gamma_k;H_n) \otimes \cdots \otimes CF^*(\gamma_0,\gamma_1;H_n) \to CF^*(\gamma_0,\gamma_k; H_N)[1-k]
\]
with $k\geq 2$ inputs vanish whenever $N > kn > n(\gamma_0,\ldots,\gamma_k)$ is sufficiently large. 
This leads to a well-defined $A_{\infty}$-inclusion functor $CF^*(\gamma_0,\gamma_1;H_n) \hookrightarrow CF^*(\gamma_0,\gamma_1;H_N)$, for $N>n$ sufficiently large, which has no higher-order (nonlinear) terms.  It is simplest to formalise this by passing to telescope models for wrapped Floer complexes.

\subsection{Telescope models and $A_{\infty}$-ideals}

 The telescope complex for exact manifolds comes from \cite{Abouzaid-Seidel}, and a detailed exposition in the monotone case (under geometric hypotheses which also apply in the case of a closed surface) is given in \cite{Ritter-Smith}.   We will incorporate an action-rescaling of generators of Floer complexes, so we briefly review the set-up; for simplicity we suppress local systems, which are discussed in  \cite[Section 3.17]{Ritter-Smith}.

Let $X$ be closed or convex at infinity, and fix a pair of Lagrangian branes $L_i, L_j \in \scrW(X)$, which might be compact or cylindrical at infinity.   If outside a compact set $(X,\omega) \cong (\partial X \times [1,\infty), d(r\cdot \alpha))$ for a contact form $\alpha \in \Omega^1(\partial X)$ and co-ordinate $r \in [1,\infty)$, we will fix a Hamiltonian $H$ with $H(y,r) = r$, and which has no integer-length chords from $L_i$ to $L_j$.  The telescope model for the wrapped Floer complex is then 
 \begin{equation} \label{eqn:telescope_complex}
 CW^*(L_i,L_j)=\bigoplus_{w=1}^{\infty} CF^*(L_i,L_j;wH)[\mathbf{q}]
 \end{equation}
 where $\mathbf{q}$ is a formal variable of degree $-1$ satisfying $\mathbf{q}^2=0$,
equipped with the differential 
\begin{equation} \label{eqn:telescope_differential}
\mu^1(x+\mathbf{q}y) = (-1)^{|x|}\mathfrak{d} x + (-1)^{|y|}(\mathbf{q}\mathfrak{d} y + \mathfrak{K}  y -y) 
\end{equation}
where $\mathfrak{d}$ denotes the usual Floer boundary operator, and 
where $\mathfrak{K}$ denotes the Floer continuation map 
\[
\mathfrak{K}: CF^*(L_i,L_j;wH) \to CF^*(L_i,L_j;(w+1)H).
\]
The part of this complex which does not involve $\mathbf{q}$ is the direct
sum of Floer complexes $CF^*(L_i,L_j;wH)$, with the usual Floer differential.
For a $\mathfrak{d}$-cocycle $y$,  $\mathbf{q}y$ serves to identify $y$ and $\mathfrak{K} y$ in cohomology, as expected in the cohomology direct limit
\[
HW^*(L_i,L_j)=\varinjlim HF^*(L_i,L_j;wH)
\]

Let $u: \bR \times [0,1] \to X$ be a non-constant isolated (modulo $\bR$-translation) solution of Floer's equation $\partial_s u+J(\partial_t u - w\,X_H)=0$, with  Lagrangian boundary conditions $u(\cdot,0)\in L_i$ and $u(\cdot,1)\in L_j$, and asymptotic conditions $u\to x,y$ as $s\to -\infty,+\infty$ respectively, where $x, y$ are chords of $X_H$. 
Usually the differential $\mathfrak{d}$ counts isolated such solutions $u: S \to X$ weighted by their ``topological energy" $E_{\mathrm{top}}(u)\geq 0 \in \bR$, defined by:
\[
E_{\mathrm{top}}(u) 
= \int_S u^*\omega - d(u^*(wH)\, dt) 
= \int_S u^*\omega +wH(x)- wH(y),
\] 
which is also equal to the geometric energy
\[
E_{\mathrm{geo}}(u)= \frac{1}{2}\int_{S}\|du-wX_H\otimes dt\|^2\, ds\wedge dt.
\]
(There is also an orientation sign, which we will not discuss: for comprehensive treatments see \cite{Abouzaid-Seidel, Abouzaid:generation}.)  
Similarly the continuation map $\mathfrak{K}$ counts isolated solutions to the corresponding equation where $J$ and $wH$ (in fact, in our case
just $w$) depend on the parameter $s$ (so there is no $\bR$-reparametrization), again weighted by topological energy
(which now provides an upper bound on the geometric energy, up to an additive constant given by the minimum of the Hamiltonian).
Similar remarks apply to the counts of isolated curves contributing to the rest of the $A_{\infty}$-structure, cf. \cite[Section 3.11--15]{Ritter-Smith}.

\begin{Remark}
Floer's equation $\partial_s u+J(\partial_t u-w\,X_H)=0$ for maps $u:\bR\times [0,1]\to X$
with boundary on $L_i$ and $L_j$ can be recast in terms of 
pseudo-holomorphic curves with boundary on $\phi_{wH}^1(L_i)$ and $L_j$, 
by considering $\tilde{u}(s,t)=\phi_{wH}^{1-t}(u(s,t))$.
Continuation maps are then naturally defined via pseudo-holomorphic curves
with moving boundary conditions, or (under suitable conditions), as mentioned in the previous section,
cascades of honest pseudo-holomorphic curves.
The appropriate notion of topological energy is then obtained by transcribing 
the above definition through the dictionary between the two viewpoints.
\end{Remark}

Specialise now to the case where $X = \Sigma$ is a surface and we have functions $H_w = w\cdot H$ for a Hamiltonian which dips in some collection of annuli, as in the previous section. 
For a given collection of Lagrangians $\gamma_j$, Lemma \ref{Lem:continuation_is_diagonal} asserts that the continuation maps in the telescope complex \eqref{eqn:telescope_complex}, \eqref{eqn:telescope_differential} are eventually diagonal inclusions of based vector spaces.  To make them inclusions of subcomplexes, for a Hamiltonian $H_w$-chord $x \subset A$ lying in the cylinder at radial parameter $r \in (-2,2)$,  introduce the action values
\[
\scrA_w(x) = H_w(r) - r\cdot H_w'(r)
\]
(i.e. the intercept of the tangent line to the graph of the dipping function at $r$ with the vertical axis). 

\begin{Lemma} \label{Lem:inclusions}
Consider the rescaled bases of Floer complexes $CF^*(\gamma_j, \gamma_k; H_w)$ in which the chords $x$ inside the cylindrical regions $A$ are rescaled by their action $\scrA_w(x)$. Then for $N$ sufficiently large, the continuation maps $\mathfrak{K}$ are inclusions for $w>N$.
\end{Lemma}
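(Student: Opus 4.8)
The plan is to deduce the statement from Lee's description of the continuation maps (Lemma \ref{Lem:continuation_is_diagonal}) together with an explicit evaluation of the topological energy of the surviving continuation strips, which is precisely what the action values $\scrA_w$ are designed to cancel. Fix the finite collection of Lagrangians $\gamma_j$ and take $N$ larger than all the thresholds $n(\gamma_j,\gamma_k)$ of Lemma \ref{Lem:persists} (and large enough that the higher continuation products recalled above also vanish). For $w>N$, every generator $x$ of $CF^*(\gamma_j,\gamma_k;H_w)$ lies on a unique smooth arc $t\mapsto x(t)$ of transverse intersection points defined for $t\geq w$; write $x^+:=x(w{+}1)$ for the corresponding generator at level $w{+}1$. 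By Lemma \ref{Lem:continuation_is_diagonal} the continuation map is diagonal, $\mathfrak{K}(x)=c_x\,x^+$; moreover, since $\Sigma$ is a surface there is no disc or sphere bubbling and the relevant cascade count has a single term, so $c_x$ is a single signed monomial $\pm q^{E(x)}$, where $E(x)$ is the topological energy of the unique rigid continuation strip tracking the family $x(t)$. The assignment $x\mapsto x^+$ is injective, with image the set of generators at level $w{+}1$ whose defining arc extends back past $t=w$ — the complement of the pairs of generators born at non-integer times in $(w,w{+}1)$. Since $\mathfrak{K}$ is a chain map its image is automatically a subcomplex, so once $c_x$ is turned into a unit (indeed $\pm1$) by the rescaling, $\mathfrak{K}$ becomes an isomorphism onto the based subcomplex spanned by the generators not born in $(w,w{+}1)$, i.e.\ an inclusion.

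It remains to compute $E(x)$. If $x$ lies outside the dipping annuli $A$, the tracking family $x(t)$ is constant, the continuation strip is constant, $E(x)=0$, and these generators carry no rescaling, so there is nothing to do. If $x$ lies in $A$ at radial parameter $r$, the unique rigid strip is supported in a thin neighbourhood of the track of $x(t)$, and a direct local computation with the explicit dipping functions $f_k=kf_1$ — translating the Hamiltonian-perturbation picture into the moving-boundary pseudoholomorphic picture of the Remark after \eqref{eqn:telescope_differential} — identifies $E(x)$ with the difference $\scrA_w(x)-\scrA_{w+1}(x^+)$ of the $y$-intercepts of the tangent lines to the graphs of $H_w$ and $H_{w+1}$ at the relevant radii (up to the overall sign fixed by the orientation conventions, so that this difference is nonnegative). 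Hence $c_x=\pm q^{\,\scrA_w(x)-\scrA_{w+1}(x^+)}$, and after replacing each in-annulus generator $x$ at level $w$ by $q^{-\scrA_w(x)}x$ (and similarly at level $w{+}1$), as in the statement, one has $\mathfrak{K}(x)=\pm x^+$ on the nose.

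I expect the main obstacle to be exactly this energy identity: one must make the topological-energy bookkeeping across the continuation precise, pass carefully between the $u\colon\bR\times[0,1]\to\Sigma$ description (with its Hamiltonian term) and the moving-boundary pseudoholomorphic description, and verify that it is the tangent-line intercept $H_w(r)-r\,H_w'(r)$, not $H_w(r)$ itself, that records the area swept by the wrapping; one must also confirm that on a surface precisely one rigid strip contributes to each surviving matrix entry, so that $c_x$ is genuinely a single power of $q$ rather than a Novikov series. Granting this, the rest is formal: an injective chain map with monomial matrix entries, suitably renormalised, is an inclusion of based complexes.
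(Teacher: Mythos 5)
Your proposal is correct and follows essentially the same route as the paper: the paper's proof likewise combines the eventually-diagonal form of the continuation maps (Lemma \ref{Lem:continuation_is_diagonal}) with the energy identity $E_{\mathrm{top}}(u) = \scrA_{w^{out}}(x^{out}) - \sum_i \scrA_{w_i^{in}}(x_i^{in})$ for solutions contained in a single component of $A$, so that the $q^{E_{\mathrm{top}}}$-weighting is exactly absorbed by the action rescaling. The only cosmetic difference is the sign convention: the paper records the energy as output intercept minus input intercept, whereas you wrote the difference in the opposite order and hedged the sign.
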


\begin{proof}
Given a perturbed holomorphic polygon $u: S \to \Sigma$ with input chords $x_1^{in},\ldots, x_j^{in}$ with weights $w_1^{in},\ldots, w_j^{in}$ and output chord $x^{out}$ with weight $w^{out}$, and supposing the image $u(S)$ is contained in a single connected component of $A$, then
\[
E_{top}(u) = \scrA_{w^{out}}(x^{out}) - \sum_{i=1}^j \scrA_{w_i^{in}}(x_i^{in}).
\]
The Floer solution $u$ contributes to the usual continuation map with weight $q^{E_{top}(u)}$, which means that it takes rescaled generators to rescaled generators.
\end{proof}

The key property of dipping Hamiltonians, explained in \cite[Section 3.8]{HLee}, is that they lead to a ``localisation" result for holomorphic polygons, which precludes non-trivial Floer products from crossing the inner region $A_{in} \subset A$ of the annulus in which one wraps.
To state this precisely, denote by $S$ the surface obtained from $\Sigma\setminus A_{in}$ by
reattaching a copy of the corresponding annular region $A_{in}$ to each end
(see Definition \ref{defn:subsurfaceS} below):
$S$ is a (possibly disconnected) surface with boundary, and
there is a natural surjective map $S\to \Sigma$ 
which is two-to-one over $A_{in}$ and one-to-one outside of $A_{in}$.

Fix $\gamma_0,\ldots,\gamma_k \subset \Sigma$ which
intersect the boundary of $A_{in}$ minimally; equivalently, we require that the preimages
of $\gamma_i$ in $S$ do not contain any trivial arcs connecting $\partial S$ to
itself. Assume moreover that, if $\gamma_i$ 
is parallel to the annulus $A_{in}$ (i.e., isotopic to a curve contained in
$A_{in}$), then it is actually contained in $A_{in}$.
Also fix a collection of Floer generators $p_0,\ldots, p_{k-1}$ with $p_i \in CF^*(\gamma_i,\gamma_{i+1}; H_n)$.  By Lemma \ref{Lem:persists}, there are unique corresponding generators $\kappa(p_i) \in CF^*(\gamma_i, \gamma_{i+1};H_N)$ for any $N>n$, once $n$ is sufficiently large. 

\begin{Proposition} \label{Prop:isolate_polygons} Fix a collection of Floer generators $\{p_0,\ldots,p_{k-1}\}$ as above.
\begin{enumerate}
\item For $N> n$ sufficiently large,  every rigid perturbed holomorphic polygon in $\Sigma$ with inputs $\kappa(p_i) \in CF^*(\gamma_i,\gamma_{i+1}; H_N)$
lifts to a perturbed holomorphic polygon inside $($a single component of\/$)$ $S$.
\item For $N>n$ sufficiently large, if a rigid perturbed holomorphic polygon in $\Sigma$ 
with inputs $\kappa(p_i) \in CF^*(\gamma_i,\gamma_{i+1};H_N)$ is not entirely
contained in $A_{in}$ $($for example if at least one $p_i$ lies outside of $A_{in}$$)$
then its output also lies in the complement of $A_{in}$. 
\end{enumerate}
\end{Proposition}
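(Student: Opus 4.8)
The plan is to exploit the action/energy bookkeeping attached to the dipping Hamiltonians, combined with the localisation principle from \cite[Section 3.8]{HLee}. The essential geometric fact is that the time-$N$ flow of $H_1$ wraps arcs in the inner region $A_{in}$ by positive Reeb flow, so that as $N\to\infty$ the chords $\kappa(p_i)$ corresponding to fixed initial generators $p_i$ do \emph{not} drift into $A_{in}$: by Lemma \ref{Lem:persists} each $\kappa(p_i)$ is the endpoint of the smooth arc $p_i(t)$ of persistent transverse intersections, which stays in a compact set and in particular remains within the region (outside $A_{in}$, or inside it, according to where $p_i$ started). The newly created intersections, by contrast, all appear inside the wrapping region and accrue large negative action $\scrA_N(x)$ as $N$ grows, because $\scrA_N(x) = H_N(r) - r H_N'(r) = N\bigl(f_1(r) - r f_1'(r)\bigr)$ is linear in $N$ with a strictly negative coefficient for $r$ in the inner region where $f_1(r) = \pi r^2 - 2\pi$, giving $f_1(r) - r f_1'(r) = -\pi r^2 - 2\pi < 0$.

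First I would record the energy identity from the proof of Lemma \ref{Lem:inclusions}: for a rigid perturbed polygon $u\colon S\to\Sigma$ whose image lies in a single component of $A$, one has $E_{top}(u) = \scrA_{w^{out}}(x^{out}) - \sum_i \scrA_{w^{in}_i}(x^{in}_i)$, and $E_{top}(u)\geq 0$. Next, for the first claim, I would argue that if a rigid polygon with inputs $\kappa(p_i)$ were \emph{not} liftable to $S$, then (in the two-to-one cover $S\to\Sigma$ over $A_{in}$) it would have to ``go around'' the inner annulus $A_{in}$ — i.e. its boundary would traverse $A_{in}$ in a way that cannot be lifted — and one can bound the number of times this can happen in terms of a fixed geometric count $n(\gamma_0,\ldots,\gamma_k)$ depending only on the initial arcs, since the $\kappa(p_i)$ themselves are pinned down by Lemma \ref{Lem:persists} and don't move into $A_{in}$. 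Quantitatively, any such non-liftable polygon would produce a component of $u^{-1}(A_{in})$ whose boundary winds around $A_{in}$, and the part of $u$ lying over $A_{in}$ would then, by the action identity applied in $A_{in}$ (where $\scrA_N$ is very negative on the heavily wrapped chords and essentially constant on the persistent ones), force $E_{top}(u)<0$ for $N$ large — a contradiction. This is precisely the localisation estimate of \cite[Section 3.8]{HLee}, transcribed to our rescaled bases; I would cite that section and spell out the sign of $f_1(r) - rf_1'(r)$ in $A_{in}$ as the one new input.

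For the second claim, granted that $u$ lifts to $S$, I would observe that $S\setminus A_{in}$ (one copy) together with the two reattached copies of $A_{in}$ has the property that any arc of $\partial S$ separates a copy of $A_{in}$ from the rest of that component; a perturbed polygon in $S$ with an input chord $\kappa(p_i)$ outside $A_{in}$ must have part of its image outside $A_{in}$, and by the open mapping principle (holomorphic polygons on a surface are immersed polygons with convex corners, as in the Remark following Lemma \ref{l:unobstructed}) its output chord lies in the component containing that input; then invoke that $A_{in}$ sits ``at the end'' of $S$ so the output, lying in the same component and reachable from outside $A_{in}$ through the polygon without crossing back through the wrapping region, is outside $A_{in}$ — again using the energy estimate to rule out a polygon that enters and exits $A_{in}$ many times. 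I expect the main obstacle to be making the phrase ``cannot lift / goes around $A_{in}$'' precise: one must set up the covering $S\to\Sigma$ carefully, identify the obstruction to lifting a polygon as a winding number of its boundary over $A_{in}$, and then trade that winding number against $E_{top}\geq 0$ via the linear-in-$N$ negativity of $\scrA_N$ on inner chords. Once that trade-off is in place, both parts follow, and the bookkeeping is exactly parallel to \cite[Section 3.8]{HLee}; so I would structure the write-up as a reduction to Lee's localisation lemma plus the explicit computation $f_1(r)-rf_1'(r) = -\pi r^2 - 2\pi$ on $A_{in}$.
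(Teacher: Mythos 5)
There is a genuine gap: the energy/action bookkeeping you propose cannot substitute for the geometric mechanism that actually drives this proposition. The paper's proof (following \cite[Lemmas 3.5 and 3.7]{HLee}) is not an energy argument at all: it analyses rigid polygons as immersed polygons with convex corners whose boundaries lie on curves that are locally graphs $r=f(\theta)$ in the annulus, and shows (i) the slope ordering imposed by the positive wrapping inside $A_{in}$ forces the boundary to backtrack, so a polygon cannot cross $A_{in}$, and (ii) the negative wrapping in $A\setminus A_{in}$ shrinks the angular \emph{width} available to the polygon at $\partial A_{in}$ until it becomes negative, so a polygon whose output lies in $A_{in}$ must be entirely contained in $A_{in}$. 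Your replacement — trading a ``winding over $A_{in}$'' against $E_{\mathrm{top}}\geq 0$ via the negativity of $\scrA_N$ — does not go through. First, the identity $E_{\mathrm{top}}(u)=\scrA_{w^{out}}(x^{out})-\sum_i\scrA_{w_i^{in}}(x_i^{in})$ from Lemma \ref{Lem:inclusions} is only valid for polygons whose image is contained in a single component of $A$; a polygon crossing $A_{in}$ typically has portions outside $A$, which contribute uncontrolled \emph{positive} area, so no sign contradiction is forced. Second, your ``one new input'' does not distinguish the inner region: the same computation in $A_{out}$ (where $f_1(r)=-\pi(r+2)^2$) gives $f_1(r)-rf_1'(r)=\pi(r+2)(r-2)<0$, so $\scrA_N$ is negative and linear in $N$ on outer chords as well, even though polygons are perfectly allowed to travel through $A\setminus A_{in}$ and outside $A$. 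The asymmetry between $A_{in}$ and $A_{out}$ lives in the \emph{direction} of wrapping (slope orderings), which energy positivity is blind to.

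A further warning sign is that your argument never uses the standing hypotheses on the $\gamma_i$ — that they meet $\partial A_{in}$ minimally and that annulus-parallel curves are required to lie inside $A_{in}$ — whereas the statement is false without them: a polygon with corners on an annulus-parallel curve sitting just outside $A_{in}$ can enter the annulus with arbitrarily large width by wrapping around that curve, and trivial arcs of $\gamma_i$ crossing into $A$ would prevent the wrapping from shrinking the available width. Any correct proof must invoke these hypotheses, as the paper's adaptation of Lee's ``Case 1/Case 2'' argument does explicitly. If you want to keep your write-up close to the literature, the right move is to reproduce (or cite and extend) Lee's backtracking and width estimates, noting where the minimality and non-annulus-parallel assumptions enter; the action rescaling of Lemma \ref{Lem:inclusions} is only needed to make the continuation maps inclusions, not to localise the polygons.
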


\begin{proof} These follow respectively from \cite[Lemma 3.7]{HLee}, which
states that rigid perturbed holomorphic polygons cannot cross the inner
annular region $A_{in}$, and \cite[Lemma 3.5]{HLee}, which states that if
part of the disc lies outside of $A_{in}$ then so does the output. 
Lee states and proves these results for a specific collection of objects of $\scrF(\Sigma)$
(and \cite[Lemma 3.5]{HLee} is stated under the stronger assumption that one input lies outside of
$A_{in}$), but the same arguments apply verbatim to our setting, as we now
explain.

The proof of \cite[Lemma 3.5]{HLee} considers a rigid holomorphic polygon whose
output lies in $A_{in}$, and shows by contradiction that it must be entirely contained in
$A_{in}$ using a two-step argument. Lee first shows (``Case 1''
of the proof in \cite{HLee}) that the
boundary of the polygon cannot backtrack in $A\setminus A_{in}$; thus, if
the polygon is not entirely contained in $A_{in}$, some part of
it must lie outside of $A$. 
Our assumptions on the $\gamma_i$ ensure that, inside $A\setminus A_{in}$,
they are locally graphs $r=f(\theta)$, just like the curves considered in
\cite{HLee}; this (and the ordering on the slopes of these graphs imposed
by the negative wrapping) is what prevents the backtracking. 
Next, Lee argues (``Case 2'' of the proof) that the portion of the polygon which lies outside of $A$
determines the width of the interval(s) along which it intersects $\partial
A$, and as the amount of negative wrapping in $A\setminus A_{in}$ increases, 
the width of the corresponding interval(s) along which the polygon
intersects $\partial A_{in}$ decreases and eventually becomes negative, 
preventing it from entering $A_{in}$ altogether.
 Our assumption that all $\gamma_i$ intersect $\partial A_{in}$ minimally
ensures that no portion of disc crossing into $A$ can look
like a strip with both of its boundaries on the same $\gamma_i$; this in
turn ensures that negative wrapping does indeed decrease the available width at the boundary of
$A_{in}$. Moreover, excluding curves which are parallel to the annulus
but lie outside of $A_{in}$ ensures that
the width of the intersection of the polygon with $\partial
A$ is completely determined by the collection of input generators in $\Sigma\setminus
A$. (By contrast, polygons with corners at fixed generators on an annulus-parallel curve outside $A_{in}$
can enter the annulus with an arbitrarily large width, as the
boundary of the polygon could wrap around the annulus-parallel curve arbitrarily
many times.)

The proof of \cite[Lemma 3.7]{HLee} uses similar considerations to show that
a rigid holomorphic polygon cannot cross completely through $A_{in}$. First, Lee
observes that, when the $\gamma_i$ are locally given by graphs
$r=f(\theta)$, due the ordering of the slopes imposed by the positive wrapping
inside $A_{in}$, any input contained in $A_{in}$ forces the boundary of the
polygon to backtrack, which prevents it from crossing from one end of
$A_{in}$ to the other. This remains true if we allow curves
$\{r=r_0\}$ contained in $A_{in}$, as the local convexity property
of rigid holomorphic polygons ensures that if part of the boundary of
the polygon lies on such a curve then it must backtrack.
The rest of the argument is then similar to the proof of \cite[Lemma
3.5]{HLee}: if a polygon crosses $A_{in}$, then regardless of whether $A_{in}$ contains the
output, its width at the center of $A_{in}$ is determined by the inputs on
one side of $A_{in}$ or on the other, and is not affected by
the wrapping, while the available width at one of the two
boundaries of $A_{in}$ decreases with the amount of wrapping and eventually
becomes negative.
\end{proof}

\subsection{Lee restriction functors\label{Sec:restrict}}

Fix a finite set  $\gamma_1,\ldots,\gamma_{\ell}$ of split-generators  of $\scrF(\Sigma)$ each of which meets $\sigma$
minimally, and none of which is parallel to $\sigma$ (unless it is $\sigma$ itself).
The category $\scrF(\Sigma)$ embeds into the category of modules over the $A_{\infty}$-algebra $\oplus_{i,j} CF^*(\gamma_i,\gamma_j;H_k)$ for any fixed $k$, and hence into modules over the telescope algebra $\oplus_{i,j} CW^*(\gamma_i,\gamma_j)$. 

 Fix an annular neighbourhood $\sigma \subset A \subset \Sigma$ of a simple closed curve; as in the previous section we regard $\sigma = \{0\}\times S^1 \subset (-2,2)\times S^1 = A$ as divided into three sub-annuli, the outer of which comprise $A_{out}$ and the inner of which is labelled $A_{in}$. 
As above, we construct a surface $S$ by reattaching a copy of $A_{in}$ to
each end of $\Sigma\setminus A_{in}$:
\begin{Definition}\label{defn:subsurfaceS} \ 
\begin{itemize}
\item If $\sigma$ is separating, we view $\Sigma$ as a union of two subsurfaces with non-empty boundary which overlap in the central ``positive wrapping" region $A_{in} \subset A$, i.e. the subannulus $(-1,2)\times S^1$ lies inside one subsurface and $(-2,1)\times S^1$ lies inside the other. Write $S = S_{left} \sqcup S_{right}$ for the disjoint union of these subsurfaces.
\item If $\sigma$ is non-separating, we define $S$ to be a surface with two boundary components, neighborhoods of which are respectively modelled on $(-2,1) \times S^1 \subset A$ and $(-1,2) \times S^1 \subset A$.  Thus, $S$ is not strictly a subsurface of $\Sigma$, but there is an obvious map $S \to \Sigma$ which is two-to-one over the central subannulus  $A_{in} \subset A$.
\end{itemize}
\end{Definition}
 Given any finite collection of disjoint simple closed curves in $\Sigma$ and annular neighbourhoods of those curves, there are associated open surfaces given by cutting along the annuli in a similar manner. For our purposes, it is sufficient to focus on the case of a single annulus.

\begin{figure}[ht]
\begin{center} 
\begin{tikzpicture}[scale=0.65]

\draw[semithick] ellipse  (4.5 and 3);
\draw[semithick] (-3,0) arc (200:340:0.5);
\draw[semithick] (2.2,0) arc  (200:340:0.5);
\draw[semithick] (-2.87,-0.15) arc (150:30:0.4);
\draw[semithick] (2.33,-0.15) arc (150:30:0.4);

\draw[semithick] (1.68,2.77) arc (10:-10:16);
\draw[semithick, dashed] (1.68,2.77) arc (170:190:16);

\draw[semithick] (-1.41,2.77) arc (10:-10:16);
\draw[semithick, dashed] (-1.41,2.77) arc (170:190:16);

\draw[semithick] (0.7,2.95) arc (10:-10:17);
\draw[semithick] (-0.68,2.96) arc (10:-10:17);
\draw[semithick, dashed] (-0.68,2.96) arc (170:190:17);
\draw[semithick, dashed] (0.7,2.95) arc (170:190:17);

\draw [thick, decorate,decoration={brace,amplitude=10pt,mirror},xshift=0.4pt,yshift=-0.4pt](-4.5,-3.2) -- (0.6,-3.2)  node[black,midway,yshift=-0.6cm] {\footnotesize $S_{left}$};
\draw [thick, decorate,decoration={brace,amplitude=10pt,mirror},xshift=0.4pt,yshift=-0.4pt](-0.5,-3.5) -- (4.5,-3.5)  node[black,midway,yshift=-0.6cm] {\footnotesize $S_{right}$};
\draw (0.1,2) node {$A_{in}$};

\end{tikzpicture}
\end{center}
\label{Subsurfaces}
\caption{Restriction to subsurfaces}
\end{figure}
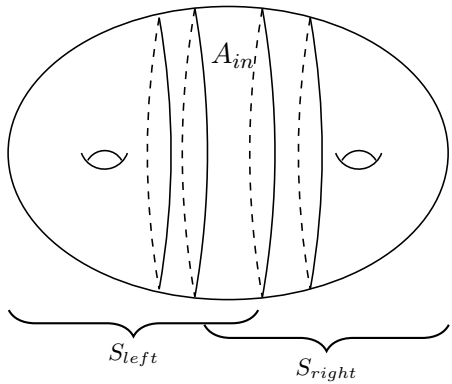

Note that the Hamiltonians $H_w$ considered previously induce the ``standard" positive wrapping at the ends of $S$ and $A_{in}$, hence are appropriate for defining their wrapped categories.

\begin{Lemma} \label{Lem:restriction_exists}
Let $A\subset \Sigma$ be a single annulus. There is an $A_{\infty}$-restriction functor $\Phi_{\Sigma,S}:\scrF(\Sigma) \to \scrW(S)$, and a pair of restriction functors $\Phi_{S,A_{in}}^{left/right}:\scrW(S) \to \scrW(A_{in})$ coming from the inclusions of $A_{in}$ into the two ends of $S$.
\end{Lemma}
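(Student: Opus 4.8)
The plan is to adapt Heather Lee's construction of restriction functors \cite{HLee} to the present, more general, objects, the crucial technical input being the polygon-localisation statement of Proposition \ref{Prop:isolate_polygons}. First I would pass to the telescope model of $\scrF(\Sigma)$: since the dipping flows $\phi^1_{H_w}$ are Hamiltonian isotopies of the closed surface $\Sigma$, each $CF^*(\gamma_i,\gamma_j;H_w)$ is quasi-isomorphic to $\hom_{\scrF(\Sigma)}(\gamma_i,\gamma_j)$ and the Floer continuation maps appearing in \eqref{eqn:telescope_differential} are quasi-isomorphisms, so the telescope complexes $CW^*_\Sigma(\gamma_i,\gamma_j)$ assemble into a model of $\scrF(\Sigma)$ quasi-equivalent to the original one (and, because the $\gamma_i$ split-generate, $\scrF(\Sigma)^{per}$ is then equivalent to the category of perfect $\mathcal{A}$-modules, $\mathcal{A}=\bigoplus_{i,j}CW^*_\Sigma(\gamma_i,\gamma_j)$). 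Let $\pi\colon S\to\Sigma$ be the tautological surjection, two-to-one over $A_{in}$ and a homeomorphism elsewhere. For a brane $(\xi,\gamma)$ of $\scrF(\Sigma)$, the preimage $\pi^{-1}(\gamma)\subset S$ is a finite union of properly immersed arcs and closed curves, which I equip with the pulled-back local system and $Spin$ structure; it is unobstructed, since any teardrop it bounded would descend to a teardrop bounded by $\gamma$, hence it is an object (a direct sum of objects) of $\scrW(S)$. This will be $\Phi_{\Sigma,S}(\xi,\gamma)$.

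The substance is to upgrade this assignment to an $A_\infty$-functor, which I would do by exhibiting it on the telescope models as a \emph{strict} functor whose linear term is the pullback $\pi^*$: a Hamiltonian $H_w$-chord or transverse intersection $p$ of $\gamma,\gamma'$ is sent to the sum of its preimages in $S$ (two preimages if $p\in A_{in}$, one otherwise). After passing to the action-rescaled bases of Lemma \ref{Lem:inclusions}, the continuation terms of the telescope differential become strict inclusions of subcomplexes, and $\pi^*$ commutes with them since it is diagonal on these bases; compatibility with the higher products is where Proposition \ref{Prop:isolate_polygons}(1) enters, after first isotoping the curves in play to meet $\partial A_{in}$ minimally (and so that any curve parallel to $A_{in}$ is contained in it) as permitted by Remark \ref{Rmk:find_sigma}: for sufficiently wrapped inputs, every rigid perturbed polygon in $\Sigma$ lifts to a rigid polygon inside a single component of $S$, and every rigid polygon in $S$ pushes forward; summing over the sheet-assignments of the inputs yields $\mu^k_{\scrW(S)}(\pi^*a_1,\dots,\pi^*a_k)=\pi^*\mu^k_{\scrF(\Sigma)}(a_1,\dots,a_k)$, i.e.\ the strict $A_\infty$-functor equations. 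The role of the telescope is precisely to absorb the fact that ``sufficiently wrapped'' depends on the arity $k$: each generator sits at a definite telescope level, and the $\mathbf q$-terms identify any higher product, computed after enough further wrapping, with its image one level up, so that the functor is honestly defined with vanishing higher-order terms. Equivalently, $\pi^*$ is an $A_\infty$-algebra map $\mathcal{A}\to\mathcal{B}$, with $\mathcal{B}$ the telescope algebra of the components $\tilde\gamma_a$ of the $\pi^{-1}(\gamma_i)$ in $S$, and $\Phi_{\Sigma,S}$ is induction of modules along it, composed with the Yoneda equivalences.

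For $\Phi^{left/right}_{S,A_{in}}$ I would observe that each of the two ends of $S$ is collared by a copy of $A_{in}$, whose core $\sigma'$ is a simple closed curve in $S$ (possibly with annular neighbourhood reaching into the Liouville completion) whose associated cut surface is $A_{in}\sqcup(\text{the rest of }S)$. Running the construction above with $(\Sigma,\sigma)$ replaced by $(S,\sigma')$ produces a restriction functor $\scrW(S)\to\scrW\bigl(A_{in}\sqcup(\text{rest})\bigr)$; post-composing with the projection onto the $\scrW(A_{in})$-factor gives $\Phi^{left}_{S,A_{in}}$, and symmetrically $\Phi^{right}_{S,A_{in}}$ (in the separating case $S=S_{left}\sqcup S_{right}$ one applies this within each summand). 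Throughout one uses that the $H_w$ restrict to the standard positive wrapping at the ends of $S$, of $A_{in}$, and of the rest, so that these are the correct wrapped categories.

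The part I expect to be delicate is the second paragraph: checking that the localisation and rescaling results, which as stated concern individual finite tuples of generators, cohere into a genuine $A_\infty$-morphism of telescope algebras — that is, that the continuation terms, the $\mathbf q$-bookkeeping, and the $k$-dependence of the wrapping threshold are all handled simultaneously. This is exactly the technical heart of \cite{HLee}, and it goes through essentially verbatim once Proposition \ref{Prop:isolate_polygons} is in hand; the remaining points (unobstructedness of the preimages, matching of wrapping conventions, identification of representables) are routine.
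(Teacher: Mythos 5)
Your construction of $\Phi_{\Sigma,S}$ is essentially the paper's: both define it on (telescope models of) a fixed collection of split-generators by the pullback map sending a generator to its single preimage, or to the sum of its two preimages when it lies in $A_{in}$, and both use Proposition \ref{Prop:isolate_polygons}(1) (lifting of rigid polygons to a single component of $S$, for sufficiently large wrapping) to see that this is a strict $A_\infty$-map, with the telescope/truncation bookkeeping absorbing the dependence of ``sufficiently wrapped'' on the inputs, and with the extension to all of $\scrF(\Sigma)$ done module-theoretically. One caveat: the paper fixes in advance a finite set of split-generators meeting $\partial A_{in}$ minimally and containing no annulus-parallel curve other than $\sigma$ itself; your suggestion to isotope ``the curves in play'' so that any annulus-parallel curve lies inside $A_{in}$ is not available in general, since a Hamiltonian (zero-flux) isotopy cannot usually move such a curve into $A_{in}$ — this is exactly why annulus-parallel objects are excluded from the generating set and handled by the cone resolution of Example \ref{Ex:restriction_example}.

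The genuine gap is in your treatment of $\Phi^{left/right}_{S,A_{in}}$. The paper gets these for free from Proposition \ref{Prop:isolate_polygons}(2): the Floer generators lying outside a given copy of $A_{in}$ span an $A_\infty$-ideal in $CW^*_S$ (and in $CW^*_\Sigma$), and the quotient by this ideal recovers the wrapped Floer complex in $A_{in}$ together with its operations; so the restriction functors are simply the quotient maps, strict and with no higher-order terms. (This ideal/quotient description is also what makes the equalizer statement of Corollary \ref{Cor:pullback} immediate.) Your alternative — re-running the cutting construction inside the open surface $S$ along a second annulus $A'$ near each end and projecting onto the annular factor — is not justified by the results you invoke: Proposition \ref{Prop:isolate_polygons} is proved for the fixed dipping Hamiltonians on $\Sigma$ and the chosen generators, and cutting $S$ would require Hamiltonians that simultaneously dip in $A'$ and wrap at the ends of $S$, for which the localisation statements would have to be re-established. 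Moreover the lifts of the generators to $S$ include boundary-parallel curves (e.g.\ the two lifts of $\sigma$) which are parallel to $A'$ and cannot in general be Hamiltonian-isotoped into it, so the hypotheses of the localisation fail and the resolution trick of Example \ref{Ex:restriction_example} would be needed again at this stage; you would also still owe an identification of the resulting composite with restriction to $A_{in}$ in the form used later. An iterated-cut argument along the lines of \cite{HLee} could likely be pushed through, but the step ``run the construction above with $(\Sigma,\sigma)$ replaced by $(S,\sigma')$'' conceals precisely the work that the paper's one-line ideal/quotient observation renders unnecessary.
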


\begin{proof} It suffices to construct the functors on a finite set of split-generators
$\gamma_i$; recall that we do not allow any of these to be
parallel to the annulus (but see Example
\ref{Ex:restriction_example} below for a description of the functor on
annulus-parallel curves).
We take telescope models for the respective morphism groups (even for the compact surface $\Sigma$) and note that, for any given set of inputs, replacing the telescope complex by a quasi-isomorphic truncation allows us to only consider arbitrarily large wrapping Hamiltonians. 

Proposition \ref{Prop:isolate_polygons} implies that the generators which lie outside of $A_{in}$ form an ideal for the $A_{\infty}$-operations in either $\Sigma$ or $S$, 
and that quotienting the Floer complex by this ideal recovers the wrapped Floer 
complex inside $A_{in}$ and its $A_\infty$-operations. Thus the quotient maps
\[
\Phi_{\Sigma,A_{in}}:CW^*_{\Sigma}(\gamma_i,\gamma_j) \to CW^*_{A_{in}}(\gamma_i,\gamma_j)
\ \text{ and } \ 
\Phi^{left/right}_{S,A_{in}}:CW^*_S(\gamma_i,\gamma_j)\to CW^*_{A_{in}}(\gamma_i,\gamma_j),
\]
where the subscripts denote the surfaces in which we compute Floer cohomology,
define restriction $A_\infty$-functors with no higher order terms.  

Next, we consider the pullback map 
\[
\Phi_{\Sigma,S}:CW_\Sigma^*(\gamma_i,\gamma_j)\to CW_S^*(\gamma_i,\gamma_j)
\]
mapping each Floer generator to its preimage under the map $S\to \Sigma$ if 
it lies outside of $A_{in}$, or to the sum of its two preimages if it lies
in $A_{in}$. Proposition \ref{Prop:isolate_polygons} implies that this map is
compatible with the $A_\infty$-operations (again after restricting 
to sufficiently large Hamiltonians), and defines a $A_\infty$-functor with
no higher order terms, identifying the wrapped Floer complex
in $\Sigma$ with the sub-algebra of the wrapped Floer complex in $S$ consisting
of elements in which pairs of generators in the two copies of $A_{in}\subset S$ appear with the same coefficients.
\end{proof}

\begin{Corollary}\label{Cor:pullback}
$\scrF(\Sigma)^{per}$ is the pullback in the diagram of restriction functors 
\[
\xymatrix{
\Tw^{\pi}\scrF(\Sigma) \ar[r] & \Tw^{\pi}\scrW(S) \ar@<-.5ex>[r] \ar@<.5ex>[r] & \Tw^{\pi}\scrW(A_{in}).
}
\]
\end{Corollary}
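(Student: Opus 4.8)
The plan is to reduce the assertion to a pullback square of $A_\infty$-algebras, already essentially proved inside Lemma~\ref{Lem:restriction_exists}, and then to invoke a Milnor-type descent property for perfect modules. Keep the split-generators $\gamma_1,\dots,\gamma_\ell$ of $\scrF(\Sigma)$ fixed at the start of Section~\ref{Sec:restrict} (meeting $\sigma$ minimally, none parallel to $\sigma$), and set $\mathcal{B}_\Sigma=\bigoplus_{i,j}CW^*_\Sigma(\gamma_i,\gamma_j)$, $\mathcal{B}_S=\bigoplus_{i,j}CW^*_S(\gamma_i,\gamma_j)$, $\mathcal{B}_{A_{in}}=\bigoplus_{i,j}CW^*_{A_{in}}(\gamma_i,\gamma_j)$ for the telescope $A_\infty$-algebras of Lemma~\ref{Lem:restriction_exists}. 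By Proposition~\ref{Prop:isolate_polygons} the generators of $\mathcal{B}_S$ lying outside $A_{in}$ span an $A_\infty$-ideal $I_{out}$ with $\mathcal{B}_S/I_{out}\cong\mathcal{B}_{A_{in}}\oplus\mathcal{B}_{A_{in}}$ (one summand for each of the two copies of $A_{in}$ in $S$), and the last paragraph of the proof of Lemma~\ref{Lem:restriction_exists} identifies $\mathcal{B}_\Sigma$ with the subalgebra of $\mathcal{B}_S$ on which the two copies carry equal coefficients, i.e.\ with the equalizer of the two projections $\Phi^{left}_{S,A_{in}},\Phi^{right}_{S,A_{in}}\colon\mathcal{B}_S\to\mathcal{B}_{A_{in}}$. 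Since the combined map $\mathcal{B}_S\to\mathcal{B}_{A_{in}}\oplus\mathcal{B}_{A_{in}}$ is degreewise surjective, this strict equalizer also computes the homotopy fibre product, giving a homotopy pullback square
\[
\mathcal{B}_\Sigma \;\simeq\; \mathcal{B}_S \times_{\mathcal{B}_{A_{in}}\oplus\mathcal{B}_{A_{in}}} \mathcal{B}_{A_{in}} .
\]

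The second step is to pass to perfect modules. The telescope model $\mathcal{B}_\Sigma$ is quasi-equivalent to the endomorphism algebra of $\{\gamma_i\}$ in $\scrF(\Sigma)$ (Lemma~\ref{Lem:restriction_exists} and the discussion preceding it), and the $\gamma_i$ split-generate $\scrF(\Sigma)$ (Proposition~\ref{Prop:Split_generators}), so $\Tw^{\pi}\scrF(\Sigma)\simeq\operatorname{Perf}\mathcal{B}_\Sigma$. Similarly I would record that a single arc crossing $A_{in}$ split-generates $\scrW(A_{in})$ (and some $\gamma_i$ meets $\sigma$), so $\Tw^{\pi}\scrW(A_{in})\simeq\operatorname{Perf}\mathcal{B}_{A_{in}}$, and --- after enlarging the family $\{\gamma_i\}$ beforehand if necessary so that the arcs and curves it cuts out in $S$ contain a full formal arc system --- $\Tw^{\pi}\scrW(S)\simeq\operatorname{Perf}\mathcal{B}_S$. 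Under these identifications the three restriction functors are base change along the algebra maps above, and $\Phi^{left}_{S,A_{in}}\circ\Phi_{\Sigma,S}$ and $\Phi^{right}_{S,A_{in}}\circ\Phi_{\Sigma,S}$ agree on the $\gamma_i$, supplying the canonical homotopy. The corollary then asserts that $\operatorname{Perf}(-)$ carries the homotopy pullback square of $A_\infty$-algebras to a homotopy pullback square of (split-closed, $\bZ/2$-graded, triangulated) categories --- the Milnor/Zariski descent property for perfect modules, valid because one leg of the square is surjective --- which identifies $\Tw^{\pi}\scrF(\Sigma)$ with the homotopy equalizer of $\Tw^{\pi}\scrW(S)\rightrightarrows\Tw^{\pi}\scrW(A_{in})$.

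A self-contained variant avoids citing descent: the functor $\Tw^{\pi}\scrF(\Sigma)\to E$ into the homotopy equalizer $E$ is fully faithful, being the equalizer identity for $\mathcal{B}_\Sigma$ on the $\gamma_i$ and propagating to all of $\operatorname{Perf}$ since in a triangulated idempotent-complete setting the homotopy equalizer (a finite homotopy limit) commutes with the cones and retracts that build perfect modules; and it is essentially surjective because its image split-generates $E$, every object $(Y,\phi)$ of $E$ being reconstructible from the $(\Phi_{\Sigma,S}\gamma_i,\mathrm{id})$ (geometrically: replace $Y$ by arcs and curves on $S$ whose restrictions to the two copies of $A_{in}$ are matched by $\phi$, then glue the copies to produce a geometric object of $\Sigma$ restricting to $Y$). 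Either way the hard part is the same pair of points: showing that the restricted $\gamma_i$ split-generate $\scrW(S)$ --- equivalently, that every object of $\Tw^{\pi}\scrW(S)$ with matched collar restrictions already descends to $\Sigma$ --- and making the $A_\infty$-level descent precise, which one reduces to the classical dg statement since every $A_\infty$-algebra is quasi-equivalent to a dg-algebra.
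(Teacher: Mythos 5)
Your first paragraph is, in substance, the paper's entire proof: Lemma \ref{Lem:restriction_exists} and Proposition \ref{Prop:isolate_polygons} identify $\bigoplus CW^*_\Sigma(\gamma_i,\gamma_j)$ with the strict equalizer of the two quotient-by-an-ideal maps $\bigoplus CW^*_S\rightrightarrows\bigoplus CW^*_{A_{in}}$, compatibly with the $A_\infty$-structure, and surjectivity makes this a homotopy pullback of $A_\infty$-algebras. The paper stops there: the corollary is stated loosely and is only ever invoked later (proof of Corollary \ref{Cor:cut_annulus}) through its module-level consequence, namely that the Yoneda module of an object of $\Tw^\pi\scrF(\Sigma)$ over the chosen generators is the equalizer of the two restricted modules --- which follows from the chain-level statement by propagating through cones and retracts, exactly as in your ``fully faithful'' step. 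So up to this point you and the paper agree.

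The extra machinery you add to prove the literal statement about split-closed categories contains two genuine gaps. First, ``Milnor/Zariski descent for perfect modules, valid because one leg of the square is surjective'' is not a theorem at the dg/$A_\infty$ level: for a Milnor square of (dg) algebras, full faithfulness of $\operatorname{Perf}(\mathcal{B}_\Sigma)\to\operatorname{Perf}(\mathcal{B}_S)\times^h_{\operatorname{Perf}(\mathcal{B}_{A_{in}})\oplus\operatorname{Perf}(\mathcal{B}_{A_{in}})}\operatorname{Perf}(\mathcal{B}_{A_{in}})$ does follow from the homotopy-pullback property of the algebras, but essential surjectivity is precisely what can fail (this is the failure of excision; the relevant comparison is between $\mathcal{B}_{A_{in}}$ and the derived tensor product over $\mathcal{B}_\Sigma$), so it cannot be cited as a general principle --- here it is essentially equivalent to Lee's gluing theorem, i.e.\ to the content being proved, and your ``self-contained variant'' likewise disposes of essential surjectivity with a one-line geometric gloss (``glue the copies''), which hides exactly the delicate points the paper later confronts (annulus-parallel pieces, Example \ref{Ex:restriction_example}). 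Second, the identification $\Tw^\pi\scrW(S)\simeq\operatorname{Perf}\mathcal{B}_S$ requires the cut generators to split-generate $\scrW(S)$; your proposed fix --- enlarge $\{\gamma_i\}$ so that their preimages contain a full formal arc system --- is an unproven geometric claim (the preimages of intersecting closed curves are not disjoint, realizing a disjoint disc-cutting arc system as preimages of unobstructed closed curves meeting $\sigma$ minimally and not annulus-parallel needs an argument, and full formal arc systems generate the stopped category, so a localization step is also needed). Neither claim is required if you state and use the corollary as the paper does, at the level of the chosen generators and their module categories; if you do want the literal categorical pullback, these two points are where the real work lies.
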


\begin{proof}
This follows directly from Lemma \ref{Lem:restriction_exists}. After truncating the telescope complexes to only consider Floer complexes $CF^*(\gamma_i,\gamma_j;H_n)$ where the wrapping $n$ is sufficiently large, $CW_\Sigma^*(\gamma_i,\gamma_j)$ is exactly the subcomplex of $CW^*_S(\gamma_i,\gamma_j)$ of elements which restrict compatibly under the two restrictions to $A_{in} \subset S$, i.e. the equalizer of the diagram $\xymatrix{ \scrW(S) \ar@<-.5ex>[r] \ar@<.5ex>[r] & \scrW(A_{in})}.$ This description is compatible with the $A_{\infty}$-operations since the arrows are given by quotienting by an ideal.  \end{proof}

\begin{Example} \label{Ex:restriction_example} 
Let $a$ and $b$ be two simple closed curves which cross an annulus $A$, meeting once just outside the annulus,
and differ from each other by
a Dehn twist parallel to the annulus (cf.\ the first part
of Figure~\ref{Fig:Cross_twice}). We consider the
twisted complex $X = a \xrightarrow{\ \ p\ \ } b$, which is isomorphic to a simple closed
curve $\gamma'$ running parallel to the annulus, as can be seen by considering the Lagrange surgery of $a$ 
and $b$ at $p$ (the red curve in Figure \ref{Fig:Cross_twice}).
(This is the most natural way in which an annulus-parallel curve can be replaced by
a twisted complex to which the machinery of Lee restriction functors can be applied).
\begin{figure}[ht]
\begin{tikzpicture}[scale=0.8]

\draw[semithick] (-8,4) -- (8,4);
\draw[semithick] (-8,1) -- (8,1);

\draw[semithick] (-4,4) arc (165:195:5.8);
\draw[semithick, dashed] (-4,4) arc (15:-15:5.8);
\draw[semithick] (4,4) arc (165:195:5.8);
\draw[semithick, dashed] (4,4) arc (15:-15:5.8);

\draw[semithick, blue] (-7.5,1.5) .. controls(-5,3.5) .. (0,3.5) .. controls(5,3.5) .. (7.5,1.5);
\draw[semithick, blue,rounded corners] (-7.5,3.5) .. controls(-5,1.5) .. (4.5,1.5) -- (5,1);
\draw[semithick, blue, dashed] (5,1)--(6,4);
\draw[semithick, blue] (6,4)--(7.5,3);

\draw[fill] (-6.15,2.5) circle (0.1); 
\draw (-6.15,2.1) node {$p$};
\draw (-7.8,3.5) node {$b$};
\draw (-7.8, 1.5) node {$a$};
\draw[semithick,red] (-7,3.1) arc (50:-52:0.77);
\draw[semithick,red] (-5.3,3.03) arc (120:242:0.55);

\draw[semithick, dashed,color=gray] [ ->] (-4,0.5) -- (-5.2,1.9) ;
\draw (-3.8,0.2) node {$\mathrm{Surgery} \ X = \mathrm{Cone}(a\stackrel{p}{\longrightarrow} b)$};

\draw[semithick] (-8,-1) -- (8,-1);
\draw[semithick] (-8,-4) -- (8,-4);
\draw[semithick] (-4,-1) arc (165:195:5.8);
\draw[semithick, dashed] (-4,-1) arc (15:-15:5.8);

\draw[semithick] (4,-1) arc (165:195:5.8);
\draw[semithick, dashed] (4,-1) arc (15:-15:5.8);
\draw[semithick, blue, rounded corners] (-7.5,-1.5) .. controls(-5,-3.5) .. (5,-3.5) -- (5.5,-4);
\draw[semithick, blue, dashed] (5.5,-4)--(6.5,-1);
\draw[semithick, blue] (6.5,-1)--(7.8,-2);
\draw (-7.8,-1.5) node {$b$};
\draw (-7.8, -3.5) node {$\phi_{H}(a)$};

\draw[semithick, blue] (-4,-4) parabola (-3.3,-1);
\draw[semithick, blue] (-4,-4) parabola (-4.7,-1);
\draw[semithick, blue, dashed] (-4.7,-1) -- (-5.1, -2.5) -- (-5.5,-4);
\draw[semithick,blue,rounded corners] (-7.8,-3.2) -- (-6.5,-2) -- (-6,-2) -- (-5.5,-4);
\draw[semithick, blue, dashed] (-3.3,-1) -- (-2.9,-4);
\draw[semithick,blue,rounded corners] (-2.9,-4) -- (-2,-2.5) -- (2.5,-2.5) -- (2.9,-1);
\draw[semithick, blue] (4,-1) parabola (3.3,-4);
\draw[semithick, blue] (4,-1) parabola (4.4,-4);
\draw[semithick, blue, dashed] (3.3,-4) -- (2.9,-1);
\draw[semithick, blue, dashed] (4.4,-4) -- (4.8,-1);
\draw[semithick,blue,rounded corners] (4.8,-1) -- (5.3,-2) -- (7.5,-3.5);

\draw[fill] (-6.65,-2.15) circle (0.1); 
\draw[fill] (-5.82,-2.67) circle (0.1); 
\draw[fill] (-2.55,-3.4) circle (0.1);
\draw[semithick, dashed,color=gray] [ ->] (-7,-4.5) -- (-6.65,-2.4) ; 
\draw (-7.1,-4.9) node {$p_\text{main}$};
\draw[semithick, dashed,color=gray] [ ->] (-6.1,-4.8) -- (-5.9,-2.85) ; 
\draw (-5.65, -5.1) node {$\partial(p_\text{cyl})$};
\draw[semithick,dashed, color=gray] [ ->] (-2,-4.5) -- (-2.45,-3.5);
\draw (-2,-4.9) node {$p_\text{cyl}$};
\draw (2.5, -6) node {$\mathrm{Continuation} \ \kappa: p \mapsto p_\text{main} + p_\text{cyl} +p'_\text{cyl}$};
\draw[fill] (3.35,-3.5) circle (0.1);
\draw[semithick,dashed, color=gray] [ ->] (2.5,-4.5) -- (3.25,-3.6);
\draw (2.45,-4.9) node {$p'_\text{cyl}$};
\draw[fill] (4.35,-3.5) circle (0.1);
\draw[semithick,dashed, color=gray] [ ->] (4.9,-4.5) -- (4.45,-3.6);
\draw (4.9,-4.9) node {$\partial(p'_\text{cyl})$};

\end{tikzpicture}
\caption{Continuation maps associated to dipping Hamiltonians\label{Fig:Cross_twice}}
\end{figure}
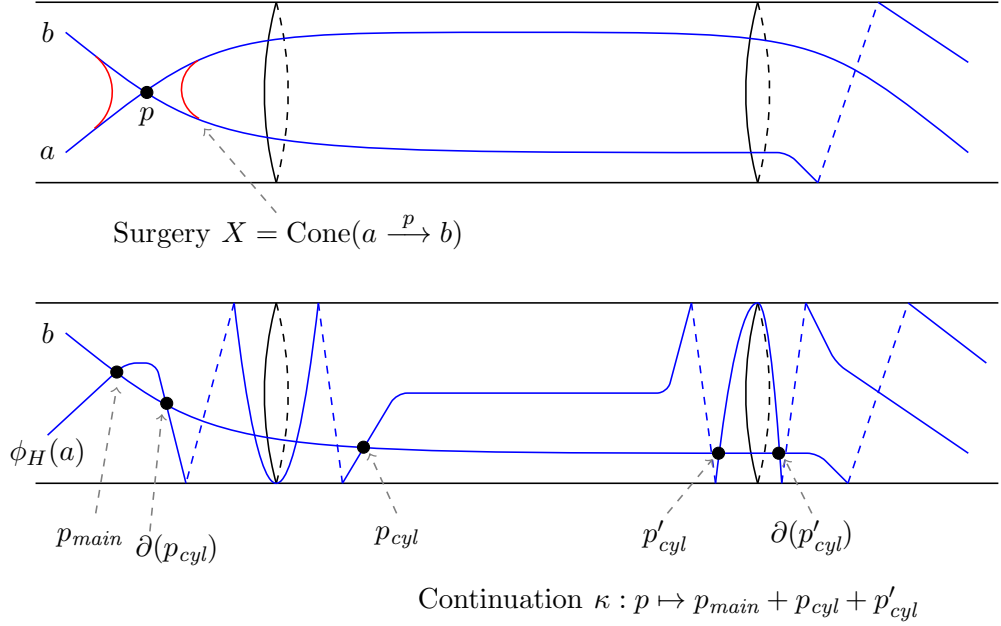
The Lee deformation effects the change in the second part of Figure \ref{Fig:Cross_twice}, which gives a quasi-isomorphic model 
$$X \ \simeq \ a \xrightarrow{p_\text{main}+p_\text{cyl}+p'_\text{cyl}} b.$$
Indeed, the deformation creates two holomorphic strips contributing to $\partial(p_\text{main})$, and the image of $p$
under the continuation map is $\kappa(p)=p_\text{main}+p_\text{cyl}+p'_\text{cyl}$
(up to a rescaling of the generators).
Restricting $a$ and $b$ to the inner annulus $A_{in}$ yields two isotopic arcs, 
and $\kappa(p)$ restricts to $p_\text{cyl}+p'_\text{cyl}$, so
$\Phi_{\Sigma,A_{in}}(X)=Cone(p_\text{cyl}+p'_\text{cyl})\in \Tw\scrW(A_{in})$ is isomorphic to a simple closed curve $\gamma'_{in}$ in
the Liouville completion of $A_{in}$. Just like the simple closed curve $\gamma'$ that
represents $X$ in $\Sigma$, this curve may or may not actually lie inside $A_{in}$; 
in the latter case, the difference with the naive restriction of $\gamma'$ illustrates
the need to exclude such annulus-parallel curves in the construction of Lee restriction functors.

The curves $a$ and $b$ restrict (or rather lift) to arcs in the open surface
$S$, representing isomorphic objects of $\scrW(S)$. Denoting by $p_\text{left}$ and $p_\text{right}$, resp.\ $p'_\text{left}$ and $p'_\text{right}$ the two
preimages of $p_\text{cyl}$, resp.\ $p'_\text{cyl}$ in the ends of the open surface $S$,
the twisted complex $\Phi_{\Sigma,S}(X)$ is the mapping cone
of $p_\text{main}+p_\text{left}+p'_\text{left}+p_\text{right}+p'_\text{right}$,
which is isomorphic to the direct sum of two boundary-parallel simple closed
curves in the completion of $S$. (If $\gamma'$ lies inside $A_{in}$ then these
are the lifts of $\gamma'$ to $S$; otherwise one curve is the
preimage of $\gamma'$ and the other one lies past the boundary of $S$ inside
the other end of the completion). Further restriction to either copy of $A_{in}$ yields the
mapping cone of $p_\text{cyl}+p'_\text{cyl}$ in $\Tw \scrW(A_{in})$, as
expected.

By contrast, if $a$ and $b$ did not run parallel to each other
in $\Sigma\setminus A$, so that no holomorphic strip connected $p_\text{main}$
to $\partial(p'_\text{cyl})$ in the second part of Figure \ref{Fig:Cross_twice},
then $\kappa(p)$ would be $p_\text{main}+p_\text{cyl}$, whose restriction to $A_{in}$
is $p_\text{cyl}$, a quasi-isomorphism between the restrictions of $a$ and $b$;
$\Phi_{\Sigma,A_{in}}(X)$ would then be
acyclic. This is as expected, since in that case the Lagrange surgery of 
$a$ and $b$ at $p$ can be pulled through $A_{in}$ and away from it altogether.
Meanwhile, the twisted complex $\Phi_{\Sigma,S}(X)$ becomes the mapping cone
of $p_\text{main}+p_\text{left}+p_\text{right}$, where $p_\text{left}$ and
$p_\text{right}$ are the preimages of $p_\text{cyl}$ in the ends of $S$;
it can be checked that this is isomorphic to a simple closed curve in 
$\Sigma\setminus A_{in}\subset S$, obtained from $a$ and $b$ by performing
the surgery in $\Sigma$ and sliding the result through $A_{in}$ and away from it.
\end{Example}

\subsection{Applications of restriction}\label{Sec:restriction_appl}
We continue with the notation of the previous subsection. 
Given a simple closed curve $\gamma\subset \Sigma$, we consider an annulus $A$ with waist curve $\gamma$,
and the Lee restriction functors $\Phi_{\Sigma,S}:\scrF(\Sigma) \to \scrW(S)$, and similarly 
$\Phi^{left/right}_{S,A_{in}}$ and $\Phi_{\Sigma,A_{in}}$.


\begin{Lemma}\label{Lem:cut_annulus}
Let $Y\in \Tw\scrF(\Sigma)$. 
If $HF^*(Y,\gamma) = 0$ then $\Phi_{\Sigma,S}(Y)$ and $\Phi_{\Sigma,A_{in}}(Y)$ are isomorphic to direct sums of
immersed closed curves with local systems in the completions of $S$ and $A_{in}$.
\end{Lemma}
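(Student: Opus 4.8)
The plan is to exploit the fact that $\Phi_{\Sigma,S}$ and $\Phi_{\Sigma,A_{in}}$ are genuine $A_\infty$-functors (Lemma \ref{Lem:restriction_exists}), so they carry twisted complexes to twisted complexes, and then invoke the geometricity results for wrapped categories of punctured surfaces. First I would observe that $S$ is a punctured surface (possibly disconnected): each component has at least one puncture, coming from the reattached copy of $A_{in}$, and $A_{in}$ itself is an annulus, i.e.\ a twice-punctured sphere. So Theorem \ref{Thm:HKK} applies to $\Tw\scrW(S)$ and to $\Tw\scrW(A_{in})$: every $\bZ/2$-graded twisted complex over either category is geometric, hence quasi-isomorphic to a direct sum of immersed arcs and immersed closed curves with finite rank local systems (in the Liouville completions).

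The key step is to rule out the arc summands using the homological hypothesis $HF^*(Y,\gamma)=0$. Here I would use the following: the waist curve $\gamma$ of the annulus $A$ defines an object of $\scrF(\Sigma)$, and under the restriction functor $\Phi_{\Sigma, A_{in}}$ it maps to (a curve isotopic to) the core curve of $A_{in}$, call it $\gamma_{in}$, which is a spherical/compact object of $\scrW(A_{in})$. By functoriality, $HW^*_{A_{in}}(\Phi_{\Sigma,A_{in}}(Y), \gamma_{in})$ receives a map from $HF^*_\Sigma(Y,\gamma)$; more to the point, the localization property of Proposition \ref{Prop:isolate_polygons} should identify the wrapped Floer cohomology of $\Phi_{\Sigma,A_{in}}(Y)$ with $\gamma_{in}$ inside $A_{in}$ with the Floer cohomology $HF^*_\Sigma(Y,\gamma)$ inside $\Sigma$ (the core curve $\gamma_{in}$ being compact, wrapping does not change the count, and polygons cannot cross $A_{in}$). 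Hence $HW^*_{A_{in}}(\Phi_{\Sigma,A_{in}}(Y),\gamma_{in}) = 0$. But any properly embedded arc $\eta$ in the completion of the annulus has nonzero wrapped Floer cohomology with the core curve $\gamma_{in}$ (it has a single transverse intersection which survives wrapping, or one computes $HW^* \ne 0$ directly), so no arc summand can occur in $\Phi_{\Sigma,A_{in}}(Y)$. A similar argument with $S$: an arc summand in $\Phi_{\Sigma,S}(Y)$ would, upon further restriction via $\Phi^{left/right}_{S,A_{in}}$, produce either an arc or a curve summand in $\Phi_{\Sigma,A_{in}}(Y)$ (boundary-connected-sum surgery does not make arcs disappear unless they cancel against curves); pairing the boundary-nontrivial pieces against $\gamma_{in}$ and using the same vanishing forces all arc summands in $S$ to be boundary-parallel and to cancel, so that $\Phi_{\Sigma,S}(Y)$ is a sum of immersed closed curves. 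Alternatively, and perhaps more cleanly, one pairs $\Phi_{\Sigma,S}(Y)$ against the images of the cocores/arcs dual to the boundary components of $S$: an arc of $S$ crossing such a dual arc nontrivially contributes to wrapped Floer cohomology, and via the restriction functors this contribution is again controlled by $HF^*_\Sigma(Y,\gamma)=0$ together with $HF^*$ of $Y$ against curves disjoint from $\gamma$, which one arranges to vanish by choosing the generating arcs of $S$ appropriately.

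The main obstacle I anticipate is making the bookkeeping in the second paragraph fully rigorous: namely, matching the vanishing of a Floer group in $\Sigma$ with the vanishing of the relevant wrapped Floer groups in $S$ and $A_{in}$, so as to exclude \emph{every} arc. The subtlety is that $\Phi_{\Sigma, S}$ and $\Phi_{\Sigma, A_{in}}$ are defined on a \emph{chosen} finite set of split-generators $\gamma_i$ of $\scrF(\Sigma)$, none parallel to $\sigma=\gamma$, so $Y$ must first be expressed as a twisted complex over these; then the hypothesis $HF^*(Y,\gamma)=0$ has to be translated into a statement about $\Phi_{\Sigma,S}(Y)$ via the equalizer description of $\scrF(\Sigma)^{per}$ (Corollary \ref{Cor:pullback}) and the identification of the restriction of $\gamma$ with $\gamma_{in}$. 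Once one knows $HW^*_{A_{in}}(\Phi_{\Sigma,A_{in}}(Y), \gamma_{in}) = 0$ (equivalently $\Phi_{\Sigma,A_{in}}(Y)$ pairs trivially with the core of the annulus), the exclusion of arcs is forced: in the completion of an annulus, a properly embedded non-compact arc always has nonvanishing wrapped cohomology with the compact core, as does a boundary-parallel curve wound nontrivially — so only curves isotopic into $A_{in}$ (which are geometric, closed, and the desired output) or acyclic pieces remain. The analogous dichotomy for $S$ — using that an arc with an end in a given puncture of $S$ pairs nontrivially with a boundary-parallel curve around that puncture, which in turn restricts to the core of $A_{in}$ — completes the argument. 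I would expect that verifying this last matching of Floer groups, and handling the possibly-disconnected structure of $S$ in the separating case, is where the real work lies; everything else is an application of Theorem \ref{Thm:HKK} and Proposition \ref{Prop:isolate_polygons}.
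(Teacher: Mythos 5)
Your overall skeleton is the right one, and the annulus half of your argument is essentially the paper's: since $\gamma\subset A_{in}$, all generators of $CW^*_\Sigma(Y,\gamma)$ lie in $A_{in}$ and Proposition \ref{Prop:isolate_polygons} confines the relevant bigons to $A_{in}$, so $CW^*_{A_{in}}(\Phi_{\Sigma,A_{in}}(Y),\Phi_{\Sigma,A_{in}}(\gamma))\cong CW^*_\Sigma(Y,\gamma)=0$; combined with Theorem \ref{Thm:HKK} (which does apply to the twisted complex $\Phi_{\Sigma,A_{in}}(Y)$, since $Y\in\Tw\scrF(\Sigma)$ and not merely $\Tw^\pi$) and the fact that an essential arc in the completed annulus pairs nontrivially with the core, this disposes of arc summands over $A_{in}$.

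The genuine gap is in the exclusion of arc summands of $\Phi_{\Sigma,S}(Y)$. Your first route rests on the assertion that ``boundary-connected-sum surgery does not make arcs disappear unless they cancel against curves,'' i.e.\ that an arc summand of the geometric replacement $\Gamma_Y$ must restrict under $\Phi_{S,A_{in}}^{left/right}$ to something pairing nontrivially with the core; this is exactly the point that needs proof, and as stated it is not an argument (to make it one you would have to compute $CW^*_{A_{in}}(\Phi^{left}_{S,A_{in}}(\eta),\gamma_{in})\cong CW^*_S(\eta,\gamma_\text{left})$ and then you are back to needing a nonvanishing statement in $S$). Your fallback route is worse off: it invokes the vanishing of $HF^*$ of $Y$ against ``curves disjoint from $\gamma$,'' which is not part of the hypotheses — the lemma only assumes $HF^*(Y,\gamma)=0$. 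The paper closes this step differently: it observes that $\Phi_{\Sigma,S}(\gamma)\simeq \gamma_\text{left}\cup\gamma_\text{right}$ (the two boundary-parallel lifts of $\gamma$ to $S$), deduces acyclicity of $CW^*_S(\Phi_{\Sigma,S}(Y),\gamma_\text{left}\cup\gamma_\text{right})$ from the equalizer description of Lemma \ref{Lem:restriction_exists} (the two other terms of the sub/quotient sequence being acyclic), and then for any arc summand $\eta$ of $\Gamma_Y$ reaching a given end of $S$, uses Remark \ref{Rmk:find_sigma} to put $\eta$ in minimal position with the corresponding curve $\gamma_\text{left}$ or $\gamma_\text{right}$; with no bigons, acyclicity of $CF^*(\eta',\gamma_\text{left})$ forces $\eta'\cap\gamma_\text{left}=\emptyset$, so the arc lies in the cylindrical end beyond $\gamma_\text{left}$ and is a trivial object. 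You would need to supply this (or an equivalent) argument — in particular the acyclicity of the pairing in $S$ and the minimal-position step — to make your proposal a proof; as written, the $S$-half does not go through.
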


\begin{proof}
As in Section \ref{Sec:restrict}, we consider the equalizer diagram 
\[
\xymatrix{
CW^*_{\Sigma}(Y,\gamma)\ar[r] & CW^*_S(\Phi_{\Sigma,S}(Y), \Phi_{\Sigma,S}(\gamma)) \ar@<-.5ex>[r] \ar@<.5ex>[r] & CW^*_{A_{in}}(\Phi^{left/right}_{S,A_{in}}(Y), \Phi^{left/right}_{S,A_{in}}(\gamma)),
}
\]
and recall that the restriction to $A_{in}$ is the quotient of $CW^*_\Sigma(Y,\gamma)$ by the
$A_\infty$-ideal spanned by the generators which lie outside of $A_{in}$.
Since $\gamma\subset A_{in}$, all the generators of
$CW^*_\Sigma(Y,\gamma)$ lie in $A_{in}$,
and Proposition \ref{Prop:isolate_polygons}~(2) ensures that the bigons contributing to 
the Floer differential are also entirely contained in $A_{in}$.
Therefore, $CW^*_{A_{in}}(\Phi_{\Sigma,A_{in}}(Y),\Phi_{\Sigma,A_{in}}(\gamma))$ is
isomorphic to $CW^*_\Sigma(Y,\gamma)$, hence acyclic.
It then follows from the equalizer diagram that $CW^*_S(\Phi_{\Sigma,S}(Y),
\Phi_{\Sigma,S}(\gamma))$ is also acyclic.

Next, we observe that $\Phi_{\Sigma,S}(\gamma)\simeq \gamma_\text{left}\cup
\gamma_\text{right}$ is the disjoint union of the two boundary-parallel simple
closed curves obtained by lifting $\gamma$ to the open surface $S$.
On the other hand, the restriction $\Phi_{\Sigma,S}(Y) \simeq \Gamma_Y$ has a geometric replacement $\Gamma_Y$ in the Liouville completion of $S$, which is a union of immersed arcs and curves with local systems, by Theorem \ref{Thm:HKK}.  
The vanishing of $HF^*(Y,\gamma)$ implies that $CW^*(\Gamma_Y,\gamma_\text{left})$
and $CW^*(\Gamma_Y,\gamma_\text{right})$ are acyclic.

We claim that $\Gamma_Y$ contains no arcs. Indeed,
Assume that $\Gamma_Y$ contains an immersed arc $\eta$ which reaches the boundary component of $S$ adjacent to $\gamma_\text{left}$.
As noted in Remark \ref{Rmk:find_sigma}, the argument of Lemma \ref{Lem:find_sigma} constructs an immersed arc $\eta'$ 
which is unobstructedly regular homotopic and hence quasi-isomorphic to $\eta$ and intersects $\gamma_\text{left}$ minimally, so that there are no
bigons contributing to the Floer differential on $CF^*(\eta',\gamma_\text{left})$.
However, the acyclicity of $CW^*(\Gamma_Y, \gamma_\text{left})$ implies that of $CF^*(\eta,\gamma_\text{left})$, since $\eta$ is a direct summand in $\Gamma_Y$.
Therefore $\eta'\cap \gamma_\text{left}=\emptyset$, and the arc $\eta'$ is entirely
contained in the cylindrical region that lies beyond $\gamma_\text{left}$ in the completion of $S$,
hence trivial as an object of $\scrW(S)$.
Arcs which reach the other boundary component of $S$  are similarly excluded
by considering $CW^*(\Gamma_Y, \gamma_\text{right})$.

It follows that 
the geometric replacement $\Gamma_Y \in \scrW(S)$ is a union of  immersed closed curves with local systems
in the completion of $S$. Moreover, as noted at the end of the proof of
Theorem \ref{Thm:HKK}, it can be assumed
that all non boundary-parallel curves in $\Gamma_Y$ are strictly contained
in the interior of $S$, or more precisely, in the subsurface $\Sigma\setminus A_{in}\subset S$ (see Remark \ref{Rmk:sweeparea}); whereas the boundary-parallel
curves can be ``straightened'' to run parallel to $\gamma$. 

Next we apply the restriction functors $\Phi_{S,A_{in}}^{left/right}$ to
$\Gamma_Y$. The non boundary-parallel summands are represented by curves
in $\Sigma\setminus A_{in}$, and hence mapped to zero. Meanwhile, the boundary-parallel curves which lie outside of $A_{in}$
need to be resolved by mapping cones as in Example \ref{Ex:restriction_example} in order
to apply the machinery of Section \ref{Sec:restrict}; the upshot is
that curves which are parallel to $\gamma_\text{left}$ (resp.\ $\gamma_\text{right}$) are mapped by
$\Phi_{S,A_{in}}^{left}$ (resp.\ $\Phi_{S,A_{in}}^{right}$)
to closed curves in the completion of $A_{in}$.
\end{proof}

\begin{Corollary}\label{Cor:cut_annulus}
Let $Y\in \Tw\scrF(\Sigma)$. 
If $HF^*(Y,\gamma) = 0$ then $Y$ is quasi-isomorphic to a direct sum of
immersed closed curves with local systems in $\Sigma$.
\end{Corollary}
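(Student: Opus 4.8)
The plan is to combine the structural description of $\Phi_{\Sigma,S}(Y)$ supplied by Lemma \ref{Lem:cut_annulus} with the equalizer (pullback) description of $\scrF(\Sigma)^{per}$ from Corollary \ref{Cor:pullback}. By Lemma \ref{Lem:cut_annulus}, $HF^*(Y,\gamma)=0$ forces $\Phi_{\Sigma,S}(Y)\simeq \Gamma_Y$, a direct sum of immersed closed curves with local systems in the completion of $S$; moreover, as recorded in that proof, each summand is either an \emph{interior} curve, isotopic into $\Sigma\setminus A_{in}\subset S$, or a \emph{boundary-parallel} curve, running parallel to one of the two boundary components of $S$ (equivalently, parallel to $\gamma$). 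Under the restriction functors $\Phi^{left/right}_{S,A_{in}}$ the interior summands map to zero, while a curve parallel to the left (resp.\ right) boundary maps to a closed curve in the completion of $A_{in}$ under $\Phi^{left}$ (resp.\ $\Phi^{right}$) and to zero under the other restriction.

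I would then exploit the equalizer structure. Write $\Gamma_Y=\Gamma^{int}\oplus\Gamma^{bp}_{left}\oplus\Gamma^{bp}_{right}$ for the interior part and the two boundary-parallel parts. The datum making $Y$ an object of the pullback of Corollary \ref{Cor:pullback} is an isomorphism $\phi\colon \Phi^{left}_{S,A_{in}}(\Gamma_Y)\xrightarrow{\ \sim\ }\Phi^{right}_{S,A_{in}}(\Gamma_Y)$ in $\Tw^\pi\scrW(A_{in})$; since the interior summands and the ``wrong-side'' boundary-parallel summands restrict to zero, this is the same as an isomorphism $\Phi^{left}_{S,A_{in}}(\Gamma^{bp}_{left})\xrightarrow{\ \sim\ }\Phi^{right}_{S,A_{in}}(\Gamma^{bp}_{right})$. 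Consequently the whole pullback datum splits as a direct sum of (i) the pullback of $(\Gamma^{int},\,0=0)$ and (ii) the pullback of $(\Gamma^{bp}_{left}\oplus\Gamma^{bp}_{right},\,\phi)$. For (i): because $S\to\Sigma$ is one-to-one over $\Sigma\setminus A_{in}$, each interior curve descends to an honest immersed closed curve with local system in $\Sigma$, whose image under $\Phi_{\Sigma,S}$ is itself and whose two restrictions to $A_{in}$ vanish, so (i) is quasi-isomorphic to a direct sum of immersed closed curves in $\Sigma$. For (ii): decompose using Krull--Schmidt in $\Tw^\pi\scrW(A_{in})$ to match each indecomposable of $\Phi^{left}_{S,A_{in}}(\Gamma^{bp}_{left})$ with one of $\Phi^{right}_{S,A_{in}}(\Gamma^{bp}_{right})$; for each matched pair, a curve parallel to the left boundary of $S$ and a curve parallel to the right boundary, whose $A_{in}$-restrictions are identified by $\phi$, glue along the identification of $\Sigma$ with the quotient of $S$ obtained by re-attaching the two ends across $A_{in}$, producing one immersed closed curve parallel to $\gamma$ in $\Sigma$. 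These glued curves restrict correctly to $S$ and to $A_{in}$, so by the pullback property of Corollary \ref{Cor:pullback} they compute the corresponding summands of $Y$.

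Assembling (i) and (ii) gives that $Y$ is quasi-isomorphic to a direct sum of immersed closed curves with local systems in $\Sigma$, as claimed. Concretely the steps, in order, are: (1) invoke Lemma \ref{Lem:cut_annulus} for the geometric replacement $\Gamma_Y$ and its interior/boundary-parallel dichotomy; (2) invoke Corollary \ref{Cor:pullback} and reduce to analysing the equalizer datum; (3) split off the interior part and descend it to $\Sigma$; (4) treat the boundary-parallel part by Krull--Schmidt matching and geometric gluing.

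The main obstacle I expect is step (4): making precise that a pair of boundary-parallel curves in the two ends of $S$, together with a chosen quasi-isomorphism between their $A_{in}$-restrictions, glues to a genuine \emph{geometric} object of $\scrF(\Sigma)$ with the prescribed restrictions. This requires realising the matching isomorphism geometrically --- the homotopy classes agree (both are multiples of $[\gamma]$), and after a flux isotopy supported in a neighbourhood of $\gamma$ (as in Lemma \ref{Lem:space} and Remark \ref{Rmk:sweeparea}) the local systems and Floer actions can be made to agree on the nose --- so that the result is a smooth immersed closed curve in $\Sigma$ rather than merely a twisted complex, and then checking that the equalizer description of Corollary \ref{Cor:pullback} identifies it with the relevant summand of $Y$. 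One must also note, exactly as in Example \ref{Ex:restriction_example}, that the construction is insensitive to whether the boundary-parallel summands of $\Gamma_Y$ lie literally inside the model annuli or slightly past them in the completion; in either case a further isotopy brings the assembled curve into $\Sigma$ itself.
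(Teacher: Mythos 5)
Your overall strategy mirrors the paper's: take the geometric replacement $\Gamma_Y$ from Lemma \ref{Lem:cut_annulus}, separate its interior and boundary-parallel summands, descend the interior curves to $\Sigma$, pair the boundary-parallel ones across the two ends of $S$, and identify the assembled object with $Y$ via Corollary \ref{Cor:pullback}. The genuine gap is in how you use Corollary \ref{Cor:pullback}. As proved in the paper, that corollary is a strict equalizer statement at the level of the morphism complexes for a fixed set of generators, taken with respect to the two specific restriction functors; it does not say that an object of $\scrF(\Sigma)^{per}$ is the same thing as an object of $\Tw^{\pi}\scrW(S)$ together with a freely chosen quasi-isomorphism $\phi$ between its two $A_{in}$-restrictions, which is the descent-type reformulation your step (2) leans on. More importantly, even granting such a reformulation, the crux is not to produce a geometric object whose restrictions are quasi-isomorphic to those of $Y$ --- your candidate does that --- but to show that the restriction \emph{maps} induced by $Y$ agree with those induced by the candidate, since an equalizer depends on the maps and not just on the objects. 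This is exactly the point the paper flags (``we need to verify that the restriction maps in the two diagrams are also the same'') and then settles: it represents the pulled-back modules $\scrY_S$ and $\scrY_{A_{in}}$ geometrically by the total projection $\tilde{Y}_S$ (containing two copies of each annulus-parallel curve) and by $\tilde{Y}_{in}$, uses orthogonality of the interior summands with curves in $A_{in}$ to force the module homomorphisms $\phi_Y^{left/right}$ of \eqref{eq:equalizer_module} to be supported on the annulus-parallel part, and then analyzes the diagram \eqref{Eq:restr_diagram_annular} for each annulus-parallel summand $\sigma$ to conclude that $\phi_Y^{left}$ sends $\sigma_{\text{left}}$ isomorphically to $\sigma$ and kills $\sigma_{\text{right}}$, and vice versa; only then is the equalizer identified with $\hat{Y}$. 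In your write-up this verification is deferred to the ``main obstacle'' paragraph and never carried out: arranging the local systems and areas by a flux isotopy fixes the objects, but it does not by itself identify the equalizer with your glued curve, nor exclude a priori that the comparison datum attached to $Y$ mixes summands in a way your Krull--Schmidt matching does not see.

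Two smaller points. First, your ``geometric gluing across $A_{in}$'' of a left-parallel with a right-parallel curve should be replaced by the paper's construction of $\hat{Y}$: for each pair of boundary-parallel summands one takes a single curve in $\Sigma$ parallel to the annulus, differing from $\gamma$ by the appropriate symplectic area --- which, when $\gamma$ is separating, may force an immersed rather than embedded representative (cf.\ Lemmas \ref{Lem:sweeparea} and \ref{Lem:space}), so ``parallel to $\gamma$'' must be taken in the homotopy sense. Second, boundary-parallel summands of $\Gamma_Y$ lying outside $A_{in}$ cannot simply be isotoped into position; as you note in passing, they must be resolved as twisted complexes as in Example \ref{Ex:restriction_example} before the restriction machinery applies, and this detour has to be incorporated into the map-level comparison above, not only into the construction of the candidate object.
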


\begin{proof}
We start again from the geometric replacement $\Gamma_Y \in \scrW(S)$
of $\Phi_{\Sigma,S}(Y)$ constructed in the proof of the previous Lemma, 
which is a direct sum of immersed closed curves with local systems in
the completion of $S$. Since $\Phi_{S,A_{in}}^{left}(\Gamma_Y)\simeq
\Phi_{S,A_{in}}^{right}(\Gamma_Y)\in \scrW(A_{in})$, the boundary-parallel
summands of $\Gamma_Y$ are ``the same'' curves near the two boundaries of~$S$.

Define $\hat{Y}\in \scrF(\Sigma)$ to be the {\em reduced} projection of $\Gamma_Y$ from
$S$ to $\Sigma$, i.e.\ the direct sum of the non-boundary-parallel summands
of $\Gamma_Y$ (which by Remark \ref{Rmk:sweeparea} can be assumed to lie
in $\Sigma\setminus A_{in}$), and for each pair of boundary-parallel summands
of $\Gamma_Y$, a curve in $\Sigma$ which runs parallel to the annulus $A$ and
differs from the waist curve $\gamma$ by the same amount of symplectic area.
(Recall that ``parallel'' means ``homotopic to a curve in $A$'': if $\gamma$ is separating, achieving the desired amount of
symplectic area might require a non-embedded regular
homotopy, cf.\ Lemmas \ref{Lem:sweeparea} and \ref{Lem:space}).
By construction, the restriction $\Phi_{\Sigma,S}(\hat{Y})$ (defined by first resolving
the annulus-parallel summands of $\hat{Y}$ as in Example \ref{Ex:restriction_example})
is isomorphic to $\Gamma_Y$. Thus, the 
two restriction diagrams
\[
\xymatrix{
\Phi_{\Sigma,S}(Y) \ar@<-.5ex>[r] \ar@<.5ex>[r] & \Phi_{\Sigma,A_{in}}(Y)
& \text{and} &
\Phi_{\Sigma,S}(\hat{Y}) \ar@<-.5ex>[r] \ar@<.5ex>[r] & \Phi_{\Sigma,A_{in}}(\hat{Y})
}
\]
involve quasi-isomorphic objects of $\scrW(S)$ and $\scrW(A_{in})$ (namely, $\Gamma_Y$ and its
restriction to $A_{in}$). In order to conclude from Corollary \ref{Cor:pullback} that $Y$ and $\hat{Y}$ are
quasi-isomorphic in $\Tw \scrF(\Sigma)$, we need to verify that the restriction maps in
the two diagrams are also the same.

More precisely, $\Phi_{\Sigma,S}(Y)\simeq \Gamma_Y$ determines a Yoneda module over $\scrW(S)$, 
whose pullback along $\Phi_{\Sigma,S}$ is a $\scrF(\Sigma)$-module which we
denote by $\scrY_S$. Similarly, $\Phi_{\Sigma,A_{in}}(Y)\in \scrW(A_{in})$ 
determines a Yoneda module over $\scrW(A_{in})$, and we denote by $\scrY_{\!A_{in}}$ its pullback along
$\Phi_{\Sigma,A_{in}}$. The functors $\Phi_{S,A_{in}}^{left/right}$ induce
two $\scrF(\Sigma)$-module homomorphisms 
\begin{equation}\label{eq:equalizer_module}
\xymatrix{\phi^{left/right}_Y:\scrY_S\ar@<-.5ex>[r] \ar@<.5ex>[r] &  \scrY_{A_{in}}\!\!},
\end{equation}
and Corollary \ref{Cor:pullback} states that the Yoneda module associated to $Y$
is isomorphic to the equalizer of this diagram in the category of $\scrF(\Sigma)$-modules.

Assume for now that all annulus-parallel curves in $\Gamma_Y$ actually lie inside $A_{in}$
(near both ends of $S$).
Then, using the correspondence between holomorphic polygons in $\Sigma$ and
in $S$ given by Proposition \ref{Prop:isolate_polygons}, the module $\scrY_S\in \text{mod-}\scrF(\Sigma)$ 
can be represented by a direct sum $\tilde{Y}_S$ of curves with local systems in $\Sigma$, namely the (total) projection
of $\Gamma_Y$ under the map $S\to \Sigma$. This differs from $\hat{Y}$ in that $\tilde{Y}_S$ contains
two copies of each of the curves which lie in $A_{in}$. Meanwhile, $\scrY_{A_{in}}$ is
represented by the direct sum $\tilde{Y}_{in}$ of the curves contained in a single copy of $A_{in}$.
Since the non-boundary-parallel summands of $\Gamma_Y$ are orthogonal to
the curves in $A_{in}$ (i.e., their Floer cohomology vanishes), the morphisms
from $\tilde{Y}_S$ to $\tilde{Y}_{in}$ which represent the module homomorphisms
$\phi_Y^{left/right}$ must be supported on the summands which
are parallel to $A_{in}$. For each annulus-parallel curve $\sigma\subset A_{in}$ which
appears as a summand of $\hat{Y}$,
the restriction $\Phi_{\Sigma,S}(\sigma)$ is a disjoint union $\sigma_\text{left}\cup \sigma_{\text{right}}$ of the
two preimages of $\sigma$ in the ends of $S$ (each of which appears as a summand in $\Gamma_Y$, and hence in $\tilde{Y}_S$). By considering the diagram
\begin{equation}\label{Eq:restr_diagram_annular}
\xymatrix{
CW^*_{\Sigma}(Y,\sigma)\ar[r] & CW^*_S(\Gamma_Y, \sigma_\text{left}\cup\sigma_\text{right}) \ar@<-.5ex>[r] \ar@<.5ex>[r] & CW^*_{A_{in}}(\Phi_{\Sigma,A_{in}}(Y), \sigma),
}
\end{equation}
we find that $\phi_Y^{left}$ maps the summand $\sigma_\text{left}$ of $\tilde{Y}_S$
 isomorphically to the summand $\sigma$ of $\tilde{Y}_{in}$, and vanishes on $\sigma_\text{right}$;
and vice-versa for $\phi_Y^{right}$. This implies in turn that the equalizer of the diagram \eqref{eq:equalizer_module} is isomorphic to $\hat{Y}$.

When $\Gamma_Y$ contains annulus-parallel curves which lie outside of $A_{in}$, 
the argument is essentially the same, but now requires a detour via the construction of Example \ref{Ex:restriction_example} to show that the modules
$\scrY_S$ and $\scrY_{in}$ are represented by the total projection $\tilde{Y}_S$ of $\Gamma_Y$ (containing two
copies of each annulus-parallel curve) and by $\tilde{Y}_{in}$ as in the previous case;
and once again to analyze the restriction diagrams \eqref{Eq:restr_diagram_annular}
for each annulus-parallel summand $\sigma$ of $\hat{Y}$.
\end{proof}

\begin{Remark}
Instead of dealing with annulus-parallel curves which lie outside of
$A_{in}$ via Example \ref{Ex:restriction_example}, one might try to
simply enlarge the annulus $A_{in}$ inside $\Sigma$ 
in order to ensure that it contains all of the annulus-parallel curves 
whose Floer cohomology with $Y$ is nonzero. However, this is not always
possible, as the area of an
immersed cylinder bound by a pair of
annulus-parallel curves can be greater
than that of the whole surface $\Sigma$.
\end{Remark}

When $Y$ is an idempotent summand rather than a twisted
complex, a similar result holds provided that $Y$ pairs trivially with {\em
two} simple closed curves:

\begin{Lemma}\label{Lem:cut_two_annuli}
Let $Y\in \Tw^\pi \scrF(\Sigma)$. Let $\gamma_1,\gamma_2\subset \Sigma$ be
two disjoint simple closed curves whose homology classes are linearly independent,
and denote by $A_1,A_2$ (disjoint) annular neighborhoods of $\gamma_1,\gamma_2$,
and by $S$ the surface obtained by cutting $\Sigma$ open along both annuli.
If $HF^*(Y,\gamma_1) = HF^*(Y,\gamma_2)=0$ then $\Phi_{\Sigma,S}(Y)$ and
$\Phi_{\Sigma,A_{i,in}}(Y)$ are isomorphic to direct sums of immersed closed
curves with local systems in the completions of $S$ and $A_{i,in}$.
\end{Lemma}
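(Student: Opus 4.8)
The plan is to mimic the proof of Lemma \ref{Lem:cut_annulus}; the one essential new ingredient, needed because $Y$ is now an idempotent summand rather than a twisted complex, is the split-closure result Corollary \ref{Cor:split-closed}. First I would check that the cut-open surface $S$ has enough punctures: since $[\gamma_1]$ and $[\gamma_2]$ are linearly independent in $H_1(\Sigma;\bZ)$, neither is separating, and $\gamma_1\cup\gamma_2$ does not separate $\Sigma$ (otherwise, for suitable orientations, $[\gamma_1]=\pm[\gamma_2]$); consequently $S$ is connected with four boundary components. As $4\geq 3$, Corollary \ref{Cor:split-closed} shows that $\Tw\scrW(S)$ is already split-closed, so $\Phi_{\Sigma,S}(Y)\in\Tw^\pi\scrW(S)=\Tw\scrW(S)$ is quasi-isomorphic to a twisted complex; by Theorem \ref{Thm:HKK} it therefore has a geometric replacement $\Gamma_Y$, a finite union of immersed arcs and immersed closed curves with local systems in the Liouville completion of $S$.

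The next step is to rule out arcs in $\Gamma_Y$, exactly as in Lemma \ref{Lem:cut_annulus}. For each $i$, the restriction $\Phi_{\Sigma,S}(\gamma_i)$ is the disjoint union $\gamma_{i,\mathrm{left}}\cup\gamma_{i,\mathrm{right}}$ of the two boundary-parallel lifts of $\gamma_i$ in $S$. Since $\gamma_i\subset A_{i,in}$, every generator of $CW^*_\Sigma(Y,\gamma_i)$, and by Proposition \ref{Prop:isolate_polygons}(2) every bigon contributing to its differential, lies in $A_{i,in}$; hence $CW^*_{A_{i,in}}(\Phi_{\Sigma,A_{i,in}}(Y),\Phi_{\Sigma,A_{i,in}}(\gamma_i))\cong CW^*_\Sigma(Y,\gamma_i)$ is acyclic, and, running around the (multi-annulus version of the) equalizer diagram of Corollary \ref{Cor:pullback} as in Lemma \ref{Lem:cut_annulus}, so is $CW^*_S(\Gamma_Y,\gamma_{i,\mathrm{left}}\cup\gamma_{i,\mathrm{right}})$, for $i=1,2$. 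Thus $\Gamma_Y$ pairs trivially with a curve parallel to each of the four boundary components of $S$. Any arc summand $\eta$ of $\Gamma_Y$ reaches one of these boundary components; after putting $\eta$ in minimal position with respect to the adjacent boundary-parallel curve by an unobstructed regular homotopy as in Remark \ref{Rmk:find_sigma}, acyclicity of the resulting Floer complex forces $\eta$ to be disjoint from it, so $\eta$ lies in the cylindrical end and is trivial in $\scrW(S)$. Hence $\Gamma_Y$ is a union of immersed closed curves with local systems in the completion of $S$, which is the first assertion.

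For the second assertion, as at the end of the proof of Theorem \ref{Thm:HKK} and by Remark \ref{Rmk:sweeparea} one may arrange that the non-boundary-parallel summands of $\Gamma_Y$ lie in $\Sigma\setminus(A_{1,in}\cup A_{2,in})\subset S$ and that the boundary-parallel summands run parallel to $\gamma_1$ or $\gamma_2$. Applying the restriction functors $\Phi^{left/right}_{S,A_{i,in}}$ --- first resolving any annulus-parallel summand lying outside $A_{i,in}$ by the mapping cone construction of Example \ref{Ex:restriction_example} --- then sends the non-boundary-parallel summands to zero and each summand parallel to $\gamma_i$ to a closed curve in the completion of $A_{i,in}$, giving the assertion for $\Phi_{\Sigma,A_{i,in}}(Y)$.

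The main obstacle, and the only point genuinely beyond the twisted-complex case, is the very first step: knowing that the idempotent summand $\Phi_{\Sigma,S}(Y)$ is again a twisted complex, so that Theorem \ref{Thm:HKK} applies. This is exactly where the hypothesis that $[\gamma_1]$ and $[\gamma_2]$ be linearly independent enters --- it is what guarantees that $S$ has at least three boundary components and hence falls within the scope of Corollary \ref{Cor:split-closed}. Once past this step, everything is a line-by-line transcription of the argument for Lemma \ref{Lem:cut_annulus}.
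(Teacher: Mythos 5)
Your proposal is correct and follows essentially the same route as the paper: the paper's proof likewise just notes that homological independence of $[\gamma_1],[\gamma_2]$ makes the cut-open surface connected with four punctures and then runs the argument of Lemma \ref{Lem:cut_annulus} verbatim. The only cosmetic difference is that the paper obtains the geometric replacement by citing Corollary \ref{Cor:boundary_gives_geometric} (which packages split-closure plus Theorem \ref{Thm:HKK}), whereas you invoke Corollary \ref{Cor:split-closed} and Theorem \ref{Thm:HKK} directly and then exclude arcs via the Floer-acyclicity argument with boundary-parallel curves, exactly as in Lemma \ref{Lem:cut_annulus}.
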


\begin{proof}
The argument proceeds as in the proof of Lemma \ref{Lem:cut_annulus},
considering two annuli parallel to $\gamma_1$ and $\gamma_2$. The
assumption that $\gamma_1$ and $\gamma_2$ are homologically independent
ensures that cutting $\Sigma$ open along both annuli yields a connected surface with four punctures, so that
the existence of a geometric replacement $\Gamma_Y$ for $\Phi_{\Sigma,S}(Y)$
is guaranteed by Corollary \ref{Cor:boundary_gives_geometric}.
\end{proof}

\begin{Corollary}
Let $Y \in \Tw^{\pi}\scrF(\Sigma)$. Suppose there are disjoint,
homologically independent simple closed curves $\gamma_1,\gamma_2 \subset \Sigma$ with
$HF(Y,\gamma_i) = 0$. 
Then $Y$ is quasi-isomorphic to a direct sum of immersed closed curves with local systems in $\Sigma$.
\end{Corollary}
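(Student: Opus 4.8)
The plan is to combine Lemma \ref{Lem:cut_two_annuli} with the argument of Corollary \ref{Cor:cut_annulus}; the point is that for a genuine idempotent summand (rather than a mere twisted complex) one must cut $\Sigma$ open along \emph{two} curves rather than one.

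First I would fix disjoint annular neighbourhoods $A_1,A_2$ of $\gamma_1,\gamma_2$ and let $S$ be the surface obtained by cutting $\Sigma$ open along both annuli. Since $[\gamma_1],[\gamma_2]$ are linearly independent, $S$ is connected with four punctures, so Lemma \ref{Lem:cut_two_annuli} applies: it produces a geometric replacement $\Gamma_Y$ of $\Phi_{\Sigma,S}(Y)$ in the Liouville completion of $S$ — a direct sum of immersed closed curves with local systems — together with geometric replacements of the further restrictions $\Phi_{\Sigma,A_{i,in}}(Y)$ in the completions of $A_{i,in}$. As in the proof of that lemma and at the end of the proof of Theorem \ref{Thm:HKK} (cf.\ Remark \ref{Rmk:sweeparea}), I would arrange that the non-boundary-parallel summands of $\Gamma_Y$ lie in $\Sigma\setminus(A_{1,in}\cup A_{2,in})\subset S$, and that each boundary-parallel summand runs parallel to $\gamma_1$ or to $\gamma_2$.

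Next I would define $\hat Y\in\scrF(\Sigma)$ to be the \emph{reduced} projection of $\Gamma_Y$ to $\Sigma$, exactly as in Corollary \ref{Cor:cut_annulus}: the non-boundary-parallel summands (now honest curves in $\Sigma$), together with, for each pair of summands of $\Gamma_Y$ parallel to $\gamma_i$, a single curve in $\Sigma$ running parallel to $A_i$ and differing from the waist curve $\gamma_i$ by the same symplectic area — using Lemmas \ref{Lem:sweeparea} and \ref{Lem:space} to realize the prescribed area, allowing a non-embedded regular homotopy when $\gamma_i$ separates. By construction (resolving the annulus-parallel summands as in Example \ref{Ex:restriction_example}) $\hat Y$ restricts to $\Gamma_Y$ over $S$ and to the correct objects over the $A_{i,in}$. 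Finally I would invoke the pullback description: the two-annulus analogue of Corollary \ref{Cor:pullback}, obtained by running the ideal/quotient construction of Lemma \ref{Lem:restriction_exists} at $A_1$ and $A_2$ simultaneously, presents $\Tw^\pi\scrF(\Sigma)$ as the equalizer of $\Tw^\pi\scrW(S)\rightrightarrows\Tw^\pi\scrW(A_{1,in})\times\Tw^\pi\scrW(A_{2,in})$, so $Y\simeq\hat Y$ will follow once $Y$ and $\hat Y$ are seen to define the same equalizer datum.

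The main obstacle is this last point: checking that the two restriction \emph{homomorphisms} (not merely the objects) agree. I expect this to reduce, annulus by annulus, to the same module-theoretic computation carried out in Corollary \ref{Cor:cut_annulus} — orthogonality of the non-boundary-parallel summands of $\Gamma_Y$ to everything in $A_{i,in}$ forces $\phi_Y^{left/right}$ to be supported on the $A_i$-parallel summands, where they act by the expected recipe (project one preimage of each annulus-parallel curve isomorphically onto its image in $A_{i,in}$, kill the other), with the Example \ref{Ex:restriction_example} detour inserted for $A_i$-parallel curves lying outside $A_{i,in}$. What one must be careful about is that the two cutting operations are genuinely independent (the disjointness of $A_1$ and $A_2$), and that it is the split-closedness of $\Tw\scrW(S)$ (Corollary \ref{Cor:split-closed}, valid since $S$ is connected with $\geq 3$ punctures) that allows Theorem \ref{Thm:HKK} to be applied to the idempotent summand $\Phi_{\Sigma,S}(Y)$ at all.
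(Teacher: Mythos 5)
Your proposal is correct and is essentially the paper's own argument: the paper proves this corollary by citing Lemma \ref{Lem:cut_two_annuli} and then running "the same argument as Corollary \ref{Cor:cut_annulus}," which is exactly the reduced-projection-plus-equalizer scheme you spell out (including the two-annulus pullback diagram and the Example \ref{Ex:restriction_example} detour for annulus-parallel summands). Your added remarks about split-closedness via Corollary \ref{Cor:split-closed} and connectedness of $S$ are precisely what makes Lemma \ref{Lem:cut_two_annuli} applicable, so nothing is missing.
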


\begin{proof} This follows from
Lemma \ref{Lem:cut_two_annuli} by the same argument as Corollary \ref{Cor:cut_annulus}.
\end{proof}

\begin{Corollary} \label{Cor:non-filling_implies_geometric}
Let $X \in D^{\pi}\scrF(\Sigma)$ be spherical with $ch(X)\neq 0$. 
If there are two disjoint homologically independent simple closed curves $\gamma_j$, $j=1,2$, with $HF^*(X,\gamma_j) = 0$, then 
$X$ is quasi-isomorphic to an embedded simple closed curve with a rank one
local system. 
\end{Corollary}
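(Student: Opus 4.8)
The plan is to bootstrap from the preceding Corollary (the one asserting that $Y\in\Tw^{\pi}\scrF(\Sigma)$ with $HF^*(Y,\gamma_j)=0$ for two disjoint homologically independent simple closed curves is a direct sum of immersed closed curves with local systems) together with the classification of immersed spherical objects, Corollary~\ref{Cor:immersed_spherical}. Applying that preceding Corollary with $Y=X$ — which is legitimate since $X\in D^{\pi}\scrF(\Sigma)=\Tw^{\pi}\scrF(\Sigma)$ and $HF^*(X,\gamma_j)=0$ — produces a quasi-isomorphism $X \simeq \bigoplus_{i} (\xi_i,\sigma_i)$ in $\Tw^{\pi}\scrF(\Sigma)$, where each $\sigma_i$ is an unobstructed immersed closed curve in $\Sigma$ and $\xi_i$ a finite rank local system; the sum is non-empty because $ch(X)\neq 0$ forces $X\not\simeq 0$.

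The next step is to reduce to a single summand. For any non-zero summand, $H^0(hom((\xi_i,\sigma_i),(\xi_i,\sigma_i)))$ is non-zero since it contains the identity, while its Euler characteristic vanishes: by \eqref{eqn:non-comm-rr} it equals $-\langle ch(\xi_i,\sigma_i),ch(\xi_i,\sigma_i)\rangle$, and the Mukai pairing on $HH_0(\scrF(\Sigma))\cong H_1(\Sigma;\Lambda)$ is the skew-symmetric intersection pairing, so this self-pairing is $0$. Hence $H^*(hom((\xi_i,\sigma_i),(\xi_i,\sigma_i)))$ has rank at least $2$. Since $H^*(hom(X,X))=\bigoplus_{i,j}H^*(hom((\xi_i,\sigma_i),(\xi_j,\sigma_j)))$ has rank at least twice the number of summands, sphericity (total rank $2$) forces exactly one summand, so $X\simeq(\xi,\sigma)$ for a single immersed curve $\sigma$ equipped with a local system $\xi$.

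It then remains to feed $(\xi,\sigma)$ into Corollary~\ref{Cor:immersed_spherical}, which requires $[\sigma]\in H_1(\Sigma;\bZ)$ to be non-zero. This follows from $ch(X)\neq 0$: the open-closed image of $ch((\xi,\sigma))$ is $\rk(\xi)\cdot[\sigma]$ — reducing to the rank-one computation recalled before Lemma~\ref{Cor:HH_integral} by additivity of $ch$ along a composition series of $\xi$ — and this is non-zero only if $[\sigma]\neq 0$. As $(\xi,\sigma)$ is quasi-isomorphic to the spherical object $X$, Corollary~\ref{Cor:immersed_spherical} applies directly (with no ``enlarge the surface'' caveat, $\Sigma$ being closed) and gives that $X$ is quasi-isomorphic to an embedded simple closed curve with a rank one local system.

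I expect the only delicate point to be this last identification of the open-closed image of the Chern character of a possibly higher-rank, possibly indecomposable local system on an immersed curve with the underlying homology class counted with multiplicity; everything else is assembly of the preceding Corollary, the Mukai-pairing identity \eqref{eqn:non-comm-rr}, and Corollary~\ref{Cor:immersed_spherical}.
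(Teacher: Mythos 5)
Your proposal is correct and follows essentially the same route as the paper: apply the preceding corollary to obtain a geometric replacement, reduce to a single immersed curve, use $ch(X)\neq 0$ to see that its homology class is non-zero, and then invoke Corollary~\ref{Cor:immersed_spherical}. The only differences are cosmetic: where you pass through the Mukai pairing and an Euler-characteristic count to rule out multiple summands, the paper simply notes that a spherical object is indecomposable (its $H^0(\hom(X,X))$ has rank one, so it cannot split), and your composition-series computation of $OC(ch(\xi,\sigma))=\rk(\xi)\cdot[\sigma]$ just spells out the step the paper asserts directly.
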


\begin{proof}
The previous corollary gives a geometric replacement for $X$; since a spherical
object is indecomposable, the replacement consists of a single immersed
curve $\sigma$ with local system. The assumption that
$ch(X)\in H_1(\Sigma;\bZ)$ is non-zero implies that $[\sigma]\in H_1(\Sigma;\bZ)$ is
non-zero.
Corollary  \ref{Cor:immersed_spherical} then implies that $\sigma$ is embedded and the
local system has rank one. 
\end{proof}

\section{Group actions}

\subsection{Finite group actions on categories}

Let $G$ be a finite group.  Following \cite{Seidel:Am_milnor_fibres, Seidel:categorical_dynamics}, we say $G$ acts strictly on a strictly unital $A_{\infty}$-category $\scrA$ if there is an action of $G$ on $\Ob\,\scrA$, and corresponding maps between morphism spaces which strictly satisfy the relations of $G$ and for which the $A_{\infty}$-operations are strictly equivariant.  An action of $G$ on $\scrA$ induces one on the category $\scrA^{mod}$ of $A_{\infty}$-modules over $\scrA$, which is a strictly unital $A_{\infty}$-category.  Necessarily $G$ will then preserve the strict units.  

A \emph{strictly equivariant} object $Y$ of $\scrA$ is one for which we have closed morphisms
\[
\rho^1_Y(g) \in hom^0_{\scrA}(g(Y),Y), \quad \rho^1_Y(e_G) = e_Y
\]
satisfying
\[
\mu^2_{\scrA}(\rho^1_Y(g),g\cdot \rho^1_Y(h)) = \rho^1_Y(gh)
\]
for all $g,h\in G$.  Any object $Y \in \scrA$ which is fixed by $G$, meaning $g(Y) = Y$ for every $g\in G$, defines a strictly equivariant object  for each character $\chi: G \to \bK^*$ via $\rho^1_Y(g) = \chi(g) e_Y$.

In the case of $A_\infty$-modules, a strictly equivariant structure on
$M\in \scrA^{mod}$ is given
by $A_\infty$-homomorphisms $\rho^1_M(g)\in \hom^0_{\scrA^{mod}}(g(M),M)$
which generally include higher order terms. The special case where the
module homomorphisms $\rho^1_M(g)$ are ordinary linear maps, with all higher order
terms identically zero, corresponds to the situation where $M$ is
equivariant in the \emph{naive} sense, i.e., there is
a linear action of $G$ on the vector spaces underlying $M$, with
respect to which the structure maps of $M$ are equivariant.

A \emph{weakly equivariant} object is one which satisfies the cohomological analogue
of strict equivariance, i.e. there are classes  $[\rho^1_Y(g)] \in  \Hom^0_{H(\scrA)}(g(Y),Y)$ which satisfy
\[
[\rho^1_Y(gh)] = [\rho^1_Y(g)]\cdot (g\cdot[\rho^1_Y(h)]), \qquad [\rho^1_Y(e_G)] = e_Y.
\]
If $Y$ and $Y'$ are both weakly equivariant, then $H^*(\hom_{\scrA}(Y,Y'))$ becomes a $G$-representation via $a \mapsto [\rho^1_{Y'}(g)]\cdot
(g\cdot a) \cdot [\rho^1_{Y}(g)]^{-1}$.

\begin{Lemma} [Seidel] \label{Lem:weaktonaive}
Assume $\bK$ has characteristic zero.  If $Y$ is a weakly equivariant object
of $\scrA$ or $\scrA^{per}$, then the Yoneda module of $Y$ is quasi-isomorphic to a
naively equivariant module.
\end{Lemma}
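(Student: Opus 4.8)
The plan is to run the standard averaging/rectification argument for homotopy‑coherent finite group actions, exploiting that $\mathrm{char}\,\bK=0$ so that $|G|$ is invertible and $H^{\ge 1}(G;V)=0$ for every $\bK$-vector space $V$. Set $P:=\mathcal{Y}_Y\in\scrA^{mod}$, the Yoneda module of $Y$; it is h-projective, so $\hom_{\scrA^{mod}}(P,P)$ computes derived endomorphisms, and it inherits a strict $G$-action from the strict $G$-action on $\scrA$ (with $g(P)=\mathcal{Y}_{g(Y)}$ up to the usual convention). A weakly equivariant structure on $Y$ is precisely a collection of invertible classes $[\rho^1(g)]\in H^0\hom_{\scrA^{mod}}(g(P),P)$ with $[\rho^1(e)]=\mathrm{id}$ and $[\rho^1(gh)]=[\rho^1(g)]\cdot(g\cdot[\rho^1(h)])$, i.e.\ a homomorphism from $G$ to the automorphism group of $P$ in the cohomological category, compatible with the $G$-action there. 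I want to produce a naively equivariant module $M$ with a quasi-isomorphism $M\simeq P$ of $\scrA$-modules intertwining this weak action with the strict one on $M$; the same argument then applies verbatim with $\scrA$ replaced by $\scrA^{per}$.

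\emph{Step 1: rectify the weak structure to a strict equivariant one.} The aim is to realize a model of $P$ as a strictly equivariant module in the sense of the paper, equivalently as a module over the semidirect‑product category $\scrA\rtimes G$ (objects those of $\scrA$, morphisms $\bigoplus_{g\in G}\hom_\scrA(g(X),X')$), whose restriction to $\scrA$ is quasi-isomorphic to $P$ and induces the given classes. This is obstruction‑theoretic: lifting the cohomology‑level data $\{[\rho^1(g)]\}$ to a genuine homotopy‑coherent $G$-action (closed representatives $\rho^1(g)$ together with the higher coherences $\rho^{\ge 2}(g_1,\dots,g_d)$ witnessing associativity) amounts to lifting the map $BG\to B\mathrm{Aut}_{H(\scrA^{mod})}(P)$ through $B\mathrm{Aut}_{\scrA^{mod}}(P)\to B\mathrm{Aut}_{H(\scrA^{mod})}(P)$; the latter map is an isomorphism on $\pi_0$, and its homotopy fibre is built from Eilenberg--MacLane pieces $K\!\bigl(H^{-k}\hom_{\scrA^{mod}}(P,P),k\bigr)$, $k\ge 1$. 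The successive obstructions therefore lie in groups $H^{\ge 2}(G;-)$ with coefficients in the $\bK$-vector spaces $H^{-k}\hom_{\scrA^{mod}}(P,P)$, which vanish since $|G|$ is invertible. (Equivalently: $BG$ is $\bK$-acyclic, so the homotopy fixed points of the $G$-action on the space of objects quasi-isomorphic to $P$ coincide with the fixed points.) This yields a strictly equivariant module $P'$ with $P'\simeq P$.

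\emph{Step 2: pass from strict to naive.} Form $\widehat P:=\bigoplus_{g\in G}g(P')$ with the tautological permutation $G$-action; because the $G$-action on $\scrA$ is strict and linear, $\widehat P$ is naively equivariant. Writing $r_g=\rho^1(g)\colon g(P')\to P'$ for the strict module homotopy‑equivalences furnished by Step 1, which satisfy $\mu^2(r_g,g(r_h))=r_{gh}$, the maps $\pi:=\sum_g r_g\colon\widehat P\to P'$ and $\iota:=\tfrac1{|G|}\bigl(r_g^{-1}\bigr)_{g}\colon P'\to\widehat P$ are $G$-equivariant and satisfy $\pi\circ\iota\simeq\mathrm{id}_{P'}$, so $P'$ is a $G$-equivariant homotopy retract of the naively equivariant module $\widehat P$. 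Splitting the resulting homotopy idempotent inside the idempotent‑complete dg category of naively equivariant $\scrA$-modules — using semisimplicity of $\bK[G]$ to see that the isotypic decomposition of $\widehat P$ is $G$-equivariant up to quasi-isomorphism, and extracting the relevant summand — produces a naively equivariant module $M$ with $M\simeq P'\simeq\mathcal{Y}_Y$, as required.

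I expect the main obstacle to be Step 1: turning the purely cohomological $\rho^1(g)$ data into a strict $A_\infty$-equivariant (i.e.\ $\scrA\rtimes G$-module) structure. One must either run the explicit inductive obstruction argument — choose closed $\rho^1(g)$, then homotopies $\rho^2(g,h)$, then the higher coherences, at each stage checking that the obstruction cocycle is exact in the vanishing group $H^{\ge 2}(G;\,\bK\text{-module})$ — or invoke the $\infty$-categorical reformulation via $\bK$-acyclicity of $BG$. Step 2 is routine once $|G|^{-1}\in\bK$; the only point requiring care is arranging the idempotent splitting to be $G$-equivariant, which is exactly where semisimplicity of $\bK[G]$ is used.
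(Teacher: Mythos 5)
Your proposal is correct in outline and follows the same basic strategy as the paper, which at this point simply cites Seidel (Propositions 14.5 and 14.7 of the categorical dynamics notes): first an obstruction-theoretic upgrade of the weak structure to a homotopy-coherent one, using the vanishing of $H^r(G;M)$ for $r\geq 1$ when $|G|$ is invertible in $\bK$ --- your Step 1 is exactly this, phrased via the fibration $B\mathrm{Aut}_{\scrA^{mod}}(P)\to B\mathrm{Aut}_{H(\scrA^{mod})}(P)$ (with the caveat that in the $\bZ/2$-graded setting the coefficient groups $H^{-k}\hom$ are read mod $2$, which does not affect the vanishing) --- and then a conversion of the coherent structure into a naively equivariant module. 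Where you genuinely differ is the second step: Seidel's Proposition 14.7 constructs the naive module directly by a bar-type construction twisting $\bK[G]\otimes(\cdot)$ by the coherence data, whereas you induce up to $\widehat P=\bigoplus_g g(P')$, average to exhibit $P'$ as an equivariant homotopy retract, and split the resulting idempotent in the category of equivariant modules. That is a legitimate alternative (module categories admit arbitrary direct sums, so cohomological idempotents split), and it makes transparent where $|G|^{-1}$ enters; Seidel's route buys an explicit model and avoids invoking idempotent splitting.

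Two points in your write-up need tightening, though neither is fatal in characteristic zero. First, the obstruction theory of Step 1 produces a homotopy-coherent structure, not a strict one; before you can write maps $r_g$ satisfying the cocycle relation on the nose, or regard $P'$ as a module over $\scrA\rtimes G$, you need a rectification step --- and this is precisely the content of Seidel's Proposition 14.7, so in your scheme it must be supplied by the inductive/bar argument you only gesture at. Second, the $r_g$ are quasi-isomorphisms, hence only homotopy-invertible, so $\iota=\tfrac{1}{|G|}\bigl(r_g^{-1}\bigr)_g$, its equivariance, and the idempotency of $\iota\pi$ all hold only up to homotopy (which, after averaging the homotopies, can be taken equivariant); this still suffices to split in the homotopy category of equivariant modules, but it should be stated, since as written your formulas presuppose strict inverses.
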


\begin{proof} This follows from \cite[Propositions 14.5 \& 14.7]{Seidel:categorical_dynamics}
(which in turn are direct analogues  of \cite[Lemmas 8.2 \& 8.3]{Seidel:Am_milnor_fibres}; in particular they do not rely on the problematic Lemma 13.7 in
\emph{op.\ cit.}). Proposition 14.5 is essentially an obstruction theory calculation
(see also \cite[Section 8c]{Seidel:Am_milnor_fibres}) showing that, if
$H^r(G;\Hom_{H(\scrA)}^{1-r}(Y,Y)) = 0$ for $r \geq 2$, then a weakly
equivariant structure can be upgraded to a {\em homotopy equivariant} (also known
as {\em coherently equivariant}) structure. In our case, the cohomology $H^r(G;M)$ 
vanishes for $r>0$ whenever $G$ is finite and $M$ is a $G$-module in which $|G|$ is invertible in $M$, in particular for all modules in characteristic zero. 
Once this is done, Proposition 14.7 of \cite{Seidel:categorical_dynamics} constructs a naively
equivariant $A_\infty$-module (see also \cite[Section
8b]{Seidel:Am_milnor_fibres}; the rationality requirement which imposes an
extra assumption on \cite[Lemma 8.2]{Seidel:Am_milnor_fibres} is not
relevant here).
\end{proof}

\subsection{Finite actions and coverings\label{Sec:finite_covers}}

Let $G$ be a finite abelian group with dual group $G^{\vee} = \Hom(G,\bC^*)$.  
Given a surface $\Sigma$, recall that $H^1(\Sigma;\bC^*)$ acts on the
Fukaya category $\scrF(\Sigma)$ by tensoring by flat (Novikov-unitary)
line bundles. Thus,
a homomorphism from $G$ to $H^1(\Sigma;\bC^*)$
determines an action of $G$ on the Fukaya category $\scrF(\Sigma)$, where
each $g\in G$ acts by twisting by a local system $\zeta^g$
(with $\zeta^{g_1g_2}\simeq \zeta^{g_1}\otimes \zeta^{g_2}$).
The main example that we consider is the following:

\begin{Lemma}\label{Lem:Zpaction}
A non-zero class $a\in H^1(\Sigma;\bZ)$ defines a $\bZ/p$-action on $\scrF(\Sigma)$, for any $p \geq 2$.
\end{Lemma}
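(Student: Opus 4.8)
The plan is to use the observation, recorded just before the statement, that any homomorphism $\rho\colon \bZ/p \to H^1(\Sigma;\bC^*)$ determines an action of $\bZ/p$ on $\scrF(\Sigma)$ by tensoring with flat line bundles; so the task reduces to producing such a $\rho$ out of $a$ and checking that the resulting action can be made strict. First I would regard $a$ as a homomorphism $a\colon H_1(\Sigma;\bZ)\to\bZ$, fix a primitive $p$-th root of unity $\zeta_p\in\bC$, and set $\rho(k)\in\Hom(H_1(\Sigma;\bZ),\bC^*)=H^1(\Sigma;\bC^*)$ to be the character $\rho(k)([\gamma])=\zeta_p^{\,k\,\langle a,[\gamma]\rangle}$ for $k\in\bZ/p$. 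This is well defined because $\langle a,[\gamma]\rangle\in\bZ$, it is a homomorphism in $k$ since $\rho(k_1)\rho(k_2)=\rho(k_1+k_2)$, and $\rho(p)$ is the trivial character; equivalently, $\rho$ is the reduction of $a$ modulo $p$ to a class $\bar a\in H^1(\Sigma;\mu_p)\subset H^1(\Sigma;\bC^*)$ of order dividing $p$, with $\rho(k)=k\bar a$. The induced action on objects is then $k\cdot(\xi,\gamma)=(\xi\otimes\gamma^*\zeta^{(k)},\gamma)$, where $\zeta^{(k)}$ denotes the flat line bundle with monodromy $\rho(k)$.

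To upgrade this to a \emph{strict} action, in the sense of the definition above and on a strictly unital model of $\scrF(\Sigma)$, I would work in the model in which a rank one local system is recorded by its monodromy homomorphism, so that the bundles $\zeta^{(k)}$ satisfy $\zeta^{(k_1)}\otimes\zeta^{(k_2)}=\zeta^{(k_1+k_2)}$ and $\zeta^{(p)}$ is the trivial bundle on the nose. The action on morphism spaces is then given at each generator $x$ by the canonical identification $\Hom(V\otimes L,\,V'\otimes L)\cong\Hom(V,V')$ for the one-dimensional fibre $L=\zeta^{(k)}_x$, and this is strictly compatible with composition of group elements. Strict equivariance of the $A_\infty$-operations follows since, after these identifications, tensoring by $\zeta^{(k)}$ multiplies the contribution of each rigid holomorphic polygon $u$ only by the $\zeta^{(k)}$-holonomy around $u|_{\partial D}$, which is trivial because $u|_{\partial D}$ is contractible in $\Sigma$; hence the operations are literally unchanged.

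The only point requiring care is this last strictness bookkeeping — ensuring the tensoring functors are strictly unital and strictly multiplicative rather than merely so up to coherent homotopy (the surrounding discussion only asserts $\zeta^{g_1g_2}\simeq\zeta^{g_1}\otimes\zeta^{g_2}$). I expect no genuine obstacle here: the reduction $\bar a$ has honest finite order in $H^1(\Sigma;\mu_p)$ and the holonomy computation is exact, not merely homotopical, so the monodromy-homomorphism model makes all the relevant identities hold strictly.
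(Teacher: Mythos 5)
Your proposal is correct and takes essentially the same route as the paper: reduce $a$ modulo $p$ to a character of $\pi_1(\Sigma)$ with values in $p$-th roots of unity and act by tensoring with the corresponding flat line bundles. The paper handles the strictness point exactly as you implicitly do --- by choosing Floer perturbation data independent of the local systems, so the polygons are unchanged and the holonomy around their (null-homotopic) boundaries is trivial --- mentioning the pairs-$(\text{object},\text{group element})$ trick only as an alternative.
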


\begin{proof} The action on objects is given by tensoring by flat unitary line bundles with holonomy given by the  class defined by $a$ in $\Hom(\pi_1(\Sigma);\Z/p)$.  
Choosing the Floer perturbation data to be independent of local systems in the
construction of $\scrF(\Sigma)$ ensures that this action is strict.
(Note that, in general, there is a standard trick to make a finite action strict on a quasi-isomorphic model of $\scrF(\Sigma)$, by passing to a category whose objects are pairs of an object of $\scrF(\Sigma)$ and an element of the given finite group, and choosing perturbation data independently for such pairs.  See e.g. \cite[Appendix A]{Sheridan:CY} for a closely related
case.)
\end{proof}

A homomorphism from $G$ to $H^1(\Sigma;\C^*)$ is the same thing as an
element of $H^1(\Sigma;G^\vee)$, or a homomorphism $\mu:\pi_1(\Sigma)\to
G^\vee$. Thus, it determines a finite Galois covering $\tilde{\Sigma}\to
\Sigma$ with deck group $G^\vee$. 


Given an immersed curve with a local system 
$(\xi,\gamma)\in \scrF(\Sigma)$, the action of $g\in G$ twists $\xi$
by the rank one local system $\zeta^g_{|\gamma}$, whose holonomy is $\langle \mu([\gamma]),g\rangle$.
Thus, if $\xi$ has rank one then the $G$-action preserves the isomorphism
class of $(\xi,\gamma)$ if and only if $[\gamma]\in \Ker\mu$, i.e.\ if and
only if $\gamma$ lifts to the covering $\tilde{\Sigma}$. The set of
$G$-equivariant structures on $(\xi,\gamma)$ is then a $G^\vee$-torsor, and so
is the set of lifts of $(\xi,\gamma)$ to $\tilde{\Sigma}$.
To be explicit, fix a base point in $\Sigma$, and a trivialization of the $G$-family of
local systems $\{\zeta^g\}$ at the base point. The choice of an arc connecting $\gamma$ to
the base point then determines on one hand, a trivialization of the collection of local
systems $\{\zeta^g\}$ over $\gamma$, and hence a $G$-equivariant structure 
on the object $(\xi,\gamma)\in \scrF(\Sigma)$ induced by the 
isomorphisms $\xi\otimes \zeta^g_{|\gamma}\stackrel{\sim}{\rightarrow} \xi$;
and on the other hand, a lift $\tilde\gamma$
of $\gamma$ to $\smash{\tilde\Sigma}$. Moving the base arc by a loop along which
$\mu$ takes the value $\chi\in G^\vee$, we obtain another $G$-equivariant 
structure on $(\xi,\gamma)$, which we denote by $(\xi,\gamma)^\chi$, 
where the isomorphism $\xi\otimes \zeta^g_{|\gamma}
\stackrel{\sim}{\rightarrow}\xi$ is modified by $\chi(g)\in \C^*$;
and another lift ${\tilde\gamma}^\chi$ of $\gamma$ to $\smash{\tilde\Sigma}$, which
differs from $\tilde\gamma$ by the deck transformation $\chi$.

Let $\{\gamma_i\}$ be a finite collection of split-generators of
$\scrF(\Sigma)$, whose homotopy classes all lie in $\Ker\mu$. 
(One way to construct such $\gamma_i$ is to choose simple closed curves in
$\tilde\Sigma$ which satisfy Abouzaid's split-generation criterion for
$\scrF(\tilde\Sigma)$, and project them down to $\Sigma$. Compatibility 
of the open-closed map with the projection $\tilde\Sigma\to\Sigma$ implies
that the corresponding immersed curves in $\Sigma$ split-generate
$\scrF(\Sigma)$.) 
Fixing base arcs as above, we equip each $\gamma_i$ with a preferred
(strict) $G$-equivariant structure, and a preferred lift $\tilde\gamma_i$ to
$\tilde\Sigma$.

The chosen $G$-equivariant structures on $\gamma_i$ equip
the $A_\infty$-algebra $\A=\bigoplus_{i,j} CF(\gamma_i,\gamma_j)$ with
an action of $G$. Explicitly, $g\in G$ acts on each Floer generator
$p\in \gamma_i\cap \gamma_j$ by multiplication by $\chi_p(g)$, where $\chi_p\in G^\vee$ is the image under
$\mu$ of the loop formed by connecting the intersection point $p$
to the base point along the base arcs for $\gamma_i$ and
$\gamma_j$. The $G$-equivariant part of the Floer complex,
$CF^G(\gamma_i,\gamma_j)$, is then generated by those intersections for
which $\chi_p=1$. Those correspond exactly to the intersections between
the chosen lifts of $\gamma_i$ and $\gamma_j$ to $\tilde{\Sigma}$; therefore
$CF^G(\gamma_i,\gamma_j)\simeq CF_{\tilde\Sigma}(\tilde\gamma_i,\tilde\gamma_j)$.
Varying the $G$-equivariant structures by $\chi_i,\chi_j\in G^\vee$,
$CF^G(\gamma_i^{\chi_i},\gamma_j^{\chi_j})$
is generated by those intersections for which $\chi_p=\chi_i^{-1}\chi_j$, and
isomorphic to $CF_{\tilde\Sigma}(\tilde\gamma_i^{\chi_i},\tilde\gamma_j^{\chi_j})$.
These isomorphisms are compatible with the $A_\infty$-operations
(if the Floer perturbation data are chosen consistently in $\Sigma$ and
$\tilde\Sigma$), and give an isomorphism of $A_\infty$-algebras
\begin{equation}\label{eq:Equivariant_vs_Lift}
\tilde{\A}=\bigoplus_{i,j,\chi_i,\chi_j}
CF_{\tilde\Sigma}(\tilde\gamma_i^{\chi_i},\tilde\gamma_j^{\chi_j})\simeq
\bigoplus_{i,j,\chi_i,\chi_j} CF^G(\gamma_i^{\chi_i},\gamma_j^{\chi_j}).
\end{equation}

Recall from \cite[Ch.~4c]{Seidel:HMSquartic} that the semidirect product
$A_\infty$-algebra $\A\rtimes G$ is defined by considering the tensor
product $\A\otimes_{\bK} \bK[G]$ of $\A$ with the group ring of $G$, 
with the operations
$$\mu^d_{\A\rtimes G}(a_d\otimes g_d,\dots,a_1\otimes g_1)=
\mu^d_\A(a_d,g_d\cdot a_{d-1},(g_dg_{d-1})\cdot a_{d-2},\dots)\otimes
(g_d\dots g_1).$$
Recalling that each $\chi\in G^\vee$ determines an idempotent
$e_\chi=|G|^{-1} \sum \chi(g)\,g\in
\bK[G]$, an easy calculation shows that the linear map $\varphi:\tilde\A
\to \A\rtimes G$ which takes
$a\in CF^G(\gamma_i^{\chi_i},\gamma_j^{\chi_j})\subset \tilde\A$ to
$\varphi(a)=a\otimes e_{\chi_i^{-1}}\in CF(\gamma_i,\gamma_j)\otimes
e_{\chi_i^{-1}}\subset \A\rtimes G$ defines an isomorphism of
$A_\infty$-algebras. Summarizing, we have:

\begin{Proposition}[{Seidel \cite[Ch. 4 \& 8b]{Seidel:HMSquartic}}]
$\A \rtimes G \simeq \tilde\A$.
\end{Proposition}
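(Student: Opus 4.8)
The statement follows from the construction carried out just above, so the plan is simply to verify that the explicit linear map $\varphi: \tilde\A\to\A\rtimes G$ defined there is an isomorphism of $A_\infty$-algebras, along the lines of \cite[Ch.~4 \& 8b]{Seidel:HMSquartic}. Since $\bK=\Lambda_{\bC}$ contains all roots of unity and $G$ is a finite abelian group, $\bK[G]$ is split semisimple with primitive idempotents $e_\chi=|G|^{-1}\sum_g\chi(g)\,g$ indexed by $\chi\in G^\vee$, so $\A\rtimes G=\bigoplus_{\chi\in G^\vee}\A\otimes e_\chi$ as $\bK$-modules. First I would record that $\varphi$ is bijective for purely bookkeeping reasons: the summand $CF^G(\gamma_i^{\chi_i},\gamma_j^{\chi_j})$ of $\tilde\A$ is, in the notation above, the weight space of $CF(\gamma_i,\gamma_j)$ on which $G$ acts through the character $\chi_i^{-1}\chi_j$ (i.e.\ the span of the intersections $p$ with $\chi_p=\chi_i^{-1}\chi_j$), and $\varphi$ carries it isomorphically onto that weight space tensored with $e_{\chi_i^{-1}}$; as $(\chi_i,\chi_j)$ ranges over $G^\vee\times G^\vee$ this hits every summand of $\A\rtimes G$ exactly once (taking $\psi=\chi_i^{-1}$ and $\chi=\chi_i^{-1}\chi_j$), so $\varphi$ is a linear bijection.

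The substantive step is to check that $\varphi$ intertwines the $A_\infty$-operations. The plan is to take a composable chain $a_1,\dots,a_d$ in $\tilde\A$ with $a_k$ running from $\gamma_{i_{k-1}}^{\psi_{k-1}}$ to $\gamma_{i_k}^{\psi_k}$, so that $a_k$ lies in the weight space of $CF(\gamma_{i_{k-1}},\gamma_{i_k})$ for the character $\chi_k:=\psi_{k-1}^{-1}\psi_k$ and $\varphi(a_k)=a_k\otimes e_{\psi_{k-1}^{-1}}$. Expanding the idempotents, applying the defining formula for $\mu^d_{\A\rtimes G}$, and using that $g$ acts on $a_k$ by the scalar $\chi_k(g)$, one finds that the coefficient of $\mu^d_\A(a_d,\dots,a_1)\otimes(g_d\cdots g_1)$ in $\mu^d_{\A\rtimes G}(\varphi(a_d),\dots,\varphi(a_1))$ is, up to the overall factor $|G|^{-d}$, equal to $\big(\prod_{k}\psi_{k-1}^{-1}(g_k)\big)\big(\prod_{k<m}\chi_k(g_m)\big)$. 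Because $\prod_{k=1}^{m-1}\chi_k=\psi_0^{-1}\psi_{m-1}$ by telescoping, this collapses after the cancellations $\psi_{k-1}^{-1}(g_k)\psi_{k-1}(g_k)=1$ (for $k\geq2$) to the single scalar $\psi_0^{-1}(g_1\cdots g_d)$, which depends only on the group element $h=g_d\cdots g_1$. Summing over the $|G|^{d-1}$ factorizations of a fixed $h$ and comparing with $\varphi(\mu^d_{\tilde\A}(a_d,\dots,a_1))=\mu^d_\A(a_d,\dots,a_1)\otimes e_{\psi_0^{-1}}$ --- where one uses that under \eqref{eq:Equivariant_vs_Lift} the operation $\mu^d_{\tilde\A}$ is computed by $\mu^d_\A$ and lands in the weight space for $\chi_1\cdots\chi_d=\psi_0^{-1}\psi_d$ --- yields the desired identity.

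The hard part here is organizational rather than mathematical: one must keep straight four coupled index sets --- the equivariant structure $\chi_i$, the corresponding lift of $\gamma_i$ to $\tilde\Sigma$, the weight space for the $G$-action, and the idempotent $e_\psi\in\bK[G]$ --- and one must transport the Koszul signs, which is harmless since $\varphi$ is degree-preserving and equals the identity on the underlying Floer complexes, so the signs match exactly as in \cite[Ch.~8b]{Seidel:HMSquartic}. No new geometric input is required: the fact that $\mu^d_{\tilde\A}$ really is the Floer product of the lifted curves in $\tilde\Sigma$, equivalently the isomorphism \eqref{eq:Equivariant_vs_Lift}, rests on the unique liftability of holomorphic polygons in $\Sigma$ through the cover once lifts of the corners are fixed, together with the choice of perturbation data made consistently upstairs and downstairs, all of which has been established above.
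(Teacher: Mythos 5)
Your proposal is correct and follows the paper's own route: it takes the same explicit map $\varphi(a)=a\otimes e_{\chi_i^{-1}}$ introduced in the text and carries out the character/idempotent computation that the paper dismisses as ``an easy calculation,'' together with the identification \eqref{eq:Equivariant_vs_Lift} of $\mu^d_{\tilde\A}$ with $\mu^d_\A$ on weight spaces. The bookkeeping (weight spaces, telescoping of the $\chi_k$, and the $|G|^{d-1}$ factorizations of a fixed group element) checks out, so this is just a fleshed-out version of the paper's argument.
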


An immersed curve with local system $Y=(\xi,\gamma)\in\scrF(\Sigma)$, with a $G$-equivariant
structure (i.e., a choice of isomorphisms $\xi\otimes
\zeta^g_{|\gamma}\stackrel{\sim}{\rightarrow} \xi$), determines a module
$\scrY_{(\xi,\gamma)}=\bigoplus_i CF(\gamma_i,Y)$ over the
$A_\infty$-algebra $\A$, which is $G$-equivariant in the naive sense
(i.e.\ $G$ acts linearly on $\scrY_{(\xi,\gamma)}$ and all the module maps are equivariant).
A naive $G$-equivariant structure on an $\A$-module $M$ equips $M$ with the
structure of a module over $\A\rtimes G$, with structure maps given by 
$$\mu^{d+1}(m,a_d\otimes g_d,\dots,a_1\otimes g_1)=
(g_1^{-1}\dots g_d^{-1})\cdot\mu^{d+1}(m,a_d,g_d\cdot a_{d-1},
(g_dg_{d-1})\cdot a_{d-2}, \dots).$$
In the case of $\scrY_{(\xi,\gamma)}$, this has a geometric interpretation
in terms of the lift $\tilde{Y}$ of $Y$ to $\tilde\Sigma$:
$$\scrY_{(\xi,\gamma)}\simeq \bigoplus_{i,\chi_i} CF^G(\gamma_i^{\chi_i},Y)
\simeq \bigoplus_{i,\chi_i}
CF_{\tilde\Sigma}(\tilde\gamma_i^{\chi_i},\tilde{Y})$$
as modules over $\A\rtimes G\simeq \tilde\A$.


Transcribing these statements in the language of modules over
$A_\infty$-categories, rather than the endomorphism algebra of a given
set of generators, a $G$-equivariant structure (in the naive sense) on a module over
$\scrF(\Sigma)$ determines a lift to a module over $\scrF(\tilde\Sigma)$,
and for Yoneda modules of equivariant objects of $\scrF(\Sigma)$ this
coincides with the geometric lifting under the covering map $\tilde\Sigma\to
\Sigma$.

\begin{Example} \label{Ex:easy}
Consider the action of $G=\bZ/p$ on $\scrF(\Sigma)$ determined by a
class $a\in H^1(\Sigma;\bZ)$ as in Lemma \ref{Lem:Zpaction}, and let
$(\xi,\gamma)\in \scrF(\Sigma)$ be an immersed curve with a rank one local system.
\begin{enumerate}
\item If $\langle a, [\gamma]\rangle = 0$, then $(\xi,\gamma)$ is strictly equivariant for
$G$, and its Yoneda module $\scrY_{(\xi,\gamma)}$ is equivariant in the
naive sense. A choice of equivariant structure corresponds to a choice of lift of
$\gamma$ to $\tilde\Sigma$, and the corresponding lift of $\scrY_{(\xi,\gamma)}$ to
$\scrF(\tilde\Sigma)^{mod}$ is then the Yoneda module for the chosen lift of
$(\xi,\gamma)$.
\item If $\langle a,[\gamma]\rangle = 1$ (or is coprime with $p$), then the 
the Yoneda module of $(\xi,\gamma)$ is not equivariant, but its
full orbit $\scrY_{(\xi,\gamma)}\rtimes G=\bigoplus_{g\in G}
\scrY_{g(\xi,\gamma)}$ admits a unique equivariant structure. The lift of
this equivariant module to
$\scrF(\tilde\Sigma)^{mod}$ is the Yoneda module for the lift of
$(\xi,\gamma)$ to $\tilde\Sigma$ (which consists of a single curve covering
$\gamma$ $p$-fold).
\end{enumerate}
\end{Example}

Generalising Example \ref{Ex:easy}, we wish to prove that an object $X \in D^{\pi}\scrF(\Sigma)$ with $\langle a, ch(X)\rangle = 0$ admits a $G$-equivariant structure. However, it is not obvious why the purely homological hypothesis on $X$ should force it to be even weakly  equivariant.  To prove this we will embed the finite group action into a $\bG_m$-action, where the homological condition will yield infinitesimal equivariance.  General machinery due to Seidel then implies 
weak $\bG_m$-equivariance, and \emph{a posteriori} $G$-equivariance.

\subsection{Categorical  $\bG_m$-actions}\label{Sec:rationalactions}

Let $\scrA$ be a proper $\bZ/2$-graded $A_{\infty}$ category with a strict $\bG_m$-action. 
There are several notions of a $\bG_m$-action on a module $M\in \scrA^{mod}$.  Briefly, following \cite{Seidel:Am_milnor_fibres} one says:
\begin{enumerate}
\item $\bG_m$ acts \emph{naively} if it acts linearly on the underlying vector spaces of $M$, with all
the structure maps being equivariant.
\item $\bG_m$ acts \emph{strictly} if there are higher order (in $\scrA$) terms to the action, i.e. for each $g\in \bG_m$ we have 
\[
\rho^1_g = \{ \rho_g^{1,d+1}: M \otimes \scrA^d \to g^*M\}_{d\geq 0} \in \hom^0_{\scrA^{mod}}(M,g^*M)
\]
satisfying unitality and cocycle conditions,
and a rationality condition that the maps $\rho^{1,d+1}_g$ are coherently induced from a single map $\rho^{1,d+1}: M \otimes \scrA^d \to \bK[\bG_m] \otimes M[-d]$.
\item $\bG_m$ acts \emph{weakly} if there are module homomorphisms
$\rho^1_g$ as above, for which the
unitality and cocycle conditions hold at the cohomological level (in $H^0(\scrA^{mod})$).
\item $\bG_m$ acts \emph{up to homotopy} if there are higher order (in $\bG_m$) terms to the action, i.e. we now have maps $\rho^i_{(g_{i},\ldots,g_1)} \in \hom_{\scrA^{mod}}^{1-r}(M, g_1^*g_2^*\cdots g_{i}^*M)$, which should again satisfy appropriate unitality, cocycle and rationality
conditions. 
\end{enumerate}
A naive action yields a strict action with no higher order terms (so $\rho^{1,d+1}_g = 0$ for $d> 0$), whilst a strict action on a module gives a naive action on the quasi-isomorphic module $M \otimes_{\scrA} \scrA$, so these are essentially equivalent notions.

Unless $\scrA$ admits a set of generators which are strictly fixed by $\bG_m$,
the notion of rationality is best formulated in the algebro-geometric language of \cite[Appendix A]{Seidel:PL_dilating}.
We will consider situations where $\bG_m$ acts freely on the objects of $\scrA$
and strictly on morphisms. Concretely, in the case of Fukaya categories, this
is achieved by setting the objects to be pairs $(L,g)$ where $L$ is a Lagrangian brane and
$g$ is an element of the group; the object $(L,g)$, also denoted $g(L)$, is
obtained from $L$ by the action of $g$ and equipped with Floer perturbation data which
is pulled back from that for $L$.

A $\bK[\bG_m]$-module is just a quasicoherent sheaf over $\bK^*$, hence has stalks at points $g\in \bK^*$.  We will identify $\bK[\bG_m]^{\otimes d}$ with $\bK[\bG_m^{\times d}]$, and note there is a natural morphism $\bK[\bG_m] \to \bK[\bG_m]^{\otimes d}$ dual to the total multiplication map $\bG_m^{\times d} \to \bG_m$.
A rational $\bG_m$-action on $\scrA$ (with $\bG_m$ acting freely on objects and
strictly on morphisms) is then, by definition, an $A_\infty$-category
$\underline{\scrA}$, with the same objects as $\scrA$, in which
\begin{enumerate}
\item  the morphism groups $\hom_{\underline\scrA}(X_0,X_1)$ are $\bZ/2$-graded projective $\bK[\bG_m]$-modules, whose fibre at $g\in \bG_m$ is $\hom_{\underline\scrA}(X_0,X_1)_g:=\hom_\scrA(g(X_0),X_1)$;
using strictness, $\hom_{\scrA}(g_0(X_0),g_1(X_1))=\hom_{\underline\scrA}(X_0,X_1)_{g_1^{-1}g_0}$ for all $X_0,X_1\in \mathrm{ob}\,\scrA$ and $g_0,g_1\in \bG_m$;

\item the $A_{\infty}$-operations define $\bK[\bG_m^{\times d}]$-module maps 
\[
\mu^d_{\underline\scrA}: \hom_{\underline\scrA}(X_{d-1},X_d) \otimes_{\bK} \cdots \otimes_{\bK} \hom_{\underline\scrA}(X_0,X_1) \to \bK[\bG_m^{\times d}] \otimes_{\bK[\bG_m]} \hom_{\underline\scrA}(X_0,X_d)[2-d]
\]
which satisfy appropriate associativity equations, and a unitality condition (fiberwise, these are just the usual axioms for $\scrA$ to be an $A_\infty$-category);

\item for all $X_0,X_1\in \mathrm{ob}\,\scrA$ and $g_0,g_1\in \bG_m$, the $\bK[\bG_m]$-module $\hom_{\underline\scrA}(g_0(X_0),g_1(X_1))$ is the pullback of
$\hom_{\underline\scrA}(X_0,X_1)$ under multiplication by $g_1^{-1}g_0$, and
the $A_\infty$-operations on these modules strictly coincide under these identifications.
\end{enumerate}

(A small difference between our exposition and \cite[Appendix A]{Seidel:PL_dilating} is that
Seidel takes objects of $\scrA$ to be pairs consisting of an object of
$\underline\scrA$ and an element of $\bG_m$, i.e.\ the objects of
$\underline\scrA$ are $\bG_m$-orbits of objects of $\scrA$; the additional
objects in our version of $\underline\scrA$ contain no additional
information, since their morphism spaces and the $A_\infty$-operations on
those are completely determined by condition (3)).

We will consider the case where $\underline\scrA$ is {\em proper}, i.e.\ the cohomology groups
$H^*(\hom_{\underline\scrA}(X_0,X_1))$ are bounded $\bK[\bG_m]$-modules and finitely generated in each degree;
this implies properness of $\scrA$.

Informally, an object $X \in \scrA$ is homotopy $\bG_m$-equivariant if it is isomorphic to all of its images $g(X)$ for $g\in \bG_m$, in a manner which is coherent up to higher homotopy data. 
Formally, a (rational) \emph{homotopy equivariant} structure on $X$ is a sequence 
\[
\rho^i_X \in \bK[\bG_m^{\times i}] \otimes_{\bK[\bG_m]} \hom_{\underline\scrA}^{1-i}(X,X), \quad i\geq 1
\]
which, stalkwise, give elements
\[
\rho^i_{X, g_{i},\ldots,g_1} \in \hom^{1-i}_{\scrA}(g_{i}\cdots g_1 (X),X)
\]
which satisfy the following associativity equations
\cite[Appendix]{Seidel:PL_dilating}:
\[
\sum_{\substack{d\geq 1\\ i_1+\dots+i_d=i}}
\mu^d_{\scrA}(\rho^{i_d}_{X,g_{i},\ldots,g_{i_1+\dots+i_{d-1}+1}},\ldots, \rho^{i_1}_{X,g_{i_1},\ldots, g_1}) +
\sum_{1\leq k<i} (-1)^{k} \rho^{i-1}_{X,g_i,\ldots, g_{k+1}g_k, \ldots, g_1} = 0
\]
and the unitality condition $[\rho^1_{X,e}] = [e_X] \in H^0(\hom_{\scrA}(X,X))$. 

In this language, a weakly $\bG_m$-equivariant structure on $X$ is the data
of $\rho^1_X$ which satisfies unitality, and for which there exists
$\rho^2_X$ such that the first two associativity equations hold, namely
\[
\mu^1_\scrA(\rho^1_{X,g})=0 \quad \text{and} \quad
\mu^2_\scrA(\rho^1_{X,g_2},\rho^1_{X,g_1})-\rho^1_{X,g_2g_1}+\mu^1_\scrA(\rho^2_{X,g_2,g_1})=0.
\]

The machinery in \cite[Section 8]{Seidel:Am_milnor_fibres} upgrades weak
equivariant structures to homotopy equivariant structures using an
obstruction theory argument, and turns homotopy equivariant objects into
naively equivariant modules.
For applications, the essential point is therefore to find criteria for the
existence of weak actions.

\subsection{Infinitesimal equivariance}

Denoting by $\mathfrak{g}$ the Lie algebra of $\bG_m$, we have a short exact sequence of $\bK[\bG_m]$-modules
\[
0 \to \mathfrak{g}^{\vee} \to \bK[\bG_m]/\calI_e^2 \to \bK \to 0
\]
where $\calI_e$ is the maximal ideal at the point $e\in \bG_m$, and
$\bK=\bK[\bG_m]/\calI_e$.
Tensoring this with the (projective, hence flat) module $\hom_{\underline\scrA}(X,X)$
and considering the resulting long exact sequence in cohomology, the image of
$[e_X]\in H^0(\hom_{\scrA}(X,X))$ under the connecting morphism yields an element
\begin{equation} \label{eqn:deformation_class}
\Def^0_X \in \mathfrak{g}^{\vee} \otimes H^1(\hom_{\scrA}(X,X)).
\end{equation}

If this vanishes then we say $X$ is ``infinitesimally equivariant".

\begin{Proposition}\label{Prop:inf_enough} Suppose $\scrA$ is proper, has a rational
$\bG_m$-action (in the sense of the preceding section), and $\bK$ has characteristic zero. If $X$ is infinitesimally equivariant and simple, i.e. $H^0(\hom_{\scrA}(X,X)) = \bK$, then $X$ admits a
weakly equivariant structure.
\end{Proposition}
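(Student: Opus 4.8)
The plan is to run an obstruction-theory argument in the spirit of Seidel's work \cite{Seidel:Am_milnor_fibres, Seidel:categorical_dynamics}, building the weakly equivariant structure one order at a time starting from the unit and using the hypotheses (properness, rationality, $H^0 = \bK$, characteristic zero) to kill all obstructions. First I would observe that to produce a weakly equivariant structure it suffices to construct $\rho^1_X \in \hom^0_{\underline\scrA}(X,X)$ with $[\rho^1_{X,e}] = [e_X]$ satisfying $\mu^1_\scrA(\rho^1_{X,g}) = 0$ fibrewise and such that the second associativity equation can be solved for some $\rho^2_X$. Because the morphism modules $\hom_{\underline\scrA}(X,X)$ are projective (hence flat) over $\bK[\bG_m]$, cohomology commutes with the relevant base changes, and one can work with the complex $H^*(\hom_{\underline\scrA}(X,X))$ as a bounded, degreewise finitely generated $\bK[\bG_m]$-module.

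The key steps, in order. (1) Interpret $\Def^0_X$: unravelling the connecting map for $0 \to \mathfrak{g}^\vee \to \bK[\bG_m]/\calI_e^2 \to \bK \to 0$ tensored with $\hom_{\underline\scrA}(X,X)$, the class $\Def^0_X \in \mathfrak{g}^\vee \otimes H^1(\hom_\scrA(X,X))$ is precisely the obstruction to lifting the cocycle $e_X$ over $\bK$ to a $\mu^1$-cocycle over the first infinitesimal neighbourhood of $e \in \bG_m$; its vanishing gives a first-order equivariant structure. (2) Upgrade from first order to all orders over the formal/infinitesimal neighbourhood of the identity: the obstruction to extending an $n$-th order solution to order $n+1$ lives in $\mathfrak{g}^\vee \otimes \mathrm{Sym}^n(\mathfrak{g}^\vee) \otimes H^1(\hom_\scrA(X,X))$ (or a Hochschild-type group built from it), and the ambiguity in the extension is governed by $H^0(\hom_\scrA(X,X)) = \bK$. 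Here I would use that the $A_\infty$-associativity constraints, when expanded in the group variables, are governed by the bar complex of $\bK[\bG_m]$ with coefficients in $\hom_{\underline\scrA}(X,X)$; the relevant obstruction groups are computed by a Koszul-type resolution of $\bK$ over $\bK[\bG_m]$, and I claim they vanish in the degrees that matter because $\bG_m$ is a (pro-)reductive group in characteristic zero, so its rational cohomology with coefficients in finite-dimensional representations vanishes above degree zero. (3) Globalise: pass from the formal neighbourhood of $e$ to all of $\bG_m$ using the rationality condition --- the $\rho^i_X$ are required to be coherently induced from $\bK[\bG_m^{\times i}]$-module maps --- so that a homotopy-equivariant structure over the formal group extends to a genuine rational one; equivalently one observes that a rational $\bG_m$-representation is determined by its restriction to infinitesimal neighbourhoods together with the weight decomposition, and properness ensures the relevant modules are sums of weight spaces. (4) Conclude that the resulting $\rho^1_X$ together with the constructed $\rho^2_X$ exhibits $X$ as weakly equivariant, which is all that is claimed.

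I would model the bookkeeping on \cite[Section 8]{Seidel:Am_milnor_fibres} and \cite[Appendix]{Seidel:PL_dilating}, where exactly this type of argument is carried out for $\bG_m$-actions; the only inputs needed beyond what is there are that $\scrA$ is proper (so the cohomology modules are finitely generated and the long exact sequences behave well) and that $\bK$ has characteristic zero (so that $\bG_m$-cohomology vanishes and obstruction classes can be divided). The step I expect to be the main obstacle is (2)--(3): making precise that the higher obstruction groups --- which a priori are not literally $H^{\geq 1}(\bG_m; -)$ but certain derived-functor groups mixing the $A_\infty$-structure with the coordinate ring of $\bG_m$ --- actually vanish, and in particular checking that the rationality constraint is preserved at every stage rather than only producing a formal solution near $e$. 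Concretely this amounts to showing that the truncated complex $\bK[\bG_m^{\times \bullet}] \otimes_{\bK[\bG_m]} \hom_{\underline\scrA}(X,X)$ computing these obstructions has vanishing cohomology in the relevant range, which follows from flatness of the morphism modules together with the cohomological triviality of the reductive group $\bG_m$; once that is in hand, the simplicity hypothesis $H^0(\hom_\scrA(X,X)) = \bK$ removes the remaining ambiguity and the induction goes through.
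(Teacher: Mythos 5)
Your proposal has a genuine gap at the globalisation step (your steps (2)--(3)), and it is exactly the point on which the whole proposition turns. Solving the cocycle equations order by order at the identity only produces a formal flat section of the sheaf $H^0(\hom_{\underline\scrA}(X,X))$ near $e\in\bG_m$; the content of the proposition is a \emph{global algebraic} family $[\rho^1_{X,g}]$, $g\in\bG_m$, obeying the group law. A formal solution need not be rational: after trivialising, infinitesimal equivariance gives a rank-one algebraic connection on $\bG_m$, whose flat sections (schematically $z^{-c}$ for $d+c\,dz/z$) are algebraic only for integral residue, so without a further argument the induction produces a germ that simply fails to extend over $\bG_m$. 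Your proposed remedy --- vanishing of higher obstruction groups because $\bG_m$ is linearly reductive in characteristic zero --- does not apply here: that kind of group-cohomology vanishing is the input for upgrading a \emph{weak} structure to a homotopy-coherent one (as in Lemma \ref{Lem:weaktonaive} and Seidel's Proposition 14.5), but the obstruction to weak equivariance itself is not a class in $H^{\geq 1}(\bG_m;-)$, because the $\bG_m$-module structure on $H^*\hom_\scrA(X,X)$ that such a class would require is precisely what you are trying to construct. Likewise, the remark that a rational representation is determined by its weight decomposition presupposes the global rational structure rather than producing it. This is why even over $\bC$ Seidel must modify the connection using surjectivity of the exponential map to kill the monodromy, a move unavailable over the Novikov field.

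The paper's route is different and is worth internalising: properness and rationality make $\calF=H^0(\hom_{\underline\scrA}(X,X))$ a coherent sheaf on $\bG_m$; a primitive $def^0_X$ of the vanishing class $\Def^0_X$ equips it with an algebraic connection, and in characteristic zero a coherent sheaf with connection is locally free, so $\calF$ is a line bundle. Then, instead of integrating the connection, one observes that the complement of the zero section of $\calF$ is an algebraic group $\mathcal{G}$ sitting in an extension $1\to\Aut(X)\to\mathcal{G}\to\bG_m\to 1$; simplicity gives $\Aut(X)=\bG_m$, and $\Ext^1(\bG_m,\bG_m)=0$ over a perfect field, so the extension splits and the $\bG_m$-orbit of $[e_X]\in\calF_e$ is the desired family of cohomological isomorphisms obeying the group law. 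Note that simplicity is used here in an essential, structural way (to identify $\Aut(X)$ with $\bG_m$ so that the extension-splitting applies), not merely to normalise an inductive ambiguity as in your sketch. If you want to salvage an obstruction-theoretic argument, you would have to add an input playing the role of this splitting (or of trivialising the monodromy algebraically); as written, the characteristic-zero facts you invoke do not supply it.
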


\begin{proof} 
The proof follows along the same lines as Seidel's argument for the case $\bK = \bC$,
which is stated as \cite[Theorem 2.7 and Corollary A.12]{Seidel:PL_dilating}
for $A_\infty$-categories with naive and rational $\bC^*$-actions,
respectively.
Rationality of the action and properness imply that the cohomology $H^0(\hom_{\underline\scrA}(X,X))$ defines a coherent sheaf over
$\bG_m$, and infinitesimal equivariance equips that sheaf with an algebraic
connection. More precisely,  a choice of primitive 
\[
def^0_X \in \mathfrak{g}^{\vee} \otimes \hom_{\scrA}^0(X,X) \quad \mathrm{for} \quad \Def^0_X = 0 \in \mathfrak{g}^{\vee} \otimes H^1(\hom_{\scrA}(X,X))
\]
equips the sheaf  with a distinguished connection (cf.\ \cite[Lemma A.3]{Seidel:PL_dilating} and \cite[Section 7a]{Seidel:Am_milnor_fibres}).
In characteristic zero, any coherent sheaf admitting a connection is locally free \cite[Corollary 2.5.2.2]{Andre}, so the sheaf
$\calF=H^0(\hom_{\underline\scrA}(X,X))$ is locally free of rank one, i.e.\ a line bundle.

Over $\bC$, the construction in \cite[Lemma 7.12]{Seidel:Am_milnor_fibres}  uses surjectivity of the exponential map to
modify the algebraic connection and trivialise the monodromy. A covariant constant section taking the value $[e_X]$ at $e\in \bG_m$ then provides a
$\bG_m$-family of cohomological isomorphisms which obey the group law, and hence defines a weak $\bG_m$-action
(which can be lifted to a homotopy $\bG_m$-action using a general obstruction theory argument).

Following  \cite[Remark 14.22]{Seidel:categorical_dynamics}, we instead argue as follows.
The total space of the line bundle $\calF$ carries an action of an algebraic group $\mathcal{G}$
which fits into an extension 
\[
1 \to \mathrm{Aut}(X) \to \mathcal{G} \to \bG_m \to 1
\]
where the subgroup $\mathrm{Aut}(X)=H^0\hom_\scrA(X,X)^*$ acts by multiplication on the fibres
$\calF_g=H^0\hom_\scrA(gX,X)$, and the quotient $\bG_m$ acts on the base by multiplication.

Since $X$ is simple, $\Aut(X) = \bG_m$.  Over any perfect (e.g. characteristic zero) field, the group of self-extensions $\Ext^1(\bG_m,\bG_m) = 0$, cf. for instance \cite[Chapter 5]{Brion}.  
Therefore $\mathcal{G} \cong \bG_m\times\bG_m$, the extension splits,
and the action of $\bG_m$ on itself by multiplication admits a lift to an
action of $\bG_m$ on the total space of $\calF$. The orbit of $[e_X]\in
\calF_e$ then provides the desired $\bG_m$-family of cohomological
isomorphisms.
\end{proof}

\subsection{$\bG_m$-actions from the Picard group}

We now apply the machinery of rational $\bG_m$-actions to the setting of 
Fukaya categories of surfaces. We first state the result for a closed surface $\Sigma$ of genus $g\geq
2$, which is our main focus.

\begin{Proposition}\label{Prop:giveanaction}
A choice of class $a \in H^1(\Sigma;\bZ)$ defines a rational $\bG_m$-action on
$\scrF(\Sigma)$, for which $\underline\scrF(\Sigma)$ is proper in the sense
of Section \ref{Sec:rationalactions}.
\end{Proposition}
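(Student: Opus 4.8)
The plan is to construct $\underline{\scrF}(\Sigma)$ explicitly from the data of the class $a$, following the template of Section \ref{Sec:rationalactions} and the discussion in Section \ref{Sec:finite_covers}. Recall that the cohomology class $a\in H^1(\Sigma;\bZ)$ is the same as a homomorphism $\mu_a\colon \pi_1(\Sigma)\to\bZ$, and we use $a$ to build a $\bG_m$-family of flat unitary line bundles $\{\zeta^g\}_{g\in\bG_m}$ on $\Sigma$, where the holonomy of $\zeta^g$ around a loop $\ell$ is $g^{\langle a,[\ell]\rangle}\in\bK^*$; these satisfy $\zeta^{g_1}\otimes\zeta^{g_2}\cong\zeta^{g_1g_2}$ coherently. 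First I would make the action strict on objects in the by-now standard way: take the objects of $\scrF(\Sigma)$ to be pairs $(L,g)$ of a Lagrangian brane $L$ and an element $g\in\bG_m$, with $g$ acting by $(L,h)\mapsto (L,gh)$, equipping the underlying curve $L$ with the twisted local system $\xi\otimes\zeta^g|_L$ and choosing Floer perturbation data pulled back from that of $L$ (this is the device used in Lemma \ref{Lem:Zpaction}; cf. \cite[Appendix A]{Sheridan:CY}). With perturbation data chosen independently of the local system, the $A_{\infty}$-operations are then strictly $\bG_m$-equivariant.

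Next I would assemble the morphism modules over $\bK[\bG_m]$. Given two Lagrangians $L_0,L_1$ meeting transversally with intersection points $p\in L_0\pitchfork L_1$, the generator at $p$ for the pair $(L_0,g_0),(L_1,g_1)$ carries the monodromy factor $(g_1^{-1}g_0)^{\langle a, [\ell_p]\rangle}$, where $\ell_p$ is the loop formed from $p$ and the reference arcs to a basepoint (exactly as in Section \ref{Sec:finite_covers}). Thus $\hom_{\underline{\scrF}}(L_0,L_1)$ is defined to be the free $\bK[\bG_m]=\bK[t^{\pm 1}]$-module on the intersection points, with $p$ sitting in degree $t^{\langle a,[\ell_p]\rangle}$, so that its fibre at $g\in\bG_m$ is $\hom_{\scrF}(g(L_0),L_1)$ as required by condition (1) of the definition of a rational $\bG_m$-action. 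Since each holomorphic polygon contributing to $\mu^d$ has a well-defined total homology class of its boundary, and the monodromy contributions multiply correctly, the structure maps are $\bK[\bG_m^{\times d}]$-module maps as in condition (2); condition (3) holds by construction because twisting by $\zeta^g$ only shifts the $\bK[\bG_m]$-grading. One point needing a brief remark: the morphism modules are free, hence projective, of finite rank, since $\scrF(\Sigma)$ is defined on finitely many generators $\{\zeta_i\}$ (Proposition \ref{Prop:Split_generators}) and each pair of generators meets in finitely many points.

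Finally, properness of $\underline{\scrF}(\Sigma)$: the cohomology $H^*(\hom_{\underline{\scrF}}(X_0,X_1))$ is a finitely generated $\bK[t^{\pm 1}]$-module because it is a subquotient of the finite-rank free module just described, and it is bounded because the category is $\bZ/2$-graded with two graded pieces; concretely, the fibre over the generic point is finite-dimensional because $\scrF(\Sigma)$ is proper (Lemma \ref{Lem:proper_and_smooth}), and coherence over $\bK[t^{\pm 1}]$ follows since $\bK[t^{\pm1}]$ is Noetherian. I expect the only genuinely delicate point to be verifying that the $A_{\infty}$-relations for $\underline{\scrF}(\Sigma)$ hold \emph{strictly} at the chain level over $\bK[\bG_m]$ rather than merely fibrewise — i.e., checking that the sign and monodromy bookkeeping in $\mu^d_{\underline{\scrF}}$ really does land in $\bK[\bG_m^{\times d}]\otimes_{\bK[\bG_m]}\hom_{\underline{\scrF}}$ compatibly — but this is a formal consequence of the strictness of the $\bG_m$-action on objects together with the multiplicativity $\zeta^{g_1}\otimes\zeta^{g_2}\cong\zeta^{g_1g_2}$, and does not require any new Floer-theoretic input beyond what is already used in Section \ref{Sec:finite_covers}.
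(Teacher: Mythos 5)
There is a genuine gap, and it is exactly the point the paper's proof is engineered to address. Your $\bG_m$ is the multiplicative group of the Novikov field, so a general element $g$ has non-zero valuation, and the rank one local system $\zeta^g$ with holonomy $g^{\langle a,[\ell]\rangle}$ is then \emph{not} unitary (despite your calling it unitary). Since objects of $\scrF(\Sigma)$ are required to carry unitary local systems, ``twist by $\zeta^g$'' is not an endofunctor of $\scrF(\Sigma)$: your recipe only defines an action of the subgroup $U_\Lambda$, which is not the algebraic group $\bG_m$ and does not support the formalism (projective $\bK[\bG_m]$-modules whose fibre at \emph{every} $g$ is $\hom_{\scrF}(g(L_0),L_1)$, coherent sheaves on $\bG_m$ with algebraic connections, vanishing of $\Ext^1(\bG_m,\bG_m)$) on which Proposition \ref{Prop:inf_enough} rests. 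Nor can you silently enlarge the category to admit non-unitary local systems: even on surfaces, where all polygon counts are finite, the naive non-unitary twist of a \emph{fixed} curve does not implement the flux deformation it is meant to encode except over an affinoid range of valuations (this is precisely the content and the limitation of Corollary \ref{Cor:analytic}). Concretely, on $T^2$ two disjoint parallel curves $\gamma_{y_0},\gamma_{y_1}$ have vanishing Floer cohomology for \emph{every} twist of $\gamma_{y_0}$, whereas the flux translate of $\gamma_{y_0}$ onto $\gamma_{y_1}$ has rank two Floer cohomology with $\gamma_{y_1}$; so the fibres of your modules at non-unitary $g$ are not the morphism spaces of any action on $\scrF(\Sigma)$, and the inputs used downstream (the $CO$/$OC$ isomorphisms of Corollary \ref{Cor:QH} in Lemma \ref{Lem:deformationclass}, weak equivariance of objects of $\scrF(\Sigma)^{per}$ in the correct category) are not available for your $\underline{\scrF}$. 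A telling symptom is that your properness argument would work verbatim for the closed torus, where the paper's construction is explicitly \emph{not} proper.

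The paper's proof takes a different, and necessarily harder, route: the valuation direction $q^s$ of $\bG_m$ acts by honest symplectomorphisms of flux $s\cdot a$ (with $\alpha$ supported in an annulus around a curve Poincar\'e dual to $a$), and only the unitary direction acts by twisting, so all objects stay inside $\scrF(\Sigma)$. The price is that $g(L)$ genuinely moves, intersection points with $L'$ are created and destroyed as the valuation of $g$ grows, and the chain-level $\bK[\bG_m]$-modules are of infinite rank; making the structure algebraic in $g$ is where the work lies --- Lee's dipping Hamiltonians, the stabilized generators of Lemma \ref{Lem:persists}, the chain-level inclusions and action rescaling of Lemma \ref{Lem:inclusions}, and the genus $\geq 2$ hypothesis to show that, despite the infinite rank, the cohomology is a finitely generated $\bK[\bG_m]$-module (properness). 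Your proposal bypasses flux entirely, which is why it looks easy; but the identification ``non-unitary twist $=$ flux'' is only local on $\bG_m$, so what you have skipped is not bookkeeping but the main content of the proposition. If you want to salvage the local-system picture, you must at least restrict to $U_\Lambda$ and supply the valuation direction geometrically, which lands you back at the paper's construction.
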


\begin{proof}
Choose a closed differential form $\alpha \in \Omega^1(\Sigma;\bR)$ representing $a$, and let $V \in
\Gamma(T\Sigma)$ be the symplectic vector field $\omega$-dual to $\alpha$.   Consider the actions of $s \in \bR$ and of $\lambda \in U_{\Lambda}$ on Lagrangian branes in $\Sigma$ given by 
\begin{itemize}
\item the symplectomorphisms $\phi^s$ associated to the time $s$ flow of $V$, with flux $s\cdot a$, and 
\item twisting by the line bundle $\xi_{\lambda} \to \Sigma$ which is topologically trivial and has connection $d+\lambda\cdot\alpha$ and hence holonomy $\exp(\lambda\cdot\alpha)$.
\end{itemize}

Using the fact that $\mathcal{L}_V(\alpha) = 0$, one can check that the actions of $\bR$ and $U_{\Lambda}$ \emph{strictly} commute, and define an action of $\bG_m$ on the set of Lagrangian submanifolds with local systems, where
$q^s\lambda\in \bG_m$ acts by $\phi^s \circ (\otimes \xi_{\lambda})$. 

We now enlarge the Fukaya category, following \cite[Section 10b]{Seidel:FCPLT}, to allow pairs $(L,g)$ with $L$ a brane and $g\in \bG_m$, where the perturbation data for $(L,g)$ is the $g$-pullback of that for $L$. 
Then $\bG_m$ acts strictly on Lagrangian branes, via $A_{\infty}$-functors with no higher order terms.
The ideas of Section \ref{Sec:analyticity}, cf.  \cite{Seidel:flux} and \cite{Kartal}, and in particular  
Theorem \ref{Thm:Floer_analytic} imply that the morphism groups $CF^*(g(L),L')$ form analytic sheaves of complexes over
$\bG_m$. 
We will explain below, using ideas from Section \ref{Sec:restriction}, that
these morphism groups can in fact arranged to be the stalks of (infinite rank but cohomologically proper) projective
$\bK[\bG_m]$-modules.
The fact that the $A_{\infty}$-operations are compatible with the group operation follows from strictness of the action. 

The reason why the projective $\bK[\bG_m]$-modules $\hom_{\underline\scrF(\Sigma)}(L,L')$
typically have infinite rank, is that the Floer complexes $CF^*(g(L),L')$ have
unbounded rank, since deforming $L$ by an increasingly large amount of flux
may create unboundedly many new intersections with $L'$. These new intersections come in
cancelling pairs, and because $\Sigma$ has genus at least two, only finitely
many such pairs of intersections ever contribute to the Floer cohomology
$HF^*(g(L),L')$, with the rest belonging to summands which remain acyclic for all $g$.
Still, in order to define the chain-level $A_\infty$-structure it is simpler
to work with the whole Floer complex. In our setting, there is a direct
geometric approach to turning the Floer complexes $CF^*(g(L),L')$ into
projective (in fact, free) $\bK[\bG_m]$-modules.

Namely, represent the Poincar\'e dual to $a$ (or some multiple of it, if $a$
is not a primitive element of $H^1(\Sigma;\bZ)$) by a simple closed curve
$\sigma$, and pick the 1-form $\alpha$ and vector field $V$ to be supported in
an annular neighbourhood $\sigma\subset A\subset \Sigma$, which we identify
with $(-2,2)\times S^1$ as in Section \ref{Sec:restrict}, with $\alpha$
pulled back from $(-2,2)$ so that $V$ is everywhere parallel to the $S^1$-factor.
As in Section \ref{Sec:restrict}, only consider objects of $\scrF(\Sigma)$
which are not parallel to the annulus $A$ and intersect its boundary
minimally (annulus-parallel curves get replaced by twisted complexes as in
Example \ref{Ex:restriction_example}).  Then the Lee perturbations commute
with the $\bG_m$-action, and the various properties of the Floer complexes
$CF^*(L,L';wH)$ for large $w$ carry over to $CF^*(g(L),L';wH)$ as long as
$w$ is sufficiently large compared to the valuation of $g\in \bG_m$.
Because the continuation maps $CF^*(g(L),L';wH)\to CF^*(g(L),L';(w+1)H)$ for sufficiently large $w$ are chain-level inclusions which
define strict $A_\infty$-homomorphisms with no higher order terms, the telescope models
for $CW^*(g(L),L')=\varinjlim CF^*(g(L),L';wH)$ can be replaced with naive
limits: define $\hom(g(L),L')$ to be the union of the increasing sequence of complexes
$CF^*(g(L),L';wH)$ for $w=N,N+1,\dots$ starting from sufficiently large
$N\gg \mathrm{val}(g)$, identifying each with a subcomplex of the next by inclusion.
Equivalently, declare the generators of
$\hom(g(L),L')$ to be the arcs of intersections given by Lemma
\ref{Lem:persists},  with $A_\infty$-operations given by counts of
perturbed holomorphic discs for any value of $w$ which is sufficiently large
relative to the given inputs and the valuation of $g$, corrected by action rescalings as
in Lemma \ref{Lem:inclusions}. There is an obvious identification between
generators for varying values of $g$, and the fact that the $A_\infty$-operations vary
algebraically with $g\in \bG_m$ is a direct consequence of Lemma
\ref{Lem:analytic}; thus the Floer complexes constructed in this manner
naturally define projective $\bK[\bG_m]$-modules underlying a rational
$\bG_m$-action.

Even though the $\bK[\bG_m]$-modules $\hom_{\underline\scrF(\Sigma)}(L,L')$
constructed in this manner are not finitely generated, the cohomology
$H^*\hom_{\underline\scrF(\Sigma)}(L,L')$ is a finitely generated
$\bK[\bG_m]$-module, because all but finitely many of the generators created
upon wrapping come in pairs which are connected by a single bigon
contributing to the Floer differential, hence lie in acyclic summands of the
chain-level $\bK[\bG_m]$-module. Here we use crucially the fact that
$\Sigma$ is not a (closed) torus. We conclude that $\underline\scrF(\Sigma)$ is proper. 
\end{proof}

\begin{Remark}
Proposition \ref{Prop:giveanaction} also holds for a surface $S$ of genus $\geq 1$ with non-empty
boundary, under the additional assumption that the restriction of $a\in H^1(S;\bZ)$ to $\partial S$
vanishes. This ensures that the class $a$ can be represented by a one-form supported in
an annular neighborhood of a simple closed curve, and the argument then
proceeds exactly as in
the case of closed surfaces. Moreover, the construction applies equally well to
the wrapped Fukaya category, and gives a rational $\bG_m$-action on $\scrW(S)$
(however $\underline\scrW(S)$ is not proper).
\end{Remark}

\begin{Example} \label{Ex:deformationclass_scc}
 Suppose $\gamma \subset \Sigma$ is a simple closed curve, and $a\in H^1(\Sigma;\bZ)$.
Then it is clear from the definition of the $\bG_m$-action that the class $\Def^0_{\gamma} \in
\mathfrak{g}^\vee\otimes HF^1(\gamma,\gamma) \simeq H^1(\gamma;\bK)$ is
$a|_{\gamma}$, or more precisely, the homomorphism from $\mathfrak{g}\simeq \bK$ to
$HF^1(\gamma,\gamma)$ given by multiplication by~$a|_{\gamma}$.
Thus, $\gamma$ is infinitesimally equivariant if and only if $\langle a,\gamma\rangle =
0$, as expected.
\end{Example}

The rational $\bG_m$-action on $\scrF(\Sigma)$ immediately extends to
twisted complexes and to the idempotent completion $\scrF(\Sigma)^{per}$;
the properness of $\underline\scrF(\Sigma)$ implies that of
$\underline\scrF(\Sigma)^{per}$.

\begin{Lemma} \label{Lem:deformationclass}
Let $X\in \scrF(\Sigma)^{per}$ be a spherical object with $ch(X)\neq 0$, and $a\in
H^1(\Sigma;\bZ)$. Then $X$ is infinitesimally equivariant with respect to the action of
Proposition \ref{Prop:giveanaction} if and only if $\langle a,ch(X)\rangle=0$.
\end{Lemma}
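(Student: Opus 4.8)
The plan is to identify $\Def^0_X$ with the image of $a$ under the characteristic morphism of $X$, and then to extract the scalar $\langle a,ch(X)\rangle$ using Serre duality on the Calabi--Yau category $\scrF(\Sigma)^{per}$ together with the compatibility of the boundary--bulk (trace) map with the Mukai pairing. \emph{Step 1: the infinitesimal generator.} First I would check that the derivation of $\scrF(\Sigma)$ obtained by differentiating the $\bG_m$-action of Proposition \ref{Prop:giveanaction} at the identity represents, in $HH^1(\scrF(\Sigma),\scrF(\Sigma))$, a non-zero multiple of the closed--open image $CO(a)\in HH^*(\scrF(\Sigma))$ of $a\in H^1(\Sigma;\bZ)$, under the isomorphism $H^*(\Sigma;\Lambda)\cong HH^*(\scrF(\Sigma))$ of Corollary \ref{Cor:QH}. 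The action is the commuting composite of tensoring by the line bundles $\xi_\lambda$ and of the flux flows $\phi^s$: differentiating the first piece multiplies each Floer generator by the period of $\alpha$ along a connecting path, which is exactly the cochain-level description of the ``classical'' part of $CO(a)$, and differentiating the flux piece contributes the same class in view of the analyticity of Floer theory in the flux direction (Section \ref{Sec:analyticity}, Theorem \ref{Thm:Floer_analytic}). Fixing a trivialization of the one-dimensional $\bK$-vector space $\mathfrak{g}^\vee$ and unwinding the connecting homomorphism that defines \eqref{eqn:deformation_class} then identifies $\Def^0_X\in HF^1(X,X)$ with $\chi_X(CO(a))$ (up to the fixed unit), where $\chi_X\colon HH^*(\scrF(\Sigma))\to HF^*(X,X)$ is the characteristic morphism $\alpha\mapsto\alpha\cdot[e_X]$.

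\emph{Step 2: reduction to a scalar and its computation.} Since $X$ is spherical, $HF^*(X,X)\cong H^*(S^1;\Lambda)$, so $HF^0(X,X)=\Lambda\cdot[e_X]$ and $HF^1(X,X)\cong\Lambda$; and since $\scrF(\Sigma)^{per}$ is smooth, proper (Lemma \ref{Lem:proper_and_smooth}) and Calabi--Yau of dimension one, the Serre pairing $HF^0(X,X)\otimes HF^1(X,X)\to\Lambda$, $(f,g)\mapsto\mathrm{tr}_{\mathrm{CY}}(f\circ g)$, is perfect. As $[e_X]$ spans $HF^0(X,X)$, this gives $\Def^0_X=0$ if and only if $\mathrm{tr}_{\mathrm{CY}}(\Def^0_X)=0$. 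Now write $\mathrm{tr}_{\mathrm{CY}}=\int\circ\,\mathrm{tr}$ with $\mathrm{tr}\colon HF^*(X,X)\to HH_*(\scrF(\Sigma))$ the boundary--bulk map and $\int\colon HH_*(\scrF(\Sigma))\to\Lambda$ the global trace of the Calabi--Yau structure. The map $\mathrm{tr}$ is a morphism of $HH^*(\scrF(\Sigma))$-modules with $\mathrm{tr}([e_X])=ch(X)$, and $\chi_X(\alpha)=\alpha\cdot[e_X]$, so $\mathrm{tr}(\Def^0_X)=CO(a)\cap ch(X)$. Transporting everything through the identifications $HH^*(\scrF(\Sigma))\cong H^*(\Sigma;\Lambda)$, $HH_*(\scrF(\Sigma))\cong H_*(\Sigma;\Lambda)$ of Corollary \ref{Cor:QH} — under which $\cap$ is the ordinary cap product and $\int$ is the pushforward to a point — this becomes $\mathrm{tr}_{\mathrm{CY}}(\Def^0_X)=\pm\langle a,ch(X)\rangle$, the Kronecker pairing of $a\in H^1(\Sigma;\Lambda)$ with $ch(X)\in H_1(\Sigma;\Lambda)$. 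The requisite compatibility of the characteristic morphism, the trace and the Mukai pairing is Shklyarov's \cite{Shklyarov}. Combining with the previous sentence establishes both implications at once.

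The step I expect to be the main obstacle is the quantitative bookkeeping in Steps 1 and 2: pinning down that the infinitesimal generator of the $\bG_m$-action is \emph{exactly} $CO(a)$ (in particular correctly accounting for the flux contribution alongside the line-bundle contribution), and that the bulk pairing $\int(CO(a)\cap ch(X))$ is \emph{literally} $\langle a,ch(X)\rangle$ rather than some other contraction — all of this carried out purely within the Hochschild formalism, as $X$ is an abstract spherical object not yet known to be geometric. As a consistency check, specialising to $X=(\xi,\gamma)$ an embedded simple closed curve recovers $\Def^0_\gamma=a|_\gamma$ and $ch(\gamma)=[\gamma]$, matching Example \ref{Ex:deformationclass_scc}.
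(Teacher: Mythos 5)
Your core computation is the same as the paper's: both arguments hinge on identifying the infinitesimal generator of the $\bG_m$-action with $CO(a)$, writing $\Def^0_X$ as the characteristic image $CO(a)\cdot[e_X]$, and using the $HH^*(\scrF(\Sigma))$-module property of the open-closed map to get $OC(CO(a)\cap[e_X])=a\cap ch(X)$. Where you genuinely diverge is in the two reduction steps. For Step 1 the paper does not differentiate the geometric action directly; it quotes Seidel for the existence of a single class $\mathbf{Def}\in HH^1$ inducing all the $\Def^0$'s, and pins it down as $CO(a)$ from the already-computed case of simple closed curves ($\Def^0_\gamma=a|_\gamma$, Example \ref{Ex:deformationclass_scc}) together with $CO$ being an isomorphism -- a cleaner route than re-deriving the cochain-level closed--open map. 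More substantially, at the final step the paper avoids Serre duality and the Calabi--Yau trace altogether: it shows the natural map $H^1\hom(X,X)\to HH_1(\scrF(\Sigma),\scrF(\Sigma))$ is an isomorphism because both sides have rank one (sphericality) and the map is non-zero (since $ch(X)\neq 0$ there is some class $a'$ with $\langle a',ch(X)\rangle\neq 0$), and then concludes via the isomorphism $OC$. Your route instead needs $\scrF(\Sigma)^{per}$ to be $1$-Calabi--Yau over $\Lambda$ in the $\bZ/2$-graded setting, the perfectness of the resulting pairing $HF^0\otimes HF^1\to\Lambda$, the factorization $\mathrm{tr}_{\mathrm{CY}}=\int\circ\,\mathrm{tr}$, and the identification of $\int$ with pushforward to a point under $OC$ -- none of which is established in the paper (Shklyarov's Mukai-pairing results do not by themselves supply the CY trace compatibility), and this is exactly the technology the authors deliberately sidestep by imposing $ch(X)\neq 0$. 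The trade-off is real: if those compatibilities are granted, your argument proves the statement without the hypothesis $ch(X)\neq 0$, which is precisely the strengthening the paper mentions in the remark following the lemma; as a self-contained proof within the paper's toolkit, however, the paper's rank-one/non-vanishing argument is the more economical and fully justified one, and you should either supply the CY input or substitute that step.
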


\begin{proof}
The deformation classes $\Def^0_X$, $X\in
\mathrm{ob}\,\scrF(\Sigma)$ are induced by a Hochschild cohomology class
$\mathbf{Def}\in \mathfrak{g}^\vee\otimes
HH^1(\scrF(\Sigma),\scrF(\Sigma))$, which measures the
infinitesimal action of $\bG_m$ on morphisms
\cite{Seidel:PL_dilating}. 
(Since $\mathfrak{g}^\vee\simeq \bK$, we can think of this as a single
Hochschild cohomology class, and henceforth we drop $\mathfrak{g}^\vee$ from
the notation).
Recall that the closed-open map $CO:H^1(\Sigma;\Lambda)\to
HH^1(\scrF(\Sigma),\scrF(\Sigma))$ is an isomorphism. Thus, the fact that
$\Def^0_\gamma=a_{|\gamma}$ for all simple closed curves implies that
$\mathbf{Def}=CO(a)$. 

The Yoneda product makes Hochschild cohomology a unital algebra, over which
the {\em cap product} endows the Hochschild homology $HH_*(\scrF(\Sigma),\scrF(\Sigma))$
with a module structure. It is well-known to experts 
that the closed-open map is a ring homomorphism, and that the open-closed
map $OC:HH_*(\scrF(\Sigma),\scrF(\Sigma))\to H^{*+1}(\Sigma;\Lambda)\simeq
H_{1-*}(\Sigma;\Lambda)$ is a homomorphism
of $H^*(\Sigma;\Lambda)$-modules (see e.g.\ \cite{Ganatra2013}, \cite{Ritter-Smith}).
With this understood, we can view $\Def^0_X\in H^1\hom(X,X)$ as a Hochschild
cycle, and the corresponding class $[\Def^0_X]\in HH_1(\scrF(\Sigma),\scrF(\Sigma))$ can be expressed
as $\mathbf{Def}\cap [e_X]=CO(a)\cap [e_X]$, which implies that
\[
OC([\Def^0_X])=OC(CO(a)\cap [e_X])=a\cap OC([e_X])=a\cap ch(X)\in H_0(\Sigma;\Lambda).
\]
Since the open-closed map is an isomorphism, we conclude that $[\Def^0_X]\in
HH_1(\scrF(\Sigma),\scrF(\Sigma))$ vanishes if and only if $\langle
a,ch(X)\rangle=0$. In order to reach the same conclusion for $\Def^0_X\in H^1\hom(X,X)$,
it remains to verify that the map $H^1\hom(X,X)\to
HH_1(\scrF(\Sigma),\scrF(\Sigma))$ is an isomorphism. Since $X$ is
spherical, both sides have rank one, and it is enough to show that the map
does not vanish identically; since $ch(X)\neq 0$ this follows from the
existence of classes $a$ for which $\langle a,ch(X)\rangle\neq 0$.
\end{proof}

We note that the assumption that $ch(X)\neq 0$ can be removed by using the
Calabi-Yau structure on $\scrF(\Sigma)$; more generally the statement is
expected to hold for all indecomposable objects of $\scrF(\Sigma)^{per}$.

We also remark that a similar criterion for infinitesimal
equivariance can be formulated for spherical objects on a surface with boundary, 
using inclusion into a larger closed surface to reduce to
Lemma \ref{Lem:deformationclass}.

\section{Conclusions}

\subsection{Spherical objects revisited} We can now conclude the proof of Theorem \ref{Thm:Main}.

\begin{Corollary} Let $X \in \scrF(\Sigma)^{per}$ be spherical, with $ch(X)\neq 0$. Suppose there is a class $a \in H^1(\Sigma;\bZ)$ with the property that $\langle a, ch(X)\rangle  = 0$. 
Then for each $p$, $X$ is quasi-isomorphic to an object of
$\scrF(\Sigma)^{per}$ which admits a naive equivariant structure for the $\bZ/p$-action of Lemma \ref{Lem:Zpaction}
and lifts to the degree $p$ covering $\tilde{\Sigma} \to \Sigma$ as an
object $\hat{X} \in \scrF(\tilde{\Sigma})^{per}$.
\end{Corollary}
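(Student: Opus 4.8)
The plan is to realise the $\bZ/p$-equivariance of Lemma \ref{Lem:Zpaction} as the restriction of a $\bG_m$-equivariance coming from the action attached to $a$. First I would invoke Proposition \ref{Prop:giveanaction}: the class $a\in H^1(\Sigma;\bZ)$ defines a rational $\bG_m$-action on $\scrF(\Sigma)$ for which $\underline\scrF(\Sigma)$ is proper, and this action passes to twisted complexes and the idempotent completion, so $\underline\scrF(\Sigma)^{per}$ is proper as well. Lemma \ref{Lem:deformationclass} then tells us that the hypothesis $\langle a, ch(X)\rangle=0$ is exactly the statement that $X$ is \emph{infinitesimally} equivariant for this action, i.e. the class $\Def^0_X\in\mathfrak{g}^\vee\otimes H^1(hom(X,X))$ of \eqref{eqn:deformation_class} vanishes. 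Since $X$ is spherical we have $H^0(hom_{\scrF(\Sigma)^{per}}(X,X))=\bK$, so $X$ is simple, and $\bK=\Lambda_\bC$ has characteristic zero; Proposition \ref{Prop:inf_enough} therefore produces a \emph{weakly} $\bG_m$-equivariant structure on $X$.

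Next I would restrict this structure to a finite subgroup. Inside $\bG_m$ sits the group $\mu_p\cong\bZ/p$ of $p$-th roots of unity in the unitary factor $U_\Lambda$, which acts purely by tensoring by a flat line bundle (the symplectomorphism component $\phi^s$ is trivial because these elements have valuation $s=0$); after the standard strictification of finite actions used in the proof of Lemma \ref{Lem:Zpaction}, and the attendant identification of quasi-equivalent models of $\scrF(\Sigma)$, this is precisely the $\bZ/p$-action of that lemma, the generator tensoring by the line bundle with holonomy $\exp(2\pi i\langle a,[\gamma]\rangle/p)$ around a loop $\gamma$. Restricting the module homomorphisms $\rho^1_g$ of the weak $\bG_m$-structure to $g\in\bZ/p$ gives a weakly $\bZ/p$-equivariant structure on $X$. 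By Lemma \ref{Lem:weaktonaive} (again using characteristic zero, so that $H^{r}(\bZ/p;M)=0$ for $r>0$ on any module $M$), the Yoneda module of $X$ is quasi-isomorphic to a \emph{naively} $\bZ/p$-equivariant $\scrF(\Sigma)$-module $M$. Since $\scrF(\Sigma)^{per}$ is the split-closed category of perfect modules, $M$ is the Yoneda module of an object $X'\in\scrF(\Sigma)^{per}$ with $X'\simeq X$, and the naive $\bZ/p$-structure on $M\simeq\scrY_{X'}$ is the required naive equivariant structure on $X'$ (which we may thereby take to be fixed by the $\bZ/p$-action).

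Finally, I would use the fact that a naively $\bZ/p$-equivariant module over $\scrF(\Sigma)$ is the same thing as a module over the semidirect product $\scrF(\Sigma)\rtimes\bZ/p$, and that by the discussion of Section \ref{Sec:finite_covers} — the isomorphism \eqref{eq:Equivariant_vs_Lift} identifying $\scrF(\Sigma)\rtimes\bZ/p$ with (a model of) $\scrF(\tilde\Sigma)$ for the degree $p$ cover $\tilde\Sigma\to\Sigma$ determined by $a\bmod p$, together with its module-theoretic counterpart — such a module corresponds to a module over $\scrF(\tilde\Sigma)$. This equivalence preserves perfection, so $M$ is carried to the Yoneda module of an object $\hat X\in\scrF(\tilde\Sigma)^{per}$, the required lift of $X'\simeq X$; running the argument separately for each $p\geq 2$ gives the statement. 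The main obstacle is conceptual rather than computational, and it is entirely concentrated in Proposition \ref{Prop:inf_enough} and Lemma \ref{Lem:deformationclass}: the point is that the purely homological hypothesis $\langle a, ch(X)\rangle=0$ forces $X$ to be even weakly $\bG_m$-equivariant. Granting that, the remaining work is the bookkeeping of restricting to $\bZ/p$, of reconciling the model of the $\bG_m$-action of Proposition \ref{Prop:giveanaction} (with $\alpha$ supported near a curve and Lee perturbations) with the standard model carrying the action of Lemma \ref{Lem:Zpaction}, and of passing between naively equivariant modules downstairs and objects on the cover.
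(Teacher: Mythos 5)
Your proposal follows essentially the same route as the paper: infinitesimal equivariance from $\langle a, ch(X)\rangle=0$ via Lemma \ref{Lem:deformationclass}, upgraded to a weak $\bG_m$-structure by Proposition \ref{Prop:inf_enough} using simplicity of the spherical object, restricted to the $p$-th roots of unity, made naive by Lemma \ref{Lem:weaktonaive}, and reinterpreted over $\scrF(\tilde{\Sigma})$ through the semidirect product description of Section \ref{Sec:finite_covers}. The one step you assert rather than argue is that the lifted module is perfect over $\scrF(\tilde{\Sigma})$ (``the equivalence preserves perfection''): this is not formal, and the paper instead deduces it from properness of the Yoneda module of $X$ together with homological smoothness of $\scrF(\tilde{\Sigma})$ (Lemma \ref{Lem:proper_and_smooth}); alternatively one can note that, since $p$ is invertible in $\Lambda_{\bC}$, averaging exhibits the equivariant module as a summand of one induced from a perfect $\scrF(\Sigma)$-module. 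Apart from supplying a justification at that point, your argument matches the paper's.
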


\begin{proof} By Lemma \ref{Lem:deformationclass}, the hypothesis $\langle a,ch(X)\rangle  = 0$
implies that $X$ is infinitesimally equivariant for the $\bG_m$-action
associated to $a$ by Proposition \ref{Prop:giveanaction}. 
Proposition \ref{Prop:inf_enough} then shows that $X$ is weakly $\bG_m$-equivariant in
the sense of Section \ref{Sec:rationalactions}. Restricting to the finite
subgroup of $p$-th roots of unity, we find that $X$ is weakly $\bZ/p$-equivariant
with respect to the action of Lemma \ref{Lem:Zpaction}, and so by Lemma \ref{Lem:weaktonaive} 
its Yoneda module is quasi-isomorphic to a naively equivariant module.  
One can then appeal to the results of Section \ref{Sec:finite_covers} and
reinterpret this naively $\bZ/p$-equivariant $\scrF(\Sigma)$-module as a
module over $\scrF(\tilde\Sigma)$. 

Since $\scrF(\Sigma)$ is proper, the Yoneda module of $X$ is proper,
and so are the equivariant module and its lift; the homological smoothness
of $\scrF(\tilde\Sigma)$ then implies that the lifted module is perfect,
hence can be represented by an object $\hat{X}\in
\scrF(\tilde\Sigma)^{per}$. Finally, projecting $\hat{X}$ back to $\Sigma$ yields
a naively equivariant object of $\scrF(\Sigma)^{per}$ which is
quasi-isomorphic to $X$.
\end{proof}
 
We now show that certain objects are supported on subsurfaces of $\tilde{\Sigma}$, in the following sense.

\begin{Corollary} \label{Cor:lift_disjoint_from_curve}
If $X \in \scrF(\Sigma)^{per}$ is spherical, with $ch(X)\neq 0$, we can find a cover $\tilde{\Sigma} \to \Sigma$ and a lift $\hat{X}$ of $X$ to
$\tilde{\Sigma}$ and a simple closed curve $\gamma \subset \tilde{\Sigma}$ for which $HF^*(\hat{X}, \gamma) = 0$.
\end{Corollary}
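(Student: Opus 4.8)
The plan is to combine the equivariant lifting just established with the vanishing results for Floer cohomology against simple closed curves from Section~\ref{Sec:restriction_appl}. By the previous Corollary, after replacing $X$ by a quasi-isomorphic object we obtain, for any $p$, a lift $\hat{X} \in \scrF(\tilde\Sigma)^{per}$ to the degree $p$ cyclic cover $\tilde\Sigma \to \Sigma$ determined by a class $a \in H^1(\Sigma;\bZ)$ with $\langle a, ch(X)\rangle = 0$. The object $\hat{X}$ is again spherical (a naively equivariant lift of a spherical object over a finite cover has the same endomorphism cohomology after summing over the group, and the equivariant summand sees exactly $H^*(S^1;\Lambda)$), so we are in a position to look for a simple closed curve on $\tilde\Sigma$ that is Floer-orthogonal to $\hat{X}$.

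The key point is a Chern-character computation on the cover. Since $X$ is spherical with $ch(X) \neq 0$, the class $ch(X) \in H_1(\Sigma;\bZ)$ is a primitive-or-not but nonzero integral class, and the lifted object $\hat{X}$ has Chern character $ch(\hat{X}) \in H_1(\tilde\Sigma;\bZ)$ which maps to $p \cdot ch(X)$ (or a related multiple) under the transfer/pushforward, and whose pairing with $H_1(\tilde\Sigma;\bZ)$ is computed, via $\chi\,HF$, by the Mukai pairing of Section~\ref{subsec:closed_fukaya}. First I would observe that $ch(\hat X)$, being a single integral homology class on $\tilde\Sigma$, is annihilated by the intersection pairing with a sublattice of $H_1(\tilde\Sigma;\bZ)$ of corank one; as $g(\tilde\Sigma)$ grows with $p$, this sublattice has large rank, and in particular contains the class of some non-separating simple closed curve $\gamma$. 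By Corollary~\ref{Cor:rankHF}-type reasoning combined with \eqref{eqn:non-comm-rr}, one gets $\chi\,HF^*(\hat X,\gamma) = -\langle ch(\hat X), [\gamma]\rangle = 0$; but that only gives vanishing of the \emph{Euler characteristic}, not of $HF^*$ itself.

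The hard part is upgrading $\chi\,HF^*(\hat X,\gamma)=0$ to $HF^*(\hat X,\gamma)=0$. This is exactly where taking $p$ (hence the covering degree, hence the genus of $\tilde\Sigma$) large is essential: one should argue that for a suitable choice of $\gamma$ in the corank-one orthogonal sublattice, \emph{both} cohomology groups $HF^0$ and $HF^1$ vanish, not merely their difference. The mechanism is that $\hat X$, being the lift of a fixed object $X$, ``lives on a bounded portion'' of $\tilde\Sigma$: its geometric replacement (once we know one exists) has a definite complexity independent of $p$, so as the genus grows we can find homologically independent simple closed curves disjoint from any representative of $\hat{X}$ — and a curve disjoint from (a representative of) $\hat{X}$ certainly has vanishing Floer cohomology with it. Concretely, I expect the cleanest route is: use that $X$ has a fixed, finite-dimensional endomorphism ring, lift this property, invoke Corollary~\ref{Cor:boundary_gives_geometric} applied after cutting (or directly the fact that on $\tilde\Sigma$ the relevant complexity is inherited from $\Sigma$), and deduce that for $p$ large there is enough room in $\tilde\Sigma$ to slide a curve $\gamma$, in the homology class selected above, off the support of $\hat X$ — giving literally $HF^*(\hat X, \gamma)=0$. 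The main obstacle to make rigorous is precisely this ``bounded support'' statement for $\hat X$ on the growing cover; once that is in hand, the choice of $\gamma$ and the vanishing are routine.
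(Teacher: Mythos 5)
Your first step (invoking the previous corollary to get a naively equivariant model of $X$ and a lift $\hat{X}$ to the degree $p$ cyclic cover determined by $a$ with $\langle a,ch(X)\rangle=0$) agrees with the paper, but your route to the vanishing $HF^*(\hat X,\gamma)=0$ has a genuine gap. You correctly note that the Mukai-pairing computation only yields $\chi\,HF^*(\hat X,\gamma)=0$, and you propose to upgrade this by arguing that $\hat X$ has ``bounded support'': a geometric replacement of complexity independent of $p$, so that for large $p$ one can slide a curve off it. But the existence of such a geometric replacement is exactly what the whole chain of arguments is trying to establish: geometricity of $\hat X$ on the closed surface $\tilde\Sigma$ is obtained in Corollary~\ref{Cor:non-filling_implies_geometric} only \emph{after} one has two disjoint homologically independent simple closed curves $\gamma_i$ with $HF^*(\hat X,\gamma_i)=0$, which feed into the cutting machinery of Lemmas~\ref{Lem:cut_annulus} and \ref{Lem:cut_two_annuli}; and Corollary~\ref{Cor:boundary_gives_geometric} only applies to the punctured surface obtained from that cutting. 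So your ``hard part'' presupposes the very Floer-vanishing statement the corollary is meant to produce, and the argument is circular. Nothing in the paper (or in your sketch) gives a direct bound on the ``complexity'' of a non-geometric perfect module $\hat X$ that is uniform in $p$.

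The missing idea is much more elementary: a pigeonhole argument using the equivariant decomposition of Section~\ref{Sec:finite_covers}. Take \emph{any} simple closed curve $\delta\subset\Sigma$ with $\langle a,[\delta]\rangle=0$, so that $\delta$ has $p$ distinct lifts $\hat\delta^\chi$, $\chi\in G^\vee$, to $\tilde\Sigma$. Then
\[
HF^*(X,\delta)\;\cong\;\bigoplus_{\chi\in G^\vee} HF^*(\hat X,\hat\delta^\chi),
\]
and the left-hand side has a fixed finite rank independent of $p$ (properness of $\scrF(\Sigma)$). Hence as soon as $p>\operatorname{rank} HF^*(X,\delta)$, at least one summand on the right vanishes, and $\gamma=\hat\delta^\chi$ is the desired curve. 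No Chern-character selection of the homology class of $\gamma$ and no geometric representative of $\hat X$ are needed at this stage; the Euler-characteristic considerations you set up are bypassed entirely.
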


\begin{proof} Pick a class $a \in H^1(\Sigma;\bZ)$ with $\langle a, ch(X)\rangle = 0$.
By the previous Corollary, $X$ is $G=\bZ/p$-equivariant for the action of
Lemma \ref{Lem:Zpaction}, and can be lifted to the degree $p$ covering
$\tilde\Sigma\to\Sigma$, giving an object $\hat{X}\in \scrF(\tilde\Sigma)^{per}$.

Let $\delta \subset \Sigma$ be any simple closed curve such that $\langle
a,[\delta]\rangle=0$: then $\delta$ lifts to a simple closed curve
$\hat\delta$ in $\tilde\Sigma$, and by the results of Section
\ref{Sec:finite_covers} we have \begin{equation} \label{eqn:HF_upstairs}
HF^*(X,\delta) = \textstyle\bigoplus\limits_{\chi\in G^{\vee}} HF^*(\hat{X},
\hat\delta^\chi),
\end{equation}
where we recall that the objects $\hat\delta^\chi$, $\chi\in G^\vee$ are the $p$ lifts of
$\delta$ to $\tilde\Sigma$. 
Then for sufficiently large $p$ ($p>\mathrm{rank}\,HF^*(X,\delta)$) some summand on the RHS vanishes, 
giving a simple closed curve $\gamma=\hat\delta^\chi\subset \tilde{\Sigma}$ with $HF^*(\hat{X},\gamma) = 0$. \end{proof}

\begin{Lemma} \label{Lem:lift_with_many_punctures}
In the situation of Corollary \ref{Cor:lift_disjoint_from_curve}, one can choose
$\tilde\Sigma$ so that there are two disjoint homologically independent
simple closed curves $\gamma_1,\gamma_2\subset \tilde\Sigma$ with
$HF^*(\hat{X},\gamma_i)=0$.
\end{Lemma}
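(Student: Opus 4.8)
The plan is to re-run the argument of Corollary~\ref{Cor:lift_disjoint_from_curve} starting from \emph{two} disjoint curves in $\Sigma$ rather than one. As there, I would fix a class $a\in H^1(\Sigma;\bZ)$ with $\langle a, ch(X)\rangle=0$ (which exists by the Corollary following Lemma~\ref{Cor:HH_integral}), let $\tilde\Sigma\to\Sigma$ be the degree $p$ cyclic covering associated to $a\bmod p$ and $G=\bZ/p$ act as in Lemma~\ref{Lem:Zpaction}; by the construction in Corollary~\ref{Cor:lift_disjoint_from_curve} (and the Corollary preceding it), for $p$ sufficiently large $X$ lifts to some $\hat X\in\scrF(\tilde\Sigma)^{per}$.

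The topological input I need is a pair of disjoint simple closed curves $\delta_1,\delta_2\subset\Sigma$ whose homology classes are linearly independent in $H_1(\Sigma;\bZ)$ and both lie in $\ker(a_*\colon H_1(\Sigma;\bZ)\to\bZ)$. Since $\langle a,v\rangle=\mathrm{PD}(a)\cdot v$, this kernel is the intersection-orthogonal $u_0^\perp$, where $u_0$ is the primitive class with $\mathrm{PD}(a)\in\bZ u_0$; after a change of symplectic basis I may assume $u_0=\mathfrak a_1$ in a standard basis $\mathfrak a_1,\mathfrak b_1,\dots,\mathfrak a_g,\mathfrak b_g$ of $H_1(\Sigma;\bZ)$, and then the standard disjoint curves representing $\mathfrak a_1$ and $\mathfrak a_2$ do the job. (Here $g\geq 2$ is used so that $\mathfrak a_2$ exists; equivalently $\mathrm{rank}\,\ker a_*=2g-1\geq 3$.) Because $\langle a,[\delta_i]\rangle=0$, each $\delta_i$ lies in the kernel of $\pi_1(\Sigma)\to\bZ/p$ and hence lifts to $p$ disjoint copies $\hat\delta_i^{\chi}$, $\chi\in G^\vee$, each mapping homeomorphically onto $\delta_i$.

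I would then apply the counting step of Corollary~\ref{Cor:lift_disjoint_from_curve} to $\delta_1$ and $\delta_2$ simultaneously. By \eqref{eqn:HF_upstairs}, $HF^*(X,\delta_i)\cong\bigoplus_{\chi\in G^\vee}HF^*(\hat X,\hat\delta_i^{\chi})$, and the ranks $r_i:=\dim_\Lambda HF^*(X,\delta_i)$ are finite and do not depend on $p$; so choosing $p>\max(r_1,r_2)$ forces, for each $i$ separately, at least one of the $p$ summands to be acyclic. Picking such a summand gives $\gamma_i:=\hat\delta_i^{\chi_i}$ with $HF^*(\hat X,\gamma_i)=0$. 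Finally I would verify the two required properties: $\gamma_1$ and $\gamma_2$ are disjoint because they project to the disjoint curves $\delta_1,\delta_2$; and they are homologically independent because the covering projection sends $[\gamma_i]\mapsto[\delta_i]$ and the $[\delta_i]$ were chosen linearly independent in $H_1(\Sigma;\bZ)$ (in particular $[\gamma_i]\neq 0$).

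I do not anticipate a genuine obstacle beyond Corollary~\ref{Cor:lift_disjoint_from_curve} itself: the new content is only the elementary fact that $\ker a_*$ contains two disjoint homologically independent simple closed curves when $g\geq 2$, together with the observation that one may enlarge $p$ so that the vanishing argument applies to both curves at once. The single point needing (routine) care is that homological independence downstairs is inherited upstairs, which is immediate since each $\gamma_i$ is a degree-one lift of $\delta_i$.
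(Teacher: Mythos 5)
Your proposal is correct, and it runs on the same engine as the paper's proof: the equivariant splitting \eqref{eqn:HF_upstairs} of $HF^*(X,\delta)$ into the $p$ summands $HF^*(\hat X,\hat\delta^\chi)$, together with a pigeonhole argument for $p$ larger than the (\,$p$-independent\,) rank of $HF^*(X,\delta)$. The one place where you deviate is the topological bookkeeping that certifies disjointness and homological independence upstairs. The paper takes a \emph{single} curve $\delta$ with $\langle a,[\delta]\rangle=0$, obtains both $\gamma_1,\gamma_2$ as two different lifts of $\delta$ (requiring $p$ large enough that at least \emph{two} of the $p$ summands vanish), and certifies independence of the two lifts by means of an auxiliary curve $\delta'$ with geometric intersection number one with $\delta$ and $\langle a,[\delta']\rangle=0$: each lift of $\delta$ meets exactly one lift of $\delta'$. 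You instead choose two disjoint, homologically independent curves $\delta_1,\delta_2$ in $\ker\langle a,\cdot\rangle$ downstairs (which exist for $g\geq 2$ since that kernel is the intersection-orthogonal of a primitive class), lift each one once, and deduce independence of $[\gamma_1],[\gamma_2]$ by pushing forward under the degree-one covering projection $[\gamma_i]\mapsto[\delta_i]$ --- a correct and slightly more elementary certificate, at the cost of the (easy) existence statement for the pair $\delta_1,\delta_2$; in exchange you only need one vanishing summand per curve rather than two vanishing summands among the lifts of one curve. Either route yields the hypotheses needed for Lemma \ref{Lem:cut_two_annuli}.
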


\begin{proof}
We proceed as in the proof of Corollary \ref{Cor:lift_disjoint_from_curve}.
Since the genus of $\Sigma$ is
at least two, we can find two simple closed curves $\delta,\delta'$ in
$\Sigma$ which have geometric intersection number one and both pair
trivially with $a$. The $p$ lifts of $\delta$ to $\tilde\Sigma$ are disjoint simple closed
curves, and each of them intersects
precisely one of the $p$ lifts of $\delta'$, so their homology
classes are linearly independent in $H_1(\tilde\Sigma;\bZ)$. The result now follows by arguing as in
the previous Corollary and taking $p$
sufficiently large to force the vanishing of at least two of the Floer
cohomology groups $HF^*(\hat{X},\hat\delta^\chi)$.
\end{proof}

\begin{Corollary}[=Theorem \ref{Thm:Main}]
A spherical object $X \in D^{\pi}\scrF(\Sigma)$ with non-zero Chern character is quasi-isomorphic to a simple closed curve equipped with a rank one local system.
\end{Corollary}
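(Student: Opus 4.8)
### Proof proposal

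The plan is to assemble the pieces developed in Sections 4--6 exactly in the order laid out in the nine-step outline of the introduction. Let $X \in D^\pi\scrF(\Sigma)$ be spherical with $\mathrm{ch}(X)\neq 0$; we want to produce a simple closed curve $\gamma$ and a rank one local system $\xi$ with $X \simeq (\xi,\gamma)$. By Lemma \ref{Cor:HH_integral} the class $\mathrm{ch}(X)\in H_1(\Sigma;\Lambda)$ is integral, and by the corollary following it there is a non-zero $a\in H^1(\Sigma;\bZ)$ with $\langle a,\mathrm{ch}(X)\rangle=0$. So the homological hypothesis of all the equivariance results is available.

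First I would invoke the first Corollary of Section \ref{Sec:rationalactions}'s conclusions: since $\langle a,\mathrm{ch}(X)\rangle=0$, Lemma \ref{Lem:deformationclass} gives infinitesimal equivariance for the $\bG_m$-action of Proposition \ref{Prop:giveanaction}, Proposition \ref{Prop:inf_enough} upgrades this (using that $X$ is simple, as it is spherical) to a weak $\bG_m$-equivariant structure, hence a weak $\bZ/p$-equivariant structure for every $p$, and Lemma \ref{Lem:weaktonaive} makes the Yoneda module naively equivariant; reinterpreting via Section \ref{Sec:finite_covers} and using properness of $\scrF(\Sigma)$ together with homological smoothness of $\scrF(\tilde\Sigma)$ (Lemma \ref{Lem:proper_and_smooth}), we obtain for each $p$ a lift $\hat X\in\scrF(\tilde\Sigma)^{per}$ to the degree-$p$ cover $\tilde\Sigma\to\Sigma$ determined by $a$, with $\hat X$ still spherical. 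Next, Lemma \ref{Lem:lift_with_many_punctures} (a Floer-theoretic pigeonhole argument: pick $\delta,\delta'\subset\Sigma$ with geometric intersection one and both pairing trivially with $a$, lift, and take $p$ large enough to kill two summands of the Floer cohomology) produces, after enlarging the cover if necessary, two disjoint homologically independent simple closed curves $\gamma_1,\gamma_2\subset\tilde\Sigma$ with $HF^*(\hat X,\gamma_i)=0$. Crucially, $\mathrm{ch}(\hat X)\neq 0$ still holds, because the open-closed map is compatible with the covering projection and $\mathrm{ch}(X)\neq 0$ lifts to a nonzero class (its image under the transfer/pushforward recovers a multiple of $\mathrm{ch}(X)$).

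Now Corollary \ref{Cor:non-filling_implies_geometric} applies verbatim to $\hat X$ on $\tilde\Sigma$: cutting along annular neighbourhoods of $\gamma_1,\gamma_2$, the restriction machinery of Section \ref{Sec:restriction} (Lemma \ref{Lem:cut_two_annuli} and the Corollary after it) shows $\hat X$ is quasi-isomorphic to a direct sum of immersed closed curves with local systems, and since a spherical object is indecomposable this direct sum is a single immersed curve $\sigma$ with local system; the hypothesis $\mathrm{ch}(\hat X)\neq 0$ forces $[\sigma]\in H_1(\tilde\Sigma;\bZ)$ nonzero, so Corollary \ref{Cor:immersed_spherical} gives that $\sigma$ is an \emph{embedded} simple closed curve carrying a \emph{rank one} local system $\hat\xi$. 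Thus $\hat X\simeq(\hat\xi,\sigma)$ in $\scrF(\tilde\Sigma)^{per}$. It remains to descend: the naively equivariant lift means the $\bZ/p$-action on $\scrF(\tilde\Sigma)$ (deck transformations) fixes $\hat X$, hence fixes the quasi-isomorphism class of the embedded curve $(\hat\xi,\sigma)$; since $\sigma$ is an embedded simple closed curve, its deck orbit consists of disjoint embedded curves, and invariance of the quasi-isomorphism class together with the fact that distinct disjoint embedded curves are never quasi-isomorphic forces the orbit to be a single curve, i.e.\ $\sigma$ is preserved setwise by the deck group and descends to an embedded simple closed curve $\gamma\subset\Sigma$ with $[\gamma]$ nonzero, carrying the rank one local system $\xi$ obtained by pushing down $\hat\xi$. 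Unwinding the identification of Section \ref{Sec:finite_covers} between equivariant $\scrF(\Sigma)$-modules and $\scrF(\tilde\Sigma)$-modules, the pushforward of $(\hat\xi,\sigma)$ is $(\xi,\gamma)$, and we conclude $X\simeq(\xi,\gamma)$.

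The main obstacle is the descent step: making precise, at the level of modules over $A_\infty$-categories, that the naive $\bZ/p$-equivariant structure on $X$ — equivalently, the lift $\hat X$ together with the datum of \emph{which} lift — is carried along the chain of quasi-isomorphisms produced by the restriction/geometrization arguments, so that when we recognize $\hat X$ geometrically as an embedded curve with local system, that recognition is compatible with the deck action and the geometric replacement is genuinely deck-equivariant. This requires knowing that the equivalences of Section \ref{Sec:restriction} (the Lee restriction functors and the pullback square of Corollary \ref{Cor:pullback}) can be taken equivariantly, or alternatively arguing \emph{a posteriori} as above, using indecomposability and the Krull--Schmidt-type rigidity of embedded curves (in the spirit of Lemma \ref{Lem:twists_forces_objects} and Lemma \ref{Lem:determines}) to pin down $\sigma$'s deck orbit without tracking the equivariant structure through every step. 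The latter route is cleaner and is what I would write up.
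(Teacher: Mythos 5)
Your steps up through the geometrization on the cover follow the paper's route (equivariance via Lemma \ref{Lem:deformationclass}, Proposition \ref{Prop:inf_enough}, Lemma \ref{Lem:weaktonaive}; then Lemma \ref{Lem:lift_with_many_punctures} and Corollary \ref{Cor:non-filling_implies_geometric}), modulo two small omissions you could easily repair: you do not actually verify that $\hat{X}$ is spherical (the paper identifies $HF^*(\hat X,\hat X)$ with the $G$-invariant part of $HF^*(X,X)$, uses invariance of the unit and Poincar\'e duality), and your justification that $ch(\hat X)\neq 0$ via ``transfer recovers a multiple of $ch(X)$'' is loose compared with the paper's argument pairing $\hat X$ against the total preimage of a curve $\eta$ with $\chi\Hom(X,\eta)\neq 0$.

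The genuine gap is in your descent step. Your argument rests on the claim that, because $X$ carries a naive $\bZ/p$-equivariant structure, the deck group of $\tilde\Sigma\to\Sigma$ fixes $\hat X$ up to quasi-isomorphism, hence fixes $\hat\sigma$ setwise, so that $\hat\sigma$ projects to an \emph{embedded} curve in $\Sigma$. This premise is false: the lift of an equivariant object is a single ``branch'' of the preimage, and the deck group permutes the $p$ distinct lifts corresponding to the $p$ equivariant structures (only the total pullback of $X$ is deck-invariant). Already for $X$ a simple closed curve $\gamma$ with $\langle a,[\gamma]\rangle=0$, the lift is one of $p$ disjoint curves which the deck group permutes transitively. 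Consequently the projection $\sigma$ of $\hat\sigma$ is a priori only an \emph{immersed} curve in $\Sigma$ (since $\hat\sigma$ may meet its deck translates), and embeddedness cannot be extracted from indecomposability or Krull--Schmidt considerations alone; the Remark following the paper's proof (the exotic spherical object of Figure \ref{Fig:bad} lifts to an embedded curve on a double cover) shows descent of embeddedness is exactly the delicate point. The paper instead descends the quasi-isomorphism: $(\xi,\sigma)$ admits $p$ equivariant structures, one of which lifts to $(\hat\xi,\hat\sigma)$, and the identification of $CF_{\tilde\Sigma}(\hat X,(\hat\xi,\hat\sigma))$ with the $G$-invariant part of $CF_{\Sigma}(X,(\xi,\sigma))$ turns the quasi-isomorphism upstairs into one downstairs; only then does a final application of Corollary \ref{Cor:immersed_spherical} (bigon removal, using $[\sigma]=ch(X)\neq 0$) replace the immersed $\sigma$ by an embedded simple closed curve. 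Your proposal is missing this last downstairs geometrization, and the shortcut you propose in its place does not work.
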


\begin{proof}
By Lemma \ref{Lem:lift_with_many_punctures}, we can find an action of
$G=\bZ/p$ and a finite covering
$\tilde\Sigma\to \Sigma$ for which $X$ is $G$-equivariant and lifts to an object $\hat{X}\in
\scrF(\tilde\Sigma)^{per}$, and two disjoint, homologically independent
simple closed curves $\gamma_1,\gamma_2\subset \tilde\Sigma$ with
$HF^*(\hat{X},\gamma_i)=0$. We now check that $\hat{X}$ is spherical and
$ch(\hat{X})$ is non-zero.

Recalling that the
Fukaya category of $\tilde\Sigma$ gives a model for the $G$-equivariant
Fukaya category of $\Sigma$ (cf.\ Section \ref{Sec:finite_covers}),
$HF^*(\hat{X},\hat{X})$ is isomorphic to the $G$-invariant summand of
$HF^*(X,X)\simeq H^*(S^1;\bK)$. General properties of equivariant
objects imply that the unit $e_X$ is $G$-invariant,
so $HF^0(\hat{X},\hat{X})$ has rank one, and Poincar\'e duality (or
vanishing of the skew-symmetric pairing $\chi\Hom(\hat{X},\hat{X})$) implies that
$HF^1(\hat{X},\hat{X})$ also has rank one. Hence $\hat{X}$ is spherical.
Meanwhile, the assumption that $ch(X)\in H_1(\Sigma;\bZ)$ is non-zero means that there exists a
simple closed curve $\eta$ such that $\chi\Hom(X,\eta)\neq 0$. Denoting by
$\hat\eta$ the total preimage of $\eta$ in $\tilde\Sigma$ (which may consist of one or more
simple closed curves), we find that $\chi\Hom(\hat{X},\hat\eta)=
\chi\Hom(X,\eta)\neq 0$, so $ch(\hat{X})$ is a non-zero element of
$H_1(\tilde\Sigma;\bZ)$.

Corollary \ref{Cor:non-filling_implies_geometric} now implies that $\hat{X}$ is quasi-isomorphic to a simple closed curve
$(\hat\xi,\hat\sigma)$ with rank one local system in $\scrF(\tilde\Sigma)$. 
The last step is to descend back from $\tilde\Sigma$ to $\Sigma$.
The projection of $\hat\sigma$ to $\Sigma$ is a closed ({\em a priori}
immersed) curve $\sigma$ in $\Sigma$, whose homology class satisfies $\langle
a,[\sigma]\rangle=0$, and the local system $\hat\xi$ descends to a rank one
local system $\xi$ on $\sigma$. As noted in Section \ref{Sec:finite_covers},
the object $(\xi,\sigma)\in \scrF(\Sigma)$ admits $p$ distinct $G$-equivariant structures,
and one of these corresponds to the lift $(\hat\xi,\hat\sigma)\in \scrF(\tilde\Sigma)$.
A quasi-isomorphism between $\hat{X}$ and $(\hat\xi,\hat\sigma)$ then
descends to a $G$-equivariant quasi-isomorphism between $X$ and
$(\xi,\sigma)$ (i.e., a quasi-isomorphism which lies in the $G$-invariant part of the
Floer complex); from which it follows that $X$ and $(\xi,\sigma)$ are also
quasi-isomorphic (non-equivariantly) in $\scrF(\Sigma)^{per}$. Corollary
\ref{Cor:immersed_spherical} then implies that $\sigma$ is quasi-isomorphic
to an embedded simple closed curve.
\end{proof}

\begin{Remark}
The exotic spherical object of Figure \ref{Fig:bad} lifts to an embedded simple closed curve on a double cover of the surface, so the ability to descend from
$\tilde{\Sigma}$ back to $\Sigma$ above is again making essential use of the homological hypothesis on $X$.  
\end{Remark}

\subsection{A Floer-theoretic Schmutz graph}\label{Sec:Schmutz}
The ``Schmutz graph", introduced in \cite{SchmutzSchaller}, has vertices non-separating simple closed curves up to isotopy, and two such are joined by an edge exactly when they have geometric intersection number one. 
The main theorem of \emph{op. cit.} asserts that the group of simplicial automorphisms of the Schmutz graph is the
extended mapping class group (of both orientation-preserving and reversing automorphisms) for a surface of genus $g\geq 3$, and the quotient of the mapping class group by the hyperelliptic involution $\iota$ when $g=2$.

We now introduce a Floer-theoretic analogue. By Theorem \ref{Thm:Main},
spherical objects of $D^\pi \scrF(\Sigma)$ with non-zero Chern character
correspond to homologically essential simple closed curves with rank one
local systems. Moreover, the simple closed curves
underlying two spherical objects $X_1,X_2$ have geometric intersection number one
if and only if the rank of $HF^*(X_1,X_2)$ is equal to 1, by Corollary \ref{Cor:rankHF}.
We introduce an equivalence relation on the set of spherical objects by
declaring that $X\sim X'$ if and only if they have Floer cohomology rank one
with the same set of spherical objects:
\begin{equation}\label{eq:HFequivalence}
X\sim X'\quad \Longleftrightarrow \quad \left(\text{for all spherical } Y,
\ \mathrm{rk}\,HF^*(X,Y)=1 \Leftrightarrow
\mathrm{rk}\,HF^*(X',Y)=1\right).
\end{equation}
 
\begin{Lemma}\label{Lem:isotopy_classes}
Two spherical objects $X,X'\in D^\pi\scrF(\Sigma)$ with non-zero Chern characters 
are equivalent if and only if the simple closed curves $\gamma,\gamma'$
underlying $X,X'$ are isotopic.
\end{Lemma}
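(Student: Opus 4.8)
The plan is to prove both directions of the equivalence, with the ``only if'' direction being the substantive one. The ``if'' direction is immediate: if $\gamma$ and $\gamma'$ are isotopic simple closed curves, then any rank one local systems on them may differ, but the \emph{set} of spherical objects $Y$ with which a given $(\xi,\gamma)$ has Floer cohomology of rank one is controlled geometrically. Indeed, by Corollary \ref{Cor:rankHF}, for $Y$ corresponding to an embedded simple closed curve $\eta$ with rank one local system, $\mathrm{rk}\,HF^*((\xi,\gamma),Y)$ equals the geometric intersection number of $\gamma$ and $\eta$ (the local systems being rank one do not affect this rank). Since geometric intersection number depends only on the isotopy class of $\gamma$, and since by Theorem \ref{Thm:Main} every spherical object with non-zero Chern character is of this form, $X$ and $X'$ have rank one Floer cohomology with exactly the same spherical objects; hence $X\sim X'$.

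For the ``only if'' direction, suppose $X\sim X'$, with underlying curves $\gamma,\gamma'$. The strategy is to show that $\gamma$ and $\gamma'$ cannot be distinguished by their intersection patterns with other simple closed curves, and then invoke a change-of-coordinates / rigidity statement in surface topology to conclude they are isotopic. First I would translate the equivalence \eqref{eq:HFequivalence} into geometry via Theorem \ref{Thm:Main} and Corollary \ref{Cor:rankHF}: $X\sim X'$ says precisely that, for every homologically essential simple closed curve $\eta$ (equipped with any rank one local system), the geometric intersection number $i(\gamma,\eta)=1$ if and only if $i(\gamma',\eta)=1$. The key classical input is then the fact that a simple closed curve on a surface of genus $\geq 2$ is determined up to isotopy by the set of (isotopy classes of) simple closed curves meeting it exactly once --- equivalently, the analogue of the Schmutz graph rigidity at the level of a single vertex. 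Concretely, one knows that the ``link'' of a non-separating curve $\gamma$ in the curve complex (or Schmutz graph) determines $\gamma$; this is essentially the injectivity underlying \cite{SchmutzSchaller}. So the heart of the argument is: the set $\{\eta : i(\gamma,\eta)=1\}$ determines $[\gamma]$.

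The main obstacle I anticipate is the case distinction according to whether $\gamma$ (equivalently $\gamma'$, since they are $\sim$-equivalent) is separating or non-separating, together with the need to confirm that ``spherical object with non-zero Chern character'' ranges over exactly those curves we need to test against. A homologically essential curve $\gamma$ is non-separating, so the relevant $\eta$'s are non-separating curves with $i(\gamma,\eta)=1$; these exist in abundance and their collection is exactly the set of Schmutz-graph neighbours of $\gamma$. The subtle point is ensuring that every such $\eta$ actually arises from a spherical object with non-zero Chern character --- but this holds because a non-separating simple closed curve equipped with a rank one local system is spherical (its self-Floer cohomology is $H^*(S^1;\Lambda)$) and has non-zero Chern character (its homology class is non-zero by non-separation), cf. Lemma \ref{Cor:HH_integral} and the discussion preceding it. Thus the $\sim$-equivalence of $X$ and $X'$ exactly says $\gamma$ and $\gamma'$ have the same set of Schmutz-graph neighbours, and a single-vertex rigidity statement for the Schmutz graph (which I would cite from or deduce from \cite{SchmutzSchaller}) finishes the proof. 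I would also remark that for $g=2$ one must be slightly careful about the hyperelliptic involution, but since we are identifying $\gamma$ and $\gamma'$ as unoriented isotopy classes and the hyperelliptic involution fixes every such class, this causes no trouble here.
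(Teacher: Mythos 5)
Your proposal is correct and follows essentially the same route as the paper's proof: translate the relation \eqref{eq:HFequivalence} into geometric intersection numbers via Theorem \ref{Thm:Main} and Corollary \ref{Cor:rankHF}, then invoke rigidity of the Schmutz graph (a curve being determined by its set of neighbours, deduced from \cite{SchmutzSchaller}). The only point you gloss over -- that the quantifier in \eqref{eq:HFequivalence} also runs over spherical $Y$ with $ch(Y)=0$, which are not covered by Theorem \ref{Thm:Main} -- is harmless, since for such $Y$ one has $\chi\Hom(X,Y)=-\langle ch(X),ch(Y)\rangle=0$, so $HF^*(X,Y)$ has even rank and such $Y$ never contribute to the condition; the paper's own proof makes the same implicit reduction.
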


\begin{proof}
Using Corollary \ref{Cor:rankHF}, $X\simeq X'$ if and only if $\gamma,\gamma'$ have 
geometric intersection number one with the exact same set of simple closed curves. 
It is a general fact of surface topology that this can only happen when
$\gamma,\gamma'$ are isotopic. For example, if $\gamma$ and $\gamma'$ were
not isotopic, then the Schmutz graph would admit an automorphism which exchanges the vertices
corresponding to $\gamma$ and $\gamma'$ while preserving every other vertex;
this cannot be induced by an element of the extended mapping class group.
\end{proof}

Since our equivalence relation on spherical objects is clearly preserved by
auto-equivalences of the Fukaya category, Lemma \ref{Lem:isotopy_classes}
implies that auto-equivalences act on the set of isotopy classes of
non-separating simple closed curves. This remains true for surfaces with
boundary, by the same argument; cf.\ Remark \ref{Rmk:Schmutz_open}. (However, spherical objects supported on boundary-parallel
curves can be permuted arbitrarily by auto-equivalences of the compact
Fukaya category; auto-equivalences of the wrapped Fukaya category are
better-behaved.)

On closed surfaces, the machinery of \cite{Seidel:flux} actually allows us
to obtain a stronger result: recalling that every homologically essential spherical object of $\scrF(\Sigma)$
comes in a $\bG_m$-family, any auto-equivalence must act on such objects in
a way that maps $\bG_m$-families to $\bG_m$-families. Since we will not need
this result, we only sketch the argument.

\begin{proof}[Sketch]
Given a spherical object $X\in \scrF(\Sigma)$ with $ch(X)\neq 0$ (hence
primitive in $H_1(\Sigma;\bZ)$ as a consequence of Theorem \ref{Thm:Main}),
and a class $a\in H^1(\Sigma;\bZ)$ with $\langle
a,ch(X)\rangle=1$, the rational $\bG_m$-action of Proposition
\ref{Prop:giveanaction} yields an object of $\underline\scrF(\Sigma)$
representing the $\bG_m$-orbit of $X$, which in turn determines an object
$\scrX$ of the category $\scrF^{per}_{\bK^*}$ of
perfect families of $\scrF(\Sigma)$-modules over the base $\bK^*$
\cite[Section 1f]{Seidel:flux}. 
Example \ref{Ex:deformationclass_scc} and the
proof of Lemma \ref{Lem:deformationclass} show that this family of modules follows
a deformation field $[\gamma]\in HH^1(\scrF^{per}_{\bK^*},\Omega^1_{\bK^*}\otimes\scrF^{per}_{\bK^*})$ 
 which is constant over the base $\bK^*$
and given fibrewise by $CO(a)\in HH^1(\scrF(\Sigma),\scrF(\Sigma))$.
Now, an autoequivalence $G$ of $\scrF(\Sigma)$ induces a functor
$\scrG^{per}$ on $\scrF^{per}_{\bK^*}$, and $\scrG^{per}(\scrX)$ is a perfect family
of modules which follows the deformation field $[\gamma']=G_*([\gamma])$,
which is constant over the base and given fibrewise by $G_*(CO(a))$
\cite[Section 1i]{Seidel:flux}.
Since the closed-open map is an isomorphism,  there exists $a'\in
H^1(\Sigma;\Lambda)$ such that $G_*(CO(a))=CO(a')$; in fact $a'\in H^1(\Sigma;\bZ)$,
because $\langle a',ch(G(Y))\rangle=\langle a,ch(Y)\rangle\in \Z$ for all
spherical objects $Y$.
Repeat the above construction for the spherical object $X'=G(X)$ and the
$\bG_m$-action determined by the cohomology class $a'$, to build a perfect
family of modules $\scrX'\in \scrF^{per}_{\bK^*}$, which follows the deformation
field $[\gamma']$. The two families of modules $\scrG^{per}(\scrX)$ and $\scrX'$ 
agree over the origin and both follow the deformation field $[\gamma']$, so
by \cite[Proposition 1.21]{Seidel:flux} their fibres are quasi-isomorphic at
every point of $\bK^*$.
\end{proof}

The group of auto-equivalences $\Auteq(\scrF(\Sigma))$ contains a subgroup isomorphic to 
$H^1(\Sigma, \Lambda^{\ast})$, given by symplectic isotopies of arbitrary flux and 
tensoring by flat line bundles. We expect that a further elaboration on the
above argument implies that this is a normal subgroup.

We now return to the proof of Corollary \ref{Cor:Main}:
   
\begin{Proposition} \label{Prop:split}
There is a natural homomorphism $\Auteq(D^{\pi}\scrF(\Sigma)) \to \Gamma_g$, which is split by the (non-canonical) homomorphism $\Gamma_g \to \Auteq(D^{\pi}\scrF(\Sigma))$ constructed in Section \ref{Sec:Monodromy}.
\end{Proposition}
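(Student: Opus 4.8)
The plan is to build the map from autoequivalences to $\Gamma_g$ by exploiting the Floer-theoretic reformulation of the Schmutz graph, and then to verify that it splits the homomorphism $\Gamma_g\to\Auteq(D^\pi\scrF(\Sigma_g))$ of Section \ref{Sec:Monodromy}. First I would let $\mathcal{G}$ denote the simplicial graph whose vertices are equivalence classes of homologically essential spherical objects of $D^\pi\scrF(\Sigma_g)$ under the relation \eqref{eq:HFequivalence}, with an edge joining $[X_1]$ and $[X_2]$ whenever $\mathrm{rk}\,HF^*(X_1,X_2)=1$. By Theorem \ref{Thm:Main} every such spherical object is quasi-isomorphic to a simple closed curve with rank one local system, and by Lemma \ref{Lem:isotopy_classes} the vertices of $\mathcal{G}$ are exactly the isotopy classes of non-separating simple closed curves, so $\mathcal{G}$ is canonically identified with the Schmutz graph. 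An autoequivalence $\Psi\in\Auteq(D^\pi\scrF(\Sigma_g))$ carries spherical objects to spherical objects; since $\Psi$ preserves Euler characteristics of morphism complexes and intertwines the Chern character with an automorphism of $HH_0$, it preserves non-vanishing of $ch$, hence carries homologically essential spherical objects to homologically essential ones and preserves the rank of $HF^*$. Thus $\Psi$ induces a simplicial automorphism of $\mathcal{G}$, and this assignment is a homomorphism $\Auteq(D^\pi\scrF(\Sigma_g))\to\operatorname{Aut}(\mathcal{G})$.

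Next I would invoke the main theorem of \cite{SchmutzSchaller}: $\operatorname{Aut}(\mathcal{G})$ is the extended mapping class group $\Gamma_g^{\pm}$ for $g\ge 3$, and for $g=2$ it is $\Gamma_2^{\pm}/\langle\iota\rangle$ with $\iota$ the hyperelliptic involution. To land in $\Gamma_g$ rather than $\Gamma_g^\pm$, observe that the Fukaya category carries a $\bZ/2$-grading by orientation and that autoequivalences are $\bZ/2$-linear and compatible with the open-closed map to $H_1(\Sigma_g;\Lambda)$; an orientation-reversing mapping class would act on $H_1$ by an isometry of the intersection form of the wrong sign relative to its action on the Mukai pairing via \eqref{eqn:non-comm-rr}, which is incompatible with being induced by an autoequivalence. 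So the image lands in the orientation-preserving subgroup, giving a homomorphism $\Auteq(D^\pi\scrF(\Sigma_g))\to\Gamma_g$ for $g\ge 3$; for $g=2$ one gets a homomorphism to $\Gamma_2/\langle\iota\rangle$, but composing with the known section of $\Gamma_2\to\Gamma_2/\langle\iota\rangle$ (or arguing directly that the hyperelliptic involution, which fixes every isotopy class of non-separating curve, acts non-trivially on local systems and hence is detected after all once one remembers the local-system data on spherical objects, so that the refined invariant already sees $\Gamma_2$) handles the genus two case. I would write the genus two argument carefully since it is the only place where the Schmutz result is not literally $\Gamma_g^\pm$.

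Finally I would check that this homomorphism splits the map $r\colon\Gamma_g\to\Auteq(D^\pi\scrF(\Sigma_g))$ of \eqref{eqn:exists}. Given $\phi\in\Gamma_g$, choose a balanced representative; the induced autoequivalence $r(\phi)$ sends the spherical object supported on a balanced simple closed curve $\sigma$ to one supported on $\phi(\sigma)$ (possibly with a modified local system, which is irrelevant since the equivalence relation \eqref{eq:HFequivalence} only remembers the underlying curve). Hence the composite $\Gamma_g\xrightarrow{r}\Auteq(D^\pi\scrF(\Sigma_g))\to\operatorname{Aut}(\mathcal{G})=\Gamma_g$ is the identity on vertices of the Schmutz graph, and a mapping class acting trivially on all isotopy classes of non-separating curves is the identity (for $g\ge 3$; for $g=2$ it is trivial or $\iota$, and again the refined picture rules out $\iota$). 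The main obstacle I expect is the bookkeeping of orientation issues and the genus two hyperelliptic subtlety: one must be careful that the invariant extracted from $\Auteq$ is fine enough to distinguish $\Gamma_g$ inside $\operatorname{Aut}(\mathcal{G})$, and to confirm that $r$ really hits every curve class with the correct multiplicity, which is where the balancing construction of Section \ref{Sec:Monodromy} and Lemma \ref{Lem:determines} are used.
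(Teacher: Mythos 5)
Your main construction coincides with the paper's proof: the graph whose vertices are equivalence classes of homologically essential spherical objects under \eqref{eq:HFequivalence} and whose edges record rank-one $HF$, its identification with the Schmutz graph via Theorem \ref{Thm:Main}, Lemma \ref{Lem:isotopy_classes} and Corollary \ref{Cor:rankHF}, the appeal to \cite{SchmutzSchaller}, the exclusion of orientation-reversing classes because autoequivalences preserve $\chi\Hom$ (i.e.\ algebraic intersection numbers), and the final splitting check are all exactly the paper's argument for $g\geq 3$.

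The genuine gap is the genus two case, which is part of the statement. Your primary fix, ``composing with the known section of $\Gamma_2\to\Gamma_2/\langle\iota\rangle$'', fails twice over. First, no such section exists: $\iota$ is central, and the central extension $1\to\langle\iota\rangle\to\Gamma_2\to\Gamma_2/\langle\iota\rangle\to 1$ is non-split (a splitting would give $\Gamma_2\cong\bZ/2\times\Gamma_2/\langle\iota\rangle$, whose abelianization contains $(\bZ/2)^2$, contradicting $H_1(\Gamma_2;\bZ)\cong\bZ/10$). Second, even if a section $s$ existed, the resulting map $s\circ\bar F$, with $\bar F:\Auteq(D^\pi\scrF(\Sigma_2))\to\Gamma_2/\langle\iota\rangle$, would compose with the monodromy homomorphism to give $\phi\mapsto s(\pi(\phi))$, which is the identity only on the index-two subgroup $\mathrm{im}(s)$ and in particular kills $\iota$; so the splitting assertion of the Proposition would fail. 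Your parenthetical fallback of ``remembering local systems'' is not developed and is problematic as stated: the relation \eqref{eq:HFequivalence} deliberately forgets local systems, and autoequivalences such as tensoring by flat line bundles move local systems while fixing every curve class, so a local-system-refined graph does not obviously produce a homomorphism to $\Gamma_2$. The paper's resolution is different: refine \eqref{eq:HFequivalence} by requiring the rank-one groups $HF^*(X,Y)$ and $HF^*(X',Y)$ to sit in the same $\bZ/2$-degree, so that vertices of the refined graph $\Upsilon^+(\Sigma_2)$ are isotopy classes of \emph{oriented} non-separating curves; since the hyperelliptic involution reverses the orientation of every simple closed curve, the action of $\Auteq(D^\pi\scrF(\Sigma_2))$ on $\Upsilon^+$ lifts the homomorphism from $\Gamma_2/\langle\iota\rangle$ to $\Gamma_2$, after which the splitting check proceeds as in higher genus. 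You would need to replace your genus two argument by this (or an equivalent) grading refinement for the proof to be complete.
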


\begin{proof}
Define a graph $\Upsilon(\Sigma)$ as follows:
\begin{itemize}
\item vertices are spherical objects $X \in \scrF(\Sigma)^{per}$ with
non-zero Chern character, modulo 
the equivalence relation \eqref{eq:HFequivalence}; 
\item two distinct vertices $X_1$ and $X_2$ are joined by an edge whenever $HF(X_1, X_2) $ has rank 1
(this is clearly invariant under the equivalence relation).
\end{itemize}

Theorem \ref{Thm:Main} and Lemma \ref{Lem:isotopy_classes} imply that the vertices of the graph are in bijection with isotopy 
classes of homologically essential simple closed curves on $\Sigma$, while
Corollary \ref{Cor:rankHF} shows that the edges correspond to pairs of
curves with geometric intersection number one.
Thus, $\Upsilon(\Sigma)$ agrees with the Schmutz graph from \cite{SchmutzSchaller}.  
On the other hand, it is manifest that $\Auteq(\scrF(\Sigma)^{per})$ acts on $\Upsilon(\Sigma)$ by simplicial automorphisms.
Thus, we obtain a homomorphism from $\Auteq(\scrF(\Sigma)^{per})$ to the
extended mapping class group $\Gamma_g^\pm$ for $g\geq 3$, or to the
quotient $\Gamma_2^\pm/\langle \iota\rangle$ when $g=2$.

Since any autoequivalence preserves the pairing $\chi HF(\cdot,\cdot)$, i.e.\ algebraic intersection
numbers of simple closed curves, its action on the set of isotopy classes 
cannot be that of an orientation-reversing diffeomorphism. Therefore, the
homomorphism actually takes values in the ordinary (oriented) mapping class group
$\Gamma_g$ for $g\geq 3$, or in $\Gamma_2/\langle \iota\rangle$ when $g=2$.
When $g=2$, one can consider the action of $\Auteq(\scrF(\Sigma_2)^{per})$ on
the graph $\Upsilon^+(\Sigma)$ whose vertices are spherical objects
modulo a graded version of \eqref{eq:HFequivalence} which requires
$HF^*(X,Y)$ and $HF^*(X',Y)$ to be in the same degree when they both have
rank 1;
i.e., we now consider isotopy classes of {\em
oriented} simple closed curves. Since the hyperelliptic involution reverses the orientation on all simple closed curves, 
this allows one to lift the homomorphism from $\Gamma_2/\langle \iota\rangle$ to $\Gamma_2$.  The fact that the action on the Floer-theoretic Schmutz graph is split by the construction of Section \ref{Sec:Monodromy} is straightforward.
\end{proof}

\begin{Remark} \label{Rmk:Schmutz_open}
The Schmutz graph can be defined analogously for isotopy classes of
non-separating simple closed curves on a punctured surface of genus
$g\geq 1$ with $n$ punctures.
It is known \cite{SchmutzSchaller} that its simplicial isometry group reproduces the extended mapping class group, modulo the hyperelliptic involution for $(g,n) \in \{(1,1),
(1,2)\}$.  Starting from here, and using the geometricity of spherical objects obtained in Corollary \ref{Cor:spherical_on_punctured}, one sees that the analogue of Proposition \ref{Prop:split} holds for punctured surfaces, i.e. the autoequivalence group of the compact Fukaya category determines (and surjects onto) the mapping class group.  \end{Remark}


\section{An application to symplectic mapping class groups\label{Sec:application}}

In this final section, we prove Theorem \ref{Thm:largeMCG} as an application of Corollary \ref{Cor:Main}.  The argument is a fairly straightforward adaption of ideas from \cite{Seidel:flux, Smith:HFQuadrics}, but involves somewhat different technology from that in the main body of the paper, so we will be relatively brief. We also leave the realm of strictly unobstructed Lagrangians; our main examples satisfy a weak monotonicity property, but we will not labour foundational aspects of the Fukaya category.

\subsection{Fukaya category summands}

Let $(M,\omega)$ be a closed symplectic manifold.  Define the preliminary category $\scrF_{pr}(M)$ to be a curved $\bZ_2$-graded linear $A_{\infty}$-category over  $\Lambda_{\geq 0}$ which has objects oriented spin Lagrangian submanifolds $L\subset M$ equipped with finite-dimensional $\Lambda_{\geq 0}$-local systems $\xi \rightarrow L$.  The morphism groups, in the two most important cases, are given by 
\begin{equation}
hom_{\scrF_{pr}(M)}((L,\xi),(L',\xi')) =  \begin{cases}  \ C^*(L; Hom(\xi,\xi')) &  L = L' \\  \ \oplus_{x\in L\cap L'} \xi_x^{\vee} \otimes \xi'_x & L \pitchfork L'  \end{cases} \end{equation}
where in the first case we take  any fixed finite-dimensional chain-level model for the classical cohomology of $L$ with coefficients in the bundle $\Hom(\xi,\xi')$.  When $\xi=\xi'$  we take this chain-level model to be strictly
unital, and denote the unit by $1_{(L,\xi)}$ (or $1_L$ if we suppress local systems from the notation).  Floer theory defines a curved $A_{\infty}$-structure $\{\mu^d\}_{d \geq 0}$ on $\scrF_{pr}(M)$. 

Let $\lambda \in \Lambda_{>0}$. The Fukaya category $\scrF(M;\lambda)$ has objects pairs $(L,\alpha)$ where $L\in \mathrm{Ob}(\scrF_{pr}(M))$, where $\alpha \in hom^1_{\scrF_{pr}(M)}(L,L)$  vanishes in  $hom^1_{\scrF_{pr}(M)}(L,L) \otimes_{\Lambda_0} \bC$, and where $\alpha$ is a solution of the \emph{weakly unobstructed} Maurer-Cartan equation
\[
\mu^0 + \mu^1(\alpha) + \mu^2(\alpha,\alpha) + \cdots = \lambda\cdot 1_L \in hom^0_{\scrF_{pr}(M)}(L,L).
\]
The morphism spaces in $\scrF(M;\lambda)$ are given by  Floer cochains
\[
hom_{\scrF(M;\lambda)}(L, L') = hom_{\scrF_{pr}(M)}(L,L') \otimes_{\Lambda_0} \Lambda
\]
and inherit a non-curved $A_{\infty}$-structure obtained by all possible insertions of Maurer-Cartan elements; thus, the differential in the Floer complex for $(L_0,\alpha_0)$ and $(L_1,\alpha_1)$ is given by
\begin{equation} \label{eq:deformed-mu1}
\begin{aligned}
\mu^1_{\scrF(M;\lambda)}(x) & = \mu^1_{\scrF_{pr}(M)}(x) + \mu^2_{\scrF_{pr}(M)}(\alpha_1,x) + \mu^2_{\scrF_{pr}(M)}(x,\alpha_0) + \mu^3_{\scrF_{pr}(M)}(\alpha_1,\alpha_1,x) \\ & \qquad + \mu^3_{\scrF_{pr}(M)}(\alpha_1,x,\alpha_0) + \mu^3_{\scrF_{pr}(M)}(x,\alpha_0,\alpha_0) + \cdots
\end{aligned}
\end{equation}
The Floer differential squares to zero, i.e. $\scrF(M;\lambda)$ has vanishing curvature, since  $\lambda\cdot 1_L$ is central,  because $\scrF_{pr}(M)$ was assumed to be strictly unital.
The mapping class group $\pi_0\Symp(M)$ acts on $\scrF(M;\lambda)$ for each $\lambda \in \Lambda_{>0}$ separately.

The open-closed map $OC:HH_*(\scrF(M;\lambda),\scrF(M;\lambda))\to QH^*(M;\Lambda)$ takes
values in the generalized $\lambda$-eigenspace of quantum
multiplication by $c_1(TM)$, which is a subring
\[
QH^*(M)_\lambda=\{a\in QH^*(M):(c_1(TM)-\lambda)^N*a=0\text{ for some }N\in
\bN\}\subset QH^*(M).
\]
The analogue of Abouzaid's generation criterion
in this setting states that, if the restriction of the open-closed map to a full subcategory of 
$\scrF(M;\lambda)^{per}$ hits an invertible element of $QH^*(M)_\lambda$, then the
full subcategory split-generates $\scrF(M;\lambda)^{per}$ \cite[Theorem 11.3]{Ritter-Smith}.

\subsection{Relative parallel transport}

 Let $(X,\omega)$ be a symplectic manifold and $(Y_t)_{t\in [0,1]}$ a smooth family of symplectic submanifolds of $X$.

\begin{Lemma}\label{Lem:relative}
There is a time-dependent symplectomorphism $\Phi_t: X \rightarrow X$ with
$\Phi_0=\id$ and $\Phi_t(Y_0) = Y_t$, well-defined up to isotopy through symplectomorphisms with the same property.
\end{Lemma}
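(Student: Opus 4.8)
The plan is the standard combination of smooth isotopy extension with a relative Moser argument. We may assume $X$ is compact (otherwise the constructions below are carried out in a neighbourhood of $\bigcup_{t} Y_t$ and the resulting vector fields are cut off). First I would invoke the smooth (parametrized) isotopy extension theorem to obtain a smooth family of diffeomorphisms $\psi_t\in\Diff(X)$ with $\psi_0=\id$ and $\psi_t(Y_0)=Y_t$. Setting $\omega_t:=\psi_t^*\omega$, these form a path of symplectic forms with $\omega_0=\omega$, all lying in the fixed class $[\omega]$ (since $\psi_t$ is isotopic to $\id$) and restricting to a path of symplectic forms on $Y_0$. The problem then reduces to producing a smooth family $k_t\in\Diff(X)$ with $k_0=\id$, $k_t(Y_0)=Y_0$ (setwise) and $k_t^*\omega_t=\omega$; then $\Phi_t:=\psi_t\circ k_t$ has the required properties.

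To construct $k_t$ I would first correct $\omega_t$ near $Y_0$. The restrictions $\omega_t|_{TX|_{Y_0}}$ form a smooth family of bundle symplectic forms, hence a smooth family of symplectic structures on $Y_0$ together with a smooth family of symplectic normal bundles $(TY_0)^{\perp_{\omega_t}}$. Applying Moser's theorem on $Y_0$ and then the Weinstein symplectic neighbourhood theorem in parametrized form — the germ of a symplectic form along a symplectic submanifold is determined by the restricted form and the symplectic normal bundle — yields a smooth family $\ell_t\in\Diff(X)$ with $\ell_0=\id$, $\ell_t(Y_0)=Y_0$, and $\ell_t^*\omega_t=\omega$ on a $t$-independent neighbourhood $U$ of $Y_0$ (after shrinking $U$). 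Replacing $\omega_t$ by $\ell_t^*\omega_t$, we may assume $\omega_t\equiv\omega$ on $U$, so $\dot\omega_t$ is supported away from $Y_0$.

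Next comes a Moser argument relative to $U$: choosing a smooth family of primitives $\tau_t$ of the exact forms $\omega_t-\omega$ with each $\tau_t$ vanishing near $Y_0$ (the usual relative Poincaré lemma), the vector fields $W_t$ defined by $\iota_{W_t}\omega_t=-\dot\tau_t$ vanish near $Y_0$, so their flow $g_t$ fixes a neighbourhood of $Y_0$ pointwise and satisfies $g_t^*\omega_t=\omega_0=\omega$. Then $k_t:=\ell_t\circ g_t$ fixes $Y_0$ setwise, has $k_0=\id$, and $k_t^*(\psi_t^*\omega)=g_t^*\ell_t^*(\psi_t^*\omega)=g_t^*\omega_t=\omega$, completing the construction of $\Phi_t$. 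For the uniqueness clause, given two such families $\Phi_t,\Phi_t'$ the path $c_t:=\Phi_t^{-1}\circ\Phi_t'$ consists of symplectomorphisms preserving $Y_0$ setwise with $c_0=\id$, and $\Phi_t^{s}:=\Phi_t\circ c_{st}$, $s\in[0,1]$, is a homotopy from $\Phi_t$ to $\Phi_t'$ through families with all the required properties.

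The main obstacle is the second step: ensuring that the correcting diffeomorphism can be made symplectic on a whole neighbourhood of $Y_t$, which is exactly where the symplectic normal data enters and where the parametrized Weinstein neighbourhood theorem is needed. The global Moser step and the uniqueness statement are then routine.
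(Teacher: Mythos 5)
Your overall strategy for existence (smooth isotopy extension, a parametrized Weinstein normal form near $Y_0$, then a global Moser argument relative to $Y_0$) is reasonable, and your uniqueness argument is exactly the one the paper gives (reparametrize the path $c_t=\Phi_t^{-1}\circ\Phi_t'$ in $\Symp(X,Y_0)$); for existence the paper does not argue from scratch but simply cites Auroux's relative Moser statement. However, as written your final Moser step has a genuine gap: the ``usual relative Poincar\'e lemma'' is a statement about a tubular neighbourhood of $Y_0$, and it does \emph{not} produce a \emph{global} primitive of $\omega_t-\omega$ vanishing near $Y_0$. Since $\omega_t-\omega$ vanishes on $U$, any global primitive $\tau_t$ restricts to a \emph{closed} $1$-form on $U$, and its class in $H^1(Y_0;\bR)$ modulo the image of $H^1(X;\bR)$ is independent of the choice of primitive: it is the relative class of $\omega_t-\omega$ in $H^2(X,Y_0;\bR)$, concretely the flux swept by loops of $Y_0$ under the family of embeddings $(\psi_t\circ\ell_t)|_{Y_0}$. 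Nothing in your construction forces this class to vanish. For instance, take $Y_0$ a symplectic torus (a smooth cubic) in $\CP{2}$ with the constant family $Y_t=Y_0$, and let $\psi_t$ be the flow of the vector field $\omega$-dual to a cutoff of $\pi^*\alpha$, where $\alpha$ is a closed non-exact $1$-form on $Y_0$ and $\pi$ the tubular-neighbourhood projection: this is a legitimate output of your first two steps (one may take $\ell_t=\id$, since $\omega_t=\omega$ near $Y_0$), yet because $H^1(\CP{2};\bR)=0$ every global primitive of $\omega_t-\omega$ restricts near $Y_0$ to a closed form of class $t[\alpha]\neq 0$, so no primitive vanishing near $Y_0$ exists and the step fails.

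The gap is repairable, because your requirement is stronger than what the lemma needs: $k_t$ only has to preserve $Y_0$ \emph{setwise}, so it suffices that the Moser field $W_t$ be tangent to $Y_0$, not that it vanish near $Y_0$. Since $\dot\omega_t=0$ on $U$, a primitive $\beta_t$ of $\dot\omega_t$ can be corrected by an exact form (via a cutoff) so that near $Y_0$ it equals $\pi^*c_t$ for a closed $1$-form $c_t$ on $Y_0$; choosing the tubular neighbourhood so that its fibres are $\omega$-orthogonal to $TY_0$ along $Y_0$, such $\beta_t$ annihilates $(TY_0)^{\perp\omega}$ along $Y_0$, hence $W_t$ (defined by $\iota_{W_t}\omega_t=-\beta_t$) is tangent to $Y_0$ and its flow preserves $Y_0$ while achieving $g_t^*\omega_t=\omega$. (Alternatively, one can kill the swept flux by composing $\ell_t$ with flows of symplectic fields tangent to $Y_0$ realizing arbitrary classes in $H^1(Y_0;\bR)$.) You should also record the small point that the Moser step on $Y_0$ is licensed because $[\omega_t|_{Y_0}]$ is $t$-independent, the embeddings $\psi_t|_{Y_0}$ being homotopic. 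With these corrections your route gives a self-contained proof of the existence statement that the paper obtains by citation.
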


\begin{proof}
The existence is \cite[Proposition 4]{Auroux:gafa}. For uniqueness up to
isotopy, observe that two choices $\Phi_{0,t}$ and $\Phi_{1,t}$ differ by a
time-dependent symplectomorphism $\rho_{1,t}=\Phi_{0,t}^{-1}\circ \Phi_{1,t}$ 
which preserves $Y_0$ setwise, i.e.\ a path based at the origin in the group 
$\Symp(X,Y_0)$ of symplectomorphisms preserving $Y_0$ setwise. The path
$\rho_{1,t}$ can
be deformed continuously to the constant path $\rho_{0,t}\equiv \id$ (e.g.\ setting
$\rho_{s,t}=\rho_{1,st}$), and $\Phi_{s,t}=\Phi_{0,t}\circ \rho_{s,t}$ gives
an isotopy between $\Phi_{0,t}$ and $\Phi_{1,t}$.
\end{proof}

Recall that a symplectic fibration $\scrX \rightarrow B$ with fibre $(X,\omega)$ is a smooth fibre bundle with a globally closed 2-form $\Omega_{\scrX}$
such that $(X_b,\Omega_{\scrX}|_{X_b}) \cong (X,\omega)$ for each $b\in B$.

\begin{Corollary}
Given a symplectic fibration $\scrX \rightarrow B$ with fibre $X$ and a locally trivial symplectic subfibration $\scrY \hookrightarrow \scrX$ with fibre $Y \subset X$ at a base-point $b\in B$, there is a relative monodromy representation 
\[
\pi_1(B;b) \longrightarrow \pi_0\Symp(X,Y)
\]
into the mapping class group of the group of symplectomorphisms preserving $Y$ setwise.
\end{Corollary}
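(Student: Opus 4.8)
The plan is to deduce the relative monodromy representation from Lemma~\ref{Lem:relative} by the standard bundle-theoretic argument: a loop in $B$ gives a path of symplectomorphisms, and the path of subfibres along it gives, via the Lemma, a symplectomorphism preserving $Y$, well-defined up to the ambiguity we need.

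First I would make precise the source of the loop-of-symplectomorphisms. Fix the base-point $b\in B$ and a loop $\ell:[0,1]\to B$ with $\ell(0)=\ell(1)=b$. Pulling back $\scrX\to B$ along $\ell$ gives a symplectic fibration over $[0,1]$, which is trivial since $[0,1]$ is contractible; a choice of trivialization identifies every fibre $X_{\ell(t)}$ with $(X,\omega)$, and the two identifications over $t=0$ and $t=1$ differ by a symplectomorphism $\psi_\ell\in\Symp(X)$ — this is the usual symplectic monodromy. Because $\scrY\hookrightarrow\scrX$ is a locally trivial symplectic subfibration with fibre $Y$, the same trivialization carries the subfibres to a smooth family $(Y_t)_{t\in[0,1]}$ of symplectic submanifolds of $X$ with $Y_0=Y$ and $Y_1=\psi_\ell^{-1}(Y)$ (up to the identifications). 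Now apply Lemma~\ref{Lem:relative} to the reversed family to obtain a time-dependent symplectomorphism $\Phi_t$ with $\Phi_0=\id$ and $\Phi_1(Y_1)=Y_0=Y$; then $\Phi_1\circ\psi_\ell$ preserves $Y$ setwise, so defines a class in $\pi_0\Symp(X,Y)$.

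The remaining work is to check well-definedness at the level of $\pi_0$ and the homomorphism property. Independence of the choice of trivialization of $\ell^*\scrX$, and of the identification of $X_b$ with $X$, changes $\psi_\ell$ and the family $(Y_t)$ by conjugation/composition by paths of symplectomorphisms, and the uniqueness-up-to-isotopy clause in Lemma~\ref{Lem:relative} absorbs exactly this ambiguity, so the class in $\pi_0\Symp(X,Y)$ is unchanged. Homotopic loops $\ell_0\simeq\ell_1$ rel endpoints give a family over $[0,1]^2$, hence an isotopy between the two resulting elements of $\Symp(X,Y)$, again using Lemma~\ref{Lem:relative} fibrewise over the homotopy parameter; so the assignment descends to $\pi_1(B;b)$. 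Finally, concatenation of loops corresponds to composition of monodromies and of the relative transport maps, giving the homomorphism property. The main obstacle is bookkeeping: organizing the trivializations so that the uniqueness clause of Lemma~\ref{Lem:relative} is applied to precisely the right ambiguity, and verifying that the relative-transport construction is compatible with concatenation on the nose (not merely up to isotopy) — but since we only claim a map into $\pi_0\Symp(X,Y)$, isotopy-level compatibility suffices, and this is routine once the setup is fixed.
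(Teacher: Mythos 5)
Your proposal is correct and follows essentially the same route as the paper: trivialize the fibration over a loop, apply Lemma \ref{Lem:relative} to the resulting one-parameter family of symplectic submanifolds of the fixed fibre $(X,\omega)$, and invoke its uniqueness-up-to-isotopy clause to get well-definedness in $\pi_0\Symp(X,Y)$ and descent to $\pi_1(B;b)$. The only difference is cosmetic: the paper implements the Moser correction by subtracting $dt\wedge da_t$ from $\Omega_{\scrX}$ so that the modified parallel transport itself preserves $\scrY$, whereas you post-compose the ordinary monodromy with the relative Moser map $\Phi_1$; both produce the same class in $\pi_0\Symp(X,Y)$.
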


\begin{proof}
Fix a 1-dimensional submanifold  $\gamma \subset B$ and trivialise the bundle $\scrX|_{\gamma}$ over $\gamma$ by symplectic parallel transport. This brings us into the situation of Lemma \ref{Lem:relative}, meaning that we have a one-parameter family of symplectic embeddings (parametrized by a co-ordinate $t\in\gamma$) of $Y$ into a fixed $(X,\omega)$. 
By differentiating the relative Moser maps $\Phi_t$ of Lemma \ref{Lem:relative}, we obtain closed 1-forms $a_t \in \Omega^1(X)$ for which the $\omega$-dual vector fields $Z_t$  flow the submanifolds $Y_t$ into one another.  If we subtract $dt \wedge da_t$ from $\Omega_{\scrX}$, we obtain a new globally closed 2-form on $\scrX|_{\gamma}$ with the correct fibrewise restriction and for which parallel transport preserves the subfibration $\scrY|_{\gamma}$.
We can apply the preceding construction to the 1-skeleton of $B$ to obtain relative parallel transport maps for loops generating $\pi_1(B)$. The uniqueness up to isotopy in Lemma \ref{Lem:relative} shows the construction descends to a representation of $\pi_1(B)$. 
\end{proof}

Let $\omega_{st}$ denote the standard constant coefficient K\"ahler form on the four-torus $T^4$. Consider $(\Sigma_2 \times T^4, \omega \oplus \omega_{st})$. We fix a sufficiently small $\varepsilon> 0$ and let $p: Z \to \Sigma_2\times T^4$ denote the $\varepsilon$-symplectic blow-up of $\Sigma_2 \times T^4$ along the symplectic submanifold $C = \Sigma_2 \times \{0\}$. The exceptional divisor $E = C\times \bP^1$ is canonically a product; indeed $Z$ is just the product $\Sigma_2 \times Bl_{pt}(T^4)$, and carries a symplectic form $\Omega$ with cohomology class $p^*[\omega\oplus\omega_{st}] - \varepsilon\cdot E$. 

The cohomology of $Z$ admits a splitting
\[
H^*(Z;\bZ) = H^*(\Sigma_2\times T^4;\bZ) \oplus H^*(\Sigma_2;\bZ)\cdot u
\]
 where $u = -PD(E)$ has degree $2$. Let $\{\eta_j\}$ denote a basis for $H^1(Z;\bZ)$, and set 
\[
\Omega^{\delta}_{irr} = \Omega  + \delta \cdot \sum_{i.j} c_{ij} \eta_i \wedge \eta_j 
\]
for coefficients $c_{ij} \in (0,1)$. If $\delta > 0$ is sufficiently small then $\Omega^{\delta}_{irr}$ is a symplectic form on $Z$, because the symplectic condition is open and the $c_{ij}$ are bounded. 

\begin{Lemma}  \label{Lem:TrivialOnH*}
If the coefficients $c_{ij}$ are linearly independent over $\bQ$, then every symplectomorphism of $(Z,\Omega^{\delta}_{irr})$ acts trivially on $H^*(Z)$.
\end{Lemma}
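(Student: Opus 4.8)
The plan is to argue purely on cohomology. Any $\phi\in\Symp(Z,\Omega^\delta_{irr})$ induces a graded ring automorphism $\phi^*$ of $H^*(Z;\bZ)$ which is orientation-preserving (so $\phi_*[Z]=[Z]$) and fixes $[\Omega^\delta_{irr}]\in H^2(Z;\bZ)$; recall $Z=\Sigma_2\times Bl_{pt}(T^4)$ has torsion-free cohomology. It suffices to prove $\phi^*=\id$ on $H^1(Z)$ and on $H^2(Z)$: then $H^0,H^6\cong\bZ$ are trivial; the nondegenerate pairing $H^2\times H^4\to H^6$ with $\phi_*[Z]=[Z]$ forces $\phi^*|_{H^4}=\id$; the pairing $H^1\times H^5\to H^6$ forces $\phi^*|_{H^5}=\id$; and a K\"unneth computation shows $H^3(Z)=H^1(Z)\cup H^2(Z)$, so $\phi^*|_{H^3}=\id$ follows from the other two.

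\textbf{Triviality on $H^2$.} Let $W\subseteq H^2(Z;\bZ)$ be the saturation of the image of the cup product $\wedge^2 H^1(Z)\to H^2(Z)$; it is a $\phi^*$-invariant sublattice, and by K\"unneth and the blow-up formula $H^2(Z;\bZ)=W\oplus\bZ u$ with $u=-\mathrm{PD}(E)$, while $W\cong H^2(\Sigma_2)\oplus\big(H^1(\Sigma_2)\otimes H^1(T^4)\big)\oplus H^2(T^4)$. The component of $[\Omega^\delta_{irr}]=[\Omega]+\delta\sum c_{ij}\,\eta_i\wedge\eta_j$ in $\bZ u$ is $\varepsilon u$ with $\varepsilon\neq0$ (which we take rational), and its component in $W$, in a fixed integral basis, has coordinates in $\bQ+\sum\bQ\cdot\delta c_{ij}$. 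Since $\phi^*$ acts on $H^2(Z)/W\cong\bZ$ by $\pm1$ and fixes a class with nonzero image there, $\phi^*u=u+w_0$ with $w_0\in W$. Writing $(\phi^*-\id)[\Omega^\delta_{irr}]=0$ and reading off coordinates in $W$, each yields an identity of the form (rational number) $+$ $\delta\cdot$(integer combination of the $c_{ij}$) $=0$; by the linear-independence hypothesis — read, after a generic rational perturbation of $\delta$ and of the parameters defining $\Omega$, as the statement that $1$ together with the $\delta c_{ij}$ are linearly independent over $\bQ$ — all these integer combinations and rational numbers vanish. This says exactly that $\phi^*=\id$ on $W$, whence $\varepsilon w_0=0$, so $w_0=0$, and $\phi^*=\id$ on all of $H^2(Z;\bZ)$.

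\textbf{Triviality on $H^1$, and the main obstacle.} As $\phi^*$ is a ring map that is the identity on $W=\mathrm{im}(\cup)$, the automorphism $A:=\phi^*|_{H^1}$ satisfies $A\alpha\cup A\beta=\alpha\cup\beta$, i.e.\ $\wedge^2A=\id$ on $W$. Unwinding this summand by summand, using the injectivity of $\cup$ on $H^1(\Sigma_2)\otimes H^1(T^4)$ and on $\wedge^2H^1(T^4)$, one finds that $A$ must preserve the splitting $H^1(Z)=H^1(\Sigma_2)\oplus H^1(T^4)$ and act by $\pm\id$ on each factor with a single uniform sign. The remaining point — excluding $A=-\id$ — is the main obstacle: cohomological data alone do not rule it out, since $[\Omega^\delta_{irr}]$ is invariant under a hyperelliptic-type involution ($\iota$ on $\Sigma_2$ together with a lift of $-\id$ on $T^4$) which acts by $-\id$ on $H^1(Z)$. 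Ruling this out requires further input from the geometry of the construction — for instance, normalising such a $\phi$ up to isotopy so that it respects the product and the exceptional divisor, and then extracting a contradiction from a finer invariant — and this is the delicate step of the proof. Granting it, $A=\id$, and combined with the first paragraph we conclude $\phi^*=\id$ on all of $H^*(Z)$.
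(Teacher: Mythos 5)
Your reduction and your treatment of $H^2$ are essentially the paper's argument in more explicit coordinates: the paper likewise observes that the action on $H^2(Z)/\langle u\rangle$ is induced from an integral matrix on $H^1(Z)$ via $\wedge^2$, so that invariance of $[\Omega^{\delta}_{irr}]$, whose coefficients are irrational in the pattern dictated by the $c_{ij}$, forces the action there to be trivial. The only real difference is how the class $u$ is pinned down: the paper notes that $\pi_2(Z)$ is generated by a fibre $F\subset E$ (with positivity of $\int_F\Omega^{\delta}_{irr}$ fixing the sign), so $u$ is preserved on the nose, whereas you allow $\phi^*u=u+w_0$ and recover $w_0=0$ at the end; both are fine. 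Your remark that one really needs $1$ together with the $\delta c_{ij}$ to be $\bQ$-independent (because the $W$-component of $[\Omega]$ is rational) is a fair point, but it applies verbatim to the paper's phrasing as well; it is a mild strengthening/genericity of the hypothesis that both arguments implicitly use, and your ``generic perturbation'' caveat is an acceptable way to handle it.

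The step you leave open --- upgrading $\wedge^2A=\mathrm{id}$ on the image of the cup product to $A=\mathrm{id}$ on $H^1(Z)$ --- is indeed a gap in your proposal as a proof of the stated lemma; but you should know that the paper's own proof does not supply it either: it passes in one sentence from triviality on $H^2(Z)/\langle u\rangle$ to ``acts trivially on $H^1(Z)$''. Your involution is exactly the right test case: the hyperelliptic involution of $\Sigma_2$ times a lift of $-\mathrm{id}$ on $T^4$ fixing the blown-up point preserves $u$ and every even-degree class, and for symmetric choices of $\omega$, of the blow-up data, and of anti-invariant representatives $\eta_i$ it is an honest symplectomorphism of $(Z,\Omega^{\delta}_{irr})$ acting by $-\mathrm{id}$ on $H^1(Z)$. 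So no argument using only invariance of $[\Omega^{\delta}_{irr}]$, of $u$, and of the ring structure can rule out $A=-\mathrm{id}$; what these methods actually give is triviality on $H^{\mathrm{even}}(Z)$ together with $A=\pm\mathrm{id}$ on $H^1$ (hence $\pm\mathrm{id}$ on all odd cohomology, with one sign). This sign ambiguity is harmless for the application in the paper --- the symplectomorphisms actually fed into the argument are smoothly isotopic to blow-ups of products of Torelli maps with the identity, hence act trivially on $H^*(Z)$ for free, and the relevant containment in $\Auteq_{HH}$ need only be checked on that subgroup --- but it cannot be removed by the cohomological argument, yours or the paper's. In short: your $H^2$ argument matches the paper's; the step you flag as the ``delicate'' one is genuinely missing from both, and your observation shows it is not a routine omission.
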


\begin{proof} Assume that the $c_{ij}$ are rationally linearly independent.  We will show that any diffeomorphism of $Z$ preserving the cohomology class $[\Omega_{irr}]$ acts trivially on cohomology.  As a ring, $H^*(Z)$ is generated by  
$H^1(Z)\simeq H^1(\Sigma_2)\oplus H^1(T^4)$ and by the class $u\in H^2(Z)$.  Note that $\pi_2(Z)$ is generated by a fibre  $F \subset E$, so any diffeomorphism preserves the class $[F] \in H_2(Z;\bZ)$ and its intersection Poincar\'e dual $E \in H_4(Z;\bZ)$. 
The action on $H^2(Z;\bR)$ of a diffeomorphism of $Z$ which fixes $[\Omega^{\delta}_{irr}]$
has the eigenvalue $1$ appearing with multiplicity at least two (since $u$ and $[\Omega^{\delta}_{irr}]$ are both preserved).
However, the action on $H^2(Z) / \langle u\rangle$ is the action on $\Lambda^2(H^1(Z))$. Since the action on $H^1$ is through an integral matrix, the coefficients of any eigenvector
for the eigenvalue 1 must be linearly dependent over $\bQ$. Therefore, preservation of $[\Omega^{\delta}_{irr}]$ implies that the diffeomorphism acts trivially on $H^1(Z)$ and hence
on~$H^*(Z)$.
\end{proof}

The Torelli group $I_2 \leq \Gamma_2$ is an infinitely generated free group, generated by the Dehn twists on separating simple closed curves.   The construction of relative parallel transport applied to a family of blow-ups yields a representation
\[
\Gamma_2 \to \pi_0\Symp(Z, \Omega)
\]
which depends on the same kinds of choice as in Section \ref{Sec:Monodromy}.  Any element of $\Gamma_2$ acting non-trivially on cohomology cannot deform to a symplectomorphism with respect to the perturbed symplectic structure $\Omega^{\delta}_{irr}$, by Lemma \ref{Lem:TrivialOnH*}.  A given element of the Torelli group, however, will deform for $\delta$ sufficiently small. 

\begin{Corollary}\label{Cor:I2deforms}
Given $N>0$, there is $\delta(N) > 0$ such that $I_2 \to \pi_0\Symp(Z,\Omega)$ deforms on a rank $N$ free subgroup $\bF_N \leq I_2$ to  $\bF_N \to \pi_0\Symp(Z,\Omega^{\delta}_{irr})$
for all $\delta\in (0,\delta(N))$.
\end{Corollary}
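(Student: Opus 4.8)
The plan is to deform a finite but arbitrarily large free subgroup of the Torelli group $I_2$, one relation at a time, using the fact that the obstruction to deforming a single symplectomorphism is controlled by a continuous (in $\delta$) quantity that vanishes at $\delta=0$. First I would recall from \cite{Seidel:flux, Smith:HFQuadrics} that the flux-type obstruction: given a symplectomorphism $f$ of $(Z,\Omega)$ coming from relative parallel transport over the Torelli family, whether $f$ persists as a symplectomorphism of $(Z,\Omega^\delta_{irr})$ is detected by the action of $f$ on the cohomology class $[\Omega^\delta_{irr}]$ together with a Floer-theoretic or flux invariant which depends continuously on the ambient symplectic form. Since $f\in I_2$ acts trivially on $H^*(\Sigma_2)$ and the construction $Z=\Sigma_2\times Bl_{pt}(T^4)$ is a product, such $f$ acts as the identity on $H^*(Z;\bZ)$ (it is supported on the $\Sigma_2$ factor and acts trivially there on cohomology by the Torelli condition), so the cohomological obstruction of Lemma \ref{Lem:TrivialOnH*} is vacuous for it; what remains is only the continuous part.

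Next I would make the one-parameter statement precise. Fix once and for all a set of $N$ generators $f_1,\dots,f_N$ of a rank $N$ free subgroup $\bF_N\le I_2$, realised by relative parallel transport maps $\Phi_1,\dots,\Phi_N\in\pi_0\Symp(Z,\Omega)$. For each $i$ the set of $\delta$ for which $\Phi_i$ deforms to a symplectomorphism of $(Z,\Omega^\delta_{irr})$ is an open neighbourhood of $0$ in the parameter interval — this is where I would invoke the openness of the symplectic condition together with the relative Moser argument of Lemma \ref{Lem:relative} and its corollary, applied to the path of symplectic forms $\Omega^t_{irr}$, $t\in[0,\delta]$: the relative monodromy construction runs identically for each nearby symplectic form, and produces a compatible family as long as the perturbation is small relative to bounds that are uniform over the finitely many generators $f_i$ and the bounded coefficients $c_{ij}$. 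Taking $\delta(N)$ to be the minimum over $i=1,\dots,N$ of the corresponding thresholds gives a single $\delta(N)>0$ such that all of $f_1,\dots,f_N$ — hence the whole subgroup $\bF_N$ they generate, since the relative parallel transport representation is a homomorphism and deformation is compatible with composition — deforms for every $\delta\in(0,\delta(N))$.

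The last point to check is that the resulting homomorphism $\bF_N\to\pi_0\Symp(Z,\Omega^\delta_{irr})$ is still injective, i.e. that we really obtain a free group of rank $N$ and not a smaller quotient; but this is not part of the assertion of Corollary \ref{Cor:I2deforms} itself, which only claims that the restriction of $I_2\to\pi_0\Symp(Z,\Omega)$ to $\bF_N$ deforms, so strictly speaking the plan above suffices for the stated Corollary. (The injectivity of the deformed map, which is what feeds into Theorem \ref{Thm:largeMCG}, is handled separately via Corollary \ref{Cor:Main}: an element of $\bF_N$ whose image were trivial in $\pi_0\Symp(Z,\Omega^\delta_{irr})$ would, by restricting the induced autoequivalence to a $\Sigma_2$-fibre and applying the surjection to $\Gamma_2$, have to act trivially on the Fukaya-theoretic Schmutz graph, contradicting that the original element of $I_2$ is a nontrivial mapping class.) The main obstacle is the first paragraph: pinning down precisely which continuity statement for the relative-monodromy obstruction one is allowed to quote from \cite{Seidel:flux, Smith:HFQuadrics}, and verifying that the relevant thresholds can be chosen uniformly over the finitely many generators — once that uniformity is in hand, the rest is a routine openness-plus-minimum argument.
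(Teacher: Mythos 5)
Your overall scaffolding (deform each of the finitely many generators for small $\delta$, take the minimum threshold, note that injectivity is not part of this Corollary) matches the paper, and your remark that freeness of $\bF_N$ lets you extend from generators to the whole subgroup is fine. But the heart of the statement --- \emph{why} a single Torelli element deforms at all --- is exactly the step you leave open, and the mechanism you gesture at does not work as stated. Lemma \ref{Lem:relative} and its corollary concern a family of symplectic \emph{submanifolds} of a fixed symplectic manifold; they do not apply to a path of symplectic forms $\Omega^t_{irr}$, and the relative monodromy construction does not ``run identically'' for the perturbed forms, because the perturbing classes $\eta_i\wedge\eta_j$ involve $H^1(\Sigma_2)$ and are not invariant under the monodromy of the family used to define $\Gamma_2\to\pi_0\Symp(Z,\Omega)$. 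Likewise there is no off-the-shelf ``continuity of a flux-type obstruction'' statement in \cite{Seidel:flux, Smith:HFQuadrics} that one can simply quote here; you flag this yourself as the main obstacle, and it is precisely where the paper's actual argument lives.

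The paper's mechanism is direct and geometric: for $f\in I_2$, view its graph $\Gamma(f)\subset (Z\times Z,\ \Omega\oplus(-\Omega))$ as a Lagrangian submanifold. Because $f$ acts trivially on $H^*(Z)$ (Torelli), $f^*[\eta_i\wedge\eta_j]=[\eta_i\wedge\eta_j]$, so the perturbing two-forms $\bigl(\delta\sum c_{ij}\eta_i\wedge\eta_j\bigr)\oplus\bigl(-\delta\sum c_{ij}\eta_i\wedge\eta_j\bigr)$ restrict to $\Gamma(f)$ with vanishing cohomology class. A Moser/Weinstein-type argument then isotopes $\Gamma(f)$ to a Lagrangian for the perturbed product form when $\delta$ is small, and since being graphical is an open condition, the deformed Lagrangian is again the graph of a symplectomorphism $f^{irr}$ of $(Z,\Omega^{\delta}_{irr})$. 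Applying this to the $N$ chosen Dehn twists about separating curves, with a threshold depending on geometric bounds for each twist (hence on $N$, not uniformly), gives $\delta(N)$. To repair your proposal you would need to replace the appeal to continuity/relative Moser with this graph-deformation argument (or an equivalent identification of the obstruction as the restriction of the perturbing class to the graph, killed by the Torelli hypothesis).
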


\begin{proof}
The graph of $f \in I_2$ defines a Lagrangian submanifold $\Gamma(f) \subset (Z\times Z, \Omega \oplus -\Omega)$. Since the
cohomology classes of the perturbing forms $\eta_i \wedge \eta_j$ restrict trivially to $\Gamma(f)$, using that
$f^*([\eta]) = [\eta]$ for all $\eta$, if $\delta$ is sufficiently small there is a Lagrangian isotopy of $\Gamma(f)$ to a submanifold Lagrangian with respect to $\Omega^{\delta}_{irr}$.  Since being graphical is an open condition, if $\delta$ is sufficiently small this is again the graph of a symplectomorphism $f^{irr}$.  Pick $N$ separating simple closed curves $\sigma_j$ on $\Sigma_2$. Then the corresponding elements $f_{\sigma_j}$  admit common deformations $f_{\sigma_j}^{irr}$ to symplectomorphisms of $(Z,\Omega^{\delta}_{irr})$ if $\delta > 0$ is sufficiently small.
({\em A priori} the size of $\delta$ depends on geometric bounds on the Dehn twists about $\sigma_j$,
hence cannot be made uniform as $N\to \infty$.)
\end{proof}

Subsequently we will show that the homomorphism $\bF_N \to \pi_0\Symp(Z,\Omega^{\delta}_{irr})$ is faithful.

 \subsection{Unbounded rank}

Let $\scrL = \mathcal{O}_{\bP^1}(-1)$ denote the $\varepsilon$-blow-up of $\bC^2$ at the origin, equipped with its toric K\"ahler form in cohomology class $\varepsilon\cdot u$, where $u=-PD(E)$ is the negative of the Poincar\'e dual to the exceptional divisor (zero-section). The Gromov invariant of $E$  is non-trivial, and 
\[
QH^*(\scrL;\Lambda) = \Lambda[u]/\langle u(u+q^\varepsilon)\rangle
\] 
Note that $c_1(T\scrL) = u$. Implanting the local model into the four-torus, one finds that if $Y = Bl_{pt}(T^4)$ with the natural K\"ahler form $p^*\omega_{st} + \varepsilon\cdot u$, then
\[
QH^*(Y;\Lambda) \cong H^*(T^4;\Lambda) \oplus \Lambda\cdot u
\]
and the first Chern class $c_1(Y) = u = -PD(E)$ acts, under quantum multiplication, nilpotently on
all cohomology classes of positive degree in $H^*(T^4;\Lambda)$ and invertibly on the second summand.

\begin{Lemma} \label{Lem:summand} $\scrF(Y;-q^\varepsilon)^{per}$ 
is semisimple and generated by an idempotent summand $T^+$ of a Lagrangian torus $T\subset Y$.
\end{Lemma}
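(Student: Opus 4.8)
The plan is to identify $\scrF(Y;-q^\varepsilon)^{per}$ explicitly using the local blow-up geometry and then apply the split-generation criterion stated just before the Lemma. First I would analyze $QH^*(Y;\Lambda)$ as computed above: quantum multiplication by $c_1(TY) = u$ is nilpotent on $H^*(T^4;\Lambda)$ and invertible on the $\Lambda\cdot u$ summand, so the generalized $(-q^\varepsilon)$-eigenspace $QH^*(Y)_{-q^\varepsilon}$ is precisely the rank-one summand $\Lambda\cdot u$ (one checks $u*(u+q^\varepsilon)=0$, so $u$ spans a $1$-dimensional eigenspace with eigenvalue $-q^\varepsilon$, while the complementary generalized eigenspace for eigenvalue $0$ is $H^*(T^4;\Lambda)$). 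In particular $QH^*(Y)_{-q^\varepsilon}$ is a one-dimensional field (a copy of $\Lambda$ with unit $-q^{-\varepsilon}u$, say), which already forces $\scrF(Y;-q^\varepsilon)^{per}$, once non-empty, to be small.

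Next I would produce the Lagrangian torus $T$ and the idempotent. Take $T \subset Y$ to be a Lagrangian torus fibre of the (almost) toric structure on $Y = Bl_{pt}(T^4)$ lying near the exceptional divisor, or equivalently a product of a Clifford-type torus in the local model $\scrL = \OO_{\bP^1}(-1)$ with a linear Lagrangian torus in the complementary $T^2$-directions; the relevant point is that $T$ bounds exactly the Maurer-Cartan family coming from the two holomorphic discs of the blow-up (the fibre $F$ and its complement), so that its Floer potential has a critical point at critical value $-q^\varepsilon$ after turning on a suitable rank-one local system, exactly as in the standard computation for $\OO_{\bP^1}(-1)$. At such a critical point $(T,\nabla)$ the Floer cohomology $HF^*((T,\nabla),(T,\nabla))$ is a Clifford algebra / exterior algebra on $H^1(T;\Lambda)$ whose even part detects the Hessian of the potential; the nondegeneracy of that critical point makes $(T,\nabla)$ a nonzero object whose endomorphism algebra splits, yielding a rank-one idempotent summand $T^+$ with $H^*(\hom(T^+,T^+)) = \Lambda$ concentrated in degree zero. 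The open-closed image of $[e_{T^+}]$ is then a nonzero multiple of the invertible element $u \in QH^*(Y)_{-q^\varepsilon}$ — this is the critical-value/critical-point correspondence, and it is where I expect the main work to lie, since it requires knowing that the open-closed map sends the idempotent of a nondegenerate critical point to the corresponding idempotent of $QH^*(Y)_{-q^\varepsilon}$, i.e. to something invertible in that summand.

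Granting that, the generation criterion \cite[Theorem 11.3]{Ritter-Smith} quoted in the previous subsection applies: the one-object subcategory $\{T^+\}$ has open-closed image hitting an invertible element of $QH^*(Y)_{-q^\varepsilon}$, so $T^+$ split-generates $\scrF(Y;-q^\varepsilon)^{per}$. Finally, semisimplicity follows because $\End(T^+) \simeq \Lambda$ is a field concentrated in degree $0$, so $\scrF(Y;-q^\varepsilon)^{per}$ is quasi-equivalent to $\mathrm{Perf}(\Lambda) = \mathrm{Vect}_\Lambda$, which is semisimple; equivalently, $HH_*(\scrF(Y;-q^\varepsilon)) \cong QH^*(Y)_{-q^\varepsilon} \cong \Lambda$ is a semisimple (in fact separable) algebra and the category is proper and smooth, hence semisimple.

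The main obstacle is the disc-counting input: verifying that $T$ (in whatever explicit guise is cleanest) bounds precisely the expected holomorphic discs so that its potential has a nondegenerate critical point with value $-q^\varepsilon$, and that the open-closed map carries the resulting idempotent to an invertible class in $QH^*(Y)_{-q^\varepsilon}$. This is essentially the Cho–Oh / Fukaya–Oh–Ohta–Ono computation for the one-point blow-up, so I would import it rather than redo it; the product with the linear torus directions in the $T^2\times T^2$ factor contributes no new discs by dimension/monotonicity reasons and only tensors the Floer algebra with an acyclic exterior factor that is absorbed by the Maurer–Cartan deformation, so it does not affect the conclusion.
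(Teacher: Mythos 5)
Your overall strategy is the same as the paper's: compute the torus in the blow-up local model $\scrL=\OO_{\bP^1}(-1)$ (the paper takes the monotone fibre at the critical point of $W(x,y)=x+y+q^{-\varepsilon}xy$, whose critical value is indeed $-q^{\varepsilon}$ with nondegenerate Hessian), observe that all holomorphic discs bounded by $T$ in $Y$ stay in a neighbourhood of the exceptional divisor so the Floer algebra is the local one, check by explicit computation that the open-closed map sends the idempotent(s) to $\pm u$, which is invertible in $QH^*(Y)_{-q^\varepsilon}$, and then invoke the Ritter--Smith generation criterion; semisimplicity follows since the split-generator has endomorphism algebra $\Lambda$. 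So the skeleton of your argument matches, including where the real work lies (the disc count and the open-closed image).

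However, there is a concrete error in your description of the geometry. $Y=Bl_{pt}(T^4)$ has complex dimension two, so $\scrL=\OO_{\bP^1}(-1)$ is the \emph{full-dimensional} local model near the exceptional divisor, and $T$ is a Lagrangian $2$-torus contained entirely in that chart: there are no ``complementary $T^2$-directions,'' and $T$ is not a product of a Clifford-type torus with a linear Lagrangian torus in $T^4$ (such a product would have dimension $>2$ and could not be Lagrangian in $Y$). The closing claim that the flat-torus directions ``only tensor the Floer algebra with an acyclic exterior factor that is absorbed by the Maurer--Cartan deformation'' is false as mathematics even in settings where such factors do exist: a flat torus factor has identically zero potential, so every critical point of the product potential is degenerate in those directions, the endomorphism algebra acquires a genuine (non-acyclic, non-semisimple) exterior tensor factor, and both the rank-one idempotent and semisimplicity would fail. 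In the present problem this does not sink the proof only because the extra directions do not exist; but as written, the justification for ``no new discs'' and for semisimplicity rests on this incorrect picture, whereas the paper's argument instead uses the embedding of a neighbourhood of the zero-section of $\scrL$ into $Y$ and the fact that discs bounded by $T$ in $Y$ cannot leave that neighbourhood. You should replace the product discussion by that localisation argument (and note that the product structure only enters later, for $Z=\Sigma_2\times Y$, where the correspondence $\Delta_{\Sigma_2}\times T$ is used).
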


\begin{proof} $\scrL$ contains a (monotone) Lagrangian torus $T \subset \mathcal{O}_{\bP^1}(-1)$, which is 
the orbit of the torus action corresponding to the unique critical point of the toric potential function $W(x,y) = x + y + q^{-\varepsilon} xy$.  
The torus $T$ is weakly unobstructed, defines an object of $\scrF(\scrL;-q^\varepsilon)$
whose Floer cohomology is semisimple in characteristic zero, and splits into the direct sum of two idempotent summands $T^{\pm}$, which are isomorphic up to
shift \cite[Section 4.4]{Smith:HFQuadrics}.

A neighborhood of the zero section in $\scrL$ (large enough to
contain $T$) embeds into a neighborhood of the exceptional divisor in
$Y=Bl_{pt}(T^4)$; since all holomorphic discs bounded by $T$
in $Y$ must be contained inside the neighborhood of the exceptional divisor,
the Floer cohomology of the torus $T$ in $Y$ is exactly as in $\scrL$. Hence 
$T$ also defines an object of $\scrF(Y;-q^\varepsilon)$ with semisimple Floer
cohomology, which splits into two idempotent summands~$T^\pm$.
An explicit calculation shows that the images of the two idempotents of $HF(T,T)$
under the open-closed map are $\pm u=\mp PD(E)$;
the generation criterion \cite[Theorem 11.3]{Ritter-Smith} then implies
that $T$ split-generates $\scrF(Y;-q^\varepsilon)^{per}$.
\end{proof}

\begin{Proposition}
There is a fully faithful functor $\scrF(\Sigma_2) \to \scrF(Z,\Omega^{\delta}_{irr};\lambda)^{per}$
whose image split-generates $\scrF(Z,\Omega^{\delta}_{irr};\lambda)^{per}$ for
the eigenvalue $\lambda=-q^\epsilon$.
\end{Proposition}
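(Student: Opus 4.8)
The plan is to build the functor as a Künneth-type construction combined with the known split-generation of $\scrF(Y;-q^\varepsilon)^{per}$ by the torus summand $T^+$. First I would recall that $Z$ is a product $\Sigma_2 \times Y$ with $Y = Bl_{pt}(T^4)$, and that the (undeformed) symplectic form $\Omega$ is a product form $\omega \oplus (p^*\omega_{st} + \varepsilon u)$. For the product symplectic manifold there is a Künneth functor $\scrF(\Sigma_2) \otimes \scrF(Y;-q^\varepsilon) \to \scrF(Z;-q^\varepsilon)$ sending a pair of Lagrangians to their product; this is standard (cf. Amorim, or Abouzaid--Smith). Composing with the inclusion $L \mapsto L \times T^+$ (using the idempotent summand $T^+$ of the Lagrangian torus $T \subset Y$ from Lemma \ref{Lem:summand}) gives a functor $\scrF(\Sigma_2) \to \scrF(Z;-q^\varepsilon)^{per}$. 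I would then check this functor is fully faithful: the morphism groups on the image are $HF^*(L_1 \times T^+, L_2 \times T^+) \cong HF^*(L_1,L_2) \otimes HF^*(T^+,T^+)$, and since $T^+$ is an idempotent summand with $HF^*(T^+,T^+) \cong \Lambda$ (a field, one-dimensional in degree zero by semisimplicity), this is just $HF^*(L_1,L_2)$, so fully faithfulness is immediate from the Künneth formula.

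Next I would handle split-generation. Since $T^+$ split-generates $\scrF(Y;-q^\varepsilon)^{per}$ and the curves $\zeta_i$ of Proposition \ref{Prop:Split_generators} split-generate $\scrF(\Sigma_2)$, the products $\zeta_i \times T^+$ split-generate the image of the Künneth functor; one then invokes the generation criterion in the monotone/weakly-monotone setting, \cite[Theorem 11.3]{Ritter-Smith}, to see that the full subcategory on the $\zeta_i \times T^+$ split-generates $\scrF(Z,\Omega;-q^\varepsilon)^{per}$. Concretely, the open-closed map on $\scrF(Z;-q^\varepsilon)^{per}$ lands in the generalized $(-q^\varepsilon)$-eigenspace $QH^*(Z)_{-q^\varepsilon}$ of quantum multiplication by $c_1(TZ)$; by the product structure and the computation $c_1(TY) = u$ acting invertibly exactly on the $\Lambda\cdot u$ summand, this eigenspace is $H^*(\Sigma_2;\Lambda) \otimes (\Lambda \cdot u)$, and the image of $T^+$ under the $Y$-factor open-closed map is $\pm u$ (from the proof of Lemma \ref{Lem:summand}), while the $\zeta_i$ already surject onto $H^*(\Sigma_2;\Lambda)$ via Corollary \ref{Cor:QH}. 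Hence the restricted open-closed map hits an invertible element of $QH^*(Z)_{-q^\varepsilon}$, giving split-generation.

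The last point is to pass from $\Omega$ to the irrational perturbation $\Omega^\delta_{irr}$. Here I would argue that for $\delta$ sufficiently small the Fukaya category $\scrF(Z,\Omega^\delta_{irr};-q^\varepsilon)^{per}$ is quasi-equivalent to $\scrF(Z,\Omega;-q^\varepsilon)^{per}$: the perturbing two-form $\delta \sum c_{ij}\eta_i \wedge \eta_j$ has class a small multiple of a fixed integral class, and its restriction to the product Lagrangians $L \times T^+$ is controlled — in fact the $\eta_i \wedge \eta_j$ restrict in a way that can be absorbed by a flux deformation and a change of bounding cochain on $T^+$ (cf. the discussion around Section \ref{Sec:analyticity} and the Remark after Theorem \ref{Thm:Floer_analytic}), so Floer cohomology is unchanged up to the analytic/flux invariance already developed. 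I expect the main obstacle to be precisely this last step: making rigorous that the perturbed Fukaya category (with its curved/weakly-unobstructed foundations over $\Lambda_{\geq 0}$) is equivalent to the unperturbed one, including checking that the weakly-unobstructed structure on $T$ and the eigenvalue $-q^\varepsilon$ are unaffected by the perturbation, and that the generation criterion still applies since $QH^*(Z,\Omega^\delta_{irr})$ and $QH^*(Z,\Omega)$ have the same relevant eigenspace structure (the perturbation does not change $c_1$ or the Gromov invariant of $E$). The Künneth fully-faithfulness and the generation-criterion computation are essentially routine given the cited results; the deformation-invariance is where care is needed, and I would lean on \cite{Seidel:flux} and the flux machinery to carry it through.
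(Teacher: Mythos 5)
Your first three steps (the K\"unneth functor for the product form $\Omega$, fully faithfulness via $\End(T^+)=\Lambda$, and split-generation via the open-closed image $1\otimes u$ being invertible in $QH^*(Z)_{-q^\varepsilon}$ together with \cite[Theorem 11.3]{Ritter-Smith}) are essentially the paper's argument for the unperturbed form. The genuine gap is the last step: you reduce the statement for $\Omega^{\delta}_{irr}$ to the claim that $\scrF(Z,\Omega^{\delta}_{irr};-q^\varepsilon)^{per}$ is quasi-equivalent to $\scrF(Z,\Omega;-q^\varepsilon)^{per}$ for small $\delta$. That claim is both stronger than what is needed and not delivered by the machinery you cite: the perturbation $\delta\sum c_{ij}\eta_i\wedge\eta_j$ changes the cohomology class of the symplectic form, so the two manifolds are not symplectomorphic, and Seidel's flux formalism produces a nontrivial family of categories over such a deformation rather than an identification of its fibres; trivializing that family would require controlling the deformation class on the whole category, not just on the objects $L\times T^+$, and ``absorbing the restriction by a flux deformation and a bounding cochain on $T^+$'' does not address the $A_\infty$-structure or the objects of $\scrF(Z,\Omega^{\delta}_{irr})$ that are not of product type. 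Indeed the surrounding application (Lemma \ref{Lem:TrivialOnH*} and Corollary \ref{Cor:I2deforms}) exploits precisely the fact that the perturbed and unperturbed symplectic structures behave very differently, so one should be suspicious of any ``the categories are the same'' shortcut.

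The paper avoids this comparison entirely by working with $\Omega^{\delta}_{irr}$ from the start, at the cost of choosing good \emph{representatives}: take $\eta_1,\dots,\eta_4$ pulled back from $\Sigma_2$ and $\eta_5,\dots,\eta_8$ pulled back from $Y$ and vanishing identically near the exceptional divisor (hence on $T$). Then every $\gamma\times T$, and the Lagrangian correspondence $\Delta_{\Sigma_2}\times T\subset \Sigma_2^-\times Z$, remain Lagrangian for the perturbed forms on the nose, even though $\Omega^{\delta}_{irr}$ is no longer a product form (this is also why the functor is realized by an idempotent summand $G^+$ of the correspondence rather than a naive K\"unneth functor). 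The source category is then $\scrF(\Sigma_2)$ for a correspondingly perturbed area form, which is harmless. Since $H^2(Y)\to H^2(T)$ vanishes, $T$ deforms to nearby symplectic forms and its Floer cohomology stays semisimple, so Lemma \ref{Lem:summand} persists; the quantum corrections still come only from the exceptional sphere class, whose symplectic area is unchanged, so the eigenvalue $-q^\varepsilon$ and the invertibility of $u$ in $QH^*(Z)_{-q^\varepsilon}$ survive, and your generation argument goes through verbatim in the perturbed setting. If you want to salvage your route, you would have to prove the deformation-invariance statement yourself, which is a substantial (and here unnecessary) undertaking.
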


\begin{proof}[Sketch]
For the product form $\Omega$ on $Z$, there is a  K\"unneth functor associated to the $A_{\infty}$-tensor product $\scrF(Z;\lambda)
\simeq \scrF(\Sigma_2) \otimes \scrF(Y;\lambda)$ and the semisimple piece of the second factor afforded by Lemma \ref{Lem:summand}.  Note that $T \subset Y$ survives arbitrary small perturbations of the given K\"ahler form $p^*\omega_{st} + \varepsilon\cdot u$ on $Y$, in the sense that it deforms as a Lagrangian to any sufficiently nearby symplectic form, since  the restriction map $H^2(Y) \to H^2(T)$ vanishes.  Since its Floer cohomology with respect to the initial symplectic form is semisimple, it must remain semisimple after small deformation. 

Choose the 1-forms $\eta_i$ on $Z=\Sigma_2\times Y$ so that
that $\eta_1,\dots,\eta_4$ are the pullbacks of closed 1-forms
$\alpha_1,\dots,\alpha_4$ on $\Sigma_2$ representing a basis of
$H^1(\Sigma_2;\bZ)$, and $\eta_5,\dots,\eta_8$ are the pullbacks of closed
1-forms on $Y$ which vanish everywhere in a neighborhood of the
exceptional divisor (and in particular on the torus $T$).

Then, for a fixed simple closed curve $\gamma \subset \Sigma_2$, the 
submanifold $\gamma\times T$ is Lagrangian not just for the product form
$\Omega$, but also for $\Omega^\delta_{irr}=\Omega + \delta \sum_{ij} c_{ij} \eta_i \wedge \eta_j$.

The association
\[
\gamma \mapsto \gamma \times T^+
\]
is globally realised by an $A_{\infty}$-functor associated to a Lagrangian correspondence
\[
G \subset \Sigma_2^- \times Z = \Sigma_2^- \times (\Sigma_2 \times Bl_{pt}(T^4))
\]
which fibres over the diagonal of $\Sigma_2^- \times \Sigma_2$ with fibre $T \subset \scrL \subset Y$. Note that $\Delta_{\Sigma_2} \times T$ remains Lagrangian after
perturbing the symplectic form on $Z$ by $\delta\sum_{ij} c_{ij}\eta_i\wedge
\eta_j$, and that on $\Sigma_2$ by $\delta\sum_{ij} c_{ij}\alpha_i\wedge
\alpha_j$. 
Since the correspondence is globally a product, the local-to-global spectral sequence $H^*(\Sigma_2; HF^*(T,T)) \to HF^*(G,G)$
degenerates; therefore the correspondence $G$ itself has an idempotent summand $G^+$ associated to a choice of idempotent for $T$, compare to \cite[Section 5.4]{Seidel:flux}. This yields the desired functor
$\scrG^+$ from $\scrF(\Sigma_2)$ (where we suppress from the notation the fact that the symplectic form depends on $\delta$) 
to $\scrF(Z,\Omega^\delta_{irr};-q^\varepsilon)^{per}$.  
This functor maps every object of $\scrF(\Sigma_2)$ to its product with
$T^+$; since we are in a product situation and $\End_{\scrF(Y;-q^\varepsilon)^{per}}(T^+) = \Lambda$, the functor is fully faithful.  

The fact that the image of the functor $\scrG^+$ split-generates $\scrF(Z;-q^\varepsilon)^{per}$
follows from Ganatra's automatic generation result \cite{Ganatra2016},
since  $HH^*(\scrF(\Sigma_2),\scrF(\Sigma_2))\simeq H^*(\Sigma_2;\Lambda)$ has the
same rank as $QH^*(Z)_{-q^\varepsilon}\simeq u\cdot H^*(\Sigma_2;\Lambda)$,
and $\scrF(\Sigma_2)$ is homologically smooth.
One can also proceed more directly: consider a collection of curves $\gamma_i\in\scrF(\Sigma_2)$ which satisfy Abouzaid's
split-generation criterion, i.e.\ the full subcategory with this set of objects
has a Hochschild cycle $\alpha$ which maps to the unit $1\in H^*(\Sigma_2;\Lambda)$ 
under the open-closed map. Then the Hochschild cycle
$\alpha^+=\scrG^+_*(\alpha)$, formed by replacing every morphism which
appears in $\alpha$ with its tensor product with $1_{T^+}$, maps to $1\otimes u\in QH^*(Z;\Lambda)$
(using the fact that we are locally in a product situation and $OC(1_{T^+})=u$).
Since $u$ is invertible in $QH^*(Z)_{-q^\varepsilon}$, \cite[Theorem
11.3]{Ritter-Smith} implies that the objects $\gamma_i\times T^+=\scrG^+(\gamma_i)$
split-generate $\scrF(Z;-q^\varepsilon)^{per}$. 
\end{proof}

Together with Corollary \ref{Cor:Main}, this proposition yields a natural map
\[
\Auteq(\scrF(Z,\Omega^{\delta}_{irr};-q^\varepsilon)^{per}) \cong \Auteq(\scrF(\Sigma_2)^{per}) \to \Gamma_2,
\]
which then induces a map 
\[
\Auteq_{HH}(\scrF(Z,\Omega^{\delta}_{irr};-q^\varepsilon)^{per}) \to I_2
\]
where the domain denotes those autoequivalences which act trivially on Hochschild cohomology.  We know that the closed-open map 
\[
H^*(\Sigma_2)\cdot u = QH^*(Z)_{-q^\varepsilon} \to HH^*(\scrF(Z,\Omega^{\delta}_{irr};-q^\varepsilon)^{per}) 
\]
is an isomorphism, and the map 
\[
\pi_0\Symp(Z,\Omega^{\delta}_{irr}) \to \Auteq(\scrF(Z,\Omega^{\delta}_{irr};-q^\varepsilon)^{per})
\]
lands in the subgroup $\Auteq_{HH}$ by Lemma \ref{Lem:TrivialOnH*}. 
Combining this with Corollary \ref{Cor:I2deforms}, we therefore obtain a map
\[
\bF_N \to \pi_0\Symp(Z,\Omega^{\delta}_{irr}) \to \Auteq_{HH}(\scrF(Z,\Omega^{\delta}_{irr};-q^\varepsilon)^{per})\to I_2 \cong \bF_{\infty}
\]
which one can compose with the quotient map $q: \bF_{\infty} \to \bF_N$ which kills all but the finite set of chosen generators (twists in all other separating simple closed curves).  
Chasing through the stages, the composite map is the natural inclusion of a finite rank free subgroup of the infinitely generated free group, or the identity after composition with $q$.  It follows that $\pi_0\Symp(Z,\Omega^{\delta}_{irr})$ surjects onto a free group of rank $N = N(\delta)$ which tends to infinity as $\delta \to 0$. This completes the proof of Theorem \ref{Thm:largeMCG}.


\bibliographystyle{amsplain}
\bibliography{mybib}

\end{document}